\numberwithin{equation}{section}
\newtheorem{theorem}{Theorem}[section]
\newtheorem{lemma}[theorem]{Lemma}
\newtheorem{proposition}[theorem]{Proposition}
\newtheoremstyle{remarkstyle}
{}{}{}{}{\bfseries}{.}{ }{\thmname{#1}\thmnumber{ #2}\thmnote{ (#3)}}
\theoremstyle{remarkstyle}
\newtheorem{remark}{Remark}[section]
\newtheorem{observation}{Observation}[section]
\newcommand{\N}{\mathbb N}
\newcommand{\Z}{\mathbb Z}
\newcommand{\R}{\mathbb R}
\newcommand{\C}{\mathbb C}
\newcommand{\Ac}{\mathcal A}
\newcommand{\Bc}{\mathcal B}
\newcommand{\Dc}{\mathcal D}
\newcommand{\Gc}{\mathcal G}
\newcommand{\Mcal}{\mathcal M}
\newcommand{\vareps}{\varepsilon}
\newcommand{\cdotb}{\boldsymbol{\cdot}}
\newcommand{\scal}[1]{\left\langle #1 \right\rangle}
\DeclareMathOperator*{\curl}{curl}
\DeclareMathOperator*{\loc}{loc}
\DeclareMathOperator*{\opt}{opt}
\DeclareMathOperator*{\dist}{dist}
\DeclareMathOperator*{\supp}{supp}
\DeclareMathOperator*{\rea}{Re}
\DeclareMathOperator*{\ima}{Im}
\DeclareMathOperator*{\sigc}{\sigma_c}
\DeclareMathOperator*{\spec}{spec}
\title[3D Magnetic NLS]
{The 3D nonlinear Schr\"odinger equation with a constant magnetic field revisited}
\author[V. D. Dinh]{Van Duong Dinh}
\address[V. D. Dinh]{Ecole Normale Sup\'erieure de Lyon \& CNRS, UMPA (UMR 5669), France
	and 
	Department of Mathematics, Ho Chi Minh City University of Education, 280 An Duong Vuong, Ho Chi Minh City, Vietnam}
\email{contact@duongdinh.com}
\subjclass[2010]{35A01; 35B44; 35Q55}
\keywords{Nonlinear Schr\"odinger equation; Magnetic field; Global existence; Blow-up; Stability}
\begin{document}
	
	\begin{abstract}
	In this paper, we revisit the Cauchy problem for the three dimensional nonlinear Schr\"od-inger equation with a constant magnetic field. We first establish sufficient conditions that ensure the existence of global in time and finite time blow-up solutions. In particular, we derive sharp thresholds for global existence versus blow-up for the equation with mass-critical and mass-supercritical nonlinearities. We next prove the existence and orbital stability of normalized standing waves which extend the previous known results to the mass-critical and mass-supercritical cases. To show the existence of normalized solitary waves, we present a new approach that avoids the celebrated concentration-compactness principle. Finally, we study the existence and strong instability of ground state standing waves which greatly improve the previous literature.
	\end{abstract}
		
	\maketitle

	
	\section{Introduction}
	\label{S1}
	\setcounter{equation}{0}	
	The present paper concerns with the Cauchy problem for nonlinear Schr\"odinger equations with a constant magnetic field in three dimensions
	\begin{align} \label{mag-NLS}
	\left\{
	\begin{array}{rcl}
	i \partial_t u + (\nabla+iA)^2 u &=& -|u|^\alpha u, \\
	\left. u \right|_{t=0} &=& u_0,
	\end{array}
	\right. \quad (t,x) \in \R^+ \times \R^3,
	\end{align}
	where
	\begin{align} \label{defi-A}
	A(x)= \frac{b}{2}(-x_2, x_1, 0), \quad x=(x_1, x_2, x_3) \in \R^3
	\end{align}
	is a vector-valued potential modeling the effect of an external magnetic field 
	\begin{align} \label{defi-B}
	B= \curl(A)=(0,0,b), \quad b \ne 0.
	\end{align}
	
	The Schr\"odinger equation with a constant magnetic field is an effective model describing properties of a single non-relativistic quantum particle in the presence of an electromagnetic field (see e.g., \cite{LS}). A rigorous mathematical investigation of the linear Schr\"odinger operator with a constant magnetic field was studied by J. Avron, I. Herbst, and B. Simon \cite{AHS-1, AHS-2, AHS-3}. 
	
	The nonlinear Schr\"odinger equation with a constant magnetic field \eqref{mag-NLS} can be regarded as a special case of the Gross-Pitaevskii equation describing the Bose-Einstein condensation with a critical rotational speed and a partial harmonic confinement potential (see e.g., \cite{BC}), namely
	\[
	i \partial_t u + \Delta u - bL_z u - \frac{b^2}{4} (x_1^2+x_2^2) u = - |u|^2 u, \quad (t,x) \in \R^+ \times \R^3,
	\]
	where 
	\begin{align} \label{defi-Lz}
	L_z := i (x_2 \partial_{x_1} - x_1 \partial_{x_2})
	\end{align}
	is the third component of the angular momentum vector 
	\[
	-ix \wedge \nabla = \left(L_x, L_y, L_z\right) = i\left( x_3 \partial_{x_2}-x_2 \partial_{x_3}, x_1 \partial_{x_3}-x_3 \partial_{x_1}, x_2 \partial_{x_1}-x_1 \partial_{x_2} \right).
	\]
	
	To our knowledge, the first paper addressed \eqref{mag-NLS} belongs to M. J. Esteban and P.-L. Lions \cite{EL}, where the existence of normalized standing waves related to \eqref{mag-NLS} was proved. T. Cazenave and M. J. Esteban \cite{CE} later established the local well-posedness for \eqref{mag-NLS}. As a consequence, they showed that normalized standing waves obtained in \cite{EL} are indeed orbitally stable under the flow of \eqref{mag-NLS}. Note that these existence and stability results hold with a mass-subcritical nonlinearity, i.e., $0<\alpha<\frac{4}{3}$. In the mass-(super)critical case, i.e., $\frac{4}{3}\leq \alpha <4$, J. M. Gon{\c{c}}alves Ribeiro \cite{Ribeiro} proved the existence of finite time blow-up solutions to \eqref{mag-NLS} with negative energy. Also with this regime of nonlinearity, the orbital instability of (rotational invariant) ground state standing waves was studied by J. M. Gon{\c{c}}alves Ribeiro \cite{Ribeiro-insta} and R. Fukuizumi and M. Ohta \cite{FO}. Recently, a new blow-up result for \eqref{mag-NLS} was found by T. F. Kieffer and M. Loss \cite{KL}.    	
	
	The main purposes of this paper are three folds:
	\begin{itemize}
		\item First, we investigate sufficient conditions for the existence of global in time and finite time blow-up solutions. In particular, we derive sharp thresholds for global existence versus blow-up for the equation with mass-(super)critical nonlinearities.
		\item Second, we study the existence and orbital stability of normalized standing waves.
		\item Finally, we address the existence and strong instability of ground state standing waves.
	\end{itemize}   
	
	\subsection{Global existence and finite blow-up}
	Before stating our results in this direction, let us recall the local theory for \eqref{mag-NLS}. The local well-posedness for \eqref{mag-NLS} with initial data in $H^1_A(\R^3)$ was established by T. Cazenave and M. J. Esteban \cite{CE} (see also \cite[Section 9.1]{Cazenave}),
	where 
	\[
	H^1_A(\R^3):= \left\{f \in L^2(\R^3) \ : \ |(\nabla+ iA) f| \in L^2(\R^3) \right\}
	\]
	is a Hilbert space equipped the norm
	\[
	\|f\|_{H^1_A}^2 = \|(\nabla+iA)f\|^2_{L^2} + \|f\|^2_{L^2}.
	\]
	
	\begin{proposition}[LWP \cite{CE}]
		Let $0<\alpha<4$ and $u_0 \in H^1_A(\R^3)$. Then there exist $T^* \in (0,\infty]$ and a unique maximal solution 
		\[
		u \in C([0,T^*), H^1_A(\R^3)) \cap C^1([0,T^*), H^{-1}_A(\R^3)),
		\]
		where $H^{-1}_A(\R^3)$ is the dual space of $H^1_A(\R^3)$. The maximal time of existence satisfies the blow-up alternative: if $T^*<\infty$, then $\lim_{t \nearrow T^*} \|u(t)\|_{H^1_A} =\infty$. In addition, there are conservation laws of mass and energy, namely
		\begin{align*}
		M(u(t)) &= \|u(t)\|^2_{L^2} = M(u_0), \tag{Mass} \\
		E(u(t)) &= \frac{1}{2} \|(\nabla + iA) u(t)\|^2_{L^2} - \frac{1}{\alpha+2} \|u(t)\|^{\alpha+2}_{L^{\alpha+2}} = E(u_0), \tag{Energy}
		\end{align*}
		for all $t\in [0,T^*)$. 
	\end{proposition}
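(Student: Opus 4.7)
The plan is to follow the standard semilinear evolution scheme, adapted to the magnetic setting. The first step is to realize $H_A := -(\nabla+iA)^2$ as a non-negative self-adjoint operator on $L^2(\R^3)$. Since $A\in L^\infty_{\loc}$ and the quadratic form $q(u):=\int_{\R^3}|(\nabla+iA)u|^2\,dx$ is non-negative and closed on $H^1_A(\R^3)$, a standard form-domain argument (or the Avron--Herbst--Simon analysis cited in the introduction) produces a self-adjoint $H_A\ge 0$ and, by Stone's theorem, a unitary group $e^{-itH_A}$ on $L^2(\R^3)$. Because this group preserves $q$, it also acts as an isometry on $H^1_A(\R^3)$ and on its dual $H^{-1}_A(\R^3)$. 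One then rewrites \eqref{mag-NLS} in Duhamel form
\[
u(t) = e^{-itH_A}u_0 + i\int_0^t e^{-i(t-s)H_A}\bigl(|u|^\alpha u\bigr)(s)\,ds.
\]

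The second step controls the nonlinearity. The diamagnetic pointwise inequality $|\nabla |u|(x)|\le |(\nabla+iA)u(x)|$ combined with the usual Sobolev embedding in $\R^3$ yields $H^1_A(\R^3)\hookrightarrow L^p(\R^3)$ for $2\le p\le 6$. Since $0<\alpha<4$ ensures $\alpha+2<6$, the map $u\mapsto |u|^\alpha u$ is continuous and locally Lipschitz from $H^1_A$ into $H^{-1}_A$ (acting through the duality pairing $\scal{|u|^\alpha u,v}=\int |u|^\alpha u\bar v$), and the energy functional $E$ is well defined on $H^1_A$.

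With these two ingredients the construction proceeds by a contraction argument. On a closed ball $B_R\subset C([0,T];H^1_A)$ with $R\sim\|u_0\|_{H^1_A}$, the Duhamel map sends $B_R$ into itself for $T=T(\|u_0\|_{H^1_A},\alpha)$ small enough, and becomes a strict contraction when measured in the weaker $L^\infty_T L^2$-metric. This produces the unique $u\in C([0,T],H^1_A)$; differentiating the integral equation in $H^{-1}_A$ gives $u\in C^1([0,T],H^{-1}_A)$. A standard maximal-interval extension argument yields $T^*$ and the blow-up alternative: if $T^*<\infty$ but $\|u(t)\|_{H^1_A}$ stays bounded on $[0,T^*)$, the local theory could be iterated past $T^*$, a contradiction. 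For conservation of mass and energy, one first assumes $u_0\in D(H_A)$, so that the solution lies in $C([0,T^*),D(H_A))\cap C^1([0,T^*),L^2)$ and the formal identities $\frac{d}{dt}M(u)=0$ and $\frac{d}{dt}E(u)=0$, obtained by pairing \eqref{mag-NLS} with $\bar u$ and with $\partial_t \bar u$ respectively, are literally valid; density of $D(H_A)$ in $H^1_A$ together with continuity of the flow then propagates the conservation laws to arbitrary $H^1_A$ data.

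The main technical subtlety is that every manipulation must respect the magnetic gradient $\nabla+iA$ rather than $\nabla$. Fortunately, this operator satisfies the product rule $(\nabla+iA)(fu)=u\nabla f+f(\nabla+iA)u$ whenever $f$ is a real scalar function, so the chain-rule computation for $|u|^\alpha u$ and the integration-by-parts identity used to derive energy conservation go through verbatim; the absence of any nonlinearity involving $\nabla u$ means there is no commutator between $A$ and the nonlinearity to worry about. This is what makes the proof ultimately a routine adaptation of the non-magnetic argument in Cazenave's monograph.
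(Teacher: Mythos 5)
The paper does not prove this proposition; it quotes it from Cazenave--Esteban \cite{CE} (see also Section 9.1 of Cazenave's monograph), whose argument rests on the explicit Mehler-type representation of the propagator $e^{-itH_A}$ for a constant field: this gives the dispersive bound $\|e^{-itH_A}\|_{L^1\to L^\infty}\lesssim |t|^{-3/2}$ for small times only (the paper itself remarks that Strichartz estimates here hold only on finite time intervals), hence local-in-time Strichartz estimates, and the fixed point is then run in Strichartz-type spaces, which is what covers the whole range $0<\alpha<4$. Your proposal replaces this by a contraction that uses nothing but unitarity of $e^{-itH_A}$ on $H^1_A$ and $H^{-1}_A$ plus the fact that $u\mapsto |u|^\alpha u$ is locally Lipschitz from $H^1_A$ to $H^{-1}_A$, and this is where it breaks down: if the integrand $|u|^\alpha u(s)$ is only controlled in $H^{-1}_A$, then unitarity returns the Duhamel integral in $H^{-1}_A$, not in $H^1_A$, so the map does not send a ball of $C([0,T];H^1_A)$ into itself. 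To close such a scheme one would need $u\mapsto |u|^\alpha u$ to be locally Lipschitz from $H^1_A$ into $H^1_A$ (equivalently into $L^2$ at the level of one magnetic derivative), which is false in three dimensions for every $\alpha>0$: estimating $\||u|^\alpha(\nabla+iA)u\|_{L^2}$ requires an $L^\infty$ bound on $u$ that $H^1(\R^3)$ does not provide. The same obstruction defeats the claimed strict contraction in the $L^\infty_TL^2$ metric: $\||u|^\alpha u-|v|^\alpha v\|_{L^2}$ is controlled by $\|u-v\|_{L^{6/(3-\alpha)}}$-type norms (and only for $\alpha\le 2$), not by $\|u-v\|_{L^2}$ with a constant depending on the $H^1_A$ bound, so the iteration does not close in the weak metric either.

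The missing ingredient is therefore a smoothing input (the local-in-time dispersive/Strichartz estimates coming from the explicit kernel, as in \cite{CE}), or alternatively a regularization--compactness construction; a purely "unitary group plus locally Lipschitz nonlinearity" argument only works when the nonlinearity is Lipschitz on the resolution space itself, e.g.\ for data in $D(H_A)$ where $H^2(\R^3)\hookrightarrow L^\infty(\R^3)$, and descending from there to general $H^1_A$ data requires exactly the continuous dependence and persistence-of-regularity statements you invoke later without proof. Relatedly, your derivation of the conservation laws presupposes that $D(H_A)$-regularity propagates with the same maximal time and that the flow depends continuously on the data in $H^1_A$ uniformly on compact time intervals; both are products of the Strichartz-based local theory rather than consequences of the sketch as given. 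The self-adjointness of $H_A$, the isometry of the group on $H^1_A$ and $H^{-1}_A$, the embedding $H^1_A\hookrightarrow L^{\alpha+2}$ via the diamagnetic inequality, and the covariant product rule are all fine, but they do not by themselves yield the fixed point for any $\alpha>0$, let alone up to the energy-critical threshold $\alpha<4$.
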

	
	In the mass-subcritical case, it was proved in \cite{CE} that solutions to \eqref{mag-NLS} exist globally in time, i.e., $T^*=\infty$. In the mass-(super)critical cases, there exist solutions to \eqref{mag-NLS} which blow up in finite time, i.e., $T^*<\infty$ (see e.g., \cite{Ribeiro, Garcia, KL}). To state blow-up results for \eqref{mag-NLS}, let us introduce the following Hilbert space
	\begin{align} \label{Sigma-A}
	\Sigma_A(\R^3):= \left\{ f\in H^1_A(\R^3) \ : \ |x| f \in L^2(\R^3)\right\}
	\end{align}
	endowed with the norm
	\[
	\|f\|^2_{\Sigma_A} := \|(\nabla+iA)f\|^2_{L^2} + \|xf\|^2_{L^2} + \|f\|^2_{L^2}.
	\]
	We will see in Remark \ref{rem-equi-norm} that $\Sigma_A(\R^3) \equiv \Sigma(\R^3)$, where
	\begin{align} \label{Sigma}
	\Sigma(\R^3):= \left\{ f\in H^1(\R^3) \ : \ |x| f \in L^2(\R^3)\right\}
	\end{align}
	equipped with the norm
	\[
	\|f\|^2_{\Sigma}:= \|\nabla f\|^2_{L^2} + \|xf\|^2_{L^2} + \|f\|^2_{L^2}.
	\]
	
	Thanks to this fact, we have the following useful identity
	\begin{align} \label{mag-norm}
	\|(\nabla + iA) f\|_{L^2}^2 = \|\nabla f\|^2_{L^2} + bR(f) + \frac{b^2}{4} \|\rho f\|^2_{L^2},
	\end{align}
	where $\rho:=\sqrt{x_1^2+x_2^2}$ and
	\begin{align} \label{defi-R}
	R(f) := i \int (x_2\partial_{x_1} f - x_1 \partial_{x_2} f) \overline{f} dx = \int L_zf \overline{f} dx,
	\end{align}
	where $L_z$ is as in \eqref{defi-Lz}. Note that, by H\"older's inequality, it is straightforward to see that the functional $R$ is well-defined on $\Sigma(\R^3)$.  
	
	By making use of virial identity related to \eqref{mag-NLS} (see Lemma \ref{lem-viri-iden}), the existence of finite time blow-up solutions to \eqref{mag-NLS} was showed by J. M. Gonçalves Ribeiro \cite{Ribeiro} (see also \cite{Garcia} for a more general magnetic potential).

	\begin{proposition}[\cite{Ribeiro}] \label{prop-blow-1}
		Let $\frac{4}{3} \leq \alpha<4$. Let $u_0 \in \Sigma_A(\R^3)$ be such that $E(u_0)<0$. Then the corresponding solution to \eqref{mag-NLS} blows up in finite time, i.e., $T^*<\infty$. 
	\end{proposition}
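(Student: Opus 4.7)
The plan is to run the classical Glassey concavity argument on the variance $V(t) := \|x u(t)\|_{L^2}^2$, which for the specific magnetic potential $A = \frac{b}{2}(-x_2, x_1, 0)$ benefits from the crucial algebraic cancellations $x \cdot A \equiv 0$ and $L_z(|x|^2) \equiv 0$. As a preliminary step I would verify that the $\Sigma_A$-regularity of the initial data is propagated by the flow, so that $V(t)$ is a well-defined $C^2$ function on $[0, T^*)$. This is standard and proceeds by a regularization argument (smooth approximation of $u_0$, truncation of the nonlinearity) combined with the local theory of \cite{CE}; it is the one genuinely technical point in the proof.

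Once $V$ is justified as a regular function of time, I would invoke Lemma \ref{lem-viri-iden} to produce an identity of the form
\[
V''(t) = 8 \|(\nabla + iA) u(t)\|_{L^2}^2 - \frac{12\alpha}{\alpha+2} \|u(t)\|_{L^{\alpha+2}}^{\alpha+2},
\]
which coincides with the usual 3D NLS virial identity but with the magnetic kinetic energy $\|(\nabla + iA)u\|_{L^2}^2$ replacing $\|\nabla u\|_{L^2}^2$ (the $L_z$- and $\rho^2$-contributions drop out because of the two cancellations above). Using the conservation of energy, this rewrites as
\[
V''(t) = 16 E(u_0) - \frac{4(3\alpha - 4)}{\alpha+2} \|u(t)\|_{L^{\alpha+2}}^{\alpha+2}.
\]
In the mass-critical case $\alpha = 4/3$ the coefficient of the nonlinear term vanishes identically, while in the mass-supercritical case $\alpha > 4/3$ it is strictly negative. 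In either regime, the hypothesis $E(u_0) < 0$ yields the uniform convexity bound $V''(t) \leq 16 E(u_0) < 0$ on $[0, T^*)$.

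Integrating twice then gives $V(t) \leq V(0) + V'(0) t + 8 E(u_0) t^2$, a downward-opening parabola. Were $T^* = \infty$, this upper bound would become negative for $t$ sufficiently large, contradicting the obvious nonnegativity $V(t) \geq 0$; hence $T^* < \infty$ as claimed. Conditional on Lemma \ref{lem-viri-iden}, the remaining argument is completely elementary, so the main obstacle lies upstream: producing the virial identity with the correct magnetic cancellations, together with the rigorous propagation of weights in Step 1, both of which are by now classical for this class of magnetic Schr\"odinger equations but which concentrate essentially all of the analytic work.
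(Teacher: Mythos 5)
Your overall Glassey strategy (propagate the $\Sigma_A$ regularity, differentiate the variance twice, integrate a concavity bound) is the right framework, and the preliminary regularization step is indeed standard. The problem is your central identity. The cancellations $x\cdot A\equiv 0$ and $L_z(|x|^2)\equiv 0$ only simplify the \emph{first} derivative, $F'(u(t))=4\ima\int x\cdot\nabla u\,\overline{u}\,dx$; they do not remove the magnetic contributions from the \emph{second} derivative. The correct identity is the one in Lemma \ref{lem-viri-iden} (which is precisely Ribeiro's virial identity that you cite): $F''(u(t))=8\|\nabla u(t)\|_{L^2}^2-2b^2\|\rho u(t)\|_{L^2}^2-\frac{12\alpha}{\alpha+2}\|u(t)\|_{L^{\alpha+2}}^{\alpha+2}$, with the plain gradient. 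Rewriting it via \eqref{mag-norm}, this equals $8\|(\nabla+iA)u\|_{L^2}^2-8bR(u)-4b^2\|\rho u\|_{L^2}^2-\frac{12\alpha}{\alpha+2}\|u\|_{L^{\alpha+2}}^{\alpha+2}$, so your proposed identity is missing the terms $-8bR(u)-4b^2\|\rho u\|_{L^2}^2$. A sanity check that such terms must be present: if they really dropped out, the variance of the free Landau evolution would grow quadratically in time, whereas the transverse motion in a constant magnetic field is confined.

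Because of this, the bound $V''(t)\leq 16E(u_0)$ does not follow. With the correct identity, the term $-4b^2\|\rho u\|^2_{L^2}$ has a good sign, but $-8bR(u)$ does not; invoking the conservation of the angular momentum $R$ (Lemma \ref{lem-cons-angu}, which your argument never uses and in fact needs) one obtains $F''(u(t))\leq 16E_0(u_0)$ with $E_0$ as in \eqref{defi-E0}. That is exactly how the paper proves Proposition \ref{prop-blow-2}, not Proposition \ref{prop-blow-1}: since $E=E_0+\frac{b}{2}R$ and $\frac{b}{2}R(u_0)$ may be negative, $E(u_0)<0$ does not imply $E_0(u_0)<0$ (see the remark following Lemma \ref{lem-cons-angu}, where the two conditions are shown to be incomparable). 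So, as written, your argument at best yields case (1) of Proposition \ref{prop-blow-2}, and even that only after correcting the identity and adding Lemma \ref{lem-cons-angu}; it does not prove the stated proposition. Note also that the paper itself does not reprove Proposition \ref{prop-blow-1} but quotes it from \cite{Ribeiro}; Ribeiro's argument is built on Lemma \ref{lem-viri-iden}, but it is not the naive estimate $F''\leq 16E(u_0)$ --- when $bR(u_0)<0$ the extra term $-8bR(u_0)$ is positive and must be genuinely dealt with, and that is where the real work in the cited proof lies.
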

	
	Recently, T. F. Kieffer and M. Loss \cite{KL} showed the following blow-up result for \eqref{mag-NLS}. 
	
	\begin{proposition}[\cite{KL}] \label{prop-blow-2}
		Let $\frac{4}{3} \leq \alpha<4$. Let $u_0 \in \Sigma_A(\R^3)$ and $u:[0,T^*) \times \R^3 \rightarrow \C$ be the corresponding solution to \eqref{mag-NLS}. Then the solution blows up in finite time, i.e., $T^*<\infty$ provided that one of the following conditions holds:  
		\begin{itemize} [leftmargin=5mm]
			\item[(1)] $E_0(u_0)<0$;
			\item[(2)] $E_0(u_0)=0$ and $\ima \mathlarger{\int} x \cdot \nabla u_0(x) \overline{u}_0(x)dx <0$;
			\item[(3)] $E_0(u_0)>0$ and $\ima \mathlarger{\int} x \cdot \nabla u_0(x) \overline{u}_0(x) dx <-\sqrt{2E_0(u_0)} \|xu_0\|_{L^2}$.
		\end{itemize}
		Here 
		\begin{align} \label{defi-E0}
		E_0(f):= \frac{1}{2} \|\nabla f\|^2_{L^2} +\frac{b^2}{8} \|\rho f\|^2_{L^2} - \frac{1}{\alpha+2} \|f\|^{\alpha+2}_{L^{\alpha+2}}.
		\end{align}
	\end{proposition}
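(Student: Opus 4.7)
My plan is to run a virial-type argument based on the quantity $V(t):=\|xu(t)\|_{L^2}^2$ and reduce all three cases to the standard convexity/quadratic-polynomial obstruction $V(t)\ge 0$.

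The first step is to observe two conservation laws that together let us replace $E$ by the simpler functional $E_0$. Because $A=\frac{b}{2}(-x_2,x_1,0)$ is the rotation vector field around the $x_3$-axis, the operator $L_z$ of \eqref{defi-Lz} commutes with $(\nabla+iA)^2$ and with the gauge-invariant nonlinearity $|u|^\alpha u$; consequently the angular momentum functional $R(u)$ defined in \eqref{defi-R} is conserved along the flow. Combined with \eqref{mag-norm} and the conservation of $E$, this gives $E_0(u(t))=E(u(t))-\tfrac{b}{2}R(u(t))=E_0(u_0)$ for every $t\in[0,T^*)$.

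Next I would establish the virial identities. A crucial algebraic remark is that $x\cdot A(x)\equiv 0$, so the covariant derivative $(\nabla+iA)u$ may be replaced by $\nabla u$ in the first moment:
\begin{equation*}
V'(t)=2\ima\int \overline{u}\, x\cdot(\nabla+iA)u\,dx=2\ima\int x\cdot\nabla u\,\overline{u}\,dx .
\end{equation*}
For $V''$, using Lemma \ref{lem-viri-iden} (or a direct computation via $(\nabla+iA)^2=\Delta+2iA\cdot\nabla-\tfrac{b^2}{4}\rho^2$ and integration by parts), one obtains
\begin{equation*}
V''(t)=8\|\nabla u(t)\|_{L^2}^2-\frac{12\alpha}{\alpha+2}\|u(t)\|_{L^{\alpha+2}}^{\alpha+2}-2b^2\|\rho u(t)\|_{L^2}^2 .
\end{equation*}
Comparing with $16E_0(u(t))$ gives
\begin{equation*}
V''(t)=16E_0(u_0)-4b^2\|\rho u(t)\|_{L^2}^2+\frac{16-12\alpha}{\alpha+2}\|u(t)\|_{L^{\alpha+2}}^{\alpha+2}\le 16E_0(u_0),
\end{equation*}
where in the last step I use precisely the mass-(super)critical assumption $\alpha\ge\tfrac{4}{3}$ and the sign of $b^2$.

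Finally, the three cases are handled by a now-standard convexity argument. Integrating the inequality $V''\le 16E_0(u_0)$ twice yields
\begin{equation*}
V(t)\le 8E_0(u_0)\,t^2+V'(0)\,t+V(0)=8E_0(u_0)\,t^2+4\Big(\ima\!\int x\cdot\nabla u_0\,\overline{u}_0\,dx\Big)\,t+\|xu_0\|_{L^2}^2 .
\end{equation*}
If $E_0(u_0)<0$, the right-hand side is a downward-opening parabola and becomes negative in finite time; if $E_0(u_0)=0$ and $\ima\!\int x\cdot\nabla u_0\,\overline{u}_0\,dx<0$, it is a strictly decreasing affine function; if $E_0(u_0)>0$, the discriminant condition $V'(0)^2>32E_0(u_0)V(0)$ together with $V'(0)<0$ forces the parabola to take a negative value at some finite $t_0>0$. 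This condition rewrites exactly as $\ima\!\int x\cdot\nabla u_0\,\overline{u}_0\,dx<-\sqrt{2E_0(u_0)}\,\|xu_0\|_{L^2}$. In each case we would obtain $V(t_0)<0$, contradicting $V\ge 0$, so $T^*<\infty$.

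The only genuinely delicate point I expect is the rigorous justification of the virial identity: the computation above is formal for $H^1_A$ solutions, and one has to pass through a standard approximation/truncation scheme (smooth cut-off $\chi_R$ applied to the weight $|x|^2$, together with the $\Sigma_A=\Sigma$ equivalence from Remark \ref{rem-equi-norm}) to make sense of $V(t)$ and its derivatives on the full maximal interval. Everything else reduces to algebraic identities and the elementary convexity analysis of a quadratic polynomial.
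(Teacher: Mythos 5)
Your proposal is correct and takes essentially the same route as the paper: conservation of $E_0$ via the conserved angular momentum $R$ (Lemma \ref{lem-cons-angu}), the virial identities of Lemma \ref{lem-viri-iden}, the bound $F''(u(t))\le 16E_0(u_0)$ using $\alpha\ge\frac{4}{3}$, and the quadratic/convexity contradiction with $\|xu(t)\|_{L^2}^2\ge 0$. Only a small slip: the first-moment identity should read $V'(t)=4\ima\int x\cdot\nabla u\,\overline{u}\,dx$ rather than $2\ima\int x\cdot\nabla u\,\overline{u}\,dx$; your final integrated inequality already uses the correct factor $4$, so the discriminant analysis and the three cases come out exactly as in the paper.
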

	
	The proof of this blow-up result is based on the virial identity and the following observation.
	
	\begin{lemma} \label{lem-cons-angu}
		Let $0<\alpha<4$ and $u_0 \in \Sigma_A(\R^3)$. Let $u:[0,T^*) \times \R^3 \rightarrow \C$ be the corresponding solution to \eqref{mag-NLS}. Then the angular momentum $R(u(t))$ is real-valued and conserved along the flow of \eqref{mag-NLS}, i.e.,
		\[
		R(u(t)) = R(u_0), \quad \forall t\in [0,T^*).
		\] 
		In particular, we have
		\[
		E_0(u(t)) = E_0(u_0), \quad \forall t\in [0,T^*),
		\]
		where $E_0$ is as in \eqref{defi-E0}.
	\end{lemma}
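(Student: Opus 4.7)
The plan is to first verify that $R(u)$ is real-valued, then establish its conservation via a commutator calculation exploiting the rotational invariance (around the $x_3$-axis) of both the magnetic Laplacian and the nonlinearity, and finally deduce conservation of $E_0$ from identity \eqref{mag-norm}. For reality, write $L_z = iD_z$ with $D_z := x_2\partial_{x_1}-x_1\partial_{x_2}$; since $\partial_{x_1}x_2 = \partial_{x_2}x_1 = 0$, the operator $D_z$ is formally antisymmetric, i.e., $\int (D_z f)\bar g\,dx = -\int f\, D_z\bar g\,dx$ for $f,g\in\Sigma(\R^3)$, and taking $f=g=u$ and conjugating yields $\overline{R(u)}=R(u)$.

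For conservation, the computation must be performed on smooth solutions, so I would first approximate $u_0$ in $\Sigma_A$ by Schwartz data and propagate the $\Sigma$-regularity of the corresponding smooth solutions; the identity would then follow by passing to the limit. At the formal level, using $\nabla\cdot A = 0$ one rewrites $-(\nabla+iA)^2 = -\Delta + bL_z + \tfrac{b^2}{4}\rho^2 =: H$, so \eqref{mag-NLS} reads $i\partial_t u = (H-|u|^\alpha)u$. With $\langle f,g\rangle := \int f\bar g\,dx$, a direct computation using self-adjointness of $L_z$, $H$, and of multiplication by $|u|^\alpha$ gives
\begin{equation*}
\frac{d}{dt}R(u(t)) = -i\langle [L_z, H]u, u\rangle + i\langle [L_z, |u|^\alpha]u, u\rangle.
\end{equation*}
The first commutator vanishes because $L_z$ commutes with each summand of $H$ (trivially with itself, and with $-\Delta$ and $\rho^2 = x_1^2+x_2^2$ by rotation invariance around the $x_3$-axis). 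For the second, Leibniz's rule gives $[L_z,|u|^\alpha]v = i(D_z|u|^\alpha)v$, whence
\begin{equation*}
\langle [L_z, |u|^\alpha]u, u\rangle = i\int|u|^2\, D_z|u|^\alpha\,dx = \tfrac{i\alpha}{\alpha+2}\int D_z|u|^{\alpha+2}\,dx = 0,
\end{equation*}
where the last integral vanishes by writing $D_z\phi = \partial_{x_1}(x_2\phi)-\partial_{x_2}(x_1\phi)$ and integrating by parts. Hence $R(u(t)) = R(u_0)$.

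The conservation of $E_0$ is then immediate: identity \eqref{mag-norm} expresses the magnetic kinetic energy as $\tfrac{1}{2}\|\nabla u\|^2_{L^2} + \tfrac{b}{2}R(u) + \tfrac{b^2}{8}\|\rho u\|^2_{L^2}$, so $E(u) = E_0(u) + \tfrac{b}{2}R(u)$. Combining energy conservation from the LWP proposition with the conservation of $R$ just proved yields $E_0(u(t)) = E_0(u_0)$.

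The main obstacle is the rigorous justification of the formal differentiation above, which requires knowing that $u(t)\in\Sigma_A$ for all $t\in[0,T^*)$; propagating this $\Sigma$-regularity along the flow is standard but is the technical step that must be executed via approximation before the commutator identity can be invoked.
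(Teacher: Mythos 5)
Your proposal is correct and follows essentially the same route as the paper: reality of $R$ via the antisymmetry of $L_z$, conservation via the formal computation with $-(\nabla+iA)^2=-\Delta+bL_z+\tfrac{b^2}{4}\rho^2$ in which $[L_z,\Delta]=[L_z,\rho^2]=0$ and the nonlinear contribution is a total derivative of $|u|^{\alpha+2}$ integrating to zero, and finally $E_0(u)=E(u)-\tfrac{b}{2}R(u)$ combined with energy conservation; your commutator formulation is just a tidier packaging of the paper's term-by-term expansion. The paper likewise leaves the rigorous justification to the standard approximation argument (citing Ribeiro), exactly as you flag in your last paragraph.
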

	For the reader's convenience, we give a proof of this result in Section \ref{S3}.
	
	\begin{remark}
		In \cite{KL}, a relationship between $E(u_0)$ and $E_0(u_0)$ has been analyzed. In particular, for a magnetic field with the strength $b>0$, we have
		\[
		\left\{
		\begin{array}{ccc}
		E(u_0) > E_0(u_0) &\text{if}& R(u_0) >0, \\
		E(u_0) < E_0(u_0) &\text{if}& R(u_0) <0, \\
		E(u_0) = E_0(u_0) &\text{if}& R(u_0) =0. 
		\end{array}
		\right.
		\]
		Depending on the sign of the magnetic strength and the angular momentum, the blow-up condition given in Proposition \ref{prop-blow-1} may be better or weaker than the one of Proposition \ref{prop-blow-2} and vice versa.
	\end{remark}
	
	Our first result is the following sharp threshold for global existence versus finite time blow-up in the mass-critical case.
	
	\begin{proposition} \label{prop-thre-mass}
		Let $\alpha=\frac{4}{3}$. 
		\begin{itemize}[leftmargin=5mm]
			\item[(1)] If $u_0 \in H^1_A(\R^3)$ satisfies $\|u_0\|_{L^2} < \|Q\|_{L^2}$, where $Q$ is the unique positive radial solution to 
			\begin{align} \label{defi-Q}
			-\Delta Q + Q - |Q|^{\alpha} Q =0,
			\end{align}
			then the corresponding solution to \eqref{mag-NLS} exists globally in time, i.e., $T^*=\infty$.
			\item[(2)] For $c > \|Q\|_{L^2}$, there exists $u_0 \in \Sigma_A(\R^3)$ such that the corresponding solution to \eqref{mag-NLS} with initial data $\left. u\right|_{t=0} =u_0$ blows up in finite time, i.e., $T^*<\infty$.
		\end{itemize}
	\end{proposition}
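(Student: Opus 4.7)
The plan is to handle the two parts separately, using the sharp mass-critical Gagliardo--Nirenberg inequality combined with the diamagnetic inequality for part (1), and an explicit rescaling of the ground state $Q$ for part (2).

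For part (1), I would invoke the sharp mass-critical Gagliardo--Nirenberg inequality
\[
\|f\|_{L^{\alpha+2}}^{\alpha+2} \leq \frac{\alpha+2}{2}\, \frac{\|f\|_{L^2}^\alpha \|\nabla f\|_{L^2}^2}{\|Q\|_{L^2}^\alpha}, \qquad \alpha = \tfrac{4}{3},
\]
applied to $|u(t)|$, which has the same $L^p$-norms as $u(t)$. The diamagnetic inequality $|\nabla |u|| \leq |(\nabla+iA)u|$ pointwise then yields
\[
\|u(t)\|_{L^{\alpha+2}}^{\alpha+2} \leq \frac{\alpha+2}{2}\, \frac{\|u_0\|_{L^2}^\alpha}{\|Q\|_{L^2}^\alpha}\, \|(\nabla+iA)u(t)\|_{L^2}^2,
\]
using the conservation of mass. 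Inserting this into the energy conservation gives
\[
E(u_0) = E(u(t)) \geq \frac{1}{2}\left(1 - \left(\tfrac{\|u_0\|_{L^2}}{\|Q\|_{L^2}}\right)^{\alpha}\right) \|(\nabla+iA)u(t)\|_{L^2}^2.
\]
Under the assumption $\|u_0\|_{L^2} < \|Q\|_{L^2}$, the prefactor is strictly positive, so $\|(\nabla+iA)u(t)\|_{L^2}$ is bounded uniformly in time. Combined with the mass bound this controls the full $H^1_A$-norm, and the blow-up alternative of the local theory forces $T^* = \infty$.

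For part (2), given $c > \|Q\|_{L^2}$, I would try the explicit initial datum
\[
u_0(x) := \lambda \mu^{3/2} Q(\mu x), \qquad \lambda := \frac{c}{\|Q\|_{L^2}} > 1, \quad \mu > 0 \text{ to be chosen large.}
\]
This is real-valued and belongs to $\Sigma_A(\R^3)$, and a direct change of variable gives $\|u_0\|_{L^2} = c$, $\|\nabla u_0\|_{L^2}^2 = \lambda^2 \mu^2 \|\nabla Q\|_{L^2}^2$, $\|u_0\|_{L^{\alpha+2}}^{\alpha+2} = \lambda^{\alpha+2} \mu^2 \|Q\|_{L^{\alpha+2}}^{\alpha+2}$ (using that $\alpha = 4/3$ is mass-critical), while $\|\rho u_0\|_{L^2}^2 = \lambda^2 \mu^{-2} \|\rho Q\|_{L^2}^2$. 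Using the Pohozaev identities for $Q$ (which give the well-known relation $E_{\mathrm{nl}}(Q) = \tfrac{1}{2}\|\nabla Q\|^2 - \tfrac{1}{\alpha+2}\|Q\|^{\alpha+2}_{L^{\alpha+2}} = 0$), the functional $E_0$ of \eqref{defi-E0} evaluates to
\[
E_0(u_0) = \mu^2\left[\tfrac{\lambda^2}{2}\|\nabla Q\|_{L^2}^2 - \tfrac{\lambda^{\alpha+2}}{\alpha+2}\|Q\|_{L^{\alpha+2}}^{\alpha+2}\right] + \frac{b^2 \lambda^2}{8\mu^2}\|\rho Q\|_{L^2}^2.
\]
The bracketed expression equals a negative multiple of $\lambda^{\alpha+2} - \lambda^2 > 0$ (since $\lambda > 1$ and $\alpha > 0$), so it is strictly negative and independent of $\mu$; choosing $\mu$ sufficiently large, the $\mu^2$-term dominates the $\mu^{-2}$-term and $E_0(u_0) < 0$. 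Proposition \ref{prop-blow-2}(1) then yields the claimed finite-time blow-up. (Since $u_0$ is real, $R(u_0) = 0$ and $E(u_0) = E_0(u_0)$, so Proposition \ref{prop-blow-1} applies equally well.)

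Neither step is a serious obstacle: part (1) is essentially a magnetic adaptation of Weinstein's argument, and the only point of attention in part (2) is ensuring the magnetic confinement term does not spoil the sign of $E_0$. That risk is eliminated by the asymmetric $\mu$-scaling above, which damps the $\|\rho u_0\|_{L^2}^2$-term while simultaneously amplifying the (negative) balance between kinetic and nonlinear energy, and this asymmetry is precisely what makes the construction go through.
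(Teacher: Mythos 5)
Your proposal is correct and follows essentially the same route as the paper: part (1) is the sharp mass-critical Gagliardo--Nirenberg inequality combined with the diamagnetic inequality and the mass/energy conservation laws plus the blow-up alternative, and part (2) is the same rescaled datum $a\,\mu^{3/2}Q(\mu x)$ with amplitude $a=c/\|Q\|_{L^2}>1$, where the Pohozaev identity makes the non-magnetic part of $E_0$ strictly negative and the large-$\mu$ scaling suppresses the $\|\rho u_0\|_{L^2}^2$ term, so that Proposition \ref{prop-blow-2} applies. No gaps.
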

	
	\begin{remark}
		It is not clear to us at the moment that whether or not there exists a blow-up solution to the mass-critical \eqref{mag-NLS} with the minimal mass $\|u_0\|_{L^2} = \|Q\|_{L^2}$. 
	\end{remark}
	
	Our next results are the following global existence in the mass-supercritical case. 
	
	\begin{proposition} \label{prop-gwp-1}
		Let $\frac{4}{3}<\alpha<4$. Let $u_0 \in H^1_A(\R^3)$ be such that $E(u_0) \geq 0$ and
		\begin{align} 
		E(u_0) [M(u_0)]^{\sigc} &< E^0(Q) [M(Q)]^{\sigc}, \label{cond-gwp-1-1} \\
		\|(\nabla+iA) u_0\|_{L^2} \|u_0\|_{L^2}^{\sigc} &< \|\nabla Q\|_{L^2} \|Q\|^{\sigc}_{L^2}, \label{cond-gwp-1-2}
		\end{align}
		where
		\begin{align} \label{defi-E^0-sigc}
		E^0(f):= \frac{1}{2}\|\nabla f\|^2_{L^2} - \frac{1}{\alpha+2} \|f\|^{\alpha+2}_{L^{\alpha+2}}, \quad\sigc:= \frac{4-\alpha}{3\alpha-4}.
		\end{align}
		Then the corresponding solution to \eqref{mag-NLS} exists globally in time, i.e., $T^*=\infty$, and satisfies
		\begin{align*}
		\|(\nabla+iA) u(t)\|_{L^2} \|u(t)\|^{\sigc}_{L^2} < \|\nabla Q\|_{L^2} \|Q\|_{L^2}^{\sigc}
		\end{align*}
		for all $t\in [0,\infty)$.
	\end{proposition}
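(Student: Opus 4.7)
The plan is to adapt the classical Holmer--Roudenko ``below-threshold'' scheme to the magnetic setting. The only extra ingredient is the pointwise diamagnetic inequality $\left|\nabla|f|\right| \leq \left|(\nabla+iA)f\right|$, which together with the sharp Gagliardo--Nirenberg inequality in $\R^3$ applied to $|u(t)|$ yields
\begin{align*}
\|u(t)\|_{L^{\alpha+2}}^{\alpha+2} \leq C_{\mathrm{GN}} \|(\nabla+iA)u(t)\|_{L^2}^{3\alpha/2} \|u(t)\|_{L^2}^{(4-\alpha)/2},
\end{align*}
where $C_{\mathrm{GN}}$ is the optimal constant, attained precisely at the ground state $Q$ of \eqref{defi-Q}. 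The Pohozaev identities for $Q$ give $\|\nabla Q\|_{L^2}^2 = \tfrac{3\alpha}{2(\alpha+2)}\|Q\|_{L^{\alpha+2}}^{\alpha+2}$ and hence $E^0(Q) = \tfrac{3\alpha-4}{6\alpha}\|\nabla Q\|_{L^2}^2$.

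Next I would introduce the scale-invariant quantities
\begin{align*}
\varphi(t) := \|(\nabla+iA)u(t)\|_{L^2}\|u(t)\|_{L^2}^{\sigc}, \qquad y_Q := \|\nabla Q\|_{L^2}\|Q\|_{L^2}^{\sigc}.
\end{align*}
Multiplying the conserved energy identity by $[M(u_0)]^{\sigc}$, using the Gagliardo--Nirenberg bound above, and noting that the choice $\sigc=(4-\alpha)/(3\alpha-4)$ is precisely what balances the powers of the mass, one obtains
\begin{align*}
E(u_0)[M(u_0)]^{\sigc} = E(u(t))[M(u(t))]^{\sigc} \geq \tfrac{1}{2}\varphi(t)^2 - \tfrac{C_{\mathrm{GN}}}{\alpha+2}\varphi(t)^{3\alpha/2} =: g(\varphi(t)),
\end{align*}
for every $t \in [0,T^*)$. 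A direct computation, combining the definition of $C_{\mathrm{GN}}$ with the two Pohozaev identities, shows that $g$ has a unique positive maximum located at $y_Q$, with value $g(y_Q) = E^0(Q)[M(Q)]^{\sigc}$.

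The proof then closes by a continuity/bootstrap argument. Hypothesis \eqref{cond-gwp-1-1} reads $g(\varphi(t)) \leq E(u_0)[M(u_0)]^{\sigc} < g(y_Q)$ for every admissible $t$, while \eqref{cond-gwp-1-2} says $\varphi(0)<y_Q$. Since $t\mapsto\varphi(t)$ is continuous on $[0,T^*)$ by the $H^1_A$ local theory, and since $\varphi(t)=y_Q$ would force the forbidden equality $g(\varphi(t))=g(y_Q)$, it follows that $\varphi(t)<y_Q$ for every $t\in[0,T^*)$. This gives a uniform $H^1_A$-bound on $u(t)$, and the blow-up alternative forces $T^*=\infty$. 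No serious obstacle arises: the delicate point is merely the verification that the Gagliardo--Nirenberg constant and threshold computed for the \emph{non-magnetic} problem are the correct comparison quantities here, which is exactly what the diamagnetic inequality secures.
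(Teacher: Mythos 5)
Your proposal is correct and follows essentially the same route as the paper's proof: the diamagnetic inequality combined with the sharp Gagliardo--Nirenberg inequality, the scale-invariant quantity $\|(\nabla+iA)u(t)\|_{L^2}\|u(t)\|_{L^2}^{\sigc}$ compared against $\|\nabla Q\|_{L^2}\|Q\|_{L^2}^{\sigc}$ via the function $G(\lambda)=\tfrac12\lambda^2-\tfrac{C_{\opt}}{\alpha+2}\lambda^{3\alpha/2}$, the Pohozaev identities to identify $G$ at the threshold with $E^0(Q)[M(Q)]^{\sigc}$, and a continuity argument closed by the blow-up alternative. No gaps.
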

	
	\begin{proposition} \label{prop-gwp-2}
		Let $\frac{4}{3}<\alpha<4$. Let $u_0 \in H^1_A(\R^3)$ be such that
		\begin{align} 
		E(u_0) [M(u_0)]^{\sigc} &= E^0(Q) [M(Q)]^{\sigc}, \label{cond-gwp-2-1} \\
		\|(\nabla +iA) u_0\|_{L^2} \|u_0\|_{L^2}^{\sigc} &< \|\nabla Q\|_{L^2} \|Q\|^{\sigc}_{L^2}. \label{cond-gwp-2-2}
		\end{align}
		Then the corresponding solution to \eqref{mag-NLS} exists globally in time, i.e., $T^*=\infty$ and satisfies
		\begin{align*}
		\|(\nabla +iA) u(t)\|_{L^2} \|u(t)\|_{L^2}^{\sigc} < \|\nabla Q\|_{L^2} \|Q\|^{\sigc}_{L^2}
		\end{align*}
		for all $t\in [0,\infty)$.
	\end{proposition}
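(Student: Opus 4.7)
The plan is to carry out an endpoint refinement of the continuity/energy-trapping scheme that proves Proposition \ref{prop-gwp-1}. Set $y(t) := \|(\nabla+iA)u(t)\|_{L^2}\|u(t)\|^{\sigc}_{L^2}$, which is continuous on $[0,T^*)$. Combining the sharp non-magnetic Gagliardo-Nirenberg inequality
\[
\|f\|^{\alpha+2}_{L^{\alpha+2}} \leq C_{\mathrm{GN}}\|\nabla f\|^{3\alpha/2}_{L^2}\|f\|^{(4-\alpha)/2}_{L^2},
\]
with optimal constant attained at $Q$, with the pointwise diamagnetic inequality $|\nabla|u|| \leq |(\nabla+iA)u|$ yields the magnetic version
\[
\|u\|^{\alpha+2}_{L^{\alpha+2}} \leq C_{\mathrm{GN}}\|(\nabla+iA)u\|^{3\alpha/2}_{L^2}\|u\|^{(4-\alpha)/2}_{L^2}.
\]
The Pohozaev and Nehari identities for $Q$ then pin down $C_{\mathrm{GN}}$ so that the function
\[
\Phi(y) := \tfrac{1}{2}y^2 - \tfrac{C_{\mathrm{GN}}}{\alpha+2}y^{3\alpha/2}
\]
attains its unique maximum on $[0,\infty)$ at $y_\ast := \|\nabla Q\|_{L^2}\|Q\|^{\sigc}_{L^2}$ with value $\Phi(y_\ast) = E^0(Q)[M(Q)]^{\sigc}$. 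Multiplying the energy by $[M(u(t))]^{\sigc}$ and inserting the magnetic GN inequality yields, via conservation of mass and energy together with the equality hypothesis \eqref{cond-gwp-2-1},
\[
\Phi(y(t)) \leq E(u(t))[M(u(t))]^{\sigc} = E(u_0)[M(u_0)]^{\sigc} = \Phi(y_\ast).
\]

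Unlike in Proposition \ref{prop-gwp-1}, this bound does not by itself exclude $y(t)=y_\ast$. The key new ingredient I would use is that the magnetic Gagliardo-Nirenberg inequality is strict whenever $u\not\equiv 0$. Writing $u = e^{i\phi}|u|$ on $\{|u|>0\}$ and expanding,
\[
|(\nabla+iA)u|^2 - |\nabla|u||^2 = |u|^2\,|\nabla\phi + A|^2,
\]
whose vanishing on a set where $|u|>0$ would force $\nabla\phi = -A$ there, contradicting $\curl A = (0,0,b)\ne 0$. Since mass conservation gives $u(t)\not\equiv 0$ for all $t$ (the case $u_0=0$ is trivial), we obtain the strict inequality $\Phi(y(t)) < \Phi(y_\ast)$, hence $y(t)\ne y_\ast$ for every $t\in[0,T^*)$.

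The conclusion then follows by a standard bootstrap: hypothesis \eqref{cond-gwp-2-2} provides $y(0)<y_\ast$, so continuity of $y(t)$ together with $y(t)\ne y_\ast$ forces $y(t)<y_\ast$ throughout $[0,T^*)$. Mass conservation upgrades this to the uniform kinetic bound $\|(\nabla+iA)u(t)\|_{L^2} < y_\ast/\|u_0\|^{\sigc}_{L^2}$, and the blow-up alternative of \cite{CE} then gives $T^*=\infty$. The main delicate point, and essentially the only substantial departure from the proof of Proposition \ref{prop-gwp-1}, is the strictness of the magnetic GN inequality above, which crucially exploits $B = \curl A \ne 0$.
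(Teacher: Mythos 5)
Your proposal follows essentially the same route as the paper: the paper also runs a continuity argument in the quantity $\|(\nabla+iA)u(t)\|_{L^2}\|u(t)\|_{L^2}^{\sigc}$ and excludes the borderline case by showing that a solution touching the threshold would have to saturate the magnetic Gagliardo--Nirenberg inequality, which is impossible because equality in the diamagnetic inequality would force $A=-\ima(\nabla u/u)$ and hence $\curl A=0$, contradicting \eqref{defi-B}. The only organizational difference is that the paper argues by contradiction at the first touching time $t_1$, using the conservation laws and the equality hypothesis \eqref{cond-gwp-2-1} to check directly that $f=u(t_1)$ attains equality in the magnetic Gagliardo--Nirenberg inequality, whereas you package the same rigidity as a standalone strictness statement valid for every nonzero $u$ and then conclude that $y(t)$ can never equal $y_\ast$.

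One step in your justification of that strictness is too quick. From the vanishing of $\int |u|^2\,|\nabla\phi+A|^2\,dx$ you only obtain $\nabla\phi=-A$ a.e.\ on $\{|u|>0\}$, and for a general nonzero $H^1_A$ function this set need not contain an open set, so ``contradicting $\curl A\neq 0$'' does not follow from this alone. The fix is already contained in your chain of inequalities: equality in the magnetic Gagliardo--Nirenberg inequality forces, in addition, equality in the non-magnetic Gagliardo--Nirenberg inequality applied to $|u|$, so $|u|$ is (up to translation and scaling) a positive multiple of $Q$, and in particular $|u|>0$ everywhere. Then $A=-\ima(\nabla u/u)$ a.e.\ on all of $\R^3$, which is curl-free in the distributional sense, contradicting $B=(0,0,b)\neq 0$. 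This is exactly the information the paper's computation at $t_1$ provides; with that point made explicit, your proof is complete.
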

	
	The following result gives a sharp threshold for global existence versus finite time blow-up in the mass-supercritical case. 
	
	\begin{theorem} \label{theo-dyn-below}
		Let $\frac{4}{3}<\alpha<4$. Let $u_0 \in \Sigma_A(\R^3)$ be such that $E_0(u_0) \geq 0$ and
		\begin{align} \label{cond-ener-below}
		E_0(u_0) [M(u_0)]^{\sigc} < E^0(Q) [M(Q)]^{\sigc},
		\end{align}
		where $E_0$ and $E^0$ are as in \eqref{defi-E0} and \eqref{defi-E^0-sigc} respectively.		
		\begin{itemize} [leftmargin=5mm]
			\item[(1)] If 
			\begin{align} \label{cond-gwp-below}
			\|\nabla u_0\|_{L^2} \|u_0\|_{L^2}^{\sigc} < \|\nabla Q\|_{L^2} \|Q\|^{\sigc}_{L^2},
			\end{align}
			then the corresponding solution to \eqref{mag-NLS} exists globally in time, i.e., $T^*=\infty$, and satisfies
			\begin{align*} 
			\|\nabla u(t)\|_{L^2} \|u(t)\|^{\sigc}_{L^2} < \|\nabla Q\|_{L^2} \|Q\|_{L^2}^{\sigc}
			\end{align*}
			for all $t\in [0,\infty)$.
			\item[(2)] If 
			\begin{align} \label{cond-blow-below}
			\|\nabla u_0\|_{L^2} \|u_0\|_{L^2}^{\sigc} > \|\nabla Q\|_{L^2} \|Q\|^{\sigc}_{L^2},
			\end{align}
			then the corresponding solution to \eqref{mag-NLS} satisfies
			\begin{align*} 
			\|\nabla u(t)\|_{L^2} \|u(t)\|^{\sigc}_{L^2}> \|\nabla Q\|_{L^2} \|Q\|^{\sigc}_{L^2}
			\end{align*}
			for all $t\in [0,T^*)$. Moreover, the solution blows up in finite time, i.e., $T^*<\infty$.
		\end{itemize}
	\end{theorem}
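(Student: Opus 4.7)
\emph{Proof outline.} The strategy is a variational trapping argument in the spirit of Kenig--Merle and Holmer--Roudenko, but driven by the reduced energy $E_0$ in place of the full energy $E$. The key new ingredient, enabled by Lemma~\ref{lem-cons-angu}, is that $E_0$ is conserved along the flow and dominates the non-magnetic energy:
$$E^0(u(t)) = E_0(u(t)) - \tfrac{b^2}{8}\|\rho u(t)\|_{L^2}^2 \leq E_0(u(t)) = E_0(u_0),$$
so that the standard non-magnetic $E^0$-based analysis can be fed with the controlling quantity $E_0(u_0)$.

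Set $X(t):=\|\nabla u(t)\|_{L^2}\|u(t)\|_{L^2}^{\sigc}$ and $X_Q:=\|\nabla Q\|_{L^2}\|Q\|_{L^2}^{\sigc}$. The sharp Gagliardo--Nirenberg inequality (with extremizer $Q$) and the Pohozaev identities for $Q$ yield
$$E^0(f)[M(f)]^{\sigc} \geq g\!\left(\|\nabla f\|_{L^2}\|f\|_{L^2}^{\sigc}\right), \qquad g(y) := \tfrac{1}{2}y^2 - \tfrac{C_{\mathrm{GN}}}{\alpha+2}y^{3\alpha/2},$$
where $g$ is strictly increasing on $[0,X_Q]$, strictly decreasing on $[X_Q,\infty)$, and $g(X_Q)=E^0(Q)[M(Q)]^{\sigc}$. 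Combining this with the previous inequality, mass conservation, and hypothesis~\eqref{cond-ener-below} gives, for every $t\in[0,T^*)$,
$$g(X(t)) \leq E^0(u(t))[M(u_0)]^{\sigc} \leq E_0(u_0)[M(u_0)]^{\sigc} < g(X_Q).$$
Since $X(t)$ is continuous in $t$ and cannot reach $X_Q$ (where $g$ attains its unique maximum), $X(t)$ is trapped on the same side of $X_Q$ as $X(0)$ throughout $[0,T^*)$, giving the propagation of \eqref{cond-gwp-below} in case~(1) and of \eqref{cond-blow-below} in case~(2).

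In case~(1), the trapping $X(t)<X_Q$ combined with mass conservation bounds $\|\nabla u(t)\|_{L^2}$ uniformly; Gagliardo--Nirenberg then bounds $\|u(t)\|_{L^{\alpha+2}}$, and conservation of $E$ together with the identity $\|(\nabla+iA)u\|_{L^2}^2 = 2E(u_0) + \tfrac{2}{\alpha+2}\|u\|_{L^{\alpha+2}}^{\alpha+2}$ yields a uniform $H^1_A$-bound, whence $T^*=\infty$ by the blow-up alternative. In case~(2), apply the virial identity (Lemma~\ref{lem-viri-iden}):
$$\frac{d^2}{dt^2}\int|x|^2|u(t,x)|^2\,dx = 16 E_0(u_0) - \frac{4(3\alpha-4)}{\alpha+2}\|u(t)\|^{\alpha+2}_{L^{\alpha+2}}.$$
Writing $\tfrac{1}{\alpha+2}\|u\|^{\alpha+2}_{L^{\alpha+2}} = \tfrac{1}{2}\|\nabla u\|_{L^2}^2 - E^0(u)$ and using $E^0(u(t))\leq E_0(u_0)$ reduces this to
$$\frac{d^2}{dt^2}\int|x|^2|u|^2\,dx \leq 12\alpha\, E_0(u_0) - 2(3\alpha-4)\|\nabla u(t)\|_{L^2}^2.$$
After multiplication by $[M(u_0)]^{\sigc}$, the trapping $X(t)^2>X_Q^2$ and the strict gap $12\alpha E_0(u_0)[M(u_0)]^{\sigc} < 2(3\alpha-4)X_Q^2$ (equivalent to \eqref{cond-ener-below} via the identity $E^0(Q)[M(Q)]^{\sigc}=\tfrac{3\alpha-4}{6\alpha}X_Q^2$) produce a uniform bound $\frac{d^2}{dt^2}\int|x|^2|u|^2\,dx \leq -c$ with $c=c(u_0,Q)>0$, and the standard concavity argument combined with the nonnegativity of the variance forces $T^*<\infty$.

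The main obstacle is the use of $E_0$ rather than $E$ in the variational trapping: this is legitimate only because of the conservation of angular momentum $R$ from Lemma~\ref{lem-cons-angu}, without which $E^0(u(t))$ could not be controlled by a quantity computed from $u_0$ alone, and the Kenig--Merle-type argument would not close. A secondary technical point is the rigorous justification of the virial computation for merely $\Sigma_A$-data, handled by the standard regularization/truncation argument as in Cazenave's book.
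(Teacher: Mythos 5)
Your proposal is correct and follows essentially the same route as the paper: the sharp Gagliardo--Nirenberg trapping driven by the conserved reduced energy $E_0$ (via the conservation of $R$ in Lemma \ref{lem-cons-angu}), a continuity argument to propagate the strict inequality on $\|\nabla u(t)\|_{L^2}\|u(t)\|_{L^2}^{\sigc}$, the blow-up alternative for part (1), and the virial--concavity argument with the gap coming from \eqref{cond-ener-below} and the identity $E^0(Q)[M(Q)]^{\sigc}=\frac{3\alpha-4}{6\alpha}\left(\|\nabla Q\|_{L^2}\|Q\|_{L^2}^{\sigc}\right)^2$ for part (2). Two harmless deviations: your displayed ``virial identity'' in case (2) omits the nonpositive term $-4b^2\|\rho u(t)\|^2_{L^2}$ and should therefore be stated as an inequality rather than an equality, and in case (1) you close the uniform $H^1_A$-bound through conservation of the full energy $E$, whereas the paper bounds $\|\rho u(t)\|_{L^2}$ from the conservation of $E_0$ and then invokes \eqref{mag-norm} together with the conservation of $R$ --- both arguments are valid.
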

	
	\begin{remark} \label{rem-dyn-below}
		Here we only consider data with $E_0(u_0)\geq 0$ since solutions to \eqref{mag-NLS} with $E_0(u_0)<0$ blow up in finite time according to Proposition \ref{prop-blow-2}. Moreover, as we see from \eqref{est-G-2}, there is no $u_0 \in \Sigma_A(\R^3)$ satisfying \eqref{cond-ener-below} and 
		\[
		\|\nabla u_0\|_{L^2} \|u_0\|^{\sigc}_{L^2} = \|\nabla Q\|_{L^2} \|Q\|_{L^2}^{\sigc}.
		\]
		Hence Theorem \ref{theo-dyn-below} indeed gives a sharp threshold for global existence versus finite time blow-up for \eqref{mag-NLS}. 
	\end{remark}
	
	\begin{remark}
		In the case of no magnetic potential, this type of result was proved by J. Holmer and S. Roudenko \cite{HR}. They also proved that global solutions scatter to the linear ones as time tends to infinity. The later result on the scattering is not expected to hold in the presence of a constant magnetic field since Strichartz estimates associated to the magnetic Schr\"odinger operator are available only for finite times (see e.g., \cite{CE}). 
	\end{remark}
	
	\begin{theorem} \label{theo-dyn-at}
		Let $\frac{4}{3}<\alpha<4$. Let $u_0 \in \Sigma_A(\R^3)$ be such that
		\begin{align} \label{cond-ener-at}
		E_0(u_0) [M(u_0)]^{\sigc} = E^0(Q) [M(Q)]^{\sigc}.
		\end{align}
		\begin{itemize} [leftmargin=5mm]
			\item[(1)] If
			\begin{align} \label{cond-gwp-at}
			\|\nabla u_0\|_{L^2} \|u_0\|^{\sigc}_{L^2} < \|\nabla Q\|_{L^2} \|Q\|^{\sigc}_{L^2},
			\end{align}
			then the corresponding solution to \eqref{mag-NLS} exists globally in time.
			\item[(2)] If 
			\begin{align} \label{cond-blow-at}
			\|\nabla u_0\|_{L^2} \|u_0\|^{\sigc}_{L^2} > \|\nabla Q\|_{L^2} \|Q\|^{\sigc}_{L^2},
			\end{align}
			then the corresponding solution to \eqref{mag-NLS} either blows up in finite time, i.e., $T^*<\infty$, or there exist a time sequence $t_n \rightarrow \infty$ and $(y_n)_{n\geq 1} \subset \R^3$ such that
			\[
			u(t_n, \cdot+y_n) \rightarrow e^{i\theta}  \lambda Q \text{ strongly in } H^1(\R^3)
			\]
			for some $\theta \in \R$ and $\lambda = \frac{\|u_0\|_{L^2}}{\|Q\|_{L^2}}$ as $n\rightarrow \infty$. 
		\end{itemize}
	\end{theorem}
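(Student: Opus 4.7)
The plan is to run a sharp Gagliardo--Nirenberg analysis at the mass-energy threshold and then combine it with the virial identity of Lemma \ref{lem-viri-iden} and the variational rigidity of $Q$. First, I would set $y(t) := \|\nabla u(t)\|_{L^2} \|u(t)\|_{L^2}^{\sigc}$ and $g(y) := \tfrac{1}{2} y^2 - \tfrac{C_{\mathrm{GN}}}{\alpha+2} y^{3\alpha/2}$, where $C_{\mathrm{GN}}$ is the best constant in $\|f\|_{L^{\alpha+2}}^{\alpha+2} \leq C_{\mathrm{GN}} \|\nabla f\|_{L^2}^{3\alpha/2} \|f\|_{L^2}^{(4-\alpha)/2}$, attained precisely at $Q$. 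A standard computation yields $E^0(f) [M(f)]^{\sigc} \geq g(y(f))$ with $g$ strictly unimodal on $[0, \infty)$ and unique maximum at $y^* := \|\nabla Q\|_{L^2} \|Q\|_{L^2}^{\sigc}$ of value $E^0(Q) [M(Q)]^{\sigc}$. Invoking the identity $E_0(f) = E^0(f) + \tfrac{b^2}{8} \|\rho f\|_{L^2}^2$, the conservation of $E_0$ and $M$ (Lemma \ref{lem-cons-angu}), and the threshold hypothesis \eqref{cond-ener-at}, I would then deduce that for every $t \in [0, T^*)$,
\begin{equation*}
g(y(t)) \;\leq\; E^0(u(t)) [M(u_0)]^{\sigc} \;\leq\; E_0(u(t)) [M(u_0)]^{\sigc} \;=\; g(y^*).
\end{equation*}

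For part (1), since $u(t) \in \Sigma(\R^3)$ has positive mass, necessarily $\|\rho u(t)\|_{L^2} > 0$, so the middle inequality is strict and $g(y(t)) < g(y^*)$. As $y^*$ is the unique maximizer of $g$, continuity of $y(\cdot)$ together with \eqref{cond-gwp-at} forces $y(t) < y^*$ throughout $[0, T^*)$. Mass conservation then bounds $\|\nabla u(t)\|_{L^2}$ uniformly, and the norm equivalence $\Sigma_A(\R^3) \equiv \Sigma(\R^3)$ from Remark \ref{rem-equi-norm} upgrades this to a uniform $H^1_A$ bound, hence $T^* = \infty$.

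For part (2), the same continuity argument yields $y(t) > y^*$ on $[0, T^*)$. Suppose $T^* = \infty$ (otherwise we are done). Using Lemma \ref{lem-viri-iden}, I would identify $\tfrac{d^2}{dt^2} \|x u(t)\|_{L^2}^2$ with a Pohozaev-type functional which, in view of the identity for $E_0$ and the strict gap $g(y(t)) < g(y^*)$, is bounded above by $-c\,(y(t) - y^*)$ for some $c > 0$ provided $y(t) - y^* \geq \delta > 0$. If $\inf_{t \geq 0}(y(t) - y^*) > 0$, this would force $\|x u(t)\|_{L^2}^2$ to become negative in finite time, a contradiction; therefore there exists a sequence $t_n \to \infty$ with $y(t_n) \to y^*$.

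Along this subsequence the chain of inequalities forces $E^0(u(t_n)) [M(u_0)]^{\sigc} \to g(y^*) = E^0(Q) [M(Q)]^{\sigc}$, and subtracting from the conserved $E_0(u(t_n)) [M(u_0)]^{\sigc}$ gives $\|\rho u(t_n)\|_{L^2} \to 0$. Thus $(u(t_n))$ is an asymptotically optimizing sequence for the sharp Gagliardo--Nirenberg inequality on $H^1(\R^3)$ at the fixed mass $M(u_0)$; invoking the classical rigidity of GN-optimizing sequences (uniqueness of $Q$ as a positive radial solution of \eqref{defi-Q}, together with the characterization of optimizers modulo phase, scaling, and translation) I would extract a further subsequence and translations $y_n \in \R^3$ so that $u(t_n, \cdot + y_n) \to e^{i\theta} \lambda Q$ strongly in $H^1(\R^3)$ for some $\theta \in \R$, and mass conservation then pins $\lambda = \|u_0\|_{L^2}/\|Q\|_{L^2}$. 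The hard part will be precisely this last compactness step: since Strichartz estimates for the magnetic Schr\"odinger operator are available only on bounded time intervals, no global linear profile decomposition is at hand, and one must rely instead on the static GN rigidity on $H^1(\R^3)$ via $\Sigma_A \equiv \Sigma$, treating the translations $y_n$ directly---the constraint $\|\rho u(t_n)\|_{L^2} \to 0$ in fact forces $y_n$ to be essentially aligned with the $x_3$-axis, consistently with the only translation symmetry available for \eqref{mag-NLS}.
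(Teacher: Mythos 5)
Your strategy is essentially the paper's: the coercivity bound $E_0(f)[M(f)]^{\sigc}\geq G\left(\|\nabla f\|_{L^2}\|f\|^{\sigc}_{L^2}\right)$ with $G$ maximized exactly at $\|\nabla Q\|_{L^2}\|Q\|^{\sigc}_{L^2}$, a continuity argument to trap $y(t)$ strictly on one side of the threshold (your strictness mechanism via $\|\rho u(t)\|_{L^2}>0$ plays the same role as the paper's Observation~\ref{observation}), the virial identity of Lemma~\ref{lem-viri-iden} to rule out $y(t)$ being bounded away from $y^*$ when $T^*=\infty$, and finally compactness of the resulting sequence $u(t_n)$ to identify the limit $e^{i\theta}\lambda Q$. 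Two points, however, need repair or substance.

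First, in part (1) the final step is wrong as written: the blow-up alternative is stated in $H^1_A$, and Remark~\ref{rem-equi-norm} does not upgrade an $H^1$ bound plus mass conservation to an $H^1_A$ bound, because the equivalence $\Sigma_A\equiv\Sigma$ is between norms that both contain the weight $\|xf\|_{L^2}$; by \eqref{mag-norm} one needs, in addition to $\|\nabla u(t)\|_{L^2}$, control of $\|\rho u(t)\|_{L^2}$ (and of the conserved $R(u(t))$). The paper obtains this from conservation of $E_0$ together with the Gagliardo--Nirenberg inequality, exactly as in the proof of Theorem~\ref{theo-dyn-below}(1); your argument must include that step. Second, the convergence $u(t_n,\cdot+y_n)\to e^{i\theta}\lambda Q$ in $H^1(\R^3)$ is precisely where the paper does its work: it runs Lions' concentration--compactness on $(u(t_n))_{n\geq 1}$, excluding vanishing via the persistence of $\|u(t_n)\|_{L^{\alpha+2}}$ and dichotomy via strict Gagliardo--Nirenberg subadditivity, and then identifies the limit through the characterization of optimizers; you invoke ``classical rigidity of GN-optimizing sequences'' without argument, which is the same content but asserted rather than proved and is the heart of part (2). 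Minor points: you should also verify $t_n\to\infty$ (immediate, since a finite accumulation point $t_0$ would give $y(t_0)=y^*$, contradicting the strict inequality already established), and your closing remark that $\|\rho u(t_n)\|_{L^2}\to 0$ forces $y_n$ to align with the $x_3$-axis is incorrect --- since $\|\rho\, Q\|_{L^2}>0$, the decay of $\|\rho u(t_n)\|_{L^2}$ is in tension with strong convergence to a nonzero translated profile regardless of the location of $y_n$ --- though this aside carries no weight in the proof.
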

	
	\begin{remark}
		It was proved in Observation \ref{observation} that there is no data $u_0 \in \Sigma_A(\R^3)$ satisfying \eqref{cond-ener-at} and 
		\[
		\|\nabla u_0\|_{L^2} \|u_0\|^{\sigc}_{L^2} = \|\nabla Q\|_{L^2} \|Q\|^{\sigc}_{L^2}.
		\]
		Thus Theorem \ref{theo-dyn-at} give a description on long time behaviors of solutions to \eqref{mag-NLS} with initial data lying at the mass-energy threshold. 
	\end{remark}
	
	\begin{theorem} \label{theo-dyn-above}
		Let $\frac{4}{3}<\alpha<4$. Let $u_0 \in \Sigma_A(\R^3)$ be such that
		\begin{align} 
		E_0(u_0) [M(u_0)]^{\sigc} &\geq E^0(Q)[M(Q)]^{\sigc}, \label{cond-above-1} \\
		\frac{E_0(u_0)[M(u_0)]^{\sigc}}{E^0(Q) [M(Q)]^{\sigc}} &\left(1-\frac{(F'(u_0))^2}{8 E_0(u_0) F(u_0)} \right) \leq 1, \label{cond-above-2} 
		\end{align}
		and 
		\begin{align}
		\|u_0\|^{\alpha+2}_{L^{\alpha+2}} \|u_0\|^{2\sigc}_{L^2} &> \|Q\|^{\alpha+2}_{L^{\alpha+2}} \|Q\|^{2\sigc}_{L^2}, \label{cond-above-3} \\
		\ima \mathlarger{\int} x \cdot \nabla u_0(x) \overline{u}_0(x)dx  &\leq 0. \label{cond-above-4}
		\end{align}
		Then the corresponding solution to \eqref{mag-NLS} blows up in finite time, i.e., $T^*<\infty$.
	\end{theorem}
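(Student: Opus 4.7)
I would use the virial method, sharpened for the above-threshold regime in the spirit of Duyckaerts-Roudenko. The four hypotheses play complementary roles: \eqref{cond-above-1} places the mass-energy above the ground-state threshold, \eqref{cond-above-3} places the nonlinear potential above its ground-state value, \eqref{cond-above-4} supplies an initially ``infalling'' virial, and \eqref{cond-above-2} quantifies that this infall is strong enough to absorb the excess. Setting $F(t):=\|x u(t)\|^2_{L^2}$, the virial identity (Lemma \ref{lem-viri-iden}) together with the conservation of $E_0$ (Lemma \ref{lem-cons-angu}) yield
\begin{align*}
F'(t) &= 4\,\ima\int x\cdot\nabla u(t)\,\overline{u(t)}\,dx, \\
F''(t) &= 16\,E_0(u_0) - \tfrac{4(3\alpha-4)}{\alpha+2}\,\|u(t)\|^{\alpha+2}_{L^{\alpha+2}},
\end{align*}
so that \eqref{cond-above-4} reads $F'(0)\leq 0$.

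First I would propagate \eqref{cond-above-3}: using mass conservation, the sharp Gagliardo-Nirenberg inequality with $Q$ as extremiser, and the Pohozaev identities (in particular $\|\nabla Q\|^2_{L^2}=\tfrac{3\alpha}{2(\alpha+2)}\|Q\|^{\alpha+2}_{L^{\alpha+2}}$), a continuity/stay-away argument shows that \eqref{cond-above-3} persists on $[0,T^*)$. At a hypothetical first equality time $t_\star$, sharp Gagliardo-Nirenberg forces $\|\nabla u(t_\star)\|_{L^2}\|u(t_\star)\|^{\sigc}_{L^2}\geq\|\nabla Q\|_{L^2}\|Q\|^{\sigc}_{L^2}$, which combined with the $E_0$-identity and \eqref{cond-above-1}--\eqref{cond-above-2} contradicts the virial identity evaluated at $t_\star$. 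Next, Cauchy-Schwarz $(F'(t))^2\leq 16 F(t)\|\nabla u(t)\|^2_{L^2}$ together with the energy bound $\|\nabla u(t)\|^2_{L^2}\leq 2E_0(u_0)+\tfrac{2}{\alpha+2}\|u(t)\|^{\alpha+2}_{L^{\alpha+2}}$ (dropping the nonnegative $\tfrac{b^2}{4}\|\rho u\|^2_{L^2}$ term in $E_0$) yield the second-order inequality
\[
F(t)F''(t) + \tfrac{3\alpha-4}{8}(F'(t))^2 \leq 12\alpha\, E_0(u_0)\, F(t).
\]
Using the propagated lower bound on $\|u(t)\|^{\alpha+2}_{L^{\alpha+2}}$ to control the $12\alpha E_0(u_0) F(t)$ term via \eqref{cond-above-2}, and setting $p:=1-\tfrac{3\alpha-4}{8}\in(0,1)$ (valid since $\tfrac{4}{3}<\alpha<4$), one obtains $(F^p)''(t)\leq 0$; meanwhile \eqref{cond-above-4} gives $(F^p)'(0)\leq 0$. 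Hence $F^p$ is concave with nonpositive initial slope and must reach zero in finite time. The Heisenberg uncertainty $\|xu\|_{L^2}\|\nabla u\|_{L^2}\geq\tfrac{3}{2}\|u\|^2_{L^2}$ then forces $\|\nabla u(t)\|_{L^2}\to\infty$ in finite time, so $T^*<\infty$ by the blow-up alternative.

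The main obstacle is the delicate calibration in the concavity step: the positive excess $E_0(u_0)M(u_0)^{\sigc}-E^0(Q)M(Q)^{\sigc}$ must be exactly absorbed by the virial kinetic term coming from $(F'(0))^2/F(0)$, which is precisely the content of the unusual form of \eqref{cond-above-2}. Hypotheses \eqref{cond-above-1}--\eqref{cond-above-2} close the $F^p$-concavity inequality globally in time, and \eqref{cond-above-3}--\eqref{cond-above-4} supply the initial data needed to exploit it; this is what distinguishes the above-threshold analysis here from the below-threshold dichotomy of Theorem \ref{theo-dyn-below}.
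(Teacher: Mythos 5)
Your outline is in the right Duyckaerts--Roudenko spirit, but the central step does not close. The plain Cauchy--Schwarz $(F')^2\le 16F\|\nabla u\|^2_{L^2}$ is too lossy here. Your inequality $FF''+\tfrac{3\alpha-4}{8}(F')^2\le 12\alpha E_0(u_0)F$ is correct, but note that in its derivation the $\|u(t)\|^{\alpha+2}_{L^{\alpha+2}}$ contributions cancel exactly, so there is no leftover potential-energy term with which to "control the $12\alpha E_0(u_0)F(t)$ term via \eqref{cond-above-2}": the lower bound \eqref{cond-above-3} has already been spent. With $p=1-\tfrac{3\alpha-4}{8}$ one gets $(F^p)''\le 0$ only on the set where $(F'(t))^2\ge \tfrac{48\alpha}{3\alpha-4}\,E_0(u_0)\,F(t)$, whereas \eqref{cond-above-2} only yields $(F'(u_0))^2\ge 8E_0(u_0)F(u_0)\bigl(1-\tfrac{E^0(Q)[M(Q)]^{\sigc}}{E_0(u_0)[M(u_0)]^{\sigc}}\bigr)$, which is strictly weaker since $\tfrac{48\alpha}{3\alpha-4}>8$ for every $\tfrac43<\alpha<4$; and even if it held at $t=0$ nothing in your argument propagates it. The paper avoids this loss through the refined Cauchy--Schwarz inequality \eqref{cauchy-schwarz}, obtained by expanding $\|\nabla(e^{i\lambda|x|^2}u)\|^2_{L^2}$ and inserting the sharp Gagliardo--Nirenberg bound; this produces $(z'(t))^2\le 4K\bigl(F''(u(t))+4b^2\|\rho u(t)\|^2_{L^2}\bigr)$ with $z=\sqrt{F}$, and it is precisely for that function $K$ (with minimum $K(\lambda_0)=\lambda_0/8$) that the hypotheses \eqref{cond-above-1}--\eqref{cond-above-4} translate into $\lambda_0\ge 0$, $(z'(0))^2\ge 4K(\lambda_0)$, $z'(0)\le 0$, and $F''(u_0)+4b^2\|\rho u_0\|^2_{L^2}<\lambda_0$, after which a bootstrap gives $z''(t)<0$ for all $t$ and hence $T^*<\infty$.

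Your preliminary step, the propagation of \eqref{cond-above-3} by a stand-alone continuity argument, is also unjustified. At a first equality time $t_\star$, sharp Gagliardo--Nirenberg indeed forces $\|\nabla u(t_\star)\|_{L^2}\|u(t_\star)\|^{\sigc}_{L^2}\ge \|\nabla Q\|_{L^2}\|Q\|^{\sigc}_{L^2}$, but this contradicts nothing: \eqref{cond-above-2} is a condition on $F'(u_0)$ and $F(u_0)$ at time zero, and the virial identity at $t_\star$ merely expresses $F''(u(t_\star))$; without quantitative control of $F'(t)$ on $[0,t_\star]$ there is no contradiction to be had. In the paper's proof the persistence of (the equivalent of) \eqref{cond-above-3}, namely $F''(u(t))+4b^2\|\rho u(t)\|^2_{L^2}<\lambda_0$, is established simultaneously with the lower bound $(z'(t))^2>4K(\lambda_0)$ inside a single continuity/bootstrap argument, and the two cannot be decoupled as you propose. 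A further minor gap: concavity of $F^p$ together with $(F^p)'(0)\le 0$ does not by itself force $F^p$ to vanish in finite time when the initial slope is zero; one needs strict negativity of the second derivative (or of the slope at a later time), which in the paper again comes out of the bootstrap via $z'(t)\le z'(1)<z'(0)\le 0$.
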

	
	\begin{remark}
		In Theorems \ref{theo-dyn-below}, \ref{theo-dyn-at}, and \eqref{theo-dyn-above}, we show the existence of finite time blow-up solutions to \eqref{mag-NLS} having $E_0(u_0) \geq 0$. Hence our results do not fall into the framework of the blow-up result proven recently by T. F. Kieffer and M. Loss (see Proposition \ref{prop-blow-2}).
	\end{remark}

	\subsection{Normalized standing waves}
	Next we are interested in the existence and stability of prescribed mass standing waves for \eqref{mag-NLS}. By standing waves, we mean solutions to \eqref{mag-NLS} of the form $u(t,x)=e^{i\omega t} \phi(x)$, where $\omega\in \R$ and $\phi$ is a solution to 
	\begin{align} \label{ell-equ}
	-(\nabla +iA)^2 \phi + \omega \phi - |\phi|^\alpha \phi =0.
	\end{align}
	The existence of standing waves for \eqref{mag-NLS} can be obtained by minimizing the energy functional $E(f)$ over the mass-constraint 
	\[
	S(c):= \left\{f \in H^1_A(\R^3) \ : \ M(f) =c\right\}
	\]
	with $c>0$. More precisely, we consider the minimization problem
	\[
	I(c):= \inf \left\{ E(f) \ : \ f \in S(c)\right\}.
	\]
	
	In the mass-subcritical case, the existence of minimizers for $I(c)$ was proved by M. J. Esteban and P.-L. Lions \cite{EL}. Moreover, the orbital stability of standing waves was showed by T. Cazenave and M. J. Esteban \cite{CE}. 
	
	\begin{proposition}[\cite{EL,CE}] \label{prop-exis-stab-mass-sub}
		Let $0<\alpha<\frac{4}{3}$. Then for any $c>0$, there exists a minimizer for $I(c)$. Moreover, the set
		\[
		\Mcal(c):= \left\{ \phi \in S(c) \ : \ E(\phi) = I(c)\right\}
		\]
		is orbitally stable under the flow of \eqref{mag-NLS} in the sense that for any $\vareps>0$, there exists $\delta>0$ such that for any initial data $u_0 \in H^1_A(\R^3)$ satisfying
		\[
		\inf_{\phi \in \Mcal(c)} \|u_0 - \phi\|_{H^1_A} \leq \delta,
		\]
		then the corresponding solution to \eqref{mag-NLS} exists globally in time and satisfies
		\[
		\inf_{\phi \in \Mcal(c)} \inf_{y\in \R^3} \|e^{iA(y) \cdot \cdotb} u(t,\cdotb+y) - \phi\|_{H^1_A} \leq \vareps, \quad \forall t\geq 0. 
		\]
	\end{proposition}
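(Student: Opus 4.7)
The plan is two-fold: first, establish existence of minimizers for $I(c)$ by P.-L. Lions's concentration-compactness principle adapted to the \emph{magnetic translation} symmetry $T_y f(x) := e^{iA(y)\cdot x} f(x+y)$; second, deduce orbital stability by the classical compactness-to-stability contradiction. Since $A$ is linear in $x$, one checks by direct computation that $(\nabla+iA)T_y f = T_y(\nabla+iA)f$, so $T_y$ is an isometry of $H^1_A(\R^3)$ that preserves the mass $M$, the $L^{\alpha+2}$-norm, and hence the energy $E$. This gauge-corrected translation replaces ordinary translations throughout the argument.

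\medskip

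For existence, the Gagliardo-Nirenberg inequality
\[
\|f\|_{L^{\alpha+2}}^{\alpha+2} \leq C \|\nabla|f|\|_{L^2}^{3\alpha/2}\||f|\|_{L^2}^{(4-\alpha)/2}
\]
combined with the diamagnetic inequality $|\nabla|f|| \leq |(\nabla+iA)f|$ gives, on $S(c)$, the coercive lower bound
\[
E(f) \geq \tfrac{1}{2}\|(\nabla+iA)f\|_{L^2}^2 - C c^{(4-\alpha)/4}\|(\nabla+iA)f\|_{L^2}^{3\alpha/2}.
\]
Since $3\alpha/2 < 2$ in the mass-subcritical regime, this shows that $I(c)$ is finite, every minimizing sequence $(f_n)\subset S(c)$ is bounded in $H^1_A$, and \eqref{mag-NLS} is globally well-posed in $H^1_A$, which is needed for the stability statement. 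The required strict subadditivity $I(\theta c) < \theta I(c)$ for $\theta>1$ (equivalently $I(c) < I(c_1) + I(c-c_1)$ for $0<c_1<c$) follows from the scaling identity
\[
E(\sqrt{\theta} f) = \theta E(f) - \tfrac{\theta^{(\alpha+2)/2}-\theta}{\alpha+2}\|f\|_{L^{\alpha+2}}^{\alpha+2}
\]
together with the fact that $\liminf \|f_n\|_{L^{\alpha+2}}^{\alpha+2} > 0$ along a minimizing sequence (any almost-ground-state of $-(\nabla+iA)^2$ on $S(c)$ shows $I(c)$ lies strictly below the bottom of the spectrum $\tfrac{|b|c}{2}$, ruling out the vanishing behaviour $\|f_n\|_{L^{\alpha+2}}\to 0$). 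Applying the concentration-compactness lemma to $|f_n|^2$ and using strict subadditivity to exclude vanishing and dichotomy, the surviving compactness alternative yields $(y_n)\subset\R^3$ such that $g_n := T_{y_n} f_n$ is $L^2$-tight; along a subsequence $g_n \to \phi$ in $L^2$ and $g_n \rightharpoonup \phi$ in $H^1_A$ with $\phi \in S(c)$, and weak lower semicontinuity of $\|(\nabla+iA)\cdot\|_{L^2}$ together with strong $L^{\alpha+2}$-convergence give $E(\phi) \leq \liminf E(g_n) = I(c)$, so $\phi \in \Mcal(c)$ and the convergence is strong in $H^1_A$.

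\medskip

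For orbital stability, I argue by contradiction: if $\Mcal(c)$ is not stable, there exist $\vareps_0 > 0$, data $u_{0,n}\in H^1_A(\R^3)$ with $\inf_{\phi\in\Mcal(c)}\|u_{0,n}-\phi\|_{H^1_A}\to 0$, and times $t_n\geq 0$ such that the corresponding (global) solutions $u_n$ satisfy
\[
\inf_{\phi \in \Mcal(c)}\inf_{y \in \R^3}\|e^{iA(y)\cdot x} u_n(t_n,\cdot+y) - \phi\|_{H^1_A} > \vareps_0 \quad \text{for all } n.
\]
Continuity of $M$ and $E$ on $H^1_A$ and the conservation laws give $M(u_n(t_n))\to c$ and $E(u_n(t_n))\to I(c)$; a trivial $L^2$-rescaling of $u_n(t_n)$ then produces a genuine minimizing sequence for $I(c)$, to which the compactness established above applies. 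Along a subsequence this rescaled sequence converges in $H^1_A$, up to magnetic translation, to some $\phi\in\Mcal(c)$, contradicting the displayed lower bound. The main difficulty I anticipate is the systematic handling of the magnetic translations: every translation in the concentration-compactness alternatives and in the stability contradiction must be replaced by the gauge-corrected $T_y$, and all estimates must be carried out in $H^1_A$ rather than $H^1$. The identity $(\nabla+iA)T_y = T_y(\nabla+iA)$ is what makes the classical Lions scheme transfer to the magnetic setting.
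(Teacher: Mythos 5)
Your argument is correct in outline, but it takes a genuinely different route from the one this paper advocates. The paper itself only quotes this proposition from \cite{EL,CE}; its own machinery (developed in Section \ref{S4} for the mass-critical and mass-supercritical analogues, and applicable verbatim for $0<\alpha<\frac{4}{3}$) deliberately \emph{avoids} the concentration-compactness principle: after the non-vanishing bound it invokes Lemma \ref{lem-weak-conv} (a cube-decomposition argument combined with the magnetic translations $f\mapsto e^{iA(y)\cdot x}f(\cdot+y)$) to extract a nonzero weak limit, and then forces the limit to carry the full mass $c$ by the Br\'ezis--Lieb splitting of Lemma \ref{lem-refi-fatou} together with the elementary scaling inequality $E(f)>\frac{\|f\|_{L^2}^2}{c}I(c)$ whenever $0<\|f\|_{L^2}^2<c$; no dichotomy analysis or strict subadditivity is needed, and in particular no localization of the magnetic kinetic energy. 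You instead run the full Lions scheme (vanishing/dichotomy/compactness) with the gauge-corrected translations $T_y$, deriving $I(\theta c)<\theta I(c)$ from the scaling identity plus non-vanishing and then strict subadditivity; this works, but it is the longer road, and the dichotomy step silently requires the standard cutoff splitting of $\|(\nabla+iA)\cdot\|_{L^2}^2$, which you should at least record (it goes through because $(\nabla+iA)(\chi f)=\chi(\nabla+iA)f+(\nabla\chi)f$). The crucial ingredient the two approaches share is exactly the point the paper identifies as missing in \cite{EL}: since $I(c)$ need not be negative here, vanishing must be excluded by showing $I(c)<\frac{|b|c}{2}$, which is the content of Lemma \ref{lem-lowe-boun}; your one-line appeal to ``almost ground states of $-(\nabla+iA)^2$'' should be expanded into the explicit product test function $g(x_\perp)h_\lambda(x_3)$, with $g$ the Gaussian lowest Landau state, where the kinetic excess is $O(\lambda^2)$ while the nonlinear gain is of order $\lambda^{\alpha/2}$, so smallness of $\lambda$ wins precisely because $\alpha<4$. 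With that computation supplied, your proof is complete; the paper's route buys brevity and robustness (no subadditivity, no splitting lemmas), while yours stays closer to the classical Esteban--Lions framework with the gap repaired.
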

	
	In \cite{EL}, the existence of minimizers for $I(c)$ was claimed without proof and the proof was referred to \cite{Lions} for a similar argument using the concentration-compactness principle. However, an important point seems to be missing in order to preclude the vanishing scenario. In fact, if the vanishing occurs, then it is well-known (see \cite{Lions}) that the minimizing sequence $f_n \rightarrow 0$ strongly in $L^r(\R^3)$ for all $2<r<6$. Thus the mass-constraint, namely $M(f_n)=c>0$ for all $n\geq 1$, is not enough to rule out the vanishing. In the case of non magnetic potential, i.e., $A=0$, the vanishing can be precluded by using the fact that $I(c)<0$ for all $c>0$, which can be proved easily using a scaling argument. However, due to the appearance of the magnetic potential, this scaling argument does not work to show the negativity of $I(c)$. Indeed, it may happen that $I(c)$ is non-negative. 
	
	In this paper, we present an alternative simple method that avoids the concentration-compactness argument. Our main contributions in this direction are the following existence and stability in the mass-critical and mass-supercritical cases.
	
	\begin{theorem} \label{theo-exis-stab-mass-cri}
		Let $\alpha=\frac{4}{3}$. Then for any $0<c<M(Q)$, where $Q$ is the unique positive radial solution to \eqref{defi-Q}, there exists a minimizer for $I(c)$. Moreover, the set of minimizers for $I(c)$ is orbitally stable in the sense of Proposition \ref{prop-exis-stab-mass-sub}.
	\end{theorem}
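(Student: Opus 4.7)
My plan is a three-part strategy: first locate $I(c)$ strictly within $(0,|b|c/2)$; second use this gap to extract a convergent subsequence from any minimizing sequence; third derive orbital stability from the Cazenave--Lions framework. For the location of $I(c)$, combine the diamagnetic inequality with the sharp mass-critical Gagliardo--Nirenberg inequality to deduce, on $S(c)$,
\[
E(f)\ \geq\ \tfrac12\bigl(1-(c/M(Q))^{2/3}\bigr)\|(\nabla+iA)f\|_{L^2}^2,
\]
which yields $I(c)\geq 0$ and coercivity of any minimizing sequence in $H^1_A(\R^3)$. The Landau spectral gap $\|(\nabla+iA)f\|_{L^2}^2\geq|b|\|f\|_{L^2}^2$ sharpens this to $I(c)\geq\tfrac{|b|c}{2}(1-(c/M(Q))^{2/3})>0$. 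For the strict upper bound $I(c)<\tfrac{|b|c}{2}$, I would insert into $E$ the product trial function $f_\mu(x)=\phi_0(x_1,x_2)c^{1/2}\mu^{1/2}g(\mu x_3)$, where $\phi_0$ is a lowest Landau level state of unit $L^2$-norm and $g$ is a fixed smooth bump: a direct computation gives $E(f_\mu)=\tfrac{|b|c}{2}+O(\mu^2)-C_0c^{5/3}\mu^{2/3}$ with $C_0>0$, so $E(f_\mu)<\tfrac{|b|c}{2}$ for small $\mu$.

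The gap then rules out vanishing of any minimizing sequence $\{f_n\}\subset S(c)$: were $\|f_n\|_{L^{10/3}}\to 0$, one would have $\liminf E(f_n)\geq\tfrac{|b|c}{2}>I(c)$. The diamagnetic inequality makes $\{|f_n|\}$ bounded in $H^1(\R^3)$, so the standard P.-L.~Lions non-vanishing lemma furnishes shifts $y_n\in\R^3$ and $g\not\equiv 0$ with $|f_n|(\cdot-y_n)\rightharpoonup g$ in $H^1(\R^3)$. Replacing $f_n$ by its magnetic translate $\tilde f_n(x)=e^{iA(y_n)\cdot x}f_n(x-y_n)$, which preserves $E$ and $M$, a further subsequence satisfies $\tilde f_n\rightharpoonup \tilde f$ in $H^1_A(\R^3)$ with $|\tilde f|=g\neq 0$. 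To upgrade to $\tilde f\in S(c)$, apply Brezis--Lieb to $E$ and $M$ and combine with a strict sub-scaling inequality $I(\lambda c')<\lambda I(c')$ whenever $1<\lambda$ and $\lambda c'\leq c$; this is proved by inserting $\sqrt\lambda f_n^{(c')}$, for $\{f_n^{(c')}\}$ a minimizing sequence of $I(c')$, into $I(\lambda c')$ to produce a gain $(\lambda^{5/3}-\lambda)\tfrac{3}{10}\|f_n^{(c')}\|^{10/3}_{L^{10/3}}$, the $L^{10/3}$-norm being bounded below because vanishing at level $c'$ is similarly ruled out by $I(c')<|b|c'/2$. Writing $c_1:=\|\tilde f\|_{L^2}^2$ and $c_2:=c-c_1\geq 0$, the Brezis--Lieb splitting yields $I(c)\geq I(c_1)+I(c_2)$, while the sub-scaling inequality applied to the pairs $(c_1,c/c_1)$ and $(c_2,c/c_2)$ gives $I(c_1)+I(c_2)>I(c)$ unless $c_2=0$. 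Hence $\tilde f\in S(c)$, and weak lower-semi-continuity of $E$ forces $E(\tilde f)=I(c)$, so $\tilde f\in\Mcal(c)$.

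For orbital stability, I would use the standard Cazenave--Lions argument: if $\Mcal(c)$ were unstable, there would exist $u_0^n$ with $\dist_{H^1_A}(u_0^n,\Mcal(c))\to 0$ and times $t_n$ at which the flow $u^n(t_n)$ remains $\vareps$-away from $\Mcal(c)$ modulo magnetic translations. Conservation of mass and energy forces $M(u^n(t_n))\to c$ and $E(u^n(t_n))\to I(c)$, so the mild rescaling $v_n:=\sqrt{c/M(u^n(t_n))}\,u^n(t_n)\in S(c)$ is a minimizing sequence for $I(c)$; by the compactness just established, $v_n$ converges to $\Mcal(c)$ (modulo magnetic translations) in $H^1_A$, contradicting the $\vareps$-separation. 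The main obstacle in the plan is the strict sub-scaling inequality in the second paragraph: in the mass-subcritical setting it is immediate from pure scaling because the kinetic and nonlinear terms scale differently, whereas at mass-criticality they scale identically and pure scaling degenerates. The magnetic energy floor $I(c')<|b|c'/2$ from the first paragraph is precisely what keeps the $L^{10/3}$-mass of minimizing sequences bounded away from zero and thereby restores the strict inequality --- in this sense the magnetic field supplies the compactness mechanism that the mass-critical non-magnetic problem famously lacks.
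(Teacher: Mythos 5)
Your proposal is correct and follows essentially the same route as the paper: it rules out vanishing through the strict gap $I(c)<\tfrac{|b|c}{2}$ obtained from a lowest-Landau-level trial function against the bound \eqref{L2-bound} (this is exactly Lemma \ref{lem-lowe-boun}), extracts a nonzero weak limit after magnetic translations as in Lemma \ref{lem-weak-conv}, recovers the full mass $\|\tilde f\|_{L^2}^2=c$ from the mass-critical rescaling made strict by the $L^{10/3}$ lower bound, and concludes stability by the Cazenave--Lions contradiction argument with global existence guaranteed by $c<M(Q)$. The only difference is cosmetic: you package the full-mass step as a strict sub-scaling inequality $I(\lambda c')<\lambda I(c')$ applied to $c_1$ and $c_2$, whereas the paper runs the same scaling identity in-line on $f$ and on $\tilde f_n-f$ (and note that your appeal to ``weak lower semicontinuity of $E$'' should be read as: once the mass is full, strong $L^2$ and hence $L^{10/3}$ convergence hold, which is exactly how the paper closes the argument).
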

	
	\begin{remark}
		The main difficulty in showing the existence of minimizers for $I(c)$ is the lack of compactness. To overcome it, the authors in \cite{EL} made use of a variant of the celebrated concentration-compactness principle adapted to the magnetic Sobolev space $H^1_A(\R^3)$. However, due to the non-negativity of $I(c)$, it is not clear from \cite{EL} how to exclude the vanishing possibility. Here we rule out the vanishing scenario by showing that every minimizing sequence for $I(c)$ has $L^{\alpha+2}$-norm bounded away from zero (see Lemma \ref{lem-lowe-boun}). This is done by using an $L^2$-bound of the magnetic-Sobolev norm (see \eqref{L2-bound}) and a suitable scaling argument. We refer to Section \ref{S4} for more details. 
	\end{remark}

	We also have the following non-existence results.
	
	\begin{proposition} \label{prop-non-exis} 
		\begin{itemize}[leftmargin=5mm]
			\item[(1)] Let $\alpha=\frac{4}{3}$. If $c\geq M(Q)$, where $Q$ is the unique positive radial solution to \eqref{defi-Q}, then there is no minimizer for $I(c)$. 
			\item[(2)] Let $\frac{4}{3}<\alpha<4$. Then for any $c>0$, there is no minimizer for $I(c)$.
		\end{itemize}
	\end{proposition}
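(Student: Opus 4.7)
The plan is to combine an explicit scaling argument (providing upper bounds on $I(c)$) with the diamagnetic inequality (providing the matching lower bound in the borderline case). Central to all computations is the family $Q_\lambda(x):=\lambda^{3/2}Q(\lambda x)$, which satisfies $\|Q_\lambda\|_{L^2}=\|Q\|_{L^2}$, $\|\nabla Q_\lambda\|_{L^2}^2=\lambda^2\|\nabla Q\|_{L^2}^2$, $\|\rho Q_\lambda\|_{L^2}^2=\lambda^{-2}\|\rho Q\|_{L^2}^2$, $\|Q_\lambda\|_{L^{\alpha+2}}^{\alpha+2}=\lambda^{3\alpha/2}\|Q\|_{L^{\alpha+2}}^{\alpha+2}$, and $R(Q_\lambda)=0$ since $Q$ is radial. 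Using \eqref{mag-norm} these yield a closed form for $E(\mu Q_\lambda)$ for any constant $\mu>0$.

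For part (1) at the critical mass $c=M(Q)$, the Pohozaev identity for \eqref{defi-Q}, equivalent to saturation of the sharp mass-critical Gagliardo--Nirenberg inequality at $Q$, reads $\|Q\|_{L^{10/3}}^{10/3}=\tfrac{5}{3}\|\nabla Q\|_{L^2}^2$. The two $\lambda^2$ terms in $E(Q_\lambda)$ then cancel exactly, leaving $E(Q_\lambda)=\tfrac{b^2}{8}\lambda^{-2}\|\rho Q\|_{L^2}^2\to 0$ as $\lambda\to\infty$, so $I(M(Q))\leq 0$. For the matching lower bound, the diamagnetic inequality $|\nabla|f||\leq|(\nabla+iA)f|$ combined with the sharp Gagliardo--Nirenberg inequality gives $E(f)\geq E^0(|f|)\geq 0$ for every $f\in S(M(Q))$. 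Hence $I(M(Q))=0$. If this infimum were attained by some $f$, equality would have to hold throughout: in Gagliardo--Nirenberg, which forces $|f|$ to be a translate-rescale of $Q$ (in particular $|f|>0$ everywhere), and in the diamagnetic inequality, which forces $f=|f|e^{i\theta}$ with $\nabla\theta+A=0$ a.e.\ on $\{|f|>0\}$. The latter entails $\curl A=0$ in the sense of distributions, contradicting $\curl A=(0,0,b)$ with $b\neq 0$; hence no minimizer exists.

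For part (1) at $c>M(Q)$, setting $\mu:=\sqrt{c/M(Q)}>1$ the $\lambda^2$ coefficient of $E(\mu Q_\lambda)$ becomes $\tfrac{\mu^2}{2}(1-\mu^{4/3})\|\nabla Q\|_{L^2}^2<0$, so $E(\mu Q_\lambda)\to-\infty$ as $\lambda\to\infty$ and $I(c)=-\infty$. For part (2), with any $c>0$ and $\alpha\in(4/3,4)$, fix any $f\in H^1_A(\R^3)$ with $M(f)=c$ and consider $f_\lambda(x):=\lambda^{3/2}f(\lambda x)\in S(c)$: the nonlinear term contributes $-\tfrac{1}{\alpha+2}\lambda^{3\alpha/2}\|f\|_{L^{\alpha+2}}^{\alpha+2}$, which dominates the $O(\lambda^2)$ kinetic and $O(\lambda^{-2})$ confinement contributions since $3\alpha/2>2$, giving $E(f_\lambda)\to-\infty$ and $I(c)=-\infty$. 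The only real subtlety is the rigidity step at the critical mass, where both governing inequalities must be saturated simultaneously; one converts this dichotomy into the topological obstruction $\curl A\neq 0$ to rule out a minimizer.
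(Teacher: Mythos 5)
Your proof is correct and follows essentially the same route as the paper: a scaling family built from $Q$ to show $I(c)=-\infty$ for $c>M(Q)$ and in the mass-supercritical case, $I(M(Q))=0$ via the diamagnetic plus Gagliardo--Nirenberg lower bound, and non-attainment at $c=M(Q)$ by the rigidity argument that a minimizer would saturate both inequalities and force $\curl A=0$, which is exactly the contradiction the paper imports from its proof of Proposition \ref{prop-gwp-2}. The only cosmetic differences are that you scale $Q$ directly through the identity \eqref{mag-norm} (using $R(Q_\lambda)=0$ by radial symmetry) instead of the paper's compactly supported truncation of $Q$ with $O(\lambda^{-\infty})$ error tracking, and in part (2) you should take the test function in $\Sigma_A(\R^3)$ (e.g.\ in $C^\infty_0(\R^3)$, as the paper does) so that $\|\rho f\|_{L^2}$ and the scale-invariant, hence harmless, term $bR(f)$ are finite.
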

	
	We are next interested in finding normalized solutions to \eqref{ell-equ} in the mass-supercritical case. By Proposition \ref{prop-non-exis}, we are not able to find minimizers for the energy functional under the mass-constraint $S(c)$. Inspired by a recent work of J. Bellazzini, N. Boussa\"id, L. Jeanjean, and N. Visciglia \cite{BBJV}, we consider the minimizing problem
	\[
	I^m(c):= \inf \left\{ E(f) \ : \ f \in S(c) \cap D(m)\right\},
	\] 
	where 
	\[
	D(m):= \left\{ f \in H^1_A(\R^3) \ : \ \|(\nabla+iA)f\|^2_{L^2} \leq m\right\}. 
	\]
	
	\begin{theorem} \label{theo-exis-stab-mass-sup}
		Let $\frac{4}{3}<\alpha<4$. Then for any $m>0$, there exists $c_0=c_0(m)>0$ sufficiently small such that:
		\begin{itemize}[leftmargin=5mm]
			\item[(1)] There exists a minimizer for $I^m(c)$ for all $0<c<c_0$. Moreover, the set of minimizers for $I^m(c)$ defined by
			\[
			\Mcal^m(c):= \left\{ \phi \in S(c) \cap D(m) \ : \ E(\phi) = I^m(c)\right\}
			\]
			satisfies
			\[
			\emptyset \ne \Mcal^m(c) \subset D(m/2).
			\]
			In particular, $\phi$ is a solution to \eqref{ell-equ} with $\omega$ the corresponding Lagrange multiplier. In addition, we have
			\begin{align} \label{est-omega}
			-|b|<\omega \leq -|b|\left(1- K c^{\frac{4-\alpha}{4}} m^{\frac{3\alpha-4}{4}} \right)
			\end{align}
			for some constant $K>0$ independent of $c$ and $m$. 
			\item[(2)] The set $\Mcal^m(c)$ with $0<c<c_0$ is orbitally stable under the flow of \eqref{mag-NLS} in the sense of Proposition \ref{prop-exis-stab-mass-sub}.
		\end{itemize}
	\end{theorem}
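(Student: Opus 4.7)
The plan is to minimize $E$ directly on the soft-constrained set $S(c)\cap D(m)$ for small $c$, exploit the fact that the minimum is attained in the relative interior $D(m/2)$ to derive the Euler--Lagrange equation \eqref{ell-equ}, and then deduce orbital stability by the classical compactness--contradiction argument.

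First I would show $I^m(c)\in\R$. The diamagnetic inequality $|\nabla|f||\le|(\nabla+iA)f|$ and the 3D Gagliardo--Nirenberg inequality applied to $|f|$ yield, for every $f\in S(c)\cap D(m)$,
\[
\|f\|_{L^{\alpha+2}}^{\alpha+2} \le C\,\|(\nabla+iA)f\|_{L^2}^{3\alpha/2}\|f\|_{L^2}^{(4-\alpha)/2} \le C m^{3\alpha/4} c^{(4-\alpha)/4},
\]
which together with \eqref{L2-bound} furnishes a uniform lower bound on $E$. For the matching upper bound I would take $\phi(x)=\psi(x_1,x_2)\,g_\mu(x_3)$, where $\psi$ is an $L^2$-normalized lowest Landau eigenfunction of the 2D magnetic operator (of eigenvalue $|b|$) and $g_\mu(x_3):=\sqrt{c\mu}\,g(\mu x_3)$ with $g\in\Sh(\R)$, $\|g\|_{L^2}=1$. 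Then $\|\phi\|_{L^2}^2=c$, $\|(\nabla+iA)\phi\|_{L^2}^2=|b|c+c\mu^2\|g'\|_{L^2}^2$, and optimizing in $\mu$ produces $I^m(c)\le \tfrac{|b|c}{2}-\kappa\,c^{(\alpha+4)/(4-\alpha)}$ for some $\kappa>0$, with $\|(\nabla+iA)\phi\|_{L^2}^2\le m/2$ provided $c$ is small enough.

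The crucial interiority step is the inequality, valid for $f\in S(c)\cap D(m)$ with $\|(\nabla+iA)f\|_{L^2}^2\in(m/2,m]$,
\[
E(f)\ge \tfrac{m}{4}-\tfrac{C}{\alpha+2}\,m^{3\alpha/4}c^{(4-\alpha)/4},
\]
which for $c$ small strictly exceeds the test-function energy, so every minimizing sequence lies (up to a tail) in $S(c)\cap D(m/2)$. Taking such a minimizing sequence $(f_n)$, bounded in $H^1_A$, with weak limit $f$, and mirroring the mass-critical strategy discussed in the excerpt, a comparison of the energy bounds with \eqref{L2-bound} forces $\|f_n\|_{L^{\alpha+2}}^{\alpha+2}\ge\delta>0$ uniformly. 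A covering argument combined with the magnetic translation $\tilde f_n(x):=e^{-iA(y_n)\cdot x}f_n(x-y_n)$, a unitary preserving $E$, $M$ and $\|(\nabla+iA)\cdot\|_{L^2}$, produces a translated minimizing sequence whose $L^{\alpha+2}$-mass concentrates in a fixed ball; the compact embedding $H^1_A(B_R)\hookrightarrow L^{\alpha+2}(B_R)$ then yields a nonzero weak limit. A Brezis--Lieb decomposition of $E$ and $M$ together with the strict monotonicity of $c\mapsto I^m(c)$ (a consequence of the sharp test-function bound) rules out loss of mass at infinity, so $\tilde f_n\to\phi$ strongly in $H^1_A$, with $\phi\in\Mcal^m(c)\subset D(m/2)$.

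Since $\Mcal^m(c)\subset D(m/2)$ is strictly interior with respect to the constraint $\|(\nabla+iA)f\|^2\le m$, only the mass constraint is active, and the Lagrange multiplier rule gives \eqref{ell-equ} with some $\omega\in\R$. Pairing with $\phi$ produces $\omega c=\|\phi\|_{L^{\alpha+2}}^{\alpha+2}-\|(\nabla+iA)\phi\|_{L^2}^2$, and the bounds $\|(\nabla+iA)\phi\|_{L^2}^2\ge|b|c$ together with $\|\phi\|_{L^{\alpha+2}}^{\alpha+2}\le C m^{3\alpha/4}c^{(4-\alpha)/4}$ immediately yield \eqref{est-omega}, the strict inequality $\omega>-|b|$ coming from $\|\phi\|_{L^{\alpha+2}}>0$. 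For part (2), I would apply the standard Cazenave--Lions contradiction argument: a hypothetical instability produces initial data $u_0^n\to\Mcal^m(c)$ in $H^1_A$ and times $t_n$ with $u^n(t_n)$ bounded away from $\Mcal^m(c)$ modulo magnetic translations; conservation of mass and energy plus continuity of $c\mapsto I^m(c)$ turn $(u^n(t_n))$ into a minimizing sequence for $I^m(c)$, and the compactness step just established produces a contradiction. The main obstacle is precisely this compactness step: the constant magnetic field breaks ordinary translation invariance in $x_1,x_2$, so Lions' concentration--compactness does not apply verbatim, and one must substitute the $L^{\alpha+2}$-lower bound from interiority, magnetic translations, and the Brezis--Lieb splitting, the delicate technical point being to verify the strict monotonicity of $I^m(c)$ quantitatively enough to preclude dichotomy.
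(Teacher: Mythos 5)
Your overall architecture mirrors the paper's: lower bound via diamagnetic plus Gagliardo--Nirenberg, a product test function built on the lowest Landau level to get $I^m(c)<\tfrac{|b|c}{2}$, interiority of minimizing sequences in $D(m/2)$, non-vanishing of the $L^{\alpha+2}$ norm, magnetic translations plus local compactness to extract a nonzero weak limit, a Lagrange multiplier from the inactive gradient constraint, and a Cazenave--Lions contradiction for stability. The genuine gap is the compactness step. You propose to exclude loss of mass by ``strict monotonicity of $c\mapsto I^m(c)$, a consequence of the sharp test-function bound.'' This is neither proved nor sufficient: your own bounds show $\tfrac{|b|c}{2}\bigl(1-o(1)\bigr)\le I^m(c)\le\tfrac{|b|c}{2}$, so $I^m$ is essentially linear and increasing, and monotonicity in the only direction that could help (strictly decreasing) is false; moreover, even granting monotonicity, the Br\'ezis--Lieb splitting only yields $I^m(c)\ge E(f)+\lim_n E(\tilde f_n-f)$ with $E(f)\ge I^m(\beta)$, $\beta=\|f\|_{L^2}^2$, and to reach a contradiction one needs a \emph{strict} subadditivity-type inequality, which the crude two-sided bounds (corrections of order $c^{(4-\alpha)/4}$ below versus $c^{(\alpha+4)/(4-\alpha)}$ above) cannot deliver. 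The paper gets the required strictness differently: it transplants the mass-critical argument, writing $E(g)=\lambda^{-2}E(\lambda g)+\frac{\lambda^{\alpha}-1}{\alpha+2}\|g\|_{L^{\alpha+2}}^{\alpha+2}$ for the nonzero weak limit and for the remainders $\tilde f_n-f$, so that the positive $\lambda^{\alpha}-1$ term produces the strict inequality that, combined with Lemma \ref{lem-refi-fatou}, forces $\|f\|_{L^2}^2=c$. Without this (or an equivalent strict-subadditivity mechanism), your dichotomy exclusion does not go through.

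Two further points need repair, though they are fixable with the paper's ingredients. First, your derivation of \eqref{est-omega} is incorrect as stated: bounding $\|\phi\|_{L^{\alpha+2}}^{\alpha+2}\le C m^{3\alpha/4}c^{(4-\alpha)/4}$ and $\|(\nabla+iA)\phi\|_{L^2}^2\ge |b|c$ separately in $\omega c=\|\phi\|_{L^{\alpha+2}}^{\alpha+2}-\|(\nabla+iA)\phi\|_{L^2}^2$ only gives $\omega\le-|b|+Cm^{3\alpha/4}c^{-\alpha/4}$, which is vacuous as $c\to0$; one must instead factor $\|(\nabla+iA)\phi\|_{L^2}^2$ out of the nonlinear term, as the paper does, to obtain $\omega\le-|b|\bigl(1-Kc^{\frac{4-\alpha}{4}}m^{\frac{3\alpha-4}{4}}\bigr)$, and the strict bound $\omega>-|b|$ follows from $E(\phi)=I^m(c)<\tfrac{|b|c}{2}$ (which you do have), not merely from $\|\phi\|_{L^{\alpha+2}}>0$. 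Second, in the stability argument you must justify that (a subsequence of) $u_n(t_n)$ lies in $D(m)$ so that it is an admissible minimizing sequence for $I^m(c)$; the paper does this via an intermediate-time argument at the level set $\|(\nabla+iA)u_n(t_n^*)\|_{L^2}^2=m$, contradicting $\Mcal^m(c)\subset D(m/2)$, and it also supplies the global existence of the perturbed solutions (Lemma \ref{lem-gwp-super}), which the stability statement implicitly requires and which your proposal omits.
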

	
	\begin{remark}
		The proof of Theorem \ref{theo-exis-stab-mass-sup} is inspired by an idea of \cite{BBJV}. However, comparing to \cite{BBJV}, there are two main different points:
		\begin{itemize}[leftmargin=5mm]
			\item[(1)] In \cite{BBJV}, the existence of minimizers for $I^m(c)$ relies on the following inequality 
			\[
			\inf \left\{ E(f) \ : \ f \in S(c) \cap D(mc/2) \right\} < \inf \left\{ E(f) \ : \ f \in S(c) \cap \left( D(m) \backslash D(mc)\right)\right\}.
			\]
			Here the notation has been modified according to our definitions. If we use the above inequality, then for $f \in S(c) \cap D(mc/2)$, it follows from \eqref{L2-bound} that
			\[
			c=M(f) \leq \frac{1}{|b|} \|(\nabla+iA) f\|^2_{L^2} \leq \frac{mc}{2|b|}
			\]
			which yields $m \geq 2|b|$. Thus the argument of \cite{BBJV} does not apply to all $m>0$. Here our proof relies instead on the following inequality:
			\begin{align} \label{est-inf-intro}
			\inf \left\{E(f) \ : \ f \in S(c) \cap D(m/4)\right\} &< \inf \left\{ E(f) \ : \ f \in S(c) \cap \left( D(m) \backslash D(m/2)\right) \right\}.
			\end{align}
			We prove in Lemma \ref{lem-c0} that for any $m>0$, there exists $c_0=c_0(m)>0$ sufficiently small such that for all $0<c<c_0$, both 
			\[
			\left\{ f \ : \ f \in S(c) \cap D(m/4) \right\}, \quad \left\{ f \ : \ f \in  S(c) \cap \left( D(m) \backslash D(m/2)\right) \right\}
			\]
			are not empty and \eqref{est-inf-intro} holds. 
			\item[(2)] The orbital stability of normalized standing waves implicitly requires the solution exists globally in time. This global existence result was not showed in \cite{BBJV}. In Lemma \ref{lem-gwp-super}, we show a global existence result that supports the orbital stability given in Theorem \ref{theo-exis-stab-mass-sup}. The proof of this result is based on a standard continuity argument. 
		\end{itemize}
	\end{remark}
	
	Our next result shows that for a fixed constant $m>0$ and $c>0$ sufficiently small, minimizers of $I^m(c)$ are indeed normalized ground states related to \eqref{ell-equ}.
	 
	\begin{proposition} \label{prop-norm-grou-stat}
		Let $\frac{4}{3}<\alpha<4$. Let $m>0$ be a fixed constant, $c>0$ sufficiently small, and $\phi \in \Mcal^m(c)$. Then $\phi$ is a normalized ground state related to \eqref{ell-equ}, i.e., 
		\[
		\left.E'\right|_{S(c)}(\phi) =0, \quad  E(\phi) = \inf \left\{E(f) \ : \ f \in S(c), \left.E'\right|_{S(c)} (f)=0\right\}.
		\]
	\end{proposition}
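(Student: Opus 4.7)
The plan is to split the argument into two parts: first verify that every $\phi \in \Mcal^m(c)$ is a critical point of $E$ restricted to $S(c)$, then show that its energy $E(\phi) = I^m(c)$ is no larger than the energy of any other critical point of $E|_{S(c)}$.

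For the first part, the decisive input from Theorem \ref{theo-exis-stab-mass-sup} is that $\Mcal^m(c) \subset D(m/2)$, so $\phi$ sits strictly inside $D(m)$. Hence the constraint $\|(\nabla+iA)f\|_{L^2}^2 \leq m$ is inactive at $\phi$, its associated Lagrange multiplier vanishes, and $\phi$ satisfies the Euler--Lagrange equation \eqref{ell-equ} with only the mass multiplier $\omega$. This is precisely the statement $\left.E'\right|_{S(c)}(\phi)=0$.

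For the second part, the plan is to show that, for $c>0$ sufficiently small (depending only on $m$), every critical point $\psi \in S(c)$ of $E|_{S(c)}$ automatically belongs to $D(m)$. Once this is established, $\psi$ is admissible in the problem defining $I^m(c)$, so
\[
E(\psi) \geq I^m(c) = E(\phi),
\]
which is the ground-state inequality. To obtain the a priori bound $\|(\nabla+iA)\psi\|_{L^2}^2 \leq m$, I would combine three ingredients. First, the Nehari identity $\|(\nabla+iA)\psi\|_{L^2}^2 + \omega_\psi c = \|\psi\|_{L^{\alpha+2}}^{\alpha+2}$, obtained by pairing \eqref{ell-equ} with $\overline{\psi}$. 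Second, the Pohozaev-type identity
\[
\|\nabla\psi\|_{L^2}^2 - \tfrac{b^2}{4}\|\rho\psi\|_{L^2}^2 = \tfrac{3\alpha}{2(\alpha+2)}\|\psi\|_{L^{\alpha+2}}^{\alpha+2},
\]
obtained by differentiating $\lambda \mapsto E(\lambda^{3/2}\psi(\lambda\cdot))$ at $\lambda=1$, which vanishes since this scaling preserves $S(c)$ and $\psi$ is a constrained critical point. Third, the diamagnetic Gagliardo--Nirenberg bound $\|\psi\|_{L^{\alpha+2}}^{\alpha+2} \leq C\|(\nabla+iA)\psi\|_{L^2}^{3\alpha/2}\,c^{(4-\alpha)/4}$, together with the Landau-type lower bound $\|(\nabla+iA)\psi\|_{L^2}^2 \geq |b|c$ coming from the $L^2$-inequality mentioned in Theorem \ref{theo-exis-stab-mass-sup}. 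Because $3\alpha/4>1$ when $\alpha>4/3$, plugging Pohozaev into \eqref{mag-norm} and invoking Gagliardo--Nirenberg closes into a self-consistent inequality for $X:=\|(\nabla+iA)\psi\|_{L^2}^2$ whose admissible set splits into a ``low'' branch bounded by a constant depending only on $m$ and a ``high'' branch diverging as $c\to 0$; for $c$ small the two branches are separated by $m$, and a continuity/connectedness argument anchored at $\phi \in D(m/2)$ forces $\psi$ to lie in the low branch.

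The main obstacle I expect is closing the estimate on the harmonic moment $\|\rho\psi\|_{L^2}^2$, which enters the Pohozaev identity but is not a priori controlled by $X$ and $c$ alone. A natural route is to complement the isotropic Pohozaev identity above with the anisotropic scaling $\psi(x_1,x_2,x_3)\mapsto \mu\psi(\mu x_1,\mu x_2,x_3)$, which also preserves the $L^2$ norm and yields the second identity $\|\nabla_H\psi\|_{L^2}^2 - \tfrac{b^2}{4}\|\rho\psi\|_{L^2}^2 = \tfrac{\alpha}{\alpha+2}\|\psi\|_{L^{\alpha+2}}^{\alpha+2}$, together with the elementary bound $|bR(\psi)|\leq \|\nabla\psi\|_{L^2}^2 + \tfrac{b^2}{4}\|\rho\psi\|_{L^2}^2$ coming from $\|(\nabla\pm iA)\psi\|_{L^2}^2 \geq 0$. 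These additional relations couple $\|\rho\psi\|_{L^2}^2$ to the remaining quantities and should close the a priori inequality for $X$, completing the argument.
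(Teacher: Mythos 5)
Your first part is fine and coincides with the paper: since $\Mcal^m(c)\subset D(m/2)$, the constraint $\|(\nabla+iA)f\|_{L^2}^2\leq m$ is inactive at $\phi$, so $\phi$ is a genuine critical point of $E|_{S(c)}$ solving \eqref{ell-equ}.

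The second part has a genuine gap. You aim to prove that \emph{every} critical point $\psi$ of $E|_{S(c)}$ lies in $D(m)$ once $c$ is small, but your identities cannot deliver this, and the claim itself is doubtful: in the mass-supercritical regime one expects solutions of \eqref{ell-equ} with mass $c$ and arbitrarily large magnetic kinetic energy (the high-frequency branch $\omega\to\infty$, for which the mass shrinks while $\|(\nabla+iA)\psi\|_{L^2}$ blows up), and these are legitimate critical points of $E|_{S(c)}$ outside $D(m)$. Concretely, combining the Nehari identity, the Pohozaev relations and the Gagliardo--Nirenberg bound only yields a self-consistent inequality with \emph{two} admissible branches, exactly as you anticipate; but the mechanism you propose to exclude the high branch --- ``a continuity/connectedness argument anchored at $\phi\in D(m/2)$'' --- has no content, because an arbitrary critical point $\psi$ is not joined to $\phi$ by any path of critical points, so nothing forces $\psi$ onto the low branch. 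In addition, your system never closes in the first place: neither the isotropic nor the anisotropic Pohozaev identity controls $\|\rho\psi\|_{L^2}^2$ in terms of $\|(\nabla+iA)\psi\|_{L^2}^2$ and $c$ (both $\|\nabla_\perp\psi\|_{L^2}^2$ and $\tfrac{b^2}{4}\|\rho\psi\|_{L^2}^2$ can be large with bounded difference), and using these identities at all requires $\psi\in\Sigma_A(\R^3)$, i.e.\ the decay of Proposition \ref{prop-expo-deca}, which you do not invoke. The missing idea is that the statement is only needed (and only true) for competitors with \emph{small energy}: the paper argues by contradiction, assuming $E(f)<E(\phi)=I^m(c)$, and it is precisely this energy bound, combined with $I^m(c)<\tfrac{|b|c}{2}$ from \eqref{est-I-mc} and the Lagrange-multiplier estimate \eqref{est-omega}, that is fed through the Nehari identity to show $\|(\nabla+iA)f\|_{L^2}^2\to 0$ as $c\to 0$; hence $f\in S(c)\cap D(m)$ for $c$ small, so $E(f)\geq I^m(c)$, a contradiction. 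Your proposal never uses the comparison $E(\psi)<E(\phi)$, the upper bound on $I^m(c)$, or \eqref{est-omega}, and without these inputs the required a priori bound on $\|(\nabla+iA)\psi\|_{L^2}^2$ cannot be obtained.
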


	\subsection{Ground state standing waves}
	
	We are also interested in the existence and stability of ground state standing waves related to \eqref{mag-NLS}. Recall that a non-zero solution $\phi$ to \eqref{ell-equ} is called a ground state related to \eqref{ell-equ} if it minimizes the action functional
	\[
	S_\omega(f) :=E(f) + \frac{\omega}{2} M(f) = \frac{1}{2} \|(\nabla+iA)f\|^2_{L^2} + \frac{\omega}{2} \|f\|^2_{L^2} - \frac{1}{\alpha+2} \|f\|^{\alpha+2}_{L^{\alpha+2}}
	\]
	over all non-trivial solutions to \eqref{ell-equ}. Note that \eqref{ell-equ} can be written as $S'_\omega(\phi)=0$. Thus we denote the set of non-trivial solutions to \eqref{ell-equ} by
	\[
	\Ac(\omega):= \left\{ f\in H^1_A(\R^3) \ : \ S'_\omega(f)=0 \right\}
	\]
	and the set of ground states related to \eqref{ell-equ} by
	\[
	\Gc(\omega) := \left\{ \phi \in \Ac(\omega) \ : \ S_\omega(\phi) \leq S_\omega(f), \forall f \in \Ac(\omega)\right\}.
	\]
	Our last results concern with the existence of ground states related to \eqref{mag-NLS} and the strong instability of ground state standing waves in the mass-supercritical case. 
	
	\begin{theorem} \label{theo-exis-grou}
		Let $0<\alpha<4$ and $\omega>-|b|$. Then there exists a ground state related to \eqref{ell-equ}. Moreover, the set of ground states $\Gc(\omega)$ is characterized by 
		\[
		\Gc(\omega)= \left\{ \phi \in H^1_A(\R^3) \backslash \{0\} \ : \ S_\omega(\phi) = d(\omega), K_\omega(\phi)=0 \right\},
		\]
		where
		\begin{align} \label{defi-d-omega}
		d(\omega):= \inf\left\{ S_\omega(f) \ : \ f \in H^1_A(\R^3) \backslash \{0\}, K_\omega(f) =0 \right\}
		\end{align}
		with
		\begin{align} \label{defi-K-omega}
		K_\omega(f):= \|(\nabla+iA) f\|^2_{L^2} + \omega \|f\|^2_{L^2} - \|f\|^{\alpha+2}_{L^{\alpha+2}}.
		\end{align}
	\end{theorem}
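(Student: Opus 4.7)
\emph{Proof plan.} I would construct a ground state as a minimizer of $S_\omega$ over the Nehari-type set $\{f\in H^1_A(\R^3)\setminus\{0\}:K_\omega(f)=0\}$. The condition $\omega>-|b|$ enters through the ``Landau-type'' bound $|b|\|f\|_{L^2}^2\le\|(\nabla+iA)f\|_{L^2}^2$ of \eqref{L2-bound}, which ensures that
\[
Q_\omega(f):=\|(\nabla+iA)f\|_{L^2}^2+\omega\|f\|_{L^2}^2
\]
is an equivalent norm on $H^1_A(\R^3)$, i.e.\ $C_\omega\|f\|_{H^1_A}^2\le Q_\omega(f)\le\|f\|_{H^1_A}^2$ for some $C_\omega>0$. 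A key identity valid on all of $H^1_A(\R^3)$ reads
\[
S_\omega(f)=\tfrac{1}{\alpha+2}K_\omega(f)+\tfrac{\alpha}{2(\alpha+2)}Q_\omega(f),
\]
so $S_\omega=\tfrac{\alpha}{2(\alpha+2)}Q_\omega$ on the constraint $\{K_\omega=0\}$.

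I would first show $d(\omega)>0$. On $\{K_\omega=0\}$, $Q_\omega(f)=\|f\|_{L^{\alpha+2}}^{\alpha+2}$. The diamagnetic inequality $\|\nabla|f|\|_{L^2}\le\|(\nabla+iA)f\|_{L^2}$ together with the Gagliardo--Nirenberg inequality in $\R^3$ yields
\[
Q_\omega(f)=\|f\|_{L^{\alpha+2}}^{\alpha+2}\le C\|(\nabla+iA)f\|_{L^2}^{3\alpha/2}\|f\|_{L^2}^{2-\alpha/2}\le C'\|f\|_{H^1_A}^{\alpha+2},
\]
and the coercivity of $Q_\omega$ then forces $\|f\|_{H^1_A}\ge\delta>0$ uniformly on the Nehari set. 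Combined with the key identity, $d(\omega)\ge\tfrac{\alpha C_\omega\delta^2}{2(\alpha+2)}>0$.

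The main obstacle is the existence of a minimizer. Let $(f_n)$ satisfy $K_\omega(f_n)=0$ and $S_\omega(f_n)\to d(\omega)$; then $Q_\omega(f_n)\to\tfrac{2(\alpha+2)}{\alpha}d(\omega)$, so $(f_n)$ is bounded in $H^1_A(\R^3)$ and $\|f_n\|_{L^{\alpha+2}}\ge\eta>0$. To remedy the lack of translation invariance I would exploit the magnetic translations $T_yf(x):=e^{iA(y)\cdot x}f(x+y)$, which commute with $(\nabla+iA)^2$ and leave both $S_\omega$ and $K_\omega$ invariant. A magnetic-Sobolev variant of Lions' concentration lemma then produces $(y_n)\subset\R^3$ such that, along a subsequence, $T_{y_n}f_n\rightharpoonup\phi^*\ne 0$ in $H^1_A$. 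Setting $g_n:=T_{y_n}f_n-\phi^*$, the Hilbert structure of $Q_\omega$ and the Brezis--Lieb lemma give $Q_\omega(T_{y_n}f_n)=Q_\omega(\phi^*)+Q_\omega(g_n)+o(1)$ and the analogue for $\|\cdot\|_{L^{\alpha+2}}^{\alpha+2}$, whence $K_\omega(\phi^*)+K_\omega(g_n)=o(1)$. If $K_\omega(\phi^*)<0$, rescaling $\phi^*$ by some $\lambda\in(0,1)$ with $K_\omega(\lambda\phi^*)=0$ would give
\[
d(\omega)\le S_\omega(\lambda\phi^*)=\tfrac{\alpha\lambda^2}{2(\alpha+2)}Q_\omega(\phi^*)<\tfrac{\alpha}{2(\alpha+2)}Q_\omega(\phi^*)\le d(\omega),
\]
a contradiction; a symmetric rescaling of $g_n$ rules out $K_\omega(\phi^*)>0$. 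Hence $K_\omega(\phi^*)=0$, so $d(\omega)\le S_\omega(\phi^*)=\tfrac{\alpha}{2(\alpha+2)}Q_\omega(\phi^*)\le d(\omega)$, which forces $Q_\omega(\phi^*)=\lim Q_\omega(f_n)$ and hence $Q_\omega(g_n)\to 0$, i.e.\ $g_n\to 0$ in $H^1_A$ and $S_\omega(\phi^*)=d(\omega)$.

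Finally, I identify $\Gc(\omega)$. A Lagrange multiplier yields $\mu\in\R$ with $S'_\omega(\phi^*)=\mu K'_\omega(\phi^*)$; pairing with $\phi^*$ gives $0=K_\omega(\phi^*)=\mu\langle K'_\omega(\phi^*),\phi^*\rangle=-\mu\alpha Q_\omega(\phi^*)$, where I used $\langle K'_\omega(\phi^*),\phi^*\rangle=2Q_\omega(\phi^*)-(\alpha+2)\|\phi^*\|_{L^{\alpha+2}}^{\alpha+2}=-\alpha Q_\omega(\phi^*)<0$. Thus $\mu=0$ and $\phi^*\in\Ac(\omega)$. Since every $\phi\in\Ac(\omega)$ satisfies $K_\omega(\phi)=\langle S'_\omega(\phi),\phi\rangle=0$, one has $\inf_{\Ac(\omega)}S_\omega\ge d(\omega)=S_\omega(\phi^*)$, giving $\phi^*\in\Gc(\omega)$. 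Conversely, any nonzero $\phi$ with $S_\omega(\phi)=d(\omega)$ and $K_\omega(\phi)=0$ is again a constrained minimizer, and the same Lagrange-multiplier argument places it in $\Ac(\omega)$, hence in $\Gc(\omega)$. This yields the stated characterization.
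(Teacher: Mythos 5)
Your argument follows essentially the same route as the paper: minimize $S_\omega$ on the Nehari set $\{K_\omega=0\}$, use \eqref{L2-bound} to get the norm equivalence (the paper's $H_\omega$ is your $Q_\omega$), deduce $d(\omega)>0$ from Gagliardo--Nirenberg plus the diamagnetic inequality, restore compactness with magnetic translations and a Lions-type lemma (the paper's Lemma \ref{lem-weak-conv}), split $H_\omega$ and $K_\omega$ along the weak limit via the Brezis--Lieb-type Lemma \ref{lem-refi-fatou}, rule out $K_\omega(\phi^*)<0$ and $K_\omega(\phi^*)>0$ by the same scaling dichotomy, and kill the Lagrange multiplier by pairing with $\phi^*$, exactly as in the paper.

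There is, however, one genuine (if small) gap at the very end: your ``conversely'' paragraph does not prove the converse. It re-derives the inclusion $\{\,\phi\ne 0:\ S_\omega(\phi)=d(\omega),\ K_\omega(\phi)=0\,\}\subset\Gc(\omega)$, which you had already established for $\phi^*$, whereas the stated characterization also requires $\Gc(\omega)\subset\{\,S_\omega=d(\omega),\ K_\omega=0\,\}$. That missing inclusion is easy but must be said: if $\phi\in\Gc(\omega)$ then $S'_\omega(\phi)=0$, so pairing with $\phi$ gives $K_\omega(\phi)=0$ and hence $S_\omega(\phi)\geq d(\omega)$; on the other hand $\phi^*\in\Ac(\omega)$, so the ground-state property gives $S_\omega(\phi)\leq S_\omega(\phi^*)=d(\omega)$, whence $S_\omega(\phi)=d(\omega)$. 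This is precisely how the paper closes the argument, and with this line added your proof is complete.
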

	
	\begin{remark}
		In \cite[Section 4]{FO}, R. Fukuizumi and M. Ohta proved the existence of ground states related to \eqref{mag-NLS} in a subspace $H^1_{A,0}(\R^3)$ of $H^1_A(\R^3)$, namely
		\begin{align} \label{defi-H-A0}
		H^1_{A,0}(\R^3) = \left\{ f \in H^1(\R^3) \ : \ \rho f \in L^2(\R^3), f=f(\rho,z) \text{ does not depend on } \theta \right\},
		\end{align}
		where $(\rho,\theta, z)$ is the cylindrical coordinates in $\R^3$, i.e., $x_1=\rho \cos \theta, x_2=\rho \sin \theta$, and $x_3=z$. Our result extends the one in \cite[Section 4]{FO} to the whole energy space $H^1_A(\R^3)$.
	\end{remark}
	
	\begin{theorem} \label{theo-insta-grou}
		Let $\frac{4}{3}<\alpha<4$, $\omega>-|b|$, and $\phi \in \Gc(\omega)$. If $\left. \partial^2_\lambda S_\omega(\phi^\lambda)\right|_{\lambda=1} \leq 0$, where 
		\begin{align} \label{defi-phi-lambda}
		\phi^\lambda(x)=\lambda^{\frac{3}{2}} \phi(\lambda x),
		\end{align} 
		then the ground state standing wave $e^{i\omega t} \phi(x)$ is strongly unstable by blow-up in the sense that for any $\vareps>0$, there exists $u_0 \in \Sigma_A(\R^3)$ such that $\|u_0-\phi\|_{\Sigma_A} <\vareps$ and the corresponding solution to \eqref{mag-NLS} with initial data $\left.u\right|_{t=0} =u_0$ blows up in finite time.
	\end{theorem}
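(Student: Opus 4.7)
My plan is to implement a Berestycki--Cazenave / Ohta potential-well strategy in the magnetic setting. As initial data I take the $L^2$-preserving dilation $u_0 := \phi^\lambda$ for $\lambda > 1$ sufficiently close to $1$; since $\omega > -|b|$ forces exponential decay of $\phi$, one has $\phi \in \Sigma_A(\R^3)$ and $u_0 \to \phi$ in $\Sigma_A(\R^3)$ as $\lambda \to 1^+$. Setting $J(\lambda) := S_\omega(\phi^\lambda)$ and using \eqref{mag-norm} together with the invariance $R(\phi^\lambda) = R(\phi)$ (which follows from the commutation of $L_z$ with the radial dilation $x\mapsto \lambda x$), a direct computation gives
\[
J(\lambda) = \tfrac{\lambda^2}{2}\|\nabla\phi\|_{L^2}^2 + \tfrac{b}{2}R(\phi) + \tfrac{b^2}{8\lambda^2}\|\rho\phi\|_{L^2}^2 + \tfrac{\omega}{2}\|\phi\|_{L^2}^2 - \tfrac{\lambda^{3\alpha/2}}{\alpha+2}\|\phi\|_{L^{\alpha+2}}^{\alpha+2}.
\]
Introducing the Pohozaev functional
\[
P(f) := \|\nabla f\|_{L^2}^2 - \tfrac{b^2}{4}\|\rho f\|_{L^2}^2 - \tfrac{3\alpha}{2(\alpha+2)}\|f\|_{L^{\alpha+2}}^{\alpha+2},
\]
one verifies $\lambda J'(\lambda) = P(\phi^\lambda)$, so $J'(1) = P(\phi) = 0$ follows from $S_\omega'(\phi) = 0$. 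Combining the hypothesis $J''(1) \leq 0$ with the explicit formula
\[
J'''(1) = -3b^2\|\rho\phi\|_{L^2}^2 - \tfrac{3\alpha(3\alpha-2)(3\alpha-4)}{8(\alpha+2)}\|\phi\|_{L^{\alpha+2}}^{\alpha+2},
\]
which is strictly negative thanks to $\alpha > 4/3$, a Taylor expansion at $\lambda=1$ shows that for $\lambda>1$ close to $1$, $S_\omega(u_0) < S_\omega(\phi) = d(\omega)$ and $P(u_0) < 0$.

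I then consider the potential well $\Kc^-_\omega := \{f \in \Sigma_A(\R^3)\setminus\{0\} : S_\omega(f) < d(\omega),\ P(f) < 0\}$ and show it is invariant under the flow with a uniform bound $P(u(t)) \leq -\delta$ on $[0, T^*)$ for some $\delta = \delta(u_0) > 0$. Conservation of mass and energy preserves $S_\omega(u(t)) < d(\omega)$. For the $P$ condition, the key ingredient is the Pohozaev-type variational characterization
\[
d(\omega) = \inf\{S_\omega(g) : g \in H^1_A(\R^3)\setminus\{0\},\ P(g) = 0\},
\]
which combined with the continuity of $t \mapsto P(u(t))$ forbids $P$ from vanishing on $[0, T^*)$ (else $S_\omega(u(t)) \geq d(\omega)$); a standard quantitative refinement on the sub-level set $\{S_\omega \leq d(\omega) - \eta\}$ with $\eta := d(\omega) - S_\omega(u_0) > 0$ yields the uniform estimate. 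The proof concludes by applying the virial identity of Lemma \ref{lem-viri-iden} to $V(t) := \|xu(t)\|_{L^2}^2$: after rearranging using \eqref{mag-norm} and conservation of $E_0$, it reduces to $V''(t) = 8P(u(t)) \leq -8\delta$ on $[0, T^*)$. Two integrations give $V(t) \leq V(0) + V'(0)t - 4\delta t^2$, which becomes negative in finite time and contradicts $V \geq 0$. Hence $T^* < \infty$.

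The main technical obstacle is the Pohozaev characterization of $d(\omega)$. In the non-magnetic case it follows from an elementary scaling argument: $J_f(\lambda) := S_\omega(f^\lambda)$ admits a unique maximum on $(0, \infty)$, so $P(f) = 0$ forces this maximum to equal $S_\omega(f)$, and a direct comparison with the $K_\omega$-based characterization from Theorem \ref{theo-exis-grou} yields the equality. In the magnetic case the term $\tfrac{b^2}{8\lambda^2}\|\rho f\|_{L^2}^2$ forces $J_f(\lambda) \to +\infty$ also as $\lambda\to 0^+$, so $J_f$ admits both a local minimum and a local maximum; a careful analysis of the auxiliary function $h(\lambda) := \lambda^3 J_f'(\lambda)$, which has a unique critical point on $(0, \infty)$ by virtue of $\alpha > 4/3$, isolates the relevant local maximum and, combined with a rescaling argument producing for each $g$ with $P(g) = 0$ some $\lambda_0$ satisfying $K_\omega(g^{\lambda_0}) = 0$ and $S_\omega(g^{\lambda_0}) \leq S_\omega(g)$, yields the chain $d(\omega) \leq S_\omega(g^{\lambda_0}) \leq S_\omega(g)$.
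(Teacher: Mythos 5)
Your overall architecture (initial data $\phi^\lambda$ with $\lambda>1$ close to $1$, an invariant ``blow-up'' set below the action level $d(\omega)$, and the virial identity $F''=8H$ from Lemma \ref{lem-viri-iden}) is the right one, but the ingredient you yourself flag as the main technical point --- the characterization $d(\omega)=\inf\{S_\omega(g):g\in H^1_A(\R^3)\setminus\{0\},\ P(g)=0\}$, where your $P$ is the paper's functional $H$ in \eqref{defi-H} --- is false in this magnetic setting, and your sketched proof of it cannot be repaired. Indeed, take $\psi\in C^\infty_0(\R^3)$ real and radial, set $\psi_\mu(x)=\mu^{3/2}\psi(\mu x)$ and $g=s\,\psi_\mu$; since $R(g)=0$, \eqref{mag-norm} gives
\[
H(g)=s^2\Big(\mu^2\|\nabla\psi\|^2_{L^2}-\frac{b^2}{4\mu^2}\|\rho\psi\|^2_{L^2}\Big)-\frac{3\alpha}{2(\alpha+2)}\,s^{\alpha+2}\mu^{\frac{3\alpha}{2}}\|\psi\|^{\alpha+2}_{L^{\alpha+2}}.
\]
Choosing $\mu^4=\frac{b^2\|\rho\psi\|^2_{L^2}}{4\|\nabla\psi\|^2_{L^2}}(1+\delta)$ with $\delta>0$ small and then $s=s(\delta)>0$ with $s^\alpha\sim\delta$ so that $H(g)=0$, one has $\mu$ bounded and $s\to0$ as $\delta\to0$, hence $S_\omega(g)=O(s^2)\to0$, whereas $d(\omega)>0$ (Step 1 of Lemma \ref{lem-exis-d-omega}). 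Thus $\inf\{S_\omega(g):g\neq0,\ H(g)=0\}<d(\omega)$, so no chain $d(\omega)\leq S_\omega(g^{\lambda_0})\leq S_\omega(g)$ can exist for such $g$; note also that the natural rescaling onto the Nehari manifold goes the wrong way, since for $K_\omega(g)<0$ the amplitude $s_0$ with $K_\omega(s_0g)=0$ satisfies $S_\omega(s_0g)=\max_{s>0}S_\omega(sg)\geq S_\omega(g)$. The culprit is the term $-\frac{b^2}{4}\|\rho f\|^2_{L^2}$ in $H$: the $L^2$-dilation does not act coherently on the effective potential, which is exactly why the Berestycki--Cazenave characterization is unavailable here.

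As a consequence your invariance argument for $\{f:\ S_\omega(f)<d(\omega),\ P(f)<0\}$ collapses: at a time where $P(u(t))=0$ you cannot conclude $S_\omega(u(t))\geq d(\omega)$, and the ``quantitative refinement'' yielding $P(u(t))\leq-\delta$ has no basis either. The paper circumvents precisely this obstruction by Ohta's device: the invariant set $\Bc(\omega)$ carries the extra constraints $M(f)=M(\phi)$, $R(f)=R(\phi)$ (conserved by Lemma \ref{lem-cons-angu}) and $K_\omega(f)<0$, and the key Lemma \ref{lem-key-est} shows, under the hypothesis $\left.\partial^2_\lambda S_\omega(\phi^\lambda)\right|_{\lambda=1}\leq0$, that on this constrained set $H(f)\leq 2\left(S_\omega(f)-d(\omega)\right)$; this single inequality simultaneously gives the flow invariance and the uniform bound $F''(u(t))\leq16\left(S_\omega(u_0)-d(\omega)\right)<0$. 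Your computations for the initial data ($J'(1)=H(\phi)=0$, the sign of $J'''(1)$, $S_\omega(\phi^\lambda)<d(\omega)$ and $H(\phi^\lambda)<0$ for $\lambda>1$ near $1$) and the final virial/convexity step agree with the paper, but without a substitute for Lemma \ref{lem-key-est} --- and you do use the hypothesis $\left.\partial^2_\lambda S_\omega(\phi^\lambda)\right|_{\lambda=1}\leq0$ only for the initial data, never in the invariance argument where it is actually indispensable --- the proof has a genuine gap.
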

	
	\begin{remark}
		In \cite{FO}, the orbital instability of ground state standing waves for \eqref{mag-NLS} in the subspace $H^1_{A,0}(\R^3)$ (see \eqref{defi-H-A0}) was proven (see also \cite{Ribeiro-insta} for an earlier similar result). Here we extend their results and show the strong instability of ground state standing waves for \eqref{mag-NLS}. 
	\end{remark}
	
	We end the introduction by reporting some recent results related to magnetic nonlinear Schr\"odinger equations. After the pioneering works of M. J. Esteban and P.-L. Lions \cite{EL} and T. Cazenave and M. J. Esteban \cite{CE}, the nonlinear Schr\"odinger equations (NLS) with magnetic potential has attracted much of interest in the last decades. For the time-dependent magnetic NLS with an external potential, we mention the works of L. Fanelli and L. Vega \cite{FV} and P. D'Ancona, L. Fanelli, and L. Vega \cite{DFV} on virial identities and Strichartz estimates; A. Garcia \cite{Garcia} for the existence of finite time blow-up solutions; J. Colliander, M. Czubak, and J. Lee \cite{CCL} for the interaction Morawetz estimate and its application to the global existence theory. For the time-independent magnetic NLS with potential, we refer to the works of G. Arioli and A. Szulkin \cite{AS} and J. Chabrowski and A. Szulkin \cite{CS} for the existence and qualitative properties of ground state solutions; K. Kurata \cite{Kurata}, S. Cingolani \cite{Cingolani}, S. Cingolani and S. Secchi \cite{CS}, S. Cingolani, L. Jeanjean, and S. Secchi \cite{CJS}, and C. O. Alves, G. M. Figueiredo, and M. F. Furtado \cite{AFF} for the existence of semiclassical solutions.

	This paper is organized as follows. In Section \ref{S2}, we recall some basic properties of the magnetic Sobolev space $H^1_A(\R^3)$ and prove some preliminary results which are needed in the sequel. Section \ref{S3} is devoted to long time dynamics such as global existence and finite time blow-up of solutions to \eqref{mag-NLS}. In Section \ref{S4}, we study the existence and orbital stability of normalized standing waves related to \eqref{mag-NLS}. Finally, the existence and strong instability of ground state standing waves will be investigated in Section \ref{S5}. 
	
	\section{Preliminaries}
	\label{S2}
	\setcounter{equation}{0}
	In this section, we recall some basic properties of the magnetic Sobolev space $H^1_A(\R^3)$ and prove some preliminary results which are needed in the sequel.  
	
	\begin{lemma} [\cite{EL}]
		Let $A \in L^2_{\loc}(\R^3, \R^3)$. Then $H^1_A(\R^3)$ equipped with the inner product 
		\[
		\scal{f,g}_{H^1_A}:= \int f \overline{g} dx + \int (\nabla+iA)f \cdot \overline{(\nabla+iA) g} dx 
		\]
		is a Hilbert space.
	\end{lemma}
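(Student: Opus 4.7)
The plan is the standard completeness argument for Sobolev-type spaces, adapted to handle the magnetic potential $A \in L^2_{\loc}$. First I would verify that the given pairing is genuinely an inner product: sesquilinearity is clear from the definition, Hermitian symmetry follows from conjugation inside the integrals, and positive-definiteness follows because $\scal{f,f}_{H^1_A} \geq \|f\|_{L^2}^2$, which vanishes only when $f=0$. The norm it induces is exactly $\|\cdot\|_{H^1_A}$.

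The substantive step is completeness. Given a Cauchy sequence $(f_n) \subset H^1_A(\R^3)$, both $(f_n)$ and the vector field $((\nabla+iA)f_n)$ are Cauchy in $L^2$, so by completeness of $L^2$ there exist $f \in L^2(\R^3)$ and $g \in L^2(\R^3, \C^3)$ with $f_n \to f$ and $(\nabla+iA) f_n \to g$ in $L^2$. It then remains to identify $g$ with $(\nabla + iA) f$ in the distributional sense, after which $f \in H^1_A(\R^3)$ and $\|f_n - f\|_{H^1_A} \to 0$.

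For the identification, I would test against any $\varphi \in C^\infty_c(\R^3, \C^3)$ and write
\[
\int (\nabla+iA) f_n \cdot \overline{\varphi}\, dx = -\int f_n\, \overline{\nabla \cdot \varphi}\, dx + i \int f_n\, \overline{A \cdot \varphi}\, dx.
\]
The left side converges to $\int g \cdot \overline{\varphi}\, dx$ by the $L^2$ convergence $(\nabla+iA)f_n \to g$. On the right side, the first term converges to $-\int f\, \overline{\nabla \cdot \varphi}\, dx$ by $L^2$ convergence of $f_n$. For the second term, the key observation is that $A \in L^2_{\loc}$ combined with $\varphi$ having compact support implies $A \cdot \varphi \in L^2(\R^3, \C^3)$; hence $\int f_n\, \overline{A \cdot \varphi}\, dx \to \int f\, \overline{A \cdot \varphi}\, dx$. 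Passing to the limit shows $g = (\nabla + iA) f$ in $\mathcal{D}'(\R^3)$, completing the proof.

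The only place where the hypothesis $A \in L^2_{\loc}$ is used is to give meaning to the product $f A$ as a distribution and to justify the convergence of the $A$-term against compactly supported test functions; this is the one subtlety worth being careful about, since without local $L^2$ regularity of $A$ the definition of $(\nabla+iA)f$ itself would already be ambiguous. All other steps are routine.
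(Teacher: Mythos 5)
Your proof is correct: the verification that the pairing is an inner product, the reduction of completeness to $L^2$-convergence of $f_n$ and $(\nabla+iA)f_n$, and the distributional identification of the limit $g$ with $(\nabla+iA)f$ by testing against $\varphi\in C^\infty_c$ (using that $A\cdot\varphi\in L^2$ precisely because $A\in L^2_{\loc}$ and $\varphi$ has compact support) is the standard and complete argument. Note that the paper itself gives no proof of this lemma, citing it directly from Esteban--Lions \cite{EL}, so your write-up simply supplies the expected argument, and it does so correctly, including the one genuine subtlety you flag, namely that $A\in L^2_{\loc}$ is what makes $Af\in L^1_{\loc}$ and hence $(\nabla+iA)f$ meaningful as a distribution in the first place.
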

	
	\begin{lemma} [Diamagnetic inequality \cite{LL}]
		Let $A\in L^2_{\loc}(\R^3,\R^3)$ and $f \in H^1_A(\R^3)$. Then $|f| \in H^1(\R^3)$. In particular, we have
		\begin{align} \label{diag-ineq}
		|\nabla |f|(x)| \leq |(\nabla+iA)f(x)| \quad \text{a.e. } x \in \R^3.
		\end{align}
	\end{lemma}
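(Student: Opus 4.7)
The plan is to prove the pointwise bound first on a regularized modulus and then pass to the limit, with the real-valuedness of $A$ supplying the crucial algebraic cancellation. For $\vareps>0$, I will work with the smooth approximation
\[
|f|_\vareps := \sqrt{|f|^2+\vareps^2},
\]
which is everywhere positive and at least as regular as $|f|^2$. Since $f\in H^1_A(\R^3)$ implies in particular that $f\in L^2(\R^3)$ and $(\nabla+iA)f\in L^2(\R^3)$, and since $A\in L^2_{\loc}$, I can first check by a standard density or mollification argument that the distributional derivative $\nabla f$ is a locally integrable function, so that the chain rule can be applied to $|f|_\vareps^2 = f\overline{f}+\vareps^2$.

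The key computation is the gradient identity
\[
\nabla |f|_\vareps \;=\; \frac{\rea(\overline{f}\,\nabla f)}{|f|_\vareps}.
\]
Here I use that $A(x)\in\R^3$ pointwise, so that $\rea(\overline{f}\cdot iA f)=\rea(iA|f|^2)=0$, and consequently
\[
\rea(\overline{f}\,\nabla f) \;=\; \rea\!\bigl(\overline{f}\,(\nabla+iA)f\bigr).
\]
Substituting and using $|\rea z|\le|z|$ together with $|f|\le|f|_\vareps$ gives the pointwise bound
\[
|\nabla|f|_\vareps(x)| \;\le\; \frac{|f(x)|}{|f|_\vareps(x)}\,|(\nabla+iA)f(x)| \;\le\; |(\nabla+iA)f(x)|
\quad\text{a.e.}
\]
This is the diamagnetic inequality at the regularized level, and the right-hand side lies in $L^2(\R^3)$ uniformly in $\vareps$.

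It remains to let $\vareps\downarrow 0$. Since $|f|_\vareps\to|f|$ pointwise and in $L^2$, and since $\{\nabla|f|_\vareps\}$ is bounded in $L^2(\R^3;\R^3)$ by the estimate above, I can extract a weak-$L^2$ limit and identify it, using test functions, with the distributional gradient of $|f|$. The weak lower semicontinuity of the $L^2$ norm together with the pointwise bound then yields
\[
|\nabla|f|(x)|\;\le\;|(\nabla+iA)f(x)| \quad\text{a.e. } x\in\R^3,
\]
and in particular $|f|\in H^1(\R^3)$ since $|f|\in L^2$ and its weak gradient is in $L^2$. The only step that requires a bit of care is the identification of the weak limit of $\nabla|f|_\vareps$ with $\nabla|f|$ in the sense of distributions; this I would do by testing against $\varphi\in C^\infty_c(\R^3)$ and passing to the limit in the trivial identity $\int|f|_\vareps\,\diver\varphi\,dx = -\int\nabla|f|_\vareps\cdot\varphi\,dx$, using dominated convergence on the left and weak convergence on the right.
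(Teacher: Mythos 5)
Your argument is correct and follows essentially the same route as the paper's source for this lemma: the paper states it without proof, citing \cite{LL}, whose proof is exactly this regularization of the modulus by $|f|_\vareps=\sqrt{|f|^2+\vareps^2}$, the identity $\nabla|f|_\vareps=\rea\bigl(\overline{f}\,(\nabla+iA)f\bigr)/|f|_\vareps$ obtained from the realness of $A$, and a passage to the limit $\vareps\downarrow 0$. Two minor streamlinings: the fact that $\nabla f\in L^1_{\loc}$ is immediate from $\nabla f=(\nabla+iA)f-iAf$ with $Af\in L^1_{\loc}$ (no mollification needed), and the limit can be taken by dominated convergence (pointwise convergence with majorant $|(\nabla+iA)f|\in L^2$), which gives the a.e. bound on $\nabla|f|$ a bit more directly than the weak-compactness argument you sketch.
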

	
	\begin{lemma}[\cite{EL}] \label{lem-prop-H1-A}
		Let $A \in L^2_{\loc}(\R^3,\R^3)$. Then the following properties hold:
		\begin{itemize}[leftmargin=5mm]
			\item[(1)] $C^\infty_0(\R^3)$ is dense in $H^1_A(\R^3)$.
			\item[(2)] $H^1_A(\R^3)$ is continuously embedded in $L^r(\R^3)$ for all $2\leq r\leq 6$. 
			\item[(3)] Assume that $A$ is linear, i.e., $A(x+y) =A(x)+A(y)$ for all $x, y \in \R^3$. Let $y \in \R^3$, $f \in H^1_A(\R^3)$, and set
			\[
			\tilde{f}(x):= e^{iA(y)\cdot x} f(x+y), \quad x \in \R^3.
			\] 
			Then $(\nabla+iA) \tilde{f}(x) = e^{iA(y) \cdot x} (\nabla+iA)f(x+y)$. In particular, 
			\[
			\|(\nabla+iA)\tilde{f}\|_{L^2}=\|(\nabla+iA)f\|_{L^2}.
			\]
			\item[(4)] If $A \in L^3_{\loc}(\R^3,\R^3)$, then $H^1_A(\R^3)$ is continuously embedded in $H^1_{\loc}(\R^3)$. In particular, $H^1_A(\R^3)$ is compactly embedded in $L^r_{\loc}(\R^3)$ for all $2\leq r<6$. 
		\end{itemize}
	\end{lemma}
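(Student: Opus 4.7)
I would treat the four statements separately. For (3), the identity is a direct computation: the linearity of $A$ gives $A(x)+A(y)=A(x+y)$, and coupled with $\nabla(e^{iA(y)\cdot x})=iA(y)e^{iA(y)\cdot x}$ this produces
\[
(\nabla+iA)\tilde f(x) = e^{iA(y)\cdot x}\bigl[(\nabla+iA)f\bigr](x+y),
\]
so the $L^2$ identity follows from translation invariance of the Lebesgue measure.

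For (2) I would use the diamagnetic inequality \eqref{diag-ineq} to obtain $|f|\in H^1(\R^3)$ with $\|\nabla|f|\|_{L^2}\le\|(\nabla+iA)f\|_{L^2}$, after which the classical Sobolev embedding $H^1(\R^3)\hookrightarrow L^r(\R^3)$ for $2\le r\le 6$ yields $f\in L^r(\R^3)$ with $\|f\|_{L^r}\lesssim\|f\|_{H^1_A}$. For (1), the plan is a two-step approximation scheme: first truncate using a smooth cutoff $\chi_R$ supported in a ball of radius $2R$ and equal to $1$ on $B_R$, and observe that
\[
(\nabla+iA)(\chi_R f)=\chi_R(\nabla+iA)f+(\nabla\chi_R)f,
\]
so dominated convergence yields $\chi_R f\to f$ in $H^1_A(\R^3)$; then regularise the resulting compactly supported function by a standard mollification, working directly at the level of the magnetic quadratic form in order to circumvent the fact that $\nabla f$ and $Af$ are not individually in $L^2$ when $A$ is merely in $L^2_{\loc}$.

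The main obstacle is (4), where one must leverage the stronger hypothesis $A\in L^3_{\loc}$ to recover local $H^1$ regularity. The plan is that for any compact $K\subset\R^3$, H\"older's inequality together with (2) gives
\[
\|Af\|_{L^2(K)}\le \|A\|_{L^3(K)}\|f\|_{L^6(K)}\le C(K)\|A\|_{L^3(K)}\|f\|_{H^1_A},
\]
and then the identity $\nabla f=(\nabla+iA)f-iAf$ yields $\nabla f\in L^2(K)$ with a uniform estimate $\|f\|_{H^1(K)}\le C(K,A)\|f\|_{H^1_A}$. Hence $H^1_A(\R^3)$ embeds continuously in $H^1_{\loc}(\R^3)$, and the Rellich--Kondrachov theorem then upgrades this to the compact embedding $H^1_A(\R^3)\hookrightarrow L^r_{\loc}(\R^3)$ for all $2\le r<6$. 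The delicate point is precisely the use of the assumption $A\in L^3_{\loc}$: under the weaker hypothesis of (1)--(3), the product $Af$ need not belong to $L^2_{\loc}$ and the factorisation of $\nabla f$ collapses, which is why the stronger local integrability of $A$ is imposed only here.
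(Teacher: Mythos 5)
The paper does not prove this lemma at all; it is quoted from the reference [EL] (the density statement goes back to Kato--Simon and is also in Lieb--Loss), so your argument can only be measured against the standard proofs. Your treatments of (2), (3), and (4) are correct and are exactly the standard ones: diamagnetic inequality plus the Sobolev embedding for (2); the direct computation $\nabla_x\bigl(e^{iA(y)\cdot x}\bigr)=iA(y)e^{iA(y)\cdot x}$ combined with $A(x)+A(y)=A(x+y)$ and translation invariance for (3); and for (4) the H\"older estimate $\|Af\|_{L^2(K)}\leq\|A\|_{L^3(K)}\|f\|_{L^6(K)}$, the splitting $\nabla f=(\nabla+iA)f-iAf$, and Rellich--Kondrachov.

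The gap is in item (1), precisely at the mollification step. After the spatial cutoff you have a compactly supported $f\in H^1_A$, and you propose to conclude by ``standard mollification, working directly at the level of the magnetic quadratic form.'' But writing
\[
(\nabla+iA)(f\ast\rho_\eps)=\bigl((\nabla+iA)f\bigr)\ast\rho_\eps+i\bigl[A(f\ast\rho_\eps)-(Af)\ast\rho_\eps\bigr],
\]
the first term converges in $L^2$, while the commutator is exactly the problem you acknowledge and do not resolve: with $A\in L^2_{\loc}$ and $f\in L^2$ (even $f\in L^6$ via item (2)), Friedrichs-type commutator estimates only give convergence in $L^1_{\loc}$ (respectively $L^{3/2}_{\loc}$), not in $L^2$, so the scheme as described does not produce convergence in the $H^1_A$ norm. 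The standard repair is an intermediate approximation that you omit: first replace $f$ by the value-truncations $f_n:=\min(|f|,n)\,f/|f|$ and show $f_n\to f$ in $H^1_A$; once $f_n$ is bounded and compactly supported, $Af_n\in L^2$, hence $\nabla f_n\in L^2$ and $f_n$ lies in the ordinary $H^1$ with compact support, and then mollification converges both for $\nabla$ (classical) and for the term $A\,(f_n\ast\rho_\eps)\to Af_n$ in $L^2$ by dominated convergence with dominating function $n|A|\mathbf{1}_K$. Without this intermediate step (or some substitute, e.g. a duality/Kato-inequality argument in the spirit of Simon's proof that $C^\infty_0$ is a form core), the two-step plan does not close, so as written item (1) is incomplete; the remaining items stand.
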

	
	\begin{lemma} [\cite{AHS-1}]
		Let $A \in W^{1,\infty}_{\loc}(\R^3, \R^3)$ and $j, k \in \{1, \cdots, 3\}$. Then for any $f \in C^\infty_0(\R^3)$, we have
		\[
		\left|\int(\partial_j A_k - \partial_k A_j) f \overline{f} dx\right| \leq \|(\partial_j + iA_j) f\|^2_{L^2} + \|(\partial_k+iA_k) f\|^2_{L^2}.
		\]
		In particular, if $A$ is as in \eqref{defi-A}, then 
		\begin{align} \label{L2-bound}
		|b|\|f\|^2_{L^2} \leq \|(\nabla +iA) f\|^2_{L^2}.
		\end{align}
	\end{lemma}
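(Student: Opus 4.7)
The natural approach is to recognize the integrand $(\partial_j A_k - \partial_k A_j)$ as the magnetic field component, which arises precisely as the commutator of the covariant derivatives
\[
D_j := \partial_j + iA_j, \qquad D_k := \partial_k + iA_k.
\]
My plan is to show $[D_j, D_k] = i(\partial_j A_k - \partial_k A_j)$ (as a multiplication operator), rewrite the left-hand side of the inequality using this identity, and then bound the resulting expression by Cauchy--Schwarz combined with the elementary inequality $2ab \le a^2 + b^2$.

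\textbf{Step 1 (commutator identity).} For $f \in C^\infty_0(\R^3)$, a direct computation using the product rule gives
\[
D_j D_k f - D_k D_j f = i(\partial_j A_k - \partial_k A_j) f,
\]
since the $\partial_j \partial_k$ terms and the $iA_j \partial_k$, $iA_k \partial_j$ terms cancel, leaving only the derivative of $A$ falling on $f$ twice.

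\textbf{Step 2 (integration by parts).} Since $A$ is real-valued, the operator $D_j$ is skew-symmetric on $C_0^\infty$: integrating by parts one checks
\[
\int (D_j f)\, \overline{g}\, dx = -\int f\, \overline{D_j g}\, dx,
\]
and similarly for $D_k$. Applying this twice,
\[
\int [D_j, D_k] f\, \overline{f}\, dx
= -\int D_k f\, \overline{D_j f}\, dx + \int D_j f\, \overline{D_k f}\, dx
= 2i\, \ima \int D_j f\, \overline{D_k f}\, dx.
\]
Combining with Step 1,
\[
\int (\partial_j A_k - \partial_k A_j)\, |f|^2\, dx \;=\; 2\, \ima \int D_j f\, \overline{D_k f}\, dx.
\]

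\textbf{Step 3 (Cauchy--Schwarz and AM-GM).} Taking absolute values and applying Cauchy--Schwarz followed by $2ab \le a^2 + b^2$ yields
\[
\left|\int (\partial_j A_k - \partial_k A_j)\, |f|^2\, dx\right|
\le 2 \|D_j f\|_{L^2} \|D_k f\|_{L^2}
\le \|(\partial_j + iA_j) f\|^2_{L^2} + \|(\partial_k + iA_k) f\|^2_{L^2},
\]
which is the first claim.

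\textbf{Step 4 (application to the constant magnetic field).} For $A(x) = \tfrac{b}{2}(-x_2, x_1, 0)$ one has $\partial_1 A_2 - \partial_2 A_1 = b$. Applying the inequality with $(j,k) = (1,2)$ gives $|b|\|f\|_{L^2}^2 \le \|(\partial_1 + iA_1) f\|_{L^2}^2 + \|(\partial_2 + iA_2) f\|_{L^2}^2 \le \|(\nabla + iA) f\|_{L^2}^2$, which is \eqref{L2-bound}. There is no genuine obstacle here; the only minor care is in justifying the integration by parts under the hypothesis $A \in W^{1,\infty}_{\loc}$, which is immediate for $f \in C_0^\infty(\R^3)$ since then all quantities are smooth and compactly supported.
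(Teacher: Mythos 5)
Your proof is correct. The paper itself does not prove this lemma; it is quoted with a citation to Avron--Herbst--Simon, and the argument you give (commutator identity $[D_j,D_k]=i(\partial_jA_k-\partial_kA_j)$, skew-symmetry of $D_j$ under integration by parts, then Cauchy--Schwarz and $2ab\le a^2+b^2$, specialized to $\partial_1A_2-\partial_2A_1=b$) is exactly the standard proof from that reference, so there is nothing to compare against within the paper. The only small imprecision is the closing remark that ``all quantities are smooth'': under the hypothesis $A\in W^{1,\infty}_{\loc}$ the potential is merely locally Lipschitz, but since $f\in C^\infty_0(\R^3)$ the product rule and integration by parts are still valid with the a.e.\ defined derivatives $\partial_jA_k\in L^\infty_{\loc}$, so the argument goes through unchanged.
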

	
	\begin{lemma} \label{lem-refi-fatou}
		Let $A \in L^2_{\loc}(\R^3, \R^3)$ and $(f_n)_{n\geq 1}$ be a bounded sequence in $H^1_A(\R^3)$. Assume that $f_n \rightharpoonup f$ weakly in $H^1_A(\R^3)$. Then we have
		\begin{align*}
		\|(\nabla+iA) f_n\|^2_{L^2} &= \|(\nabla+iA)f\|^2_{L^2} + \|(\nabla+iA) (f_n-f)\|^2_{L^2} + o_n(1), \\
		\|f_n\|^r_{L^r} &= \|f\|^r_{L^r} + \|f_n-f\|^r_{L^r} + o_n(1), \quad 2\leq r \leq 6.
		\end{align*}
	\end{lemma}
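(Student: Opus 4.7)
The plan is to prove the two decompositions by separate standard arguments, treating the magnetic gradient identity as a Hilbert-space orthogonality computation and the $L^r$ identity as an application of the classical Brezis--Lieb lemma.

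For the first identity, I would use that the linear map $f \mapsto (\nabla+iA)f$ is continuous from $H^1_A(\R^3)$ to $L^2(\R^3;\C^3)$, so weak convergence $f_n \rightharpoonup f$ in $H^1_A(\R^3)$ implies $(\nabla+iA)f_n \rightharpoonup (\nabla+iA)f$ weakly in $L^2(\R^3;\C^3)$. Expanding
\[
\|(\nabla+iA)(f_n-f)\|_{L^2}^2 = \|(\nabla+iA) f_n\|_{L^2}^2 - 2\rea \int (\nabla+iA) f_n \cdot \overline{(\nabla+iA) f}\, dx + \|(\nabla+iA) f\|_{L^2}^2
\]
and using that the cross term converges to $\|(\nabla+iA) f\|_{L^2}^2$ by the definition of weak convergence in $L^2$, one obtains the desired decomposition with a $o_n(1)$ remainder. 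This is essentially the parallelogram-type identity in the Hilbert space $L^2(\R^3;\C^3)$.

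For the second identity, I would first produce almost-everywhere convergence of a subsequence. The vector potential $A$ given by \eqref{defi-A} is smooth, in particular $A \in L^3_{\loc}(\R^3,\R^3)$, so by Lemma \ref{lem-prop-H1-A}(4) the embedding $H^1_A(\R^3) \hookrightarrow L^2_{\loc}(\R^3)$ is compact. Combined with the weak convergence and boundedness of $(f_n)$ in $H^1_A(\R^3)$, this yields $f_n \to f$ strongly in $L^2_{\loc}(\R^3)$, and hence $f_n \to f$ almost everywhere along a subsequence. Since $H^1_A(\R^3) \hookrightarrow L^r(\R^3)$ continuously for $2 \le r \le 6$ (Lemma \ref{lem-prop-H1-A}(2)), the sequence $(f_n)$ is bounded in $L^r(\R^3)$. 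Applying the classical Brezis--Lieb lemma for each fixed $r \in [2,6]$ gives
\[
\|f_n\|_{L^r}^r - \|f_n - f\|_{L^r}^r \longrightarrow \|f\|_{L^r}^r,
\]
first along the subsequence and then, by the standard subsequence principle (the limit being independent of the subsequence), along the full sequence.

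There is no genuine obstacle here: the first part is pure Hilbert-space algebra, and the second is a direct invocation of Brezis--Lieb once a.e. convergence is obtained from the compact local embedding. The only point requiring care is ensuring that the a.e. extraction procedure does not weaken the conclusion to a subsequence; this is handled by the standard fact that if every subsequence of $(f_n)$ admits a further subsequence along which the identity holds with the same limit $f$, then the identity holds for the entire sequence.
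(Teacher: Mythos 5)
Your proof is correct, and for the first identity it is in fact a little cleaner than the paper's: the paper expands $\|(\nabla+iA)(f+g_n)\|^2_{L^2}$ with $g_n=f_n-f\rightharpoonup 0$ and disposes of the cross term by a density argument, approximating $f$ by $\varphi\in C^\infty_0(\R^3)$ and testing the weak convergence against $\varphi$, whereas you invoke the standard fact that the bounded linear map $f\mapsto(\nabla+iA)f$ from $H^1_A(\R^3)$ into $L^2(\R^3;\C^3)$ is weak--weak continuous and then test directly against the fixed function $(\nabla+iA)f$. The two arguments have the same content; yours simply packages the density step into an abstract functional-analytic fact. For the second identity the paper merely cites the Br\'ezis--Lieb lemma after noting the continuous embedding $H^1_A(\R^3)\hookrightarrow L^r(\R^3)$, leaving the almost-everywhere convergence implicit, while you supply it via the compact local embedding of Lemma \ref{lem-prop-H1-A}(4) together with the subsequence principle; this is more complete than the paper's treatment. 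The one caveat is that Lemma \ref{lem-prop-H1-A}(4) requires $A\in L^3_{\loc}(\R^3,\R^3)$, whereas the lemma is stated for $A\in L^2_{\loc}(\R^3,\R^3)$, so as written your argument proves the $L^r$ identity only in the smaller class (which does cover the paper's potential \eqref{defi-A}, as you note). To recover the stated generality, observe that for $A\in L^2_{\loc}$ one still has $\nabla f_n=(\nabla+iA)f_n-iAf_n$ bounded in $L^1(B)$ for every ball $B$, so $(f_n)$ is bounded in $W^{1,1}(B)$ and Rellich's theorem yields strong $L^1(B)$ convergence, hence almost-everywhere convergence along a subsequence; after this your Br\'ezis--Lieb and subsequence arguments go through unchanged.
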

	
	\begin{proof}
		As $H^1_A(\R^3)$ is continuously embedding in $L^r(\R^3)$ for all $2\leq r\leq 6$. The second identity is a direct consequence of the refined Fatou's lemma due to H. Br\'ezis and  E. H. Lieb \cite{BL}. Let us prove the first identity. Set $g_n:= f_n -f$. We see that $g_n \rightharpoonup 0$ weakly in $H^1_A(\R^3)$. We compute
		\begin{align*}
		\|(\nabla +iA) f_n\|^2_{L^2} &= \|(\nabla +iA) (f+g_n)\|^2_{L^2} \\
		&= \|(\nabla+iA) f\|^2_{L^2} + \|(\nabla +iA) g_n\|^2_{L^2} + 2 \rea \int \overline{(\nabla+iA) f} \cdot (\nabla+iA) g_n dx.
		\end{align*}
		Let $\epsilon>0$. Since $C^\infty_0(\R^3)$ is dense in $H^1_A(\R^3)$, we take $\varphi \in C^\infty_0(\R^3)$ so that $\|(\nabla+iA)(f-\varphi)\|_{L^2} <\epsilon/2C$, where $C:=\sup_{n\geq 1} \|g_n\|_{H^1_A} <\infty$. Since $g_n \rightharpoonup 0$ weakly in $H^1_A(\R^3)$, we see that
		\[
		\left|\int \overline{(\nabla+iA) \varphi} \cdot (\nabla+iA) g_n dx\right| \rightarrow 0 \text{ as } n\rightarrow \infty.
		\]
		Thus there exists $n_0 \in \N$ such that for $n\ge n_0$,
		\begin{align*}
		\Big|\int \overline{(\nabla+iA) f} &\cdot(\nabla+iA) g_n dx\Big|\\ 
		&\leq \left|\int \overline{(\nabla+iA) (f-\varphi)} \cdot (\nabla+iA) g_n dx\right| + 	\left|\int \overline{(\nabla+iA) \varphi} \cdot (\nabla +iA) g_n dx\right| \\
		&\leq \|(\nabla+iA)(f-\varphi)\|_{L^2} \|(\nabla+iA) g_n\|_{L^2} + \epsilon/2 <\epsilon.
		\end{align*}
		The proof is complete.
	\end{proof}

	\begin{lemma} \label{lem-weak-conv}
		Let $A \in L^3_{\loc}(\R^3, \R^3)$ be linear. Let $(f_n)_{n\geq 1}$ be a bounded sequence in $H^1_A(\R^3)$, i.e., $\sup_{n\geq 1} \|f_n\|_{H^1_A} <\infty$. Assume that there exists $\vareps_0>0$ such that
		\begin{align} \label{lowe-bound-fn-1}
		\inf_{n\geq 1} \|f_n\|_{L^r} \geq \vareps_0
		\end{align}
		for some $2<r<6$. Then up to a subsequence, there exist $f \in H^1_A(\R^3)\backslash \{0\}$ and $(y_n)_{n\geq 1} \subset \R^3$ such that
		\[
		e^{iA(y_n) \cdot x} f_n(x+y_n) \rightharpoonup f \text{ weakly in } H^1_A(\R^3).
		\]
	\end{lemma}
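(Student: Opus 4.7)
The plan is to reduce the magnetic non-vanishing question to a classical one by means of the diamagnetic inequality, run the standard Lions non-vanishing lemma for $(|f_n|)$ in $H^1(\R^3)$, and then use the gauge-covariant translation of Lemma \ref{lem-prop-H1-A}(3) to transplant the concentration point back to the magnetic setting, where the $H^1_A$-norm is preserved.

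First I would observe that by the diamagnetic inequality \eqref{diag-ineq}, the sequence $(|f_n|)$ is bounded in $H^1(\R^3)$, with $\|\nabla|f_n|\|_{L^2}\leq\|(\nabla+iA)f_n\|_{L^2}$. Since $\||f_n|\|_{L^r}=\|f_n\|_{L^r}\geq\vareps_0$ for some $r\in(2,6)$ (a subcritical exponent in dimension three), the classical Lions concentration-compactness lemma, in its non-vanishing form, applied to the real-valued sequence $(|f_n|)\subset H^1(\R^3)$ excludes the vanishing alternative. Hence there exist $R>0$, $\eta>0$, and a sequence $(y_n)_{n\geq1}\subset\R^3$ such that, after passing to a subsequence,
$$\int_{B(y_n,R)}|f_n(x)|^2\,dx\geq\eta\quad\text{for all } n\geq 1.$$

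Next I would introduce the magnetically translated sequence
$$\tilde{f}_n(x):=e^{iA(y_n)\cdot x}f_n(x+y_n),\quad x\in\R^3.$$
Thanks to the linearity of $A$, Lemma \ref{lem-prop-H1-A}(3) yields $(\nabla+iA)\tilde{f}_n(x)=e^{iA(y_n)\cdot x}(\nabla+iA)f_n(x+y_n)$, and in particular $\|\tilde{f}_n\|_{H^1_A}=\|f_n\|_{H^1_A}$. Thus $(\tilde{f}_n)$ is bounded in $H^1_A(\R^3)$ and, extracting a further subsequence, $\tilde{f}_n\rightharpoonup f$ weakly in $H^1_A(\R^3)$ for some $f\in H^1_A(\R^3)$.

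Finally I would invoke the compact embedding $H^1_A(\R^3)\hookrightarrow L^2_{\loc}(\R^3)$ of Lemma \ref{lem-prop-H1-A}(4) (which requires the hypothesis $A\in L^3_{\loc}$) to conclude that $\tilde{f}_n\to f$ strongly in $L^2(B(0,R))$. A change of variables gives
$$\int_{B(0,R)}|\tilde{f}_n(x)|^2\,dx=\int_{B(y_n,R)}|f_n(y)|^2\,dy\geq\eta,$$
so passing to the limit yields $\int_{B(0,R)}|f|^2\,dx\geq\eta>0$, whence $f\neq 0$. The only delicate point is that one cannot directly translate $(f_n)$ in $H^1_A(\R^3)$ because ordinary spatial translations do not preserve the magnetic gradient; the gauge phase $e^{iA(y_n)\cdot x}$ is precisely the compensating factor needed, and once it is inserted the remainder of the argument is routine.
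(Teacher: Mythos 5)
Your proposal is correct and follows essentially the same strategy as the paper: pass to $|f_n|$ via the diamagnetic inequality to get non-vanishing in $H^1(\R^3)$, transfer the concentration point using the gauge-covariant translation $e^{iA(y_n)\cdot x}f_n(x+y_n)$ (valid since $A$ is linear, Lemma \ref{lem-prop-H1-A}(3)), and conclude $f\neq 0$ from the compact embedding $H^1_A(\R^3)\hookrightarrow L^2_{\loc}(\R^3)$. The only difference is cosmetic: where you invoke the classical Lions non-vanishing lemma for $(|f_n|)$, the paper reproves that step directly by summing a Sobolev inequality over unit cubes $Q_k=(k,k+1)^3$, which produces lattice translation points instead of ball centers but yields the same conclusion.
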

	
	\begin{proof}
		The proof is based on an argument of \cite[Lemma 3.4]{BBJV}. By interpolation, we infer from \eqref{lowe-bound-fn-1} that
		\begin{align} \label{lowe-bound-fn-2}
		\inf_{n\geq 1} \|f_n\|_{L^{\frac{10}{3}}} \geq \vareps_1 >0.
		\end{align}
		By the Sobolev embedding
		\[
		\|f\|^{\frac{10}{3}}_{L^{\frac{10}{3}}(Q_k)} \leq C \|f\|^{\frac{4}{3}}_{L^2(Q_k)} \|f\|^2_{H^1(Q_k)},
		\]
		where
		\[
		Q_k:= (k, k+1)^3, \quad k \in \Z
		\]
		and taking the sum over $k\in \Z$, we get
		\[
		\|f\|^{\frac{10}{3}}_{L^{\frac{10}{3}}} \leq C\left(\sup_{k\in \Z} \|f\|_{L^2(Q_k)} \right)^{\frac{4}{3}} \|f\|^2_{H^1}.
		\]
		Replacing $f$ by $|f|$ and using the diamagnetic inequality \eqref{diag-ineq}, we have
		\begin{align*}
		\|f\|^{\frac{10}{3}}_{L^{\frac{10}{3}}} &\leq C\left( \sup_{k\in \Z} \|f\|_{L^2(Q_k)} \right)^{\frac{4}{3}} \left( \|\nabla |f|\|^2_{L^2} + \|f\|^2_{L^2} \right) \\
		&\leq C\left( \sup_{k\in \Z} \|f\|_{L^2(Q_k)} \right)^{\frac{4}{3}} \left(\|(\nabla +iA) f\|^2_{L^2} +\|f\|^2_{L^2} \right).
		\end{align*}
		Applying the above inequality to $f_n$ and using \eqref{lowe-bound-fn-2} together with the fact that $\sup_{n\geq 1} \|f_n\|_{H^1_A} <\infty$, there exists $(k_n)_{n\geq 1} \subset \Z$ such that
		\[
		\inf_{n\geq 1} \|f_n\|_{L^2(Q_{k_n})} \geq C
		\]
		for some constant $C>0$. Set $y_n=(-k_n, -k_n, -k_n)$ and
		\[
		\tilde{f}_n(x):= e^{iA(y_n) \cdot x} f_n(x+y_n).
		\]
		By Lemma \ref{lem-prop-H1-A}, we have
		\[
		\|\tilde{f}_n\|^2_{L^2}=\|f_n\|^2_{L^2}, \quad \|(\nabla+iA) \tilde{f}_n\|^2_{L^2} = \|(\nabla +iA) f_n\|^2_{L^2}.
		\]
		Thus we get $\sup_{n\geq 1} \|\tilde{f}_n\|_{H^1_A} <\infty$ and 
		\begin{align} \label{lowe-bound-gn}
		\inf_{n\geq 1} \|\tilde{f}_n\|^2_{L^2(Q_0)} \geq C>0.
		\end{align}
		Since the embedding $H^1_A(\R^3) \hookrightarrow L^2(Q_0)$ is compact (see again Lemma \ref{lem-prop-H1-A}), there exist $f \in H^1_A(\R^3)$ and a subsequence still denoted by $(\tilde{f}_n)_{n\geq 1}$ such that $\tilde{f}_n \rightharpoonup f$ weakly in $H^1_A(\R^3)$ and $\tilde{f}_n \rightarrow f$ strongly in $L^2(Q_0)$. By \eqref{lowe-bound-gn}, we have $f \ne 0$. The proof is complete.
	\end{proof}

	\section{Global existence and finite time blow-up}
	\label{S3}
	\setcounter{equation}{0}
	In this section, we study the existence of global in time and finite time blow-up solutions to \eqref{mag-NLS}. Let us start with the following result.
	
	\begin{remark} \label{rem-equi-norm}
		Let $A$ be as in \eqref{defi-A}. Then $\Sigma_A(\R^3) \equiv \Sigma(\R^3)$, where $\Sigma_A(\R^3)$ and $\Sigma(\R^3)$ are defined as in \eqref{Sigma-A} and \eqref{Sigma-A} respectively.
	\end{remark}
	
	\begin{proof}
		We first recall the following identity due to \cite{Ribeiro}:
		\begin{align} \label{iden-mag}
		\|(\nabla+iA)f\|^2_{L^2}= \|\nabla f\|^2_{L^2} -2 \rea \int (\nabla+iA)f \cdot \overline{i A f} dx - \|Af\|^2_{L^2},
		\end{align}
		From \eqref{iden-mag}, we have from H\"older's and Cauchy-Schwarz' inequalities that
		\begin{align*}
		\|\nabla f\|^2_{L^2} \leq 2 \left(\|(\nabla+iA)f\|^2_{L^2} + \|Af\|^2_{L^2} \right) \leq C(b) \|f\|^2_{\Sigma_A},
		\end{align*}
		hence $\Sigma_A(\R^3)\subset \Sigma(\R^3)$. On the other hand, by \eqref{iden-mag}, we have
		\begin{align*}
		\|(\nabla+iA)f\|^2_{L^2} &\leq \|\nabla f\|^2_{L^2} + 2\|(\nabla+iA) f\|_{L^2} \|Af\|_{L^2} + \|Af\|^2_{L^2} \\
		&\leq \|\nabla f\|^2_{L^2} + \frac{1}{2} \|(\nabla+iA)f\|^2_{L^2} + 3 \|Af\|^2_{L^2}
		\end{align*}
		which implies that 
		\[
		\|(\nabla+iA)f\|^2_{L^2} \leq 2\|\nabla f\|^2_{L^2} + 6 \|Af\|^2_{L^2} \leq C(b) \|f\|^2_{\Sigma},
		\]
		so $\Sigma(\R^3) \subset \Sigma_A(\R^3)$. The proof is complete.		
	\end{proof}
	
	We next prove the angular momentum conservation given in Lemma \ref{lem-cons-angu}. 
	
	\begin{proof} [Proof of Lemma \ref{lem-cons-angu}]
		The proof is essentially given in \cite{KL}. For the reader's convenience, we recall some details. We first observe that
		\begin{align} \label{prop-Lz}
		\int L_z f g dx = - \int L_z g f dx,  \quad \overline{L_z f} = - L_z \overline{f}
		\end{align}
		which yields
		\[
		R(f) = \int L_z f \overline{f} dx = - \int f L_z \overline{f} dx = \int f \overline{L_z f} dx = \overline{R(f)}
		\]
		or $R(f)$ is real-valued. We next give formal computations to show the conservation of angular momentum. The rigorous proof needs the standard approximation argument (see \cite{Ribeiro}) and we omit the details. From \eqref{mag-norm}, \eqref{defi-R}, and \eqref{prop-Lz}, we have
		\begin{align} \label{nabla-A}
		(\nabla +iA)^2 = \Delta - b L_z - \frac{b^2}{4}\rho^2
		\end{align}
		and $[\Delta, L_z]=0$. It follows that
		\begin{align*}
		\frac{d}{dt} R(u(t))&= \int L_z \partial_t u(t) \overline{u}(t) dx + i \int L_z u(t) \partial_t \overline{u}(t) dx \\
		&= \int L_z (i \Delta u - i b L_z u - i\frac{b^2}{4} \rho^2 u +i|u|^\alpha u) \overline{u} dx \\
		&\mathrel{\phantom{=}}+ \int L_z u (-i\Delta \overline{u} - i b L_z \overline{u} + i \frac{b^2}{4}\rho^2 \overline{u} - i |u|^\alpha \overline{u}) dx \\
		&= i \int L_z \Delta u \overline{u} - L_z u \Delta \overline{u} dx - ib \int L^2_z u \overline{u} + L_z u L_z \overline{u} dx \\
		&\mathrel{\phantom{=}} -i \frac{b^2}{4}\int L_z(\rho^2 u) \overline{u} - L_z u \rho^2 \overline{u} dx + i \int L_z(|u|^\alpha u) \overline{u} - L_zu |u|^\alpha \overline{u} dx \\
		&= (1) +(2) + (3) + (4).
		\end{align*}
		We see that
		\[
		(1) = i \int L_z \Delta u \overline{u} - \Delta L_z u \overline{u} dx = i \int [L_z, \Delta] u \overline{u} dx =0
		\]
		and
		\[
		(2) = ib \int L^2_z u \overline{u} + L_z u L_z \overline{u} dx = ib \int L_z u L_z \overline{u} - L_z u L_z \overline{u} dx =0.
		\]
		As $L_z(\rho^2 u) = \rho^2 L_z u$, we readily see that $(3)=0$. Here we have used the fact that $L_z = -i \partial_\theta$, where where $x_1=\rho \cos \theta$ and $x_2 =\rho \sin \theta$ with $\rho=\sqrt{x_1^2+x_2^2}$ and $\theta \in [0,2\pi)$. Finally, we have
		\[
		L_z(|u|^{\alpha+2}) = -i\partial_\theta(|u|^\alpha u \overline{u}) = -i \partial_\theta(|u|^\alpha u) \overline{u} -i |u|^\alpha u \partial_\theta \overline{u} = L_z(|u|^\alpha u) \overline{u} + |u|^\alpha u L_z \overline{u}
		\]
		which shows that
		\[
		(4) = i \int L_z(|u|^{\alpha+2}) -  |u|^\alpha u L_z \overline{u} - L_z u |u|^\alpha \overline{u} dx = - i \int |u|^\alpha (u L_z \overline{u} + \overline{u} L_z u) dx.
		\]
		On the other hand, we have
		\[
		L_z(|u|^{\alpha+2}) = (\alpha+2) |u|^{\alpha+1} L_z(|u|) =\frac{\alpha+2}{2} |u|^\alpha ( u L_z \overline{u} + \overline{u} L_z u),
		\]
		where $L_z(|u|^2) = 2|u|L_z(|u|)= u L_z \overline{u} + \overline{u}L_z u$. It follows that
		\[
		(4) = \frac{-2i}{\alpha+2} \int L_z(|u|^{\alpha+2}) dx =\frac{-2}{\alpha+2}\int \partial_\theta(|u|^{\alpha+2})dx=0.
		\]
		The proof is complete.
	\end{proof}
	
	We next recall the following virial identity related to \eqref{mag-NLS} (see e.g., \cite[Theorem 1.2]{Ribeiro}) which plays an important role in proving the existence of finite time blow-up solutions.
	 
	\begin{lemma}[\cite{Ribeiro}] \label{lem-viri-iden}
		Let $0<\alpha<4$ and $u_0 \in \Sigma_A(\R^3)$. Let $u:[0,T^*) \times \R^3 \rightarrow \C$ be the corresponding solution to \eqref{mag-NLS}. Set
		\begin{align} \label{defi-F}
		F(u(t)):= \int |x|^2 |u(t,x)|^2 dx.
		\end{align}
		Then the function $[0,T^*) \ni t \mapsto F(u(t))$ is in $C^2([0,T^*))$ and 
		\begin{align*}
		F'(u(t)) &
		= 4 \ima \int x \cdot \nabla u(t,x) \overline{u}(t,x) dx,\\
		F''(u(t)) &= 8 \|\nabla u(t)\|^2_{L^2} -2 b^2 \|\rho u(t)\|^2_{L^2} - \frac{12\alpha}{\alpha+2} \|u(t)\|^{\alpha+2}_{L^{\alpha+2}},
		\end{align*}
		for all $t\in [0,T^*)$.
	\end{lemma}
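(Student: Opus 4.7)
The plan is to establish the two identities first formally, assuming enough regularity and spatial decay to justify every manipulation, and then pass to the general case via the standard approximation scheme: approximate $u_0$ in $\Sigma_A(\R^3)$ by $C_0^\infty(\R^3)$ data, apply the identities to the resulting smooth solutions, and pass to the limit using the local-in-time continuous dependence of the flow in $\Sigma_A$ (as in \cite{Ribeiro,Cazenave}). This reduces the whole proof to a chain of commutator computations and integrations by parts.

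For $F'(u)$, I would differentiate under the integral and substitute the equation:
\[
F'(u) = 2\rea\int |x|^2 \bar u\, \partial_t u\, dx = -2\ima\int |x|^2 \bar u\, (\nabla+iA)^2 u\, dx,
\]
the nonlinear contribution being manifestly real and dropping out. Expanding $(\nabla+iA)^2 = \Delta - b L_z - \tfrac{b^2}{4}\rho^2$ via \eqref{nabla-A}, the $\rho^2$ piece disappears (its integrand is real) and the $L_z$ piece disappears as well, because $|x|^2$ is rotation-invariant about the $z$-axis: $L_z(|x|^2 u) = |x|^2 L_z u$, whence $\int |x|^2 \bar u\, L_z u\, dx$ is real by exactly the symmetry argument used for $R(u)$ in the proof of Lemma \ref{lem-cons-angu}. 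Integrating by parts in the remaining $\Delta$ term (the piece $\int |x|^2 |\nabla u|^2 dx$ is real and drops), one obtains $F'(u) = 4\ima\int x\cdot\nabla u\, \bar u\, dx$.

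For $F''(u)$, I would differentiate $F'(u)$ once more, substitute the equation, and take imaginary parts. Using the formal self-adjointness of $\Delta$, $L_z$, and of the multipliers $\rho^2$, $|u|^\alpha$, each of the four resulting pieces reduces to a commutator expression of the form $4\rea\int \bigl([x\cdot\nabla,K]u\bigr)\, \bar u\, dx$ with $K\in\{\Delta,\; -b L_z,\; -\tfrac{b^2}{4}\rho^2,\; |u|^\alpha\}$. The decisive algebraic facts are the commutator identities
\[
[x\cdot\nabla,\Delta] = -2\Delta, \qquad [x\cdot\nabla,L_z] = 0, \qquad [x\cdot\nabla,\rho^2] = 2\rho^2,
\]
the second of which reflects the commutativity of the dilation and rotation generators. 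These yield the contributions $8\|\nabla u\|_{L^2}^2$, $0$, and $-2b^2\|\rho u\|_{L^2}^2$, respectively. The nonlinear commutator reduces to $\int x\cdot\nabla(|u|^\alpha)\,|u|^2\, dx$, which via the identity $|u|^\alpha \nabla |u|^2 = \tfrac{2}{\alpha+2}\nabla|u|^{\alpha+2}$ and one further integration by parts equals $-\tfrac{3\alpha}{\alpha+2}\|u\|^{\alpha+2}_{L^{\alpha+2}}$, producing the final term $-\tfrac{12\alpha}{\alpha+2}\|u\|^{\alpha+2}_{L^{\alpha+2}}$.

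The main obstacle is rigor rather than algebra: for a generic $u_0 \in \Sigma_A(\R^3)$, the solution has neither second derivatives nor the moments required to justify the commutator manipulations directly. This is handled by the approximation scheme already mentioned, combined with the equivalence $\Sigma_A \equiv \Sigma$ from Remark \ref{rem-equi-norm}, which transports the weighted-$L^2$ bound along the flow. The claimed $C^2$ regularity of $t \mapsto F(u(t))$ then follows from the closed-form expression for $F''(u(t))$ together with the continuity of $t\mapsto u(t)$ in $H^1_A$ and of $t\mapsto xu(t)$ in $L^2$ provided by the well-posedness theory in $\Sigma_A$.
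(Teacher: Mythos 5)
The paper does not actually prove this lemma: it is recalled verbatim from \cite[Theorem 1.2]{Ribeiro}, so there is no in-paper argument to compare against line by line. Your proposal is a correct reconstruction of the standard virial computation that underlies the cited result. The algebra checks out: expanding $(\nabla+iA)^2=\Delta-bL_z-\tfrac{b^2}{4}\rho^2$ as in \eqref{nabla-A}, the $L_z$ and $\rho^2$ pieces indeed drop from $F'$ (the integrand of the $\rho^2$ term is real, and $\int |x|^2\overline{u}\,L_zu\,dx$ is real by the same antisymmetry of $L_z$ used for $R(u)$ in Lemma \ref{lem-cons-angu}); the commutators $[x\cdot\nabla,\Delta]=-2\Delta$, $[x\cdot\nabla,L_z]=0$, $[x\cdot\nabla,\rho^2]=2\rho^2$ give $8\|\nabla u\|_{L^2}^2-2b^2\|\rho u\|_{L^2}^2$ with no angular-momentum contribution; and the nonlinear term $4\int |u|^2\,x\cdot\nabla(|u|^\alpha)\,dx=-\tfrac{12\alpha}{\alpha+2}\|u\|_{L^{\alpha+2}}^{\alpha+2}$ is computed correctly. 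The one place to tighten is the rigor step: the continuity of $t\mapsto xu(t)$ in $L^2$ and the persistence of $\Sigma_A$ along the flow are not automatic consequences of the $H^1_A$ well-posedness theory and cannot simply be quoted without risking circularity, since the usual way to obtain them is precisely a truncated-variance argument (replace $|x|^2$ by a bounded cutoff approximation, derive the differential identities for the truncated quantity, close with Gronwall, and pass to the limit). Folding that truncation into your approximation scheme — rather than treating $\Sigma_A$ well-posedness as a black box — gives the complete proof, and is exactly the "standard approximation argument (see \cite{Ribeiro})" the paper alludes to for Lemma \ref{lem-cons-angu}.
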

	
	\begin{proof} [Proof of Proposition \ref{prop-blow-2}]
		Let $F(u(t))$ is as in \eqref{defi-F}. By Lemma \ref{lem-viri-iden}, we have
		\begin{align*}
		F''(u(t)) &= 8 \|\nabla u(t)\|^2_{L^2} - 2b^2 \|\rho u(t)\|^2_{L^2} - \frac{12\alpha}{\alpha+2} \|u(t)\|^{\alpha+2}_{L^{\alpha+2}} \\
		&=16 E_0(u(t)) -4b^2 \|\rho u(t)\|^2_{L^2} - \frac{4(3\alpha-4)}{\alpha+2} \|u(t)\|^{\alpha+2}_{L^{\alpha+2}}
		\end{align*}
		for all $t\in [0,T^*)$. By Lemma \ref{lem-cons-angu}, we infer that
		\[
		F''(u(t)) \leq 16 E_0(u_0), \quad \forall t\in [0,T^*).
		\]
		Integrating this inequality, we get
		\[
		F(u(t)) \leq \|xu_0\|^2_{L^2} + 4 \left(\ima \int x \cdot \nabla u_0(x) \overline{u}_0(x)dx\right) t + 8 E_0(u_0) t^2, \quad \forall t\in [0,T^*).
		\]
		If one of the conditions given in Proposition \ref{prop-blow-2} holds, then there exists $t_1>0$ such that $F(u(t_1))<0$ which is a contradiction. The proof is complete.
	\end{proof}
	
	Now we prove the sharp threshold for global existence versus blow-up for \eqref{mag-NLS} in the mass-critical case given in Proposition \ref{prop-thre-mass}.
	\begin{proof} [Proof of Proposition \ref{prop-thre-mass}]
		(1) By the Gagliardo-Nirenberg inequality and the diamagnetic inequality \eqref{diag-ineq}, we have
		\begin{align} \label{mag-GN-ineq-mass-cri}
		\|f\|^{\frac{10}{3}}_{L^{\frac{10}{3}}} \leq \frac{5}{3} \left(\frac{\|f\|_{L^2}}{\|Q\|_{L^2}} \right)^{\frac{4}{3}} \|\nabla |f|\|^2_{L^2} \leq \frac{5}{3} \left(\frac{\|f\|_{L^2}}{\|Q\|_{L^2}} \right)^{\frac{4}{3}} \|(\nabla+iA)f\|^2_{L^2},
		\end{align}
		where $Q$ is the unique positive radial solution to \eqref{defi-Q} with $\alpha=\frac{4}{3}$. From this inequality and the conservation laws of mass and energy, we infer that 
		\begin{align*}
		E(u_0) = E(u(t)) &\geq \frac{1}{2} \|(\nabla +iA) u(t)\|^2_{L^2} - \frac{1}{2} \left(\frac{\|u(t)\|_{L^2}}{\|Q\|_{L^2}} \right)^{\frac{4}{3}} \|(\nabla+iA)u(t)\|^2_{L^2} \\
		&= \frac{1}{2} \left(1-\left(\frac{\|u_0\|_{L^2}}{\|Q\|_{L^2}}\right)^{\frac{4}{3}}\right) \|(\nabla +iA) u(t)\|^2_{L^2}
		\end{align*}
		for all $t\in [0,T^*)$. As $\|u_0\|_{L^2}<\|Q\|_{L^2}$, we have $\sup_{t\in [0,T^*)} \|(\nabla +iA) u(t)\|_{L^2} \leq C$ which, by the blow-up alternative, implies that $T^*=\infty$. 
		
		(2) Let $c >\|Q\|_{L^2}$. We define
		\[
		u_0 (x):= a \lambda^{\frac{3}{2}} Q(\lambda x),
		\]
		where $a:= \frac{c}{\|Q\|_{L^2}}>1$ and $\lambda>0$ will be chosen later. As $Q$ decays exponentially at infinity, it is clear that $Q \in \Sigma_A(\R^3)$ (see also Remark \ref{rem-equi-norm}). Moreover, we have
		\begin{align*}
		\|u_0\|^2_{L^2} &= a^2 \|Q\|^2_{L^2}=c^2, & \|\nabla u_0\|^2_{L^2} &= a^2\lambda^2  \|\nabla Q\|^2_{L^2}, \\ \|u_0\|^{\frac{10}{3}}_{L^{\frac{10}{3}}} &= a^{\frac{10}{3}} \lambda^2 \|Q\|^{\frac{10}{3}}_{L^{\frac{10}{3}}}, & \|\rho u_0\|^2_{L^2} &= a^2 \lambda^{-2} \|\rho Q\|^2_{L^2}.
		\end{align*}
		It follows that
		\begin{align*}
		E_0(u_0) &= \frac{1}{2}\|\nabla u_0\|^2_{L^2} + \frac{b^2}{8} \|\rho u_0\|^2_{L^2} -\frac{3}{10} \|u_0\|^{\frac{10}{3}}_{L^{\frac{10}{3}}} \\
		&= a^2 \lambda^2 \left(\frac{1}{2} \|\nabla Q\|^2_{L^2} + \frac{b^2}{2} \lambda^{-4} \|\rho Q\|^2_{L^2} -\frac{3}{10} a^{\frac{4}{3}} \|Q\|^{\frac{10}{3}}_{L^{\frac{10}{3}}}\right).
		\end{align*}
		Using the Pohozaev's identity (see e.g., \cite{Cazenave}): 
		\begin{align} \label{poho-iden-mass}
		\|\nabla Q\|_{L^2}^2 = \frac{3}{5} \|Q\|^{\frac{10}{3}}_{L^{\frac{10}{3}}} = \frac{3}{2} \|Q\|^2_{L^2},
		\end{align}
		we infer that
		\[
		E_0(u_0) = a^2\lambda^2 \left(\frac{b^2}{2} \lambda^{-4} \|\rho Q\|^2_{L^2} - \frac{3}{10} \left(a^{\frac{4}{3}}-1\right) \|Q\|^{\frac{10}{3}}_{L^{\frac{10}{3}}} \right).
		\]
		Taking $\lambda>0$ sufficiently large, we have $E_0(u_0)<0$. By Proposition \ref{prop-blow-2}, the corresponding solution to \eqref{mag-NLS} with initial data $\left. u\right|_{t=0} = u_0$ blows up in finite time. The proof is complete. 
	\end{proof}
	
	Before studying the global existence and finite time blow-up for \eqref{mag-NLS} in the mass-supercritical case, let us recall the following properties of the unique positive radial solution $Q$ to \eqref{defi-Q}. 
	
	\begin{remark} 
		Let $\frac{4}{3}<\alpha<4$ and $Q$ be the unique positive radial solution to \eqref{defi-Q}. It is well-known that $Q$ optimizes the Gagliardo-Nirenberg inequality
		\begin{align} \label{GN-ineq-super}
		\|f\|^{\alpha+2}_{L^{\alpha+2}} \leq C_{\opt} \|\nabla f\|^{\frac{3\alpha}{2}}_{L^2} \|f\|^{\frac{4-\alpha}{2}}_{L^2}, \quad \forall f\in H^1(\R^3).
		\end{align}
		In particular, we have
		\[
		C_{\opt} = \|Q\|^{\alpha+2}_{L^{\alpha+2}} \div \left[\|\nabla Q\|^{\frac{3\alpha}{2}}_{L^2} \|Q\|^{\frac{4-\alpha}{2}}_{L^2} \right].
		\]
		Thanks to the following Pohozaev's identities (see e.g., \cite{Cazenave}):
		\begin{align} \label{poho-Q-super}
		\|Q\|^2_{L^2} = \frac{4-\alpha}{3\alpha} \|\nabla Q\|^2_{L^2} = \frac{4-\alpha}{2(\alpha+2)} \|Q\|^{\alpha+2}_{L^{\alpha+2}},
		\end{align}
		we have
		\begin{align} \label{proper-E0-Q}
		E^0(Q) [M(Q)]^{\sigc} = \frac{3\alpha-4}{6\alpha} \left(\|\nabla Q\|_{L^2} \|Q\|^{\sigc}_{L^2}\right)^2, \quad C_{\opt} = \frac{2(\alpha+2)}{3\alpha} \left( \|\nabla Q\|_{L^2} \|Q\|_{L^2}^{\sigc}\right)^{-\frac{3\alpha-4}{2}}.
		\end{align}
	\end{remark}
 
	We are now able to prove the global existence given in Propositions \ref{prop-gwp-1} and \ref{prop-gwp-2}.
	
	\begin{proof} [Proof of Proposition \ref{prop-gwp-1}]
		Let $u:[0,T^*)\times \R^3 \rightarrow \C$ be the corresponding solution to \eqref{mag-NLS}. By the Gagliardo-Nirenberg inequality \eqref{GN-ineq-super} and the diamagnetic inequality \eqref{diag-ineq}, we have 
		\[
		\|f\|^{\alpha+2}_{L^{\alpha+2}} \leq C_{\opt} \|\nabla |f|\|^{\frac{3\alpha}{2}}_{L^2} \|f\|^{\frac{4-\alpha}{2}}_{L^2} \leq C_{\opt} \|(\nabla + iA) f\|^{\frac{3\alpha}{2}}_{L^2} \|f\|^{\frac{4-\alpha}{2}}_{L^2}, \quad \forall f\in H^1_A(\R^3).
		\]
		It follows that
		\begin{align*}
		E(u(t)) &[M(u(t))]^{\sigc} \\
		&\geq \frac{1}{2} \left(\|(\nabla+iA) u(t)\|_{L^2} \|u(t)\|^{\sigc}_{L^2}\right)^2 - \frac{C_{\opt}}{\alpha+2} \| (\nabla +iA)u(t)\|^{\frac{3\alpha}{2}}_{L^2} \|u(t)\|^{\frac{4-\alpha}{2} + 2\sigc}_{L^2} \nonumber \\
		&= G\left( \|(\nabla+iA) u(t)\|_{L^2} \|u(t)\|^{\sigc}_{L^2}\right) 
		\end{align*}
		for all $t\in [0,T^*)$, where
		\begin{align} \label{defi-G}
		G(\lambda):= \frac{1}{2}\lambda^2 - \frac{C_{\opt}}{\alpha+2} \lambda^{\frac{3\alpha}{2}}.
		\end{align}
		Using \eqref{poho-Q-super} and \eqref{proper-E0-Q}, we see that 
		\begin{align} \label{iden-G}
		G\left( \|\nabla Q\|_{L^2} \|Q\|^{\sigc}_{L^2}\right) = \frac{3\alpha-4}{6\alpha} \left( \|\nabla Q\|_{L^2} \|Q\|^{\sigc}_{L^2}\right)^2 = E^0(Q) [M(Q)]^{\sigc}.
		\end{align}
		By the conservation of energy and \eqref{cond-ener-below}, we have
		\begin{align}
		G\left( \|(\nabla+iA) u(t)\|_{L^2} \|u(t)\|^{\sigc}_{L^2}\right) &\leq E(u_0) [M(u_0)]^{\sigc} \nonumber\\
		&<E^0(Q)[M(Q)]^{\sigc}= G\left( \|\nabla Q\|_{L^2} \|Q\|^{\sigc}_{L^2}\right)  \label{est-G}
		\end{align}
		for all $t\in [0,T^*)$. By \eqref{cond-gwp-below}, the continuity argument implies
		\[
		\|(\nabla+iA) u(t)\|_{L^2} \|u(t)\|^{\sigc}_{L^2} < \|\nabla Q\|_{L^2} \|Q\|^{\sigc}_{L^2}
		\]
		for all $t\in [0,T^*)$. This estimate together with the blow-up alternative and the conservation of mass yield $T^*=\infty$. 
	\end{proof}
	
	\begin{proof} [Proof of Proposition \ref{prop-gwp-2}]
		It suffices to prove that
		\[
		\|(\nabla +iA) u(t)\|_{L^2} \|u(t)\|_{L^2}^{\sigc} < \|\nabla Q\|_{L^2} \|Q\|^{\sigc}_{L^2}
		\]
		for all $t\in [0,T^*)$. This together with the blow-up alternative shows that $T^*=\infty$. Assume by contradiction that there exists $t_0>0$ such that 
		\[
		\|(\nabla +iA) u(t_0)\|_{L^2} \|u(t_0)\|_{L^2}^{\sigc} \geq \|\nabla Q\|_{L^2} \|Q\|^{\sigc}_{L^2}.
		\]
		By \eqref{cond-gwp-2-2} and the continuity argument, there exists $t_1 \in (0, t_0]$ such that
		\[
		\|(\nabla +iA) u(t_1)\|_{L^2} \|u(t_1)\|_{L^2}^{\sigc} = \|\nabla Q\|_{L^2} \|Q\|^{\sigc}_{L^2}.
		\]
		Denote $f= u(t_1)$. We have from \eqref{cond-gwp-2-1} and the conservation laws of mass and energy that
		\[
		E(f)[M(f)]^{\sigc} = E^0(Q) [M(Q)]^{\sigc}, \quad \|(\nabla+iA)f\|_{L^2} \|f\|^{\sigc}_{L^2} = \|\nabla Q\|_{L^2} \|Q\|^{\sigc}_{L^2}.
		\]
		We take $\lambda>0$ such that $\|f\|_{L^2} = \lambda \|Q\|_{L^2}$. It follows that
		\[
		E(f) = \lambda^{-2\sigc} E^0(Q), \quad \|(\nabla +iA)f\|_{L^2} = \lambda^{-\sigc}\|\nabla Q\|_{L^2}
		\]
		which yields
		\begin{align*}
		\|f\|^{\alpha+2}_{L^{\alpha+2}} &= (\alpha+2) \left(\frac{1}{2} \|(\nabla +iA)f\|^2_{L^2} - E(f) \right) \\
		&=(\alpha+2) \left(\frac{\lambda^{-2\sigc}}{2} \|\nabla Q\|^2_{L^2} - \lambda^{-2\sigc} E^0(Q)\right) \\
		&= \lambda^{-2\sigc} \|Q\|^{\alpha+2}_{L^{\alpha+2}}.
		\end{align*}
		Thus we get
		\begin{align*}
		\left[\|f\|^{\alpha+2}_{L^{\alpha+2}}\right] &\div \left[\|(\nabla +iA)f\|^{\frac{3\alpha}{2}}_{L^2} \|f\|^{\frac{4-\alpha}{2}}_{L^2}\right] \\
		&= \left[\lambda^{-2\sigc} \|Q\|^{\alpha+2}_{L^{\alpha+2}} \right] \div \left[ \left( \lambda^{-\sigc} \|\nabla Q\|_{L^2}\right)^{\frac{3\alpha}{2}} \left(\lambda \|Q\|_{L^2} \right)^{\frac{4-\alpha}{2}}\right] \\
		&= \left[ \|Q\|^{\alpha+2}_{L^{\alpha+2}}\right] \div \left[ \|\nabla Q\|_{L^2}^{\frac{3\alpha}{2}} \|Q\|_{L^2}^{\frac{4-\alpha}{2}} \right] = C_{\opt},
		\end{align*}
		where $C_{\opt}$ is as in \eqref{GN-ineq-super}. From this, the diamagnetic inequality, and \eqref{GN-ineq-super}, we see that
		\[
		\|f\|^{\alpha+2}_{L^{\alpha+2}} = C_{\opt} \|(\nabla +iA)f\|^{\frac{3\alpha}{2}}_{L^2} \|f\|^{\frac{4-\alpha}{2}}_{L^2} \geq C_{\opt} \|\nabla |f|\|^{\frac{3\alpha}{2}}_{L^2} \|f\|^{\frac{4-\alpha}{2}}_{L^2} \geq \|f\|^{\alpha+2}_{L^{\alpha+2}}.
		\]
		In particular, we have
		\[
		\|(\nabla +iA) f\|_{L^2}= \|\nabla |f|\|_{L^2}.
		\]
		Using the fact (see e.g., \cite[Theorem 7.21]{LL}) that
		\[
		|\nabla |f|| = \left| \rea \left(\nabla f \frac{\overline{f}}{|f|}\right) \right| = \left| \rea \left( (\nabla +iA) f \frac{\overline{f}}{|f|} \right) \right| \leq |(\nabla+iA) f|,
		\]
		we infer that
		\[
		\ima \left( (\nabla +iA) f \frac{\overline{f}}{|f|} \right) =0 \Longleftrightarrow A= -\ima \left( \frac{\nabla f}{f}\right) \text{ a.e. in } \R^3,
		\]
		hence $\curl A= (0,0,0)$ which contradicts \eqref{defi-B}.
	\end{proof}
	
	We next give the proof of the sharp threshold for global existence and blow-up for \eqref{mag-NLS} in the mass-supercritical case given in Theorem \ref{theo-dyn-below}.
	
	\begin{proof}[Proof of Theorem \ref{theo-dyn-below}]
		(1) Let us consider $u_0 \in \Sigma_A(\R^3)$ satisfying \eqref{cond-ener-below} and \eqref{cond-gwp-below}. Let $u:[0,T^*)\times \R^3 \rightarrow \C$ be the corresponding solution to \eqref{mag-NLS}. By the Gagliardo-Nirenberg inequality \eqref{GN-ineq-super}, we have 
		\begin{align*}
		E_0(u(t)) [M(u(t))]^{\sigc} &\geq \frac{1}{2} \left(\|\nabla u(t)\|_{L^2} \|u(t)\|^{\sigc}_{L^2}\right)^2 + \frac{b^2}{8} \|\rho u(t)\|_{L^2} \|u(t)\|^{2\sigc}_{L^2} \nonumber\\
		&\mathrel{\phantom{\geq \frac{1}{2} \left(\|\nabla u(t)\|_{L^2} \|u(t)\|^{\sigc}_{L^2}\right)^2}}- \frac{C_{\opt}}{\alpha+2} \| \nabla u(t)\|^{\frac{3\alpha}{2}}_{L^2} \|u(t)\|^{\frac{4-\alpha}{2} + 2\sigc}_{L^2} \nonumber \\
		&\geq G\left( \|\nabla u(t)\|_{L^2} \|u(t)\|^{\sigc}_{L^2}\right) 
		\end{align*}
		for all $t\in [0,T^*)$, where $G$ is as in \eqref{defi-G}. Using \eqref{iden-G}, Lemma \ref{lem-cons-angu}, and \eqref{cond-ener-below}, we have
		\begin{align} \label{est-G-2}
		G\left( \|\nabla u(t)\|_{L^2} \|u(t)\|^{\sigc}_{L^2}\right) &\leq E_0(u_0) [M(u_0)]^{\sigc} \nonumber \\
		&<E^0(Q) [M(Q)]^{\sigc}= G\left( \|\nabla Q\|_{L^2} \|Q\|^{\sigc}_{L^2}\right)
		\end{align}
		for all $t\in [0,T^*)$. By \eqref{cond-gwp-below}, the continuity argument implies
		\[
		\|\nabla u(t)\|_{L^2} \|u(t)\|^{\sigc}_{L^2} < \|\nabla Q\|_{L^2} \|Q\|^{\sigc}_{L^2}
		\]
		for all $t\in [0,T^*)$. By the conservation of mass, we infer that 
		\[
		\sup_{t\in [0,T^*)} \|\nabla u(t)\|_{L^2} \leq C(\|u_0\|_{L^2}, \|Q\|_{L^2}, \|\nabla Q\|_{L^2}).
		\]
		On the other hand, by Lemma \ref{lem-cons-angu} and \eqref{GN-ineq-super}, we have
		\begin{align*}
		\frac{b^2}{8} \|\rho u(t)\|^2_{L^2} &\leq E_0(u(t)) + \frac{1}{\alpha+2} \|u(t)\|^{\alpha+2}_{L^{\alpha+2}} \\
		&\leq E_0(u_0) + \frac{C_{\opt}}{\alpha+2} \|\nabla u(t)\|^{\frac{3\alpha}{2}}_{L^2}\|u(t)\|^{\frac{4-\alpha}{2}}_{L^2} \leq C(E_0(u_0), M(u_0), \|Q\|_{L^2}, \|\nabla Q\|_{L^2})
		\end{align*}
		for all $t\in [0,T^*)$. From \eqref{mag-norm}, Remark \ref{rem-equi-norm}, and Lemma \ref{lem-cons-angu}, we have
		\[
		\sup_{t \in [0,T^*)} \|(\nabla + iA) u(t)\|_{L^2} \leq C(E_0(u_0), M(u_0), \|Q\|_{L^2}, \|\nabla Q\|_{L^2})
		\]
		which, by the blow-up alternative, implies that $T^*=\infty$.
		
		(2) Let us now consider $u_0 \in \Sigma_A(\R^3)$ satisfying \eqref{cond-ener-below} and \eqref{cond-blow-below}. By the same argument as above, we see that
		\begin{align} \label{est-blow-below}
		\|\nabla u(t)\|_{L^2} \|u(t)\|^{\sigc}_{L^2} > \|\nabla Q\|_{L^2} \|Q\|^{\sigc}_{L^2}
		\end{align}
		for all $t\in [0,T^*)$. We next show that the solution blows up in finite time. From \eqref{cond-ener-below}, we take $\vartheta =\vartheta(u_0,Q)>0$ such that 
		\[
		E_0(u_0) [M(u_0)]^{\sigc} \leq (1-\vartheta) E^0(Q) [M(Q)]^{\sigc}.
		\]
		We also denote
		\begin{align} \label{defi-H}
		\begin{aligned}
		H(f):&= \|\nabla f\|^2_{L^2} - \frac{b^2}{4} \|\rho f\|^2_{L^2} - \frac{3\alpha}{2(\alpha+2)} \|f\|^{\alpha+2}_{L^{\alpha+2}} \\
		&=\frac{3\alpha}{2} E_0(f) - \frac{3\alpha-4}{4} \|\nabla f\|^2_{L^2} - \frac{(3\alpha+4)b^2}{16} \|\rho f\|^2_{L^2}.
		\end{aligned}
		\end{align}
		By Lemma \ref{lem-cons-angu}, \eqref{est-blow-below}, and the conservation of mass, we see that
		\begin{align*}
		H(u(t)) [M(u(t))]^{\sigc} &\leq \frac{3\alpha}{2} E_0(u(t)) [M(u(t))]^{\sigc} - \frac{3\alpha-4}{4}\left( \|\nabla u(t)\|_{L^2} \|u(t)\|^{\sigc}_{L^2}\right)^2 \\
		&\leq \frac{3\alpha}{2}E_0(u_0) [M(u_0)]^{\sigc} - \frac{3(p-1)-4}{4} \left(\|\nabla Q\|_{L^2}\|Q\|^{\sigc}_{L^2}\right)^2 \\
		&\leq \frac{3\alpha}{2}(1-\vartheta) E^0(Q) [M(Q)]^{\sigc} -\frac{3\alpha-4}{4} \left(\|\nabla Q\|_{L^2}\|Q\|^{\sigc}_{L^2}\right)^2 \\
		& = -\frac{3\alpha-4}{4} \vartheta \left(\|\nabla Q\|_{L^2}\|Q\|^{\sigc}_{L^2}\right)^2
		\end{align*}
		for all $t\in [0,T^*)$. It follows from Lemma \ref{lem-viri-iden} that
		\[
		F''(u(t)) = 8 H(u(t)) \leq -2(3\alpha-4)\vartheta \left(\frac{\|Q\|_{L^2}}{\|u_0\|_{L^2}}\right)^{2\sigc} \|\nabla Q\|^2_{L^2}<0
		\]
		for all $t\in [0,T^*)$. This shows that $T^*<\infty$. The proof is complete.
	\end{proof}
	
	Next we study the long time behaviors of solutions to \eqref{mag-NLS} with data lying at the mass-energy threshold given in Theorem \ref{theo-dyn-at}.
	
	\begin{proof}[Proof of Theorem \ref{theo-dyn-at}] Let us start with the following observation. 
		
		\begin{observation} \label{observation}
			There is no $f\in \Sigma_A(\R^3)$ satisfying
			\[
			E_0(f) [M(f)]^{\sigc} = E^0(Q) [M(Q)]^{\sigc}, \quad \|\nabla f\|_{L^2} \|f\|^{\sigc}_{L^2} = \|\nabla Q\|_{L^2} \|Q\|^{\sigc}_{L^2}.
			\]
		\end{observation}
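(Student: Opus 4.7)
The plan is to revisit the Gagliardo--Nirenberg bound that was used in the proof of Theorem \ref{theo-dyn-below}, but this time keeping the magnetic term $\frac{b^2}{8}\|\rho f\|_{L^2}^2$ visible in the lower bound rather than dropping it. Both hypotheses saturate the mass-energy threshold simultaneously, so any strictly positive defect appearing in the chain of inequalities must vanish identically. I expect the argument to reduce to showing that $\rho f \equiv 0$, which then forces $f = 0$ because the $x_3$-axis $\{\rho = 0\}$ is a Lebesgue-null set in $\R^3$.

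First I would apply the sharp Gagliardo--Nirenberg inequality \eqref{GN-ineq-super} to estimate $\|f\|_{L^{\alpha+2}}^{\alpha+2}$ in terms of $\|\nabla f\|_{L^2}$ and $\|f\|_{L^2}$ (with the plain gradient, not the magnetic one, since $E_0$ is defined using $\|\nabla f\|_{L^2}$). Multiplying the resulting lower bound on $E_0(f)$ by $[M(f)]^{\sigc}$ and using the exponent identity $(4-\alpha)/2 + 2\sigc = 3\alpha\sigc/2$, which follows immediately from $\sigc = (4-\alpha)/(3\alpha-4)$, I arrive at
\[
E_0(f)[M(f)]^{\sigc} \geq G\bigl(\|\nabla f\|_{L^2}\|f\|_{L^2}^{\sigc}\bigr) + \frac{b^2}{8}\|\rho f\|_{L^2}^2 \|f\|_{L^2}^{2\sigc},
\]
where $G$ is the function defined in \eqref{defi-G}. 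This is the inequality implicitly used in \eqref{est-G-2} of the proof of Theorem \ref{theo-dyn-below}, but now with the magnetic confinement term retained.

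Next I would invoke the hypothesis $\|\nabla f\|_{L^2}\|f\|_{L^2}^{\sigc} = \|\nabla Q\|_{L^2}\|Q\|_{L^2}^{\sigc}$ together with \eqref{iden-G} to replace $G(\|\nabla f\|_{L^2}\|f\|_{L^2}^{\sigc})$ by $E^0(Q)[M(Q)]^{\sigc}$. Combined with the first hypothesis $E_0(f)[M(f)]^{\sigc} = E^0(Q)[M(Q)]^{\sigc}$, the displayed inequality collapses to
\[
\tfrac{b^2}{8}\|\rho f\|_{L^2}^2 \|f\|_{L^2}^{2\sigc} \leq 0.
\]
Since $b \neq 0$ by \eqref{defi-B} and $\|f\|_{L^2} > 0$ (otherwise the second hypothesis would read $0 = \|\nabla Q\|_{L^2}\|Q\|_{L^2}^{\sigc} > 0$), I conclude $\rho f = 0$ a.e., which in turn forces $f = 0$ a.e., contradicting the positivity of $\|\nabla f\|_{L^2}\|f\|_{L^2}^{\sigc}$.

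I do not anticipate a real obstacle: the whole argument is essentially a one-line strengthening of the chain \eqref{est-G-2}, exploiting only the fact that the magnetic term was dropped in that earlier estimate. The only item worth double-checking is the exponent bookkeeping that produces $(\|\nabla f\|_{L^2}\|f\|_{L^2}^{\sigc})^{3\alpha/2}$ after multiplication by $[M(f)]^{\sigc}$, and this is immediate from the definition of $\sigc$.
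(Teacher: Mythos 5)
Your proof is correct and follows essentially the same route as the paper's: both exploit the sharp Gagliardo--Nirenberg inequality \eqref{GN-ineq-super} to show that the two saturation hypotheses force the magnetic term $\frac{b^2}{8}\|\rho f\|^2_{L^2}$ to vanish, whence $f=0$ and a contradiction. The paper runs the computation through the normalization $\|f\|_{L^2}=\lambda\|Q\|_{L^2}$ and a comparison of $L^{\alpha+2}$-norms, whereas you repackage it via the function $G$ of \eqref{defi-G} and the identity \eqref{iden-G}, but the underlying argument is the same.
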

		In fact, we take $\lambda>0$ such that $\|f\|_{L^2} = \lambda \|Q\|_{L^2}$. It follows that
		\begin{align} \label{obser-f}
		E_0(f) = \lambda^{-2\sigc} E^0(Q), \quad \|\nabla f\|_{L^2} = \lambda^{-\sigc} \|\nabla Q\|_{L^2}.
		\end{align}
		Using the Gagliardo-Nirenberg inequality \eqref{GN-ineq-super} and \eqref{proper-E0-Q}, we see that
		\begin{align*}
		\|f\|^{\alpha+2}_{L^{\alpha+2}} \|f\|^{2\sigc}_{L^2} &\leq C_{\opt} \|\nabla f\|^{\frac{3\alpha}{2}}_{L^2} \|f\|^{2\sigc+\frac{4-\alpha}{2}}_{L^2} \\
		& = \frac{2(\alpha+2)}{3\alpha} \left(\|\nabla Q\|_{L^2} \|Q\|^{\sigc}_{L^2}\right)^{-\frac{3\alpha-4}{2}}  \left(\|\nabla f\|_{L^2}\|f\|^{\sigc}_{L^2}\right)^{\frac{3\alpha}{2}} \\
		& = \frac{2(\alpha+2)}{3\alpha} \left(\|\nabla Q\|_{L^2}\|Q\|_{L^2}^{\sigc}\right)^2.
		\end{align*}
		This implies
		\[
		\|f\|^{\alpha+2}_{L^{\alpha+2}} \leq \frac{2(\alpha+2)}{3\alpha} \lambda^{-2\sigc} \|\nabla Q\|^2_{L^2} = \lambda^{-2\sigc} \|Q\|^{\alpha+2}_{L^{\alpha+2}}.
		\]
		Using \eqref{obser-f}, we infer that
		\begin{align*}
		0\leq \frac{b^2}{2} \|\rho f\|^2_{L^2} &= E_0(f) - \frac{1}{2}\|\nabla f\|^2_{L^2}  +\frac{1}{\alpha+2} \|f\|^{\alpha+2}_{L^{\alpha+2}} \\
		&\leq \frac{1}{\alpha+2}\|f\|^{\alpha+2}_{L^{\alpha+2}} - \frac{1}{\alpha+2} \lambda^{-2\sigc} \|Q\|^{\alpha+2}_{L^{\alpha+2}} \leq 0.
		\end{align*}
		This shows that $f =0$ which is a contradiction. 
		
		(1) Let $u_0\in \Sigma_A(\R^3)$ satisfy \eqref{cond-ener-at} and \eqref{cond-gwp-at}. Let $u:[0,T^*)\times \R^3 \rightarrow \C$ be the corresponding solution to \eqref{mag-NLS}. 
		We will show that 
		\begin{align*}
		\|\nabla u(t)\|_{L^2} \|u(t)\|^{\sigc}_{L^2} < \|\nabla Q\|_{L^2} \|Q\|^{\sigc}_{L^2}
		\end{align*}
		for all $t\in [0,T^*)$. Assume by contradiction that there exists $t_0 \in [0,T^*)$ such that 
		\[
		\|\nabla u(t_0)\|_{L^2} \|u(t_0)\|^{\sigc}_{L^2} \geq \|\nabla Q\|_{L^2} \|Q\|^{\sigc}_{L^2}.
		\] 
		By the continuity using \eqref{cond-gwp-at}, there exists $t_1\in (0,t_0]$ such that 
		\[
		\|\nabla u(t_1)\|_{L^2} \|u(t_1)\|^{\sigc}_{L^2} = \|\nabla Q\|_{L^2} \|Q\|^{\sigc}_{L^2}.
		\]
		By Lemma \ref{lem-cons-angu} and \eqref{cond-ener-at}, we have
		\[
		E_0(u(t_1)) [M(u(t_1))]^{\sigc} = E^0(Q) [M(Q)]^{\sigc}
		\]
		which contradicts Observation \ref{observation}.
		
		(2) Let $u_0 \in \Sigma_A(\R^3)$ satisfy \eqref{cond-ener-at} and \eqref{cond-blow-at}. By the same argument as above, we prove that
		\[
		\|\nabla u(t)\|_{L^2} \|u(t)\|^{\sigc}_{L^2} > \|\nabla Q\|_{L^2} \|Q\|^{\sigc}_{L^2}
		\]
		for all $t\in [0,T^*)$. If $T^*<\infty$, then we are done. If $T^*=\infty$, then we consider two cases.
		
		{\bf Case 1.} If 
		\[
		\sup_{t\in [0,\infty)} \|\nabla u(t)\|_{L^2} \|u(t)\|^{\sigc}_{L^2} > \|\nabla Q\|_{L^2} \|Q\|^{\sigc}_{L^2},
		\] 
		then there exists $\eta>0$ such that for all $t \in [0,\infty)$,
		\[
		\|\nabla u(t)\|_{L^2} \|u(t)\|^{\sigc}_{L^2} \geq (1+\eta) \|\nabla Q\|_{L^2} \|Q\|^{\sigc}_{L^2}.
		\]
		It follows that
		\begin{align*}
		H(u(t)) [M(u(t))]^{\sigc} & \leq \frac{3\alpha}{2} E_0(u(t))[M(u(t))]^{\sigc} - \frac{3\alpha-4}{4} \left( \|\nabla u(t)\|_{L^2} \|u(t)\|^{\sigc}_{L^2} \right)^2 \\
		&\leq \frac{3\alpha}{2} E_0(u_0) [M(u_0)]^{\sigc} - \frac{3\alpha-4}{4} (1+\eta)^2 \left(\|\nabla Q\|_{L^2} \|Q\|^{\sigc}_{L^2}\right)^2 \\
		&= \frac{3\alpha}{2} E^0(Q) [M(Q)]^{\sigc} - \frac{3\alpha-4}{4}(1+\eta)^2 \left(\|\nabla Q\|_{L^2} \|Q\|^{\sigc}_{L^2}\right)^2 \\
		&= \frac{3\alpha-4}{4} \left(1-(1+\eta)^2\right) \left(\|\nabla Q\|_{L^2} \|Q\|^{\sigc}_{L^2}\right)^2 <0
		\end{align*}
		for all $t\in [0,\infty)$, where the functional $H$ is as in \eqref{defi-H}. Thus we have
		\[
		F''(u(t))=8H(u(t)) \leq -2(3\alpha-4) \left((1+\eta)^2-1\right) \left(\frac{\|Q\|_{L^2}}{\|u_0\|_{L^2}}\right)^{2\sigc} \|\nabla Q\|_{L^2}^2
		\]
		for all $t\in [0,\infty)$. Integrating this inequality, there exists $t_0>0$ such that $F(t_0)<0$ which is a contradiction.
		
		{\bf Case 2.} We must have 
		\[
		\sup_{t\in [0,\infty)} \|\nabla u(t)\|_{L^2} \|u(t)\|_{L^2}^{\sigc} = \|\nabla Q\|_{L^2} \|Q\|^{\sigc}_{L^2}.
		\]
		Thus there exists $(t_n)_{n\geq 1} \subset [0,\infty)$ such that 
		\[
		\lim_{n\rightarrow \infty} \|\nabla u(t_n)\|_{L^2} \|u(t_n)\|_{L^2}^{\sigc} = \|\nabla Q\|_{L^2} \|Q\|^{\sigc}_{L^2}.
		\]
		By the conservation laws of mass and Lemma \ref{lem-cons-angu}, we have
		\[
		E_0(u(t_n))[M(u(t_n))]^{\sigc}= E^0(Q) [M(Q)]^{\sigc}.
		\]
		Note that $t_n$ must tend to infinity. Otherwise, there exists $t_0 \in [0,\infty)$ such that up to a subsequence, $t_n \rightarrow t_0$ as $n\rightarrow \infty$. By continuity of the solution maps $t \ni [0,\infty) \mapsto u(t) \in \Sigma_A(\R^3)$ and $\Sigma_A(\R^3)\subset H^1(\R^3)$, we have
		\[
		E_0(u(t_0))[M(u(t_0))]^{\sigc}= E^0(Q) [M(Q)]^{\sigc}, \quad \|\nabla u(t_0)\|_{L^2} \|u(t_0)\|_{L^2}^{\sigc} = \|\nabla Q\|_{L^2} \|Q\|^{\sigc}_{L^2} 
		\]
		which is impossible due to Observation \ref{observation}. Now, we take $\lambda>0$ so that $\|u(t_n)\|_{L^2} = \lambda \|Q\|_{L^2}$. Note that $\lambda$ is independent of $n$ due to the conservation of mass. It follows that
		\[
		E_0(u(t_n)) = \lambda^{-2\sigc} E^0(Q), \quad \lim_{n\rightarrow \infty} \|\nabla u(t_n)\|_{L^2} = \lambda^{-\sigc} \|\nabla Q\|_{L^2}.
		\]
		By the Gagliardo-Nirenberg inequality \eqref{GN-ineq-super}, we see that
		\begin{align*}
		\|u(t_n)\|^{\alpha+2}_{L^{\alpha+2}} &\leq C_{\opt} \|\nabla u(t_n)\|^{\frac{3\alpha}{2}}_{L^2} \|u(t_n)\|^{\frac{4-\alpha}{2}}_{L^2} \\
		&= \frac{2(\alpha+2)}{3\alpha} \left(\|\nabla Q\|_{L^2} \|Q\|^{\sigc}_{L^2}\right)^{-\frac{3\alpha-4}{2}} \|\nabla u(t_n)\|^{\frac{3\alpha}{2}}_{L^2} \left(\lambda \|Q\|_{L^2}\right)^{\frac{4-\alpha}{2}}
		\end{align*}
		which implies
		\[
		\lim_{n\rightarrow \infty} \|u(t_n)\|^{\alpha+2}_{L^{\alpha+2}} \leq \frac{2(\alpha+2)}{3\alpha} \lambda^{-2\sigc} \|\nabla Q\|^2_{L^2} = \lambda^{-2\sigc} \|Q\|^{\alpha+2}_{L^{\alpha+2}}.
		\]
		Thus we have
		\[
		\lambda^{-2\sigc} E^0(Q) \leq \lim_{n\rightarrow \infty} E^0(u(t_n)) \leq E_0(u(t_n)) = \lambda^{-2\sigc} E^0(Q)
		\]
		which implies
		\[
		\lim_{n\rightarrow \infty} E^0(u(t_n)) = \lambda^{-2\sigc} E^0(Q).
		\]
		We have proved that there exists a time sequence $t_n \rightarrow \infty$ such that 
		\[
		\|u(t_n)\|_{L^2} = \lambda \|Q\|_{L^2}, \quad \lim_{n\rightarrow \infty} \|\nabla u(t_n)\|_{L^2} = \lambda^{-\sigc} \|\nabla Q\|_{L^2}, \quad \lim_{n\rightarrow\infty} E^0(u(t_n))= \lambda^{-2\sigc} E^0(Q)
		\]
		for some $\lambda>0$. By the concentration-compactness lemma of P.-L. Lions \cite{Lions}, there exists a subsequence still denoted by $(u(t_n))_{n\geq 1}$ satisfying one of the following three possibilities: vanishing, dichotomy and compactness. 
		
		The vanishing cannot occur. In fact, suppose that the vanishing occurs. Then it was shown in \cite{Lions} that $u(t_n) \rightarrow 0$ strongly in $L^r(\R^3)$ for any $2<r<6$. This however contradicts to the fact that
		\[
		\lim_{n\rightarrow \infty} \|u(t_n)\|^{\alpha+2}_{L^{\alpha+2}} = \lambda^{-2\sigc} \|Q\|^{\alpha+2}_{L^{\alpha+2}}>0.
		\]
		
		The dichotomy cannot occur. Indeed, suppose the dichotomy occurs, then there exist $\mu \in (0, \lambda \|Q\|_{L^2})$ and sequences $(f^1_n)_{n\geq 1}, (f^2_n)_{n\geq 1}$ bounded in $H^1(\R^3)$ such that
		\[
		\left\{
		\renewcommand*{\arraystretch}{1.3}
		\begin{array}{l}
		\|u(t_n) - f^1_n - f^2_n\|_{L^r} \rightarrow 0 \text{ as } n\rightarrow \infty \text{ for any } 2 \leq r < 6, \\
		\|f^1_n\|_{L^2} \rightarrow \mu, \quad \|f^2_n\|_{L^2} \rightarrow \lambda \|Q\|_{L^2} - \mu \text{ as } n\rightarrow \infty, \\
		\dist(\supp(f^1_n), \supp(f^2_n)) \rightarrow \infty \text{ as } n \rightarrow \infty, \\
		\liminf_{n\rightarrow \infty} \|\nabla u(t_n)\|^2_{L^2} - \|\nabla f^1_n\|^2_{L^2} - \|\nabla f^2_n\|^2_{L^2} \geq 0.
		\end{array}
		\right.	
		\]
		By the Gagliardo-Nirenberg inequality, we have
		\[
		\|f^1_n\|^{\alpha+2}_{L^{\alpha+2}} \leq C_{\opt} \|\nabla f^1_n\|^{\frac{3\alpha}{2}}_{L^2} \|f^1_n\|^{\frac{4-\alpha}{2}}_{L^2} < C_{\opt} \|\nabla f^1_n\|^{\frac{3\alpha}{2}}_{L^2} \|u(t_n)\|^{\frac{4-\alpha}{2}}_{L^2}
		\]
		for $n$ sufficiently large. Similarly, we have for $n$ large enough,
		\[
		\|f^2_n\|^{\alpha+2}_{L^{\alpha+2}} < C_{\opt} \|\nabla f^2_n\|^{\frac{3\alpha}{2}}_{L^2} \|u(t_n)\|^{\frac{4-\alpha}{2}}_{L^2}.
		\]
		It follows that
		\begin{align*}
		\lambda^{-2\sigc} \|Q\|^{\alpha+2}_{L^{\alpha+2}}= \lim_{n\rightarrow \infty} \|u(t_n)\|^{\alpha+2}_{L^{\alpha+2}} &= \lim_{n\rightarrow \infty} \|f^1_n\|^{\alpha+2}_{L^{\alpha+2}} + \|f^2_n\|^{\alpha+2}_{L^{\alpha+2}} \\
		&< C_{\opt} \lim_{n\rightarrow \infty} \left(\|\nabla f^1_n\|^{\frac{3\alpha}{2}}_{L^2} + \|\nabla f^2_n\|^{\frac{3\alpha}{2}}_{L^2} \right) \|u(t_n)\|^{\frac{4-\alpha}{2}}_{L^2} \\
		&\leq C_{\opt} \lim_{n\rightarrow \infty} \left(\|\nabla f^1_n\|_{L^2}^2 +\|\nabla f^2_n\|^2 \right)^{\frac{3\alpha}{4}} \|u(t_n)\|^{\frac{4-\alpha}{2}}_{L^2}\\
		&\leq C_{\opt} \lim_{n\rightarrow \infty} \|\nabla u(t_n)\|^{\frac{3\alpha}{2}}_{L^2} \|u(t_n)\|^{\frac{4-\alpha}{2}}_{L^2} \\
		&= C_{\opt} \left(\lambda^{-\sigc} \|\nabla Q\|_{L^2} \right)^{\frac{3\alpha}{2}} \left( \lambda \|Q\|_{L^2}\right)^{\frac{4-\alpha}{2}} \\
		&= \lambda^{-2\sigc} \|Q\|^{\alpha+2}_{L^{\alpha+2}}
		\end{align*}
		which is a contradiction. 
		
		Therefore, the compactness must occur. By \cite{Lions}, there exist a subsequence still denoted by $(u(t_n))_{n\geq 1}$, a function $f \in H^1(\R^3)$ and a sequence $(y_n)_{n\geq 1} \subset \R^3$ such that $u(t_n,\cdot+y_n) \rightarrow f$ strongly in $L^r(\R^3)$ for any $2\leq r<6$ and weakly in $H^1(\R^3)$. We have
		\[
		\|f\|_{L^2} = \lim_{n\rightarrow \infty} \|u(t_n, \cdot+y_n)\|_{L^2} = \lambda \|Q\|_{L^2}
		\]
		and
		\[
		\|f\|^{\alpha+2}_{L^{\alpha+2}} = \lim_{n\rightarrow \infty} \|u(t_n, \cdot+y_n)\|^{\alpha+2}_{L^{\alpha+2}} = \lambda^{-2\sigc}\|Q\|^{\alpha+2}_{L^{\alpha+2}}
		\]
		and 
		\[
		\|\nabla f\|_{L^2} \leq \liminf_{n\rightarrow \infty} \|\nabla u(t_n, \cdot+y_n)\|_{L^2} = \lambda^{-\sigc} \|\nabla Q\|_{L^2}.
		\]
		On the other hand, by the Gagliardo-Nirenberg inequality \eqref{GN-ineq-super}, we have
		\[
		\|\nabla f\|^{\frac{3\alpha}{2}}_{L^2} \geq \frac{\|f\|^{\alpha+2}_{L^{\alpha+2}}}{C_{\opt} \|f\|^{\frac{4-\alpha}{2}}_{L^2}} = \frac{ \lambda^{-2\sigc} \|Q\|^{\alpha+2}_{L^{\alpha+2}}}{ C_{\opt} \left( \lambda \|Q\|_{L^2}\right)^{\frac{4-\alpha}{2}}} = \left(\lambda^{-\sigc} \|\nabla Q\|_{L^2}\right)^{\frac{3\alpha}{2}}
		\]
		hence $\|\nabla f\|_{L^2} = \lim_{n\rightarrow \infty} \|\nabla u(t_n,\cdot+y_n)\|_{L^2} = \lambda^{-\sigc}\|\nabla Q\|_{L^2}$. In particular, $u(t_n,\cdot+y_n) \rightarrow f$ strongly in $H^1(\R^3)$. It is easy to see that
		\[
		\frac{\|f\|^{\alpha+2}_{L^{\alpha+2}}}{\|\nabla f\|^{\frac{3\alpha}{2}}_{L^2} \|f\|^{\frac{4-\alpha}{2}}_{L^2}} = \frac{\|Q\|^{\alpha+2}_{L^{\alpha+2}}}{\|\nabla Q\|^{\frac{3\alpha}{2}}_{L^2} \|Q\|^{\frac{4-\alpha}{2}}_{L^2}} = C_{\opt}.
		\]
		This shows that $f$ is an optimizer for the Gagliardo-Nirenberg inequality \eqref{GN-ineq-super}. By the characterization of ground state (see e.g., \cite{Lions}) with the fact $\|f\|_{L^2} = \lambda \|Q\|_{L^2}$, we have $f(x) = e^{i\theta} \lambda Q(x - x_0)$ for some $\theta \in \R$, $\mu>0$ and $x_0 \in \R^3$. Redefining the variable, we prove that there exists a sequence $(y_n)_{n\geq 1} \subset \R^3$ such that
		\[
		u(t_n, \cdot +y_n) \rightarrow e^{i\theta} \lambda Q \text{ strongly in } H^1(\R^3)
		\]
		as $n\rightarrow \infty$. 
		The proof is complete.
	\end{proof}
	
	We end this section by giving the proof of the blow-up above the mass-energy threshold given in Theorem \ref{theo-dyn-above}.
	
	\begin{proof}[Proof of Theorem \ref{theo-dyn-above}]
		We follow an argument of T. Duyckaerts and S. Roudenko \cite{DR}. Let $u:[0,T^*) \times \R^3 \rightarrow \C$ be the corresponding solution to \eqref{mag-NLS}. We will proceed in two steps.
		
		{\bf Step 1. Reduction of conditions.} Let us start with the following Cauchy-Schwarz inequality:
		\begin{align} \label{cauchy-schwarz}
		\left( \ima \int \overline{f} x \cdot \nabla f dx \right)^2 \leq \|xf\|^2_{L^2} \left(\|\nabla f\|^2_{L^2} - \left( \frac{\|f\|^{\alpha+2}_{L^{\alpha+2}}}{C_{\opt} \|f\|_{L^2}^{\frac{4-\alpha}{2}}} \right)^{\frac{4}{3\alpha}} \right)
		\end{align}
		for all $f\in H^1(\R^3)$. To see it, we have from \eqref{GN-ineq-super} that
		\[
		\|\nabla f\|^2_{L^2} \geq \left(\frac{\|f\|^{\alpha+2}_{L^{\alpha+2}}}{C_{\opt} \|f\|^{\frac{4-\alpha}{2}}_{L^2}}\right)^{\frac{4}{3\alpha}}.
		\] 
		This implies that 
		\[
		4\lambda^2 \|xf\|^2_{L^2} -4 \lambda \ima \int \overline{f} x \cdot \nabla f  dx + \|\nabla f\|^2_{L^2}=\|\nabla (e^{i\lambda |x|^2} f)\|^2_{L^2} \geq \left(\frac{\|f\|^{\alpha+2}_{L^{\alpha+2}}}{C_{\opt} \|f\|^{\frac{4-\alpha}{2}}_{L^2}}\right)^{\frac{4}{3\alpha}}
		\]
		for all $\lambda \in \R$. This shows \eqref{cauchy-schwarz}. We also recall the following identities:
		\begin{align*}
		F''(u(t)) &= 8 \|\nabla u(t)\|^2_{L^2} -2b^2\|\rho u(t)\|^2_{L^2} - \frac{12\alpha}{\alpha+2} \|u(t)\|^{\alpha+2}_{L^{\alpha+2}} \\
		&= 16 E_0(u(t)) - 4b^2 \|\rho u(t)\|^2_{L^2} - \frac{4(3\alpha-4)}{\alpha+2} \|u(t)\|^{\alpha+2}_{L^{\alpha+2}} \\
		&= 12\alpha E_0(u(t))- 2(3\alpha-4) \|\nabla u(t)\|^2_{L^2} - \frac{(3\alpha+4)b^2}{2} \|\rho u(t)\|^2_{L^2},
		\end{align*}
		where $F(u(t))$ is as in \eqref{defi-F}. In particular, we have
		\begin{align*}
		\|u(t)\|^{\alpha+2}_{L^{\alpha+2}} &= \frac{\alpha+2}{4(3\alpha-4)} \left(16E_0(u(t)) - 4b^2 \|\rho u(t)\|^2_{L^2} - F''(u(t))\right), \\
		\|\nabla u(t)\|^2_{L^2} & = \frac{1}{2(3\alpha-4)} \left(12\alpha E_0(v(t)) - \frac{(3\alpha+4)b^2}{2} \|\rho u(t)\|^2_{L^2} - F''(u(t))\right).
		\end{align*}
		Note that since $\|u(t)\|_{L^{\alpha+2}} \geq 0$, we have 
		\[
		F''(u(t)) +4b^2 \|\rho u(t)\|^2_{L^2} \leq 16E_0(u(t)).
		\]
		Moreover, inserting the above identities into \eqref{cauchy-schwarz}, we get
		\begin{multline*}
		\left(\frac{F'(u(t))}{4}\right)^2 \leq F(u(t)) \Big[ \frac{1}{2(3\alpha-4)} \Big( 12\alpha E_0(u(t)) - \frac{(3\alpha+4)b^2}{2} \|\rho u(t)\|^2_{L^2} - F''(u(t)) \Big) \\
		- \left(\frac{(\alpha+2) \Big( 16E_0(u(t)) - 4b^2 \|\rho u(t)\|^2_{L^2} - F''(u(t)) \Big)}{4(3\alpha-4)C_{\opt} \|u(t)\|^{\frac{4-\alpha}{2}}_{L^2}} \right)^{\frac{4}{3\alpha}}
		\Big]
		\end{multline*}
		Since $3\alpha >4$, we infer that
		\begin{align} \label{est-deri-z}
		(z'(t))^2 \leq 4K\left(F''(u(t)) + 4b^2 \|\rho u(t)\|^2_{L^2} \right),
		\end{align}
		where
		\[
		z(t):= \sqrt{F(v(t))}
		\]
		and
		\[
		K(\lambda):= \frac{1}{2(3\alpha-4)} \left(12 \alpha E_0 - \lambda\right) - \left(\frac{(\alpha+2)\left(16E_0 -\lambda\right)}{4(3\alpha-4) C_{\opt} M^{\frac{4-\alpha}{4}}} \right)^{\frac{4}{3\alpha}}
		\]
		with $\lambda \leq 16E_0$, $E_0=E_0(u(t))= E_0(u_0)$ and $M= M(u(t)) = M(u_0)$. Since $3\alpha>4$, we readily check that $K(\lambda)$ is decreasing on $(-\infty,\lambda_0)$ and increasing on $(\lambda_0, 16E_0)$, where $\lambda_0$ satisfies
		\begin{align} \label{lambda-0-1}
		\frac{3\alpha C_{\opt} M^{\frac{4-\alpha}{4}}}{2(\alpha+2)} = \left( \frac{(\alpha+2)(16E_0-\lambda_0)}{4(3\alpha-4) C_{\opt} M^{\frac{4-\alpha}{4}}}\right)^{\frac{4-3\alpha}{3\alpha}}.
		\end{align}
		This implies that
		\begin{align*}
		K(\lambda_0)= \frac{1}{2(3\alpha-4)} ( 12\alpha E_0-\lambda_0) - \frac{3\alpha(16E_0-\lambda_0)}{8(3\alpha-4)} = \frac{\lambda_0}{8}.  
		\end{align*}
		Note that \eqref{lambda-0-1} can be rewritten as
		\[
		\left(\frac{3\alpha C_{\opt}}{2(\alpha+2)} \right)^{\frac{4}{3\alpha}}  =\left(\frac{8(3\alpha-4)}{3\alpha (16E_0-\lambda_0) M^{\sigc}}\right)^{\frac{3\alpha-4}{3\alpha}}
		\]
		which together with the fact (see \eqref{proper-E0-Q})
		\[
		C_{\opt}=\frac{2(\alpha+2)}{3\alpha} \left(\frac{6\alpha}{3\alpha-4} E^0(Q)[M(Q)]^{\sigc}\right)^{-\frac{3\alpha-4}{4}}
		\]
		imply
		\[
		\frac{(16E_0-\lambda_0) M^{\sigc}}{16E^0(Q)[M(Q)]^{\sigc}} =1
		\]
		or
		\begin{align} \label{lambda-0-2}
		\frac{E_0 M^{\sigc}}{E^0(Q)[M(Q)]^{\sigc}} \left(1-\frac{\lambda_0}{16E_0}\right)=1.
		\end{align}
		As a result, we see that \eqref{cond-above-1} is equivalent to 
		\begin{align} \label{cond-lambda-0-1}
		\lambda_0 \geq 0
		\end{align}
		and \eqref{cond-above-2} is equivalent to $(F'(u_0))^2 \geq 2 F(u_0) \lambda_0$ or
		\begin{align} \label{cond-lambda-0-2}
		(z'(0))^2 \geq \frac{\lambda_0}{2} = 4K(\lambda_0).
		\end{align}
		Moreover, \eqref{cond-above-4} is equivalent to $z'(0) \leq 0$. Finally, \eqref{cond-above-3} is equivalent to 
		\[
		F''(u_0) +4b^2 \|\rho u_0\|^2_{L^2} <\lambda_0.
		\] 
		Indeed, if \eqref{cond-above-3} holds, then 
		\begin{align*}
		F''(u_0) + 4b^2 \|\rho u_0\|^2_{L^2} &= 16E_0 - \frac{4(3\alpha-4)}{\alpha+2} \|u_0\|^{\alpha+2}_{L^{\alpha+2}} \\
		&<16E_0 - \frac{4(3\alpha-4)}{\alpha+2} \frac{ \|Q\|^{\alpha+2}_{L^{\alpha+2}} [M(Q)]^{\sigc}}{M^{\sigc}} \\
		&= 16\left( E_0- \frac{E^0(Q)[M(Q)]^{\sigc}}{M^{\sigc}}\right) \\
		&= 16E_0 \left(1-\frac{E^0(Q)[M(Q)]^{\sigc}}{E_0M^{\sigc}}\right) \\
		&= \lambda_0,
		\end{align*}
		where the last equality comes from \eqref{lambda-0-2}.
		
		{\bf Step 2. Finite time blow-up.} Let $u_0 \in \Sigma_A(\R^3)$ satisfy \eqref{cond-above-1}, \eqref{cond-above-2}, \eqref{cond-above-3}, and \eqref{cond-above-4}. By Step 1, we have
		\begin{align} \label{cond-blow-above-equiv}
		\lambda_0 \geq 0, \quad (z'(0))^2 \geq \frac{\lambda_0}{2} = 4K(\lambda_0), \quad z'(0)\leq 0, \quad F''(u_0) + 4b^2\|\rho u_0\|^2_{L^2} <\lambda_0.
		\end{align}
		We claim that
		\begin{align} \label{claim-blow-above}
		z''(t) <0, \quad \forall t\in [0,T^*).
		\end{align}
		Using the fact
		\begin{align} \label{iden-deri-z}
		z''(t) = \frac{1}{z(t)} \left(\frac{F''(u(t))}{2} - (z'(t))^2\right),
		\end{align}
		we have $z''(0)<0$. Assume that \eqref{claim-blow-above} does not hold. Then there exists $t_0 \in (0,T^*)$ such that 
		\[
		z''(t) <0, \quad \forall t\in [0,t_0), \quad z''(t_0) =0.
		\]
		By \eqref{cond-blow-above-equiv}, we have
		\[
		z'(t) <z'(0) \leq - 2\sqrt{K(\lambda_0)}, \quad \forall t\in (0,t_0].
		\]
		Hence $(z'(t))^2 >4K(\lambda_0)$ which together with \eqref{est-deri-z} imply
		\[
		K\left( F''(u(t)) + 4b^2\|\rho u(t)\|^2_{L^2} \right) > K(\lambda_0), \quad \forall t\in (0,t_0].
		\]
		It follows that 
		\[
		F''(u(t)) + 4b^2\|\rho u(t)\|^2_{L^2} \ne \lambda_0, \quad \forall t\in (0,t_0]
		\]
		which, by continuity, implies
		\[
		F''(u(t)) + 4b^2 \|\rho u(t)\|^2_{L^2} <\lambda_0, \quad \forall t\in [0,t_0].
		\]
		By \eqref{iden-deri-z}, we obtain
		\[
		z''(t_0) = \frac{1}{z(t_0)} \left( \frac{F''(u(t_0))}{2} - (z'(t_0))^2\right) < \frac{1}{z(t_0)} \left(\frac{\lambda_0}{2} -\frac{\lambda_0}{2}\right) =0
		\]
		which is a contradiction. This proves \eqref{claim-blow-above}. Now we assume by contradiction that $T^*=\infty$. Then by \eqref{claim-blow-above}, we have
		\[
		z'(t) \leq z'(1)<z'(0)\leq 0, \quad \forall t\in [1,\infty).
		\]
		This contradicts with the fact $z(t)$ is positive for all $t\in [0,\infty)$. The proof is complete.
	\end{proof}
	
	\section{Existence and stability of normalized standing waves}
	\label{S4}
	\setcounter{equation}{0}
	In this section, we prove the existence and orbital stability of normalized standing waves related to \eqref{mag-NLS}. To this end, we need the following result which plays a crucial role in ruling out the vanishing possibility.	
	
	\begin{lemma} \label{lem-lowe-boun}
		Let $A$ be as in \eqref{defi-A} and $0<\alpha<4$. Let $c>0$ and $(f_n)_{n\geq 1}$ be a minimizing sequence for $I(c)$. Then there exists $C>0$ such that
		\[
		\liminf_{n\rightarrow \infty} \|f_n\|_{L^{\alpha+2}} \geq C>0.
		\]
	\end{lemma}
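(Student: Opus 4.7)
The plan is to establish the strict inequality $I(c) < \tfrac{|b|c}{2}$, from which the claim follows immediately. Combining \eqref{L2-bound} with the definition of the energy gives, for every $f \in S(c)$,
\[
E(f) \geq \frac{|b|c}{2} - \frac{1}{\alpha+2}\|f\|_{L^{\alpha+2}}^{\alpha+2}.
\]
Thus, once $I(c) < \tfrac{|b|c}{2}$ is known, for any minimizing sequence $(f_n)_{n\geq 1}$ one has
\[
\|f_n\|_{L^{\alpha+2}}^{\alpha+2}\geq(\alpha+2)\left(\tfrac{|b|c}{2}-E(f_n)\right),
\]
and taking the limit inferior yields $\liminf_{n\to\infty}\|f_n\|_{L^{\alpha+2}}^{\alpha+2}\geq(\alpha+2)\bigl(\tfrac{|b|c}{2}-I(c)\bigr)>0$, which is the desired bound.

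To produce $I(c)<\tfrac{|b|c}{2}$, I will construct an explicit test function saturating the $L^2$-bound up to a small error. A natural choice is a lowest Landau level state of the $2$D magnetic operator, tensored with a rescaled $1$D profile. Set
\[
\phi_0(x_1,x_2):=\sqrt{\tfrac{|b|}{2\pi}}\,e^{-|b|(x_1^2+x_2^2)/4},
\]
so that $\|\phi_0\|_{L^2(\R^2)}=1$. A direct computation based on the representation \eqref{nabla-A}, using $L_z\phi_0=0$ (by radial symmetry in $(x_1,x_2)$) and the pointwise identity $-\Delta\phi_0=|b|\phi_0-\tfrac{b^2\rho^2}{4}\phi_0$, gives $-(\nabla+iA)^2\phi_0=|b|\phi_0$. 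Fix $\chi\in C_0^\infty(\R)$ with $\|\chi\|_{L^2(\R)}=1$ and, for $\lambda>0$, define $\chi_\lambda(x_3):=\lambda^{1/2}\chi(\lambda x_3)$ and
\[
f_\lambda(x):=\sqrt{c}\,\phi_0(x_1,x_2)\,\chi_\lambda(x_3)\in S(c).
\]

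Extending the Landau identity to $\R^3$ by adjoining the $x_3$-derivative yields
\[
\|(\nabla+iA)f_\lambda\|_{L^2}^2 = c\bigl(|b|+\lambda^2\|\chi'\|_{L^2}^2\bigr),\qquad \|f_\lambda\|_{L^{\alpha+2}}^{\alpha+2}=K\lambda^{\alpha/2},
\]
where $K:=c^{(\alpha+2)/2}\|\phi_0\|_{L^{\alpha+2}(\R^2)}^{\alpha+2}\|\chi\|_{L^{\alpha+2}(\R)}^{\alpha+2}>0$ is independent of $\lambda$. Therefore
\[
E(f_\lambda)=\frac{|b|c}{2}+\frac{c\|\chi'\|_{L^2}^2}{2}\lambda^2-\frac{K}{\alpha+2}\lambda^{\alpha/2}.
\]
Since $0<\alpha<4$ forces $\alpha/2<2$, the negative $\lambda^{\alpha/2}$-term dominates the positive $\lambda^2$-term as $\lambda\to 0^+$, so $E(f_\lambda)<\tfrac{|b|c}{2}$ for $\lambda>0$ sufficiently small, which gives $I(c)<\tfrac{|b|c}{2}$. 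The only substantive point is recognizing that the Landau ground state saturates \eqref{L2-bound}; once this is verified, the one-parameter rescaling in $x_3$ together with the subcriticality $\alpha<4$ produce the strict inequality with no concentration-compactness input.
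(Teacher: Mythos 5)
Your proposal is correct and follows essentially the same route as the paper: the key inequality $I(c)<\tfrac{|b|c}{2}$ is obtained from exactly the same test functions (the normalized lowest Landau level Gaussian in $(x_1,x_2)$, which saturates \eqref{L2-bound}, tensored with a rescaled one-dimensional profile in $x_3$, with the subcriticality $\alpha<4$ making the $\lambda^{\alpha/2}$ term dominate as $\lambda\to 0^+$), combined with the lower bound $E(f)\geq\tfrac{|b|c}{2}-\tfrac{1}{\alpha+2}\|f\|^{\alpha+2}_{L^{\alpha+2}}$ on $S(c)$ coming from \eqref{L2-bound}. The only difference is cosmetic: you argue directly and obtain the quantitative bound $\liminf_n\|f_n\|^{\alpha+2}_{L^{\alpha+2}}\geq(\alpha+2)\bigl(\tfrac{|b|c}{2}-I(c)\bigr)$, whereas the paper phrases the same ingredients as a proof by contradiction.
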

	
	\begin{proof}
		Assume by contradiction that there exists a subsequence still denoted by $(f_n)_{n\geq 1}$ satisfying $\lim_{n\rightarrow \infty} \|f_n\|_{L^{\alpha+2}} =0$. Thanks to \eqref{L2-bound}, we see that
		\begin{align} \label{lowe-boun-Ic}
		I(c) = \lim_{n\rightarrow \infty} E(f_n) = \lim_{n\rightarrow \infty} \frac{1}{2} \|(\nabla +iA) f_n\|_{L^2}^2 \geq \lim_{n\rightarrow \infty}\frac{|b|}{2} \|f_n\|^2_{L^2} = \frac{|b| c}{2}.
		\end{align}
		Denote $x=(x_\perp, x_3)$ with $x_\perp = (x_1, x_2) \in \R^2$ and $x_3 \in \R$, and set $g(x_\perp):= \sqrt{\frac{|b|}{2\pi}} e^{-\frac{|b|}{4} |x_\perp|^2}$. One can readily check that
		\[
		\|g\|_{L^2(\R^2)} = 1, \quad \|\nabla_\perp g\|^2_{L^2(\R^2)} +\frac{b^2}{4} \|\rho g\|^2_{L^2(\R^2)} = |b|.
		\]
		Let $h \in C^\infty_0(\R)$ be such that $\|h\|^2_{L^2(\R)} =c$ and set
		\begin{align} \label{defi-f-lambda}
		f_\lambda(x) = g(x_\perp) h_\lambda(x_3), \quad h_\lambda(x_3) = \lambda^{\frac{1}{2}} h(\lambda x_3)
		\end{align}
		with $\lambda>0$ to be chosen later. We have $\|f_\lambda\|^2_{L^2}=c$ for all $\lambda>0$. Using \eqref{mag-norm}, we see that
		\begin{align*}
		\|(\nabla+iA)f_\lambda\|^2_{L^2} &= \|\nabla f_\lambda\|^2_{L^2} + bR(f_\lambda) + \frac{b^2}{4} \|\rho f_\lambda\|^2_{L^2} \\
		&= \|\nabla_\perp g\|^2_{L^2(\R^2)} \|h_\lambda\|^2_{L^2(\R)} + \|g\|^2_{L^2(\R^2)} \|\partial_3 h_\lambda\|^2_{L^2(\R)} \\
		&\mathrel{\phantom{=}}+ b \left(\int_{\R^2} L_z g \overline{g} dx_\perp\right) \|h_\lambda\|^2_{L^2(\R)} + \frac{b^2}{4} \|\rho g\|^2_{L^2(\R^2)} \|h_\lambda\|^2_{L^2(\R)}\\
		&=  c\left(\|\nabla_\perp g\|^2_{L^2(\R^2)} + \frac{b^2}{4} \|\rho g\|^2_{L^2(\R^2)} \right) + \lambda^2 \|\partial_3 h\|^2_{L^2(\R)} \\
		&= c|b| + \lambda^2 \|\partial_3 h\|^2_{L^2(\R)}.
		\end{align*}
		Here we note that $\mathlarger{\int}_{\R^2} L_z g \overline{g} dx_\perp =0$ as $g$ is radially symmetric. It follows that
		\[
		E(f_\lambda) = \frac{|b|c}{2} + \frac{\lambda^2}{2} \|\partial_3 h\|^2_{L^2(\R)} - \frac{\lambda^{\frac{\alpha}{2}}}{\alpha+2}\|g\|^{\alpha+2}_{L^{\alpha+2}(\R^2)} \|h\|^{\alpha+2}_{L^{\alpha+2}(\R)}. 
		\]
		As $\alpha<4$, by taking $\lambda>0$ sufficiently small, we have $E(f_\lambda)<\frac{|b|c}{2}$. In particular, $I(c)< \frac{|b|c}{2}$ which contradicts \eqref{lowe-boun-Ic}. The proof is complete.
	\end{proof}

	\begin{proof} [Proof of Theorem \ref{theo-exis-stab-mass-cri}] We proceed in two steps.
		
		{\bf Step 1. Existence of minimizers.} Let $0<c<M(Q)$. We first show that $I(c)$ is well-defined, i.e., $I(c)>-\infty$. Let $f \in S(c)$. By the Gagliardo-Nirenberg inequality \eqref{mag-GN-ineq-mass-cri}, we have
		\begin{align*}
		E(f) &\geq \frac{1}{2} \|(\nabla +iA) f\|^2_{L^2} - \frac{1}{2} \left(\frac{M(f)}{M(Q)} \right)^{\frac{2}{3}} \|(\nabla +iA) f\|^2_{L^2} \\
		&= \frac{1}{2} \left(1-\left(\frac{c}{M(Q)}\right)^{\frac{2}{3}}\right) \|(\nabla+iA)f \|^2_{L^2} \geq 0
		\end{align*}
		for all $f \in S(c)$. This shows that $I(c) \geq 0$. 
		
		Now let $(f_n)_{n\geq 1}$ be a minimizing sequence for $I(c)$. From the above estimate, we have
		\[
		\frac{1}{2} \left(1-\left(\frac{c}{M(Q)}\right)^{\frac{2}{3}}\right) \|(\nabla+iA)f_n\|^2_{L^2} \leq E(f_n) \rightarrow I(c) \text{ as } n \rightarrow \infty.
		\]
		This shows that $(f_n)_{n\geq 1}$ is a bounded sequence in $H^1_A(\R^3)$. Moreover, by Lemma \ref{lem-lowe-boun}, we see that up to a subsequence, 
		\[
		\inf_{n\geq 1} \|f_n\|_{L^{\frac{10}{3}}} \geq C >0.
		\]
		By Lemma \ref{lem-weak-conv}, up to a subsequence, there exist $f \in H^1_A(\R^3)\backslash \{0\}$ and $(y_n)_{n\geq 1}\subset \R^3$ such that
		\[
		\tilde{f}_n(x):= e^{iA(y_n) \cdot x} f_n(x+y_n) \rightharpoonup f \text{ weakly in } H^1_A(\R^3).
		\]
		By the weak convergence in $H^1_A(\R^3)$, we have
		\[
		0<\|f\|^2_{L^2} \leq \liminf_{n\rightarrow \infty} \|\tilde{f}_n\|^2_{L^2} = \liminf_{n\rightarrow \infty} \|f_n\|^2_{L^2}=c
		\]
		and
		\[
		\|(\nabla +iA) f\|^2_{L^2} \leq \liminf_{n\rightarrow \infty} \|(\nabla+iA) \tilde{f}_n\|^2_{L^2} = \liminf_{n\rightarrow \infty} \|(\nabla +iA) f_n\|^2_{L^2}.
		\]
		Next we claim that 
		\begin{align} \label{claim}
		\|f\|^2_{L^2}=c.
		\end{align}
		Let us postpone the verification of \eqref{claim} for the moment and finish the proof of Theorem \ref{theo-exis-stab-mass-cri}. By the weak convergence in $H^1_A(\R^3)$ and \eqref{claim}, we infer that $\tilde{f}_n \rightarrow f$ strongly in $L^2(\R^3)$. Using this strong convergence and the magnetic Gagliardo-Nirenberg inequality 
		\[
		\|f\|^{\frac{10}{3}}_{L^{\frac{10}{3}}} \leq C_{\opt} \|(\nabla +iA) f\|^2_{L^2} \|f\|^{\frac{4}{3}}_{L^2},
		\]
		we see that $\tilde{f}_n \rightarrow f$ strongly in $L^{\frac{10}{3}}(\R^3)$. Thus we get
		\[
		I(c) \leq E(f) \leq \liminf_{n\rightarrow \infty} E(\tilde{f}_n) = \liminf_{n\rightarrow \infty} E(f_n) = I(c),
		\]
		hence $E(f) = I(c)$ or $f$ is a minimizer for $I(c)$. This also implies that $\tilde{f}_n \rightarrow f$ strongly in $H^1_A(\R^3)$. 
		
		It remains to prove \eqref{claim}. Assume by contradiction that it is not true, i.e., $0<\|f\|^2_{L^2}<c$. We have for any $\lambda>0$,
		\[
		E(\lambda f) = \lambda^2 E(f) + \frac{\lambda^2(1-\lambda^\alpha)}{\alpha+2} \|f\|^{\alpha+2}_{L^{\alpha+2}}
		\]
		or
		\[
		E(f) = \frac{1}{\lambda^2} E(\lambda f) + \frac{\lambda^\alpha-1}{\alpha+2} \|f\|^{\alpha+2}_{L^{\alpha+2}}.
		\]
		Set $\lambda_0 = \frac{\sqrt{c}}{\|f\|_{L^2}}>1$. We have $\|\lambda_0 f\|^2_{L^2}=c$ and
		\[
		E(f) = \frac{\|f\|^2_{L^2}}{c} E(\lambda_0 f) + \frac{\lambda_0^\alpha-1}{\alpha+2} \|f\|^{\alpha+2}_{L^{\alpha+2}} > \frac{\|f\|^2_{L^2}}{c} I(c)
		\]
		as $f \ne 0$ and $\lambda_0>1$. Similarly, set $\lambda_n:= \frac{\sqrt{c}}{\|\tilde{f}_n-f\|_{L^2}}$. By Lemma \ref{lem-refi-fatou}, we have $\|\tilde{f}_n-f\|^2_{L^2} \rightarrow c-\|f\|^2_{L^2}$ as $n\rightarrow \infty$, hence $\lambda_n \rightarrow \frac{\sqrt{c}}{\sqrt{c-\|f\|^2_{L^2}}}  >1$ as $n\rightarrow \infty$. In particular, we have
		\[
		\lim_{n\rightarrow \infty} E(\tilde{f}_n-f) = \lim_{n\rightarrow \infty} \frac{1}{\lambda_n^2} E(\lambda_n(\tilde{f}_n-f)) + \frac{\lambda_n^\alpha-1}{\alpha+2} \|\tilde{f}_n-f\|^{\alpha+2}_{L^{\alpha+2}} \geq \frac{c-\|f\|^2_{L^2}}{c} I(c).
		\]
		Using the refined Fatou's lemma (see Lemma \ref{lem-refi-fatou}), we get
		\[
		I(c) = \lim_{n\rightarrow \infty} E(f_n) = \lim_{n\rightarrow \infty} E(\tilde{f}_n) = E(f) + \lim_{n\rightarrow \infty} E(\tilde{f}_n-f) > \frac{\|f\|^2_{L^2}}{c} I(c) + \frac{c-\|f\|^2_{L^2}}{c} I(c) =I(c)
		\]
		which is a contradiction. This proves \eqref{claim} and the existence of minimizers for $I(c)$.
		
		{\bf Step 2. Orbital stability.} Let us now show that the set of minimizers $\Mcal(c)$ is orbitally stable in the sense of Proposition \ref{prop-exis-stab-mass-sub}. We follow an argument of \cite{CE}. Assume by contradiction that it is not true. Then there exist $\vareps_0>0$, $\phi_0 \in \Mcal(c)$, and a sequence of initial data $(u_{0,n})_{n \geq 1} \subset H^1_A(\R^3)$ such that
		\begin{align} \label{sta-prof-1}
		\lim_{n\rightarrow \infty} \|u_{0,n} -\phi_0\|_{H^1_A} =0
		\end{align} 
		and a sequence of time $(t_n)_{n\geq 1} \subset [0,\infty)$ such that 
		\begin{align} \label{sta-prof-2}
		\inf_{\phi \in \Mcal(c)} \inf_{y\in \R^3} \|e^{iA(y)\cdot \cdotb}u_n(t_n, \cdotb +y)- \phi\|_{H^1_A} \geq \vareps_0,
		\end{align}
		where $u_n$ is the solution to \eqref{mag-NLS} with initial data $\left. u_n\right|_{t=0} = u_{0,n}$. Note that the solutions exist globally in time by Proposition \ref{prop-thre-mass}.
		
		Since $\phi_0 \in \Mcal(c)$, we have $E(\phi_0) = I(c)$. From \eqref{sta-prof-1} and the Sobolev embedding, we infer that
		\[
		\|u_{0,n}\|^2_{L^2} \rightarrow \|\phi_0\|_{L^2}^2=c, \quad E(u_{0,n}) \rightarrow E(\phi_0)=I(c) \text{ as } n\rightarrow \infty.
		\]
		By the conservation laws of mass and energy, we have
		\[
		\|u_n(t_n)\|^2_{L^2} \rightarrow c, \quad E(u_n(t_n)) \rightarrow I(c) \text{ as } n \rightarrow \infty.
		\]
		In particular, $(u_n(t_n))_{n\geq 1}$ is a minimizing sequence for $I(c)$. Arguing as in Step 1, we see that up to a subsequence, there exist $\phi \in \Mcal(c)$ and $(y_n)_{n\geq 1}\subset \R^3$ such that 
		\[
		\|e^{iA(y_n)\cdot \cdotb}u_n(t_n, \cdotb+y_n) - \phi\|_{H^1_A} \rightarrow 0 \text{ as } n\rightarrow \infty.
		\]
		This however contradicts \eqref{sta-prof-2}. The proof is complete.
	\end{proof}

	\begin{proof} [Proof of Proposition \ref{prop-non-exis}]
		(1) We first consider the case $\alpha=\frac{4}{3}$. Let $\varphi \in C^\infty_0(\R^3)$ be radially symmetric satisfying $\varphi(x)=1$ for $|x| \leq 1$. We define
		\[
		f_\lambda(x):=   B_\lambda  \lambda^{\frac{3}{2}} \varphi(x) Q_0(\lambda x), \quad \lambda>0,
		\]
		where $Q_0(x)= \frac{Q(x)}{\|Q\|_{L^2}}$ and $B_\lambda>0$ is such that $\|f_\lambda\|^2_{L^2} =c$ for all $\lambda>0$. By the definition, we have
		\begin{align*}
		B_\lambda^{-2} =\frac{1}{c} \int \varphi^2(\lambda^{-1} x) Q^2_0(x) dx.
		\end{align*}
		Since $Q_0$ decays exponentially at infinity, we see that for $\lambda>0$ sufficiently large and any $\delta>0$,
		\[
		\left| \int \left(1-\varphi^2(\lambda^{-1} x)\right) Q_0^2(x) dx \right| \lesssim \int_{|x| \geq \lambda} e^{-C |x|} dx \lesssim \int_{|x| \geq \lambda} |x|^{-3-\delta} dx \lesssim \lambda^{-\delta}.
		\]
		In particular, we have $B^2_\lambda = c + O(\lambda^{-\infty})$ as $\lambda \rightarrow \infty$, where $D_\lambda = O(\lambda^{-\infty})$ means that $|D_\lambda| \leq C \lambda^{-\delta}$ for any $\delta>0$ with some constant $C>0$ independent of $\lambda$. Using \eqref{mag-norm}, we have
		\[
		\|(\nabla+iA) f_\lambda\|^2_{L^2}= \|\nabla f_\lambda\|^2_{L^2} + b R(f_\lambda) + \frac{b^2}{4} \|\rho f_\lambda\|^2_{L^2} = \|\nabla f_\lambda\|^2_{L^2} + \frac{b^2}{4} \|\rho f_\lambda\|^2_{L^2}, 
		\]
		where $R(f_\lambda)=0$ as $f_\lambda$ is radially symmetric. We have
		\begin{align*}
		\|\nabla f_\lambda\|^2_{L^2} = B_\lambda^2 \Big( \int |\nabla \varphi(\lambda^{-1} x)|^2 Q_0^2(x) dx &+ \lambda^2 \int \varphi^2(\lambda^{-1} x) |\nabla Q_0(x)|^2dx \\
		& + 2 \lambda \rea \int \varphi(\lambda^{-1} x) Q_0(x) \nabla \varphi(\lambda^{-1} x) \cdot \nabla Q_0(x) dx \Big).
		\end{align*}
		As $|\nabla Q_0|$ also decays exponentially at infinity and $B_\lambda^2 = c + O(\lambda^{-\infty})$ as $\lambda \rightarrow \infty$, we infer that
		\[
		\|\nabla f_\lambda\|^2_{L^2} = c \lambda^2  \|\nabla Q_0\|^2_{L^2} + O(\lambda^{-\infty})
		\]
		as $\lambda \rightarrow \infty$. On the other hand, since $\lambda^3 Q_0^2(\lambda x)$ converges weakly to the Dirac delta function at zero when $\lambda \rightarrow \infty$, we infer that
		\[
		\int \rho^2(x) |f_\lambda (x)|^2 dx = B_\lambda^2 \int \rho^2(x) \varphi^2(x) \lambda^3 Q_0^2(\lambda x) dx \rightarrow 0
		\]
		as $\lambda \rightarrow \infty$, where $\rho(x)=\sqrt{x_1^2+x_2^2}$. We also have
		\[
		\|f_\lambda\|^{\frac{10}{3}}_{L^{\frac{10}{3}}} =  c^{\frac{5}{3}} \lambda^2 \|Q_0\|^{\frac{10}{3}}_{L^{\frac{10}{3}}} + O(\lambda^{-\infty})
		\]
		as $\lambda \rightarrow \infty$. It follows that
		\begin{align}
		I(c) \leq E(f_\lambda) &= \frac{1}{2} \|(\nabla+iA)f_\lambda\|^2_{L^2} - \frac{3}{10} \|f_\lambda\|^{\frac{10}{3}}_{L^{\frac{10}{3}}} \nonumber \\
		&= \frac{1}{2}\|\nabla f_\lambda\|^2_{L^2} + \frac{b^2}{8}\|\rho f_\lambda\|^2_{L^2} - \frac{3}{10}\|f_\lambda\|^{\frac{10}{3}}_{L^{\frac{10}{3}}} \nonumber \\
		&= \frac{c}{2}\lambda^2 \left( \|\nabla Q_0\|^2_{L^2} - \frac{3}{5} c^{\frac{2}{3}} \|Q_0\|^{\frac{10}{3}}_{L^{\frac{10}{3}}} \right) + o_\lambda(1) \nonumber \\
		&= \frac{c}{2}\lambda^2 \|\nabla Q_0\|^2_{L^2} \left(1-\left(\frac{c}{M(Q)}\right)^{\frac{2}{3}} \right) + o_\lambda(1) \label{est-I-c}
		\end{align}
		as $\lambda \rightarrow \infty$, where $D_\lambda = o_\lambda(1)$ means that $|D_\lambda| \rightarrow 0$ as $\lambda \rightarrow \infty$. Here we have used \eqref{poho-iden-mass} to get
		\[
		\frac{3}{5}\|Q_0\|^{\frac{10}{3}}_{L^{\frac{10}{3}}} = \frac{\|\nabla Q_0\|^2_{L^2}}{\|Q\|^{\frac{4}{3}}_{L^2} }.
		\]
		
		In the case $c>M(Q)$, letting $\lambda \rightarrow \infty$ in \eqref{est-I-c}, we get $I(c) =-\infty$, hence there is no minimizer for $I(c)$. 
		
		In the case $c=M(Q)$, it follows from \eqref{est-I-c} that $I(M(Q)) \leq 0$. On the other hand, by the magnetic Gagliardo-Nirenberg inequality \eqref{mag-GN-ineq-mass-cri}, we have for any $f \in H^1_A$ satisfying $\|f\|_{L^2}^2=c=M(Q)$,
		\[
		E(f) \geq \frac{1}{2} \|(\nabla+iA) f\|^2_{L^2} - \frac{1}{2} \left(\frac{\|f\|_{L^2}}{\|Q\|_{L^2}}\right)^{\frac{4}{3}} \|(\nabla+iA) f\|^2_{L^2} = 0.
		\]
		This shows that $I(M(Q)) \geq 0$, hence $I(M(Q)) =0$. We will show that there is no minimizer for $I(M(Q))$. Assume by contradiction that there exists a minimizer for $I(M(Q))$, says $\phi$. We have
		\begin{align*}
		0= I(\|Q\|^2_{L^2}) = E(\phi) = \frac{1}{2} \|(\nabla+iA) \phi\|^2_{L^2}  - \frac{3}{10} \|\phi\|^{\frac{10}{3}}_{L^{\frac{10}{3}}}
		\end{align*}
		In particular, $\phi$ is an optimizer to the magnetic Gagliardo-Nirenberg inequality \eqref{mag-GN-ineq-mass-cri}. Arguing as in the proof of Proposition \ref{prop-gwp-2}, we get a contradiction. Thus there is no minimizer for $I(M(Q))$.
		
		(2) Let us consider the case $\frac{4}{3}<\alpha<4$. Let $f \in C^\infty_0(\R^3)$ be radially symmetric and satisfy $\|f\|^2_{L^2}=c$. Denote
		\[
		f_\lambda(x):= \lambda^{\frac{3}{2}} f(\lambda x), \quad \lambda>0.
		\]
		We see that $\|f_\lambda\|^2_{L^2}=\|f\|^2_{L^2}=c$ for all $\lambda>0$. We also have
		\begin{align*}
		E(f_\lambda)&=\frac{1}{2} \|(\nabla+iA) f_\lambda\|^2_{L^2} - \frac{1}{\alpha+2} \|f_\lambda\|^{\alpha+2}_{L^{\alpha+2}} \\
		&= \frac{1}{2} \|\nabla f_\lambda\|^2_{L^2} + \frac{b^2}{8} \|\rho f_\lambda\|^2_{L^2} -\frac{1}{\alpha+2} \|f_\lambda\|^{\alpha+2}_{L^{\alpha+2}} \\
		&= \frac{\lambda^2}{2} \|\nabla f\|^2_{L^2} + \frac{b^2 \lambda^{-2}}{8} \|\rho f\|^2_{L^2} - \frac{\lambda^{\frac{3\alpha}{2}}}{\alpha+2} \|f\|^{\alpha+2}_{L^{\alpha+2}}.
		\end{align*}
		As $\alpha>\frac{4}{3}$ or $\frac{3\alpha}{2} >2$, we see that $E(f_\lambda) \rightarrow -\infty$ as $\lambda \rightarrow \infty$. In particular, $I(c) =-\infty$.  
	\end{proof}

	Before giving the proof of Theorem \ref{theo-exis-stab-mass-sup}, we prepare some lemmas.
	
	\begin{lemma} \label{lem-c0}
		Let $\frac{4}{3}<\alpha<4$. Then for any $m>0$, there exists $c_0=c_0(m)>0$ sufficiently small such that for all $0<c<c_0$,
		\begin{align}
		S(c) \cap D(m) &\ne \emptyset, \label{non-empty} \\
		\inf \left\{E(f) \ : \ f \in S(c) \cap D(m/4)\right\} &< \inf \left\{ E(f) \ : \ f \in S(c) \cap \left( D(m) \backslash D(m/2)\right) \right\}. \label{est-inf}
		\end{align}
	\end{lemma}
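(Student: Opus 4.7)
The plan is to reuse the product test function $f_\lambda(x) = g(x_\perp)h_\lambda(x_3)$ introduced in the proof of Lemma \ref{lem-lowe-boun}, where $g(x_\perp) = \sqrt{|b|/(2\pi)}\,e^{-|b||x_\perp|^2/4}$ is the normalized Gaussian on $\R^2$ and $h_\lambda(x_3) = \lambda^{1/2}h(\lambda x_3)$ for some fixed $h \in C_0^\infty(\R)$ with $\|h\|_{L^2(\R)}^2 = c$. That lemma already established
\[
\|f_\lambda\|_{L^2}^2 = c, \qquad \|(\nabla+iA)f_\lambda\|_{L^2}^2 = c|b| + \lambda^2 \|\partial_3 h\|_{L^2(\R)}^2.
\]
For $c$ small enough that $c|b| < m/8$, choosing $\lambda$ small makes the right-hand side $\leq m/4$, placing $f_\lambda$ in $S(c) \cap D(m/4)$; choosing $\lambda$ so that $\lambda^2 \|\partial_3 h\|_{L^2}^2$ lies in $(m/2-c|b|,\, m-c|b|]$ places $f_\lambda$ in $S(c) \cap (D(m)\setminus D(m/2))$. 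This settles the non-emptiness claim \eqref{non-empty} and, in particular, shows both of the sets appearing in \eqref{est-inf} are non-empty once $c < c_0(m)$ with $c_0(m) := m/(8|b|)$, say.

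For the right-hand side of \eqref{est-inf}, I would bound $E(f)$ from below for every $f \in S(c) \cap (D(m)\setminus D(m/2))$. Combining the Gagliardo-Nirenberg inequality \eqref{GN-ineq-super} with the diamagnetic inequality \eqref{diag-ineq} gives
\[
\|f\|_{L^{\alpha+2}}^{\alpha+2} \leq C_{\opt} \|(\nabla+iA)f\|_{L^2}^{3\alpha/2} \|f\|_{L^2}^{(4-\alpha)/2},
\]
so using $\|(\nabla+iA)f\|_{L^2}^2 \in (m/2, m]$ and $\|f\|_{L^2}^2 = c$ yields
\[
E(f) \geq \|(\nabla+iA)f\|_{L^2}^2 \left[\frac{1}{2} - \frac{C_{\opt}}{\alpha+2} m^{(3\alpha-4)/4} c^{(4-\alpha)/4}\right] > \frac{m}{2} \left[\frac{1}{2} - K m^{(3\alpha-4)/4} c^{(4-\alpha)/4}\right],
\]
with $K = C_{\opt}/(\alpha+2)$. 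Since $\alpha < 4$, the exponent $(4-\alpha)/4$ is strictly positive, so by shrinking $c_0(m)$ further if necessary I can guarantee the bracket is at least $3/8$ for all $c < c_0(m)$, giving $E(f) > 3m/16$.

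For the left-hand side, I plug the same $f_\lambda$ (in its small-$\lambda$ regime) into $E$ and discard the negative nonlinear term:
\[
E(f_\lambda) \leq \tfrac{1}{2}\|(\nabla+iA)f_\lambda\|_{L^2}^2 = \tfrac{1}{2}\bigl(c|b| + \lambda^2 \|\partial_3 h\|_{L^2}^2\bigr).
\]
After shrinking $c_0(m)$ once more so that $c|b| \leq m/16$ and choosing $\lambda$ small enough that $\lambda^2 \|\partial_3 h\|_{L^2}^2 \leq m/16$ (both constraints being compatible with $f_\lambda \in D(m/4)$), I obtain $E(f_\lambda) \leq m/16 < 3m/16$. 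Taking the infimum over $f \in S(c) \cap D(m/4)$ on the left completes the verification of \eqref{est-inf}. The only substantive step is the lower bound on the right-hand side; the rest is a matter of bookkeeping, choosing $c_0(m)$ small enough so that the three constraints (non-emptiness of each of the two sets, smallness of the Gagliardo-Nirenberg correction) are simultaneously satisfied.
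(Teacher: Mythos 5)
Your proof is correct and takes essentially the same route as the paper: the decisive step in both is the Gagliardo--Nirenberg/diamagnetic lower bound $E(f)\ge \|(\nabla+iA)f\|_{L^2}^2\left(\tfrac12-Kc^{\frac{4-\alpha}{4}}m^{\frac{3\alpha-4}{4}}\right)$ on the annulus $D(m)\setminus D(m/2)$, played against the trivial upper bound $E(f)\le\tfrac12\|(\nabla+iA)f\|_{L^2}^2$ on $D(m/4)$, with $c_0(m)$ shrunk so the correction term is harmless. The only cosmetic difference is your choice of test functions: you reuse the Gaussian--bump product from Lemma \ref{lem-lowe-boun} for both non-emptiness claims, whereas the paper uses an amplitude rescaling for $S(c)\cap D(m)$ and an $L^2$-invariant radial dilation for $S(c)\cap(D(m)\setminus D(m/2))$.
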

	
	\begin{proof}
		We take $f_0 \in C^\infty_0(\R^3)$ satisfying $\|(\nabla+iA) f_0\|^2_{L^2} = m$. Denote $c_0=c_0(m) := \|f_0\|^2_{L^2}$ and set $f(x):= \sqrt{\frac{c}{c_0}} f_0(x)$. It follows that $\|f\|^2_{L^2} = c$ and $\|(\nabla+iA) f\|^2_{L^2} = \frac{mc}{c_0} <m$ for all $0<c<c_0$. In particular, $f\in S(c) \cap D(m)$, hence \eqref{non-empty} is proved. 
		
		To prove \eqref{est-inf}, we first observe that $S(c) \cap \left(D(m) \backslash D(m/2)\right) \ne \emptyset$ for $c>0$ sufficiently small. Indeed let $\varphi \in C^\infty_0(\R^3)$ be radially symmetric and satisfy $\|\varphi\|^2_{L^2}=1$. Denote $f^\lambda(x):= \sqrt{c} \lambda^{\frac{3}{2}} \varphi(\lambda x)$ with $\lambda>0$ to be chosen later. We have $\|f^\lambda\|^2_{L^2}=c$ and 
		\begin{align*}
		\|(\nabla+iA)f^\lambda\|^2_{L^2} = \|\nabla f^\lambda\|^2_{L^2} + \frac{b^2}{4} \|\rho f^\lambda\|^2_{L^2} =c \left( \lambda^2 \|\nabla \varphi\|^2_{L^2} + \frac{b^2 \lambda^{-2} }{4} \|\rho \varphi\|^2_{L^2}\right).
		\end{align*}
		For each $m>0$, by reducing $c_0=c_0(m)>0$ if necessary, there exists $\lambda_0>0$ such that 
		\begin{align} \label{cond-m}
		\|(\nabla+iA) f^{\lambda_0}\|^2_{L^2}=\frac{3m}{4}.
		\end{align} 
		In particular, $f^{\lambda_0}\in S(c) \cap \left(D(m) \backslash D(m/2)\right)$. In fact, we observe that \eqref{cond-m} is equivalent to
		\begin{align} \label{est-m}
		\lambda^2\|\nabla \varphi\|^2_{L^2} + \frac{b^2 \lambda^{-2} }{4} \|\rho \varphi\|^2_{L^2} =\frac{3m}{4c}.
		\end{align}
		As a function of $\lambda$, the left hand side of \eqref{est-m} takes values on $[|b|\|\nabla \varphi\|_{L^2} \|\rho \varphi\|_{L^2}, \infty)$. Thus if we take $c_0=c_0(m)>0$ sufficiently small so that $\frac{3m}{4c_0} \geq |b|\|\nabla \varphi\|_{L^2} \|\rho \varphi\|_{L^2}$, there exists $\lambda_0>0$ such that \eqref{est-m} holds. 
		
		Now we prove \eqref{est-inf}. By the Gagliardo-Nirenberg inequality \eqref{GN-ineq-super} and the diamagnetic inequality, we have
		\[
		E(f) \geq \frac{1}{2} \|(\nabla+iA) f\|^2_{L^2} - K \|(\nabla+iA)f\|^{\frac{3\alpha}{2}}_{L^2} \|f\|^{\frac{4-\alpha}{2}}_{L^2}
		\]
		for some constant $K>0$. In particular, we have
		\begin{align} \label{est-g-h-c}
		g_c\left(\|(\nabla+iA) f\|^2_{L^2}\right) \leq E(f) \leq h_c\left(\|(\nabla+iA)f\|^2_{L^2}\right), \quad \forall f \in S(c),
		\end{align}
		where
		\[
		g_c(\lambda):= \frac{1}{2} \lambda- K c^{\frac{4-\alpha}{4}} \lambda^{\frac{3\alpha}{4}}, \quad h_c(\lambda) = \frac{1}{2} \lambda.
		\]
		Thanks to \eqref{est-g-h-c}, \eqref{est-inf} is proved provided that there exists $c_0=c_0(m)>0$ sufficiently small such that for each $0<c<c_0$,
		\begin{align} \label{est-hc}
		h_c(m/4) < \inf_{\lambda \in (m/2,m)} g_c(\lambda).
		\end{align}
		Notice that
		\[
		g_c(\lambda) =\frac{1}{2} \lambda \left( 1- 2K c^{\frac{4-\alpha}{4}} \lambda^{\frac{3\alpha-4}{4}} \right) >\frac{1}{3} \lambda
		\]
		for $\lambda \in (0,m)$ and for $0<c<c_0$. We infer that
		\[
		\inf_{\lambda \in (m/2,m)} g_c(\lambda) \geq \frac{m}{6} > \frac{m}{8} = h_c(m/4).
		\]
		This proves \eqref{est-hc}, hence \eqref{est-inf}. 
	\end{proof}
	
	\begin{lemma} \label{lem-non-vani}
		Let $A$ be as in \eqref{defi-A}, $\frac{4}{3}<\alpha<4$, and $m>0$. Then there exists $c_0=c_0(m)>0$ sufficiently small such that for all $0<c<c_0$ and any minimizing sequence $(f_n)_{n\geq 1}$ of $I^m(c)$, there exists $C>0$ such that
		\[
		\liminf_{n\rightarrow \infty} \|f_n\|_{L^{\alpha+2}} \geq C>0.
		\]
	\end{lemma}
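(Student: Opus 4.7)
The strategy is to transplant the proof of Lemma \ref{lem-lowe-boun} to the constrained problem $I^m(c)$, the only new concern being that the competitor used to beat the ``bulk'' threshold $|b|c/2$ must lie in $D(m)$ as well as $S(c)$. The argument is by contradiction: assume there is a minimizing sequence $(f_n)$ for $I^m(c)$ with $\liminf_n \|f_n\|_{L^{\alpha+2}}=0$. Then after passing to a subsequence, $\|f_n\|_{L^{\alpha+2}} \to 0$, and by \eqref{L2-bound} together with mass conservation,
\[
I^m(c)=\lim_{n\to\infty} E(f_n)=\tfrac{1}{2}\lim_{n\to\infty}\|(\nabla+iA)f_n\|^2_{L^2}\;\geq\;\tfrac{|b|c}{2}.
\]
Consequently, to derive a contradiction it suffices to produce a single $f \in S(c)\cap D(m)$ with $E(f)<|b|c/2$, since $I^m(c)\leq E(f)$ by definition.

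For the competitor, I would reuse the factorized ansatz of Lemma \ref{lem-lowe-boun}: set $x=(x_\perp,x_3)$, let $g(x_\perp)=\sqrt{|b|/(2\pi)}\,e^{-|b||x_\perp|^2/4}$ be the normalized ground state of the 2D magnetic oscillator in the $x_\perp$-variables, fix $\tilde h \in C^\infty_0(\R)$ with $\|\tilde h\|^2_{L^2(\R)}=1$, and define
\[
f_\lambda(x):=g(x_\perp)h_\lambda(x_3),\qquad h_\lambda(x_3):=\sqrt{c}\,\lambda^{1/2}\tilde h(\lambda x_3),\qquad \lambda>0.
\]
Then $\|f_\lambda\|^2_{L^2}=c$, and the same cylindrical computation used to prove Lemma \ref{lem-lowe-boun} (exploiting that $\int_{\R^2}L_z g\,\overline g\,dx_\perp=0$ and that $g$ saturates $\|\nabla_\perp g\|_{L^2(\R^2)}^2+\tfrac{b^2}{4}\|\rho g\|_{L^2(\R^2)}^2=|b|$) yields
\[
\|(\nabla+iA)f_\lambda\|^2_{L^2}=c|b|+c\lambda^2\|\tilde h'\|^2_{L^2(\R)},
\]
\[
E(f_\lambda)=\frac{|b|c}{2}+\frac{c\lambda^2}{2}\|\tilde h'\|^2_{L^2(\R)}-\frac{c^{(\alpha+2)/2}\lambda^{\alpha/2}}{\alpha+2}\|g\|^{\alpha+2}_{L^{\alpha+2}(\R^2)}\|\tilde h\|^{\alpha+2}_{L^{\alpha+2}(\R)}.
\]

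Since $\alpha<4$, one has $\alpha/2<2$, so the negative term of order $\lambda^{\alpha/2}$ dominates the positive term of order $\lambda^2$ as $\lambda\to 0^+$, giving $E(f_\lambda)<|b|c/2$ for all sufficiently small $\lambda$. To force $f_\lambda\in D(m)$ as well, it is enough to make both $c|b|$ and $c\lambda^2\|\tilde h'\|^2_{L^2(\R)}$ not exceed, say, $m/2$: choosing $c_0=c_0(m)$ with $c_0|b|<m/2$ handles the first term, and then an additional smallness condition $\lambda^2\leq m/(2c\|\tilde h'\|^2_{L^2(\R)})$ on $\lambda$ handles the second. Both smallness requirements on $\lambda$ are compatible, so such a $\lambda>0$ exists, and the desired contradiction $I^m(c)\leq E(f_\lambda)<|b|c/2$ follows.

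The only delicate point is the interplay of the two constraints on $\lambda$: the energy-reducing choice $\lambda$ must be small in order that $\lambda^{\alpha/2}$ beat $\lambda^2$, while the $D(m)$-admissibility choice imposes an upper bound on $\lambda$ depending on $c$ and $m$. Both are upper bounds on $\lambda$ and are therefore trivially compatible, but the admissibility bound is what forces the restriction to small $c$ (specifically $c<m/|b|$), which is the reason the statement must be phrased with a small parameter $c_0=c_0(m)$ rather than for arbitrary $c>0$.
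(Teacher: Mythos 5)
Your proposal is correct and follows essentially the same route as the paper: a contradiction via \eqref{L2-bound} giving $I^m(c)\geq |b|c/2$, then the same factorized Gaussian-times-$h_\lambda$ competitor from Lemma \ref{lem-lowe-boun} placed in $S(c)\cap D(m)$ for $c$ and $\lambda$ small to force $I^m(c)<|b|c/2$. The only difference is cosmetic: you spell out the explicit smallness conditions on $c_0$ and $\lambda$ that the paper compresses into ``provided $0<c<c_0(m)\ll 1$ and $0<\lambda \ll 1$.''
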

	
	\begin{proof}
		The proof is similar to that of Lemma \ref{lem-lowe-boun}. Suppose that there exists a subsequence still denoted by $(f_n)_{n\geq 1}$ such that $\lim_{n\rightarrow \infty} \|f_n\|_{L^{\alpha+2}} =0$. By \eqref{L2-bound}, we have
		\[
		I^m(c) =\lim_{n\rightarrow \infty} E(f_n) \geq \frac{|b|c}{2}.
		\]
		Let $f_\lambda$ be as in \eqref{defi-f-lambda} with $\lambda>0$ to be chosen shortly. We have $\|f_\lambda\|^2_{L^2} =c$ for all $\lambda>0$ and
		\[
		\|(\nabla+iA)f_\lambda\|^2_{L^2} = c|b| + \lambda^2 \|\partial_3 h\|^2_{L^2(\R)} \leq m
		\]
		provided $0<c<c_0(m) \ll 1$ and $0<\lambda \ll 1$. On the other hand, we have 
		\begin{align} \label{est-I-mc}
		E(f_\lambda)=\frac{|b|c}{2}+\frac{\lambda^2}{2}\|\partial_3 h\|^2_{L^2}-\frac{\lambda^{\frac{\alpha}{2}}}{\alpha+2} \|g\|^{\alpha+2}_{L^{\alpha+2}} \|h\|^{\alpha+2}_{L^{\alpha+2}}<\frac{|b|c}{2} 
		\end{align}
		for $\lambda>0$ sufficiently small. This shows that $I^m(c) < \frac{|b|c}{2}$ which is a contradiction. 
	\end{proof}
	
	\begin{lemma} \label{lem-gwp-super}
		Let $\frac{4}{3}<\alpha<4$. Let $m>0$ and $u_0 \in H^1_A(\R^3)$ be such that
		\[
		\|(\nabla+iA) u_0\|^2_{L^2} \leq m.
		\]
		Then there exists $c_0=c_0(m)>0$ sufficiently small such that for all $0<c<c_0$, if $M(u_0)=c$, then the corresponding solution to \eqref{mag-NLS} exists globally in time, i.e., $T^*=\infty$.
	\end{lemma}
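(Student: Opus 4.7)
The plan is a standard continuity/bootstrap argument driven by the conservation of energy and the magnetic Gagliardo--Nirenberg inequality, exploiting the fact that a small $L^2$-mass makes the nonlinear contribution to the energy a strictly sub-dominant term of the kinetic one. Set
\[
X(t) := \|(\nabla+iA)u(t)\|_{L^2}^2, \qquad t \in [0,T^*),
\]
so that $X(0) \le m$ by assumption and $E(u_0) \le X(0)/2 \le m/2$. By the diamagnetic inequality \eqref{diag-ineq} combined with the Gagliardo--Nirenberg inequality \eqref{GN-ineq-super} we have, for every $t\in[0,T^*)$,
\[
\|u(t)\|_{L^{\alpha+2}}^{\alpha+2} \le C_{\opt}\, X(t)^{\frac{3\alpha}{4}}\, \|u(t)\|_{L^2}^{\frac{4-\alpha}{2}} = C_{\opt}\, c^{\frac{4-\alpha}{4}}\, X(t)^{\frac{3\alpha}{4}},
\]
using conservation of mass $M(u(t))=c$. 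Plugging this into the identity
\[
\tfrac12 X(t) = E(u_0) + \tfrac{1}{\alpha+2}\|u(t)\|_{L^{\alpha+2}}^{\alpha+2}
\]
coming from the conservation of energy yields
\[
h_c\bigl(X(t)\bigr) \le E(u_0) \le \tfrac{m}{2}, \qquad h_c(\lambda) := \tfrac{1}{2}\lambda - \tfrac{C_{\opt}}{\alpha+2}\, c^{\frac{4-\alpha}{4}}\, \lambda^{\frac{3\alpha}{4}}.
\]

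The second step is to analyze the scalar function $h_c$. Since $\frac{3\alpha}{4}>1$, the function $h_c$ is increasing on an interval $[0,\lambda_*(c)]$ and decreasing afterwards, where $\lambda_*(c) \to \infty$ as $c \to 0$ (it scales like $c^{-\sigc}$). Moreover, for any fixed $\lambda_0>0$, $h_c(\lambda_0) \to \lambda_0/2$ as $c\to 0^+$. Applied to $\lambda_0 = 3m/2$, this allows us to choose $c_0=c_0(m)>0$ so small that for every $0<c<c_0$ one has simultaneously
\[
\lambda_*(c) > \tfrac{3m}{2}, \qquad h_c\!\left(\tfrac{3m}{2}\right) > \tfrac{m}{2}.
\]
In particular $h_c$ is strictly increasing on $[0, 3m/2]$ and $h_c(3m/2) > E(u_0)$.

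Finally I would close the argument by a continuity bootstrap. The map $t\mapsto X(t)$ is continuous on $[0,T^*)$ (by the local theory in $H^1_A$) and $X(0) \le m < 3m/2$. If $X(t)$ ever reached the value $3m/2$ at some first time $t_1 \in (0, T^*)$, then by monotonicity of $h_c$ on $[0, 3m/2]$ we would obtain
\[
h_c\bigl(X(t_1)\bigr) = h_c\!\left(\tfrac{3m}{2}\right) > E(u_0),
\]
contradicting the energy inequality above. Therefore $X(t) < 3m/2$ throughout the maximal interval of existence, and the blow-up alternative in Proposition 1.1 forces $T^* = \infty$.

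The only mildly delicate point is the calibration of $c_0(m)$: one must ensure that the threshold $3m/2$ (any fixed constant strictly above $m$ would do) lies on the increasing branch of $h_c$ and that $h_c$ evaluated at this threshold exceeds $m/2$. Both requirements are satisfied once $c$ is small because the nonlinear correction in $h_c$ carries the vanishing factor $c^{(4-\alpha)/4}$, so the main ``obstacle'' is really just bookkeeping rather than a genuine analytic difficulty.
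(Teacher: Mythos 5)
Your argument is correct and is essentially the same as the paper's proof: both rely on conservation of mass and energy together with the diamagnetic Gagliardo--Nirenberg inequality to get a bound of the form $\tfrac12 X(t)\le E(u_0)+\tfrac{C_{\opt}}{\alpha+2}\,c^{\frac{4-\alpha}{4}}X(t)^{\frac{3\alpha}{4}}$, and then close via a continuity/bootstrap step using that the nonlinear term carries the vanishing factor $c^{\frac{4-\alpha}{4}}$. The only difference is packaging: the paper invokes the abstract continuity lemma (Lemma \ref{lem-cont-argu}) with threshold $2\alpha$ built from $|E(u_0)|$ and the initial kinetic energy, whereas you run the first-crossing argument explicitly with the fixed threshold $\tfrac{3m}{2}$; the two are interchangeable.
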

	
	To prove this result, we recall the following simple continuity argument. 
	
	\begin{lemma}[Continuity argument] \label{lem-cont-argu}
		Let $I \subset \R$ be an interval and $X : I \rightarrow [0 ,\infty )$ be a continuous
		function satisfying for every $t \in I$,
		\[
		X(t) \leq \alpha + \beta [X(t)]^\theta,
		\]
		where $\alpha, \beta > 0$ and $\theta > 0$ are constants. Assume that
		\[
		X(t_0 ) \leq 2\alpha, \quad  \beta < 2^{-\theta} \alpha^{1-\theta}
		\]
		for some $t_0 \in I$. Then for every $t \in I$, we have
		\[
		X(t) \leq 2\alpha.
		\]
	\end{lemma}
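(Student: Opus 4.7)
The plan is a standard connectedness/bootstrap argument on the interval $I$. I introduce
\[
J := \{t \in I \ : \ X(t) \leq 2\alpha\},
\]
which is non-empty (it contains $t_0$ by hypothesis) and closed in $I$ by continuity of $X$. The goal is to show $J$ is also open in $I$; since $I$ is an interval, hence connected, this forces $J=I$.

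The first step is to reinterpret the algebraic assumption $\beta < 2^{-\theta}\alpha^{1-\theta}$ as the strict inequality
\[
\alpha + \beta (2\alpha)^\theta < 2\alpha,
\]
which holds because $\beta(2\alpha)^\theta = 2^\theta \alpha^\theta \beta < 2^\theta \alpha^\theta \cdot 2^{-\theta}\alpha^{1-\theta} = \alpha$. This is the sole content of the numerical hypothesis, and it is exactly what powers the bootstrap.

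Next, for any $t \in J$ I substitute $X(t) \leq 2\alpha$ into the assumed inequality to upgrade the bound from non-strict to strict:
\[
X(t) \leq \alpha + \beta X(t)^\theta \leq \alpha + \beta (2\alpha)^\theta < 2\alpha.
\]
Thus $X(t) < 2\alpha$ on all of $J$, and by continuity of $X$ this strict inequality persists on a relative neighborhood in $I$ of every point of $J$. Hence $J$ is open in $I$, and combining non-emptiness with clopenness in the connected set $I$ yields $J = I$, which is the desired conclusion.

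There is no serious obstacle; the only delicate point is the bootstrap step that converts $X(t) \leq 2\alpha$ into the strict bound $X(t) < 2\alpha$, which is exactly what prevents $X$ from reaching the threshold $2\alpha$ and keeps $J$ open. Had the hypothesis been only $\beta \leq 2^{-\theta}\alpha^{1-\theta}$, the bootstrap would stall at equality and the conclusion could fail at a boundary point.
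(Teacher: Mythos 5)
Your proof is correct: the key bootstrap step $X(t)\leq \alpha+\beta X(t)^\theta \leq \alpha+\beta(2\alpha)^\theta < 2\alpha$ (using only monotonicity of $x\mapsto x^\theta$ and $\beta(2\alpha)^\theta<\alpha$) combined with the clopen/connectedness argument on $J=\{t\in I : X(t)\leq 2\alpha\}$ is exactly the standard continuity argument. The paper states this lemma without proof, so there is nothing to compare against; your write-up supplies precisely the canonical justification being invoked.
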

	
	\begin{proof}[Proof of Lemma \ref{lem-gwp-super}]
		Let $u:[0,T^*)\times \R^3 \rightarrow \C$ be the corresponding solution to \eqref{mag-NLS}. Using \eqref{L2-bound}, we have
		\[
		\|u_0\|^2_{L^2} \leq \frac{1}{|b|} \|(\nabla+iA) u_0\|^2_{L^2} \leq \frac{m}{|b|}.
		\]
		By \eqref{GN-ineq-super} and the diamagnetic inequality, we have
		\begin{align*}
		|E(u_0)| \leq \frac{1}{2} \|(\nabla+iA)u_0\|^2_{L^2} + \frac{C}{\alpha+2} \|(\nabla+iA)u_0\|^{\frac{3\alpha}{2}}_{L^2} \|u_0\|^{\frac{4-\alpha}{2}}_{L^2} \leq \frac{m}{2} + \frac{C}{\alpha+2} \frac{m^{\frac{\alpha+2}{2}}}{|b|^{\frac{4-\alpha}{4}}}.
		\end{align*}
		Similarly, by the conservation of mass and energy, we have for all $t\in [0,T^*)$,
		\begin{align*}
		\|(\nabla+iA)u(t)\|^2_{L^2} &= 2E(u(t) +\frac{2}{\alpha+2} \|u(t)\|^{\alpha+2}_{L^{\alpha+2}} \\
		&\leq 2E(u(t)) + \frac{2C}{\alpha+2} \|(\nabla+iA)u(t)\|^{\frac{3\alpha}{2}}_{L^2} \|u(t)\|^{\frac{4-\alpha}{2}}_{L^2} \\
		&\leq 2|E(u_0)| + \frac{2C}{\alpha+2} \|(\nabla+iA)u(t)\|^{\frac{3\alpha}{2}}_{L^2} (M(u_0))^{\frac{4-\alpha}{4}}.
		\end{align*}
		Set $X(t):= \|(\nabla+iA)u(t)\|^2_{L^2}$ and
		\[
		\alpha:= 2|E(u_0)| + \frac{1}{2}\|(\nabla+iA)u_0\|^2_{L^2}, \quad \beta:= \frac{C}{\alpha+2} (M(u_0))^{\frac{4-\alpha}{4}}, \quad \theta:= \frac{3\alpha}{4}. 
		\]
		It follows that
		\[
		X(t) \leq \alpha + \beta [X(t)]^\theta, \quad \forall t\in [0,T^*).
		\]
		Since $X(0)\leq 2\alpha$, we have from Lemma \ref{lem-cont-argu} that
		\[
		X(t) \leq 2\alpha, \quad \forall t\in [0,T^*)
		\]
		provided that $\beta <2^{-\theta} \alpha^{1-\theta}$. As $\theta>1$ and $\alpha$ is bounded from above by a constant depending only on $m$, we see that $2^{-\theta} \alpha^{1-\theta}$ is bounded from below by some constant depending on $m$. Therefore, if $M(u_0)$ is sufficiently small depending on $m$, then $\sup_{t\in [0,T^*)} \|(\nabla+iA)u(t)\|_{L^2} <\infty$. The blow-up alternative yields $T^*=\infty$. The proof is complete.
	\end{proof}
	
	\begin{proof}[Proof of Theorem \ref{theo-exis-stab-mass-sup}]
		The proof is done in two steps.
		
		{\bf Step 1. Existence of minimizers.} Let $(f_n)_{n\geq 1}$ be a minimizing sequence for $I^m(c)$ with $0<c<c_0=c_0(m)\ll 1$. We see that $(f_n)_{n\geq 1}$ is a bounded sequence in $H^1_A(\R^3)$. By Lemma \ref{lem-non-vani}, we have $\liminf_{n\rightarrow \infty} \|f_n\|_{L^{\alpha+2}} \geq C>0$. From Lemma \ref{lem-weak-conv}, up to a subsequence, there exist $f\in H^1_A(\R^3) \backslash \{0\}$ and a sequence $(y_n)_{n\geq 1} \subset \R^3$ such that
		\[
		\tilde{f}_n(x):=e^{iA(y_n) \cdot x} f_n(x+y_n) \rightharpoonup f \text{ weakly in } H^1_A(\R^3).
		\]
		By the weak convergence, we have
		\[
		0< \|f\|^2_{L^2} \leq \liminf_{n\rightarrow \infty} \|\tilde{f}_n\|^2_{L^2} = \liminf_{n\rightarrow \infty} \|f_n\|^2_{L^2}=c
		\]
		and
		\[
		\|(\nabla+iA) f\|^2_{L^2} \leq \liminf_{n\rightarrow \infty} \|(\nabla+iA) \tilde{f}_n\|^2_{L^2}=\liminf_{n\rightarrow \infty} \|(\nabla+iA) f_n\|^2_{L^2} \leq m.
		\]
		Arguing as in the proof of Theorem \ref{theo-exis-stab-mass-cri}, we have $\|f\|^2_{L^2}=c$, hence $f\in S(c) \cap D(m)$. We also have $E(f) = I^m(c)$ or $f$ is a minimizer for $I^m(c)$. Moreover, $\tilde{f}_n\rightarrow f$ strongly in $H^1_A(\R^3)$.
		
		We next prove that $f \in D(m/2)$. Indeed, suppose that it is not true. By \eqref{est-inf}, we have
		\begin{align*}
		I^m(c) &\leq \inf \left\{E(f) \ : \ f \in S(c) \cap D(m/4)\right\} \\
		&< \inf \left\{ E(f) \ : \ f \in S(c) \cap \left(D(m) \backslash D(m/2)\right) \right\} \\
		&\leq E(f) = I^m(c)
		\end{align*}
		which is a contradiction. This shows that $\emptyset \ne \Mcal^m(c) \subset D(m/2)$. As $f$ does not belong to the boundary of $D(m)$, there exists a Lagrange multiplier $\omega \in \R$ such that $S'_\omega(f)[\varphi] =0$ for all $\varphi \in C^\infty_0(\R^3)$, where $S_\omega(f):= E(f) + \frac{\omega}{2} M(f)$. A direct computation shows that $f$ is a solution to 
		\[
		-(\nabla+iA)^2 f + \omega f - |f|^\alpha f =0
		\] 
		in the weak sense. From this, we infer that
		\[
		\omega \|f\|^2_{L^2} = - \|(\nabla+iA) f\|^2_{L^2} + \|f\|^{\alpha+2}_{L^{\alpha+2}} = - 2E(f) + \frac{\alpha}{\alpha+2}\|f\|^{\alpha+2}_{L^{\alpha+2}}> -2E(f).
		\]
		Thus we get 
		\[
		\omega> -\frac{2E(f)}{\|f\|^2_{L^2}} =-\frac{2I^m(c)}{c} > -|b|,
		\]
		where the last inequality follows from \eqref{est-I-mc}. On the other hand, by \eqref{GN-ineq-super} and \eqref{diag-ineq}, we have
		\begin{align*}
		\omega \|f\|^2_{L^2} &\leq - \|(\nabla+iA)f\|^2_{L^2} + K \|(\nabla+iA)f\|^{\frac{3\alpha}{2}}_{L^2} \|f\|^{\frac{4-\alpha}{2}}_{L^2} \\
		&\leq - \|(\nabla+iA)f\|^2_{L^2} \left(1- K\|(\nabla+iA)f\|^{\frac{3\alpha-4}{2}}_{L^2} \|f\|^{\frac{4-\alpha}{2}}_{L^2} \right)
		\end{align*}
		for some constant $K>0$. As $\|f\|^2_{L^2}=c$ and $f \in D(m/2)$ or $\|(\nabla+iA)f\|^2_{L^2} \leq m/2$, we get
		\[
		\omega \|f\|^2_{L^2} \leq - \|(\nabla+iA)f\|^2_{L^2} \left(1 - K c^{\frac{4-\alpha}{4}} m^{\frac{3\alpha-4}{4}} \right),
		\]
		where the constant $K$ may vary from line to line. Reducing the value of $c$ if necessary, we infer from \eqref{L2-bound} that
		\[
		\omega \leq -|b| \left(1 - K c^{\frac{4-\alpha}{4}} m^{\frac{3\alpha-4}{4}} \right).
		\]
		This shows \eqref{est-omega}. It completes the proof of Item (1).
		
		{\bf Step 2. Orbital stability.} As in the proof of Theorem \ref{theo-exis-stab-mass-cri}, we argue by contradiction. Suppose that $\Mcal^m(c)$ is not orbitally stable. There exist $\epsilon_0>0$, $\phi_0 \in \Mcal^m(c)$, a sequence of initial data $u_{0,n} \in H^1_A(\R^3)$ satisfying
		\begin{align}  \label{orbi-proof-sup-1}
		\lim_{n\rightarrow \infty} \|u_{0,n} - \phi_0\|_{H^1_A} = 0 
		\end{align}
		and a sequence of time $(t_n)_{n\geq 1} \subset [0,\infty)$ such that
		\begin{align} \label{orbi-proof-sup-2}
		\inf_{\phi \in \Mcal^m(c)} \inf_{y\in \R^3}\|e^{iA(y_n) \cdot \cdotb} u_n(t_n, \cdotb+y) - \phi\|_{H^1_A} \geq \epsilon_0,
		\end{align}
		where $u_n$ is the solution to \eqref{mag-NLS} with initial data $\left.u_n\right|_{t=0}= u_{0,n}$. Note that the solutions exist globally in time by Lemma \ref{lem-gwp-super}.
		
		Since $\phi_0 \in \Mcal^m(c)$, we have $E(\phi_0) = I^m(c)$. By \eqref{orbi-proof-sup-1} and the Sobolev embedding, we have $\|u_{0,n}\|^2_{L^2} \rightarrow \|\phi_0\|^2_{L^2}=c$ and
		\[
		\|(\nabla+iA) u_{0,n}\|^2_{L^2} \rightarrow \|(\nabla+iA)\phi_0\|^2_{L^2} \leq m, \quad E(u_{0,n}) \rightarrow E(\phi_0) = I^m(c).
		\]
		By conservation laws of mass and energy, we have
		\[
		\|u_n(t_n)\|^2_{L^2} \rightarrow c, \quad E(u_n(t_n)) \rightarrow I^m(c)
		\]
		as $n\rightarrow \infty$. We next claim that (up to a subsequence) $\|(\nabla+iA)u_n(t_n)\|^2_{L^2} \leq m$. Suppose that there exists $N\geq 1$ such that $\|(\nabla+iA)u_n(t_n)\|^2_{L^2} >m$ for every $n\geq N$. By continuity, there exists $t_n^* \in (0,t_n)$ such that $\|(\nabla+iA)u_n(t_n^*)\|^2_{L^2} =m$. Since 
		\[
		\|u_n(t^*_n)\|^2_{L^2} \rightarrow c, \quad \|(\nabla+iA) u_n(t^*_n)\|^2_{L^2} =m, \quad E(u_n(t^*_n)) \rightarrow I^m(c)
		\]
		as $n\rightarrow \infty$, we see that $u_n(t^*_n)$ is a minimizing sequence for $I^m(c)$. By Step 1, there exist $\phi \in \Mcal^m(c)$ and a sequence $(y_n)_{n\geq 1} \subset \R^3$ such that $e^{iA(y_n)\cdot \cdotb}u_n(t^*_n, \cdotb+y_n) \rightarrow \phi$ strongly in $H^1_A(\R^3)$. This is not possible since minimizers for $I^m(c)$ does not belong to the boundary of $S(c)\cap D(m)$. Thus there exists a subsequence $(t_{n_k})_{k\geq 1}$ such that $\|(\nabla+iA)u_{n_k}(t_{n_k})\|^2_{L^2} \leq m$ for all $k\geq 1$. This shows that $(u_{n_k}(t_{n_k}))_{k\geq 1}$ is a minimizing sequence for $I^m(c)$. Again, by Step 1, there exist $\phi \in \Mcal^m(c)$ and a sequence $(y_k)_{k\geq 1} \subset \R^3$ such that
		\[
		\|e^{iA(y_k)\cdot \cdotb} u_{n_k}(t_{n_k}, \cdotb+y_k) - \phi\|_{H^1_A} \rightarrow 0
		\]
		as $k\rightarrow \infty$. This contradicts \eqref{orbi-proof-sup-2}, and the proof is complete.	
	\end{proof}

	\begin{proof} [Proof of Proposition \ref{prop-norm-grou-stat}]
		The first point follows directly from Theorem \ref{theo-exis-stab-mass-sup}. Let us prove the second point. Assume by contradiction that there exists $f \in S(c)$ with $\left.E'\right|_{S(c)} (f)=0$ such that $E(f)<E(\phi)=I^m(c)$. As $\left.E'\right|_{S(c)} (f)=0$, there exists a Lagrange multiplier $\omega \in \R$ such that $f$ is a solution to \eqref{ell-equ}. It follows that
		\[
		\|(\nabla+iA)f\|^2_{L^2} + \omega \|f\|^2_{L^2} - \|f\|^{\alpha+2}_{L^{\alpha+2}} =0.
		\]
		In particular, we have
		\[
		\frac{\alpha}{2(\alpha+2)} \left( \|(\nabla+iA)f\|^2_{L^2} +\omega\|f\|^2_{L^2}\right) =E(f)+\frac{\omega}{2}\|f\|^2_{L^2} < I^m(c) + \frac{\omega}{2}c.
		\]
		We infer that
		\[
		\|(\nabla+iA)f\|^2_{L^2} < \frac{2(\alpha+2)}{\alpha} \left( I^m(c) + \frac{\omega}{2}c\right) - \omega c.
		\]
		Using \eqref{est-I-mc}, we see that
		\[
		\|(\nabla+iA)f\|^2_{L^2} < \frac{\alpha+2}{\alpha} (|b| + \omega ) c - \omega c 
		\]
		which, by \eqref{est-omega}, implies $\|(\nabla+iA)f\|^2_{L^2} \rightarrow 0$ as $c \rightarrow 0$. Thus for $c>0$ sufficiently small, we have $f \in S(c) \cap D(m)$. By the definition of $I^m(c)$, we get $I^m(c) \leq E(f)$ which is a contradiction. The proof is complete.
	\end{proof}

	\section{Existence and instability of ground state standing waves}
	\label{S5}
	\setcounter{equation}{0}
	This section is devoted to the existence of ground states related to \eqref{ell-equ} and the strong instability of ground state standing waves related to \eqref{mag-NLS} in the mass-supercritical case.
	
	Before giving the proof of Theorem \ref{theo-exis-grou}, we need the following observations.
	
	\begin{observation}
		Let $A$ be as in \eqref{defi-A} and $\omega>-|b|$. Then 
		\begin{align} \label{equi-norm}
		H_\omega(f):=\|(\nabla+iA) f\|^2_{L^2} + \omega \|f\|^2_{L^2} \simeq \|(\nabla+iA) f\|^2_{L^2} +  \|f\|^2_{L^2}.
		\end{align}
	\end{observation}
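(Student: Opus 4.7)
The plan is to establish \eqref{equi-norm} as a two-sided inequality by leaning entirely on the diamagnetic-type lower bound \eqref{L2-bound}, namely $|b|\|f\|_{L^2}^2 \leq \|(\nabla+iA)f\|_{L^2}^2$, together with a simple convex-combination trick that absorbs the (possibly negative) mass term.

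For the upper bound I would simply estimate
\[
H_\omega(f) \;\leq\; \|(\nabla+iA)f\|_{L^2}^2 + |\omega|\,\|f\|_{L^2}^2 \;\leq\; \max(1,|\omega|)\bigl(\|(\nabla+iA)f\|_{L^2}^2 + \|f\|_{L^2}^2\bigr),
\]
which gives one direction with no use of the hypothesis $\omega > -|b|$.

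For the lower bound, which is the only point where $\omega > -|b|$ enters, I would split the magnetic gradient in a convex combination. Because $\omega > -|b|$, the number $1 + \omega/|b|$ is strictly positive, so I can fix a parameter $\theta$ with
\[
0 \;<\; \theta \;<\; \min\!\left(1,\; 1 + \tfrac{\omega}{|b|}\right).
\]
Then writing $\|(\nabla+iA)f\|_{L^2}^2 = \theta\|(\nabla+iA)f\|_{L^2}^2 + (1-\theta)\|(\nabla+iA)f\|_{L^2}^2$ and applying \eqref{L2-bound} to the second piece,
\[
H_\omega(f) \;\geq\; \theta\,\|(\nabla+iA)f\|_{L^2}^2 \;+\; \bigl((1-\theta)|b| + \omega\bigr)\|f\|_{L^2}^2.
\]
Both coefficients are strictly positive by the choice of $\theta$, so setting $c := \min(\theta,\,(1-\theta)|b|+\omega) > 0$ yields
\[
H_\omega(f) \;\geq\; c\bigl(\|(\nabla+iA)f\|_{L^2}^2 + \|f\|_{L^2}^2\bigr),
\]
which is the desired lower bound and completes the equivalence.

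There is essentially no obstacle: the only subtlety is the choice of $\theta$, and the hypothesis $\omega > -|b|$ is precisely what guarantees the interval for $\theta$ is nonempty. If $\omega$ were allowed to equal $-|b|$ the constant $(1-\theta)|b|+\omega$ could only be made nonnegative by taking $\theta = 0$, which would kill the gradient coefficient; this confirms the hypothesis is sharp and that the argument above is the natural one.
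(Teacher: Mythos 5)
Your proof is correct and uses exactly the same key ingredient as the paper, namely the bound \eqref{L2-bound}; the paper arranges the absorption slightly differently (it writes $\|(\nabla+iA)f\|^2_{L^2}+\|f\|^2_{L^2}\leq H_\omega(f)+|1-\omega|\,\|f\|^2_{L^2}$ and then controls the mass term via $(\omega+|b|)\|f\|^2_{L^2}\leq H_\omega(f)$, which is again \eqref{L2-bound}), whereas you split the gradient term with a parameter $\theta$, but the two arguments are identical in substance and both give explicit constants depending only on $\omega$ and $b$. The hypothesis $\omega>-|b|$ enters at the same point in both proofs, so there is nothing to fix.
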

	In fact, we have
	\[
	\|(\nabla+iA) f\|^2_{L^2} + \omega \|f\|^2_{L^2} \leq (1+ |\omega|) \left(\|(\nabla+iA) f\|^2_{L^2} + \|f\|^2_{L^2}\right).
	\]
	On the other hand, by \eqref{L2-bound}, we see that
	\[
	\|(\nabla+iA) f\|^2_{L^2} + \omega \|f\|^2_{L^2}  \geq (\omega+|b|) \|f\|^2_{L^2}.
	\]
	It follows that
	\begin{align*}
	\|(\nabla+iA) f\|^2_{L^2} + \|f\|^2_{L^2} & \leq \|(\nabla+iA) f\|^2_{L^2} + \omega \|f\|^2_{L^2} + |1-\omega| \|f\|^2_{L^2} \\
	&\leq \left(1+\frac{|1-\omega|}{\omega+|b|} \right) \left(\|(\nabla+iA) f\|^2_{L^2} + \omega \|f\|^2_{L^2} \right).
	\end{align*}
	
	\begin{observation} 
		Let $A$ be as in \eqref{defi-A}, $0<\alpha<4$, and $\omega>-|b|$. Then there exists $f \in H^1_A(\R^3)$ such that $K_\omega(f)=0$.
	\end{observation}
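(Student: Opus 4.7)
The plan is to use a simple one-parameter scaling argument combined with the intermediate value theorem. The key observation is that, by \eqref{equi-norm}, for any nonzero $g\in H^1_A(\R^3)$ we have $H_\omega(g)>0$, and of course $\|g\|^{\alpha+2}_{L^{\alpha+2}}>0$ (since $g\ne 0$ and $H^1_A(\R^3)\hookrightarrow L^{\alpha+2}(\R^3)$ by Lemma \ref{lem-prop-H1-A}).

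First I would pick any nonzero test function $g\in C^\infty_0(\R^3)\subset H^1_A(\R^3)$, and consider the one-parameter family $f_\lambda := \lambda g$ with $\lambda>0$. A direct computation gives
\[
K_\omega(f_\lambda) = \lambda^2 H_\omega(g) - \lambda^{\alpha+2}\|g\|^{\alpha+2}_{L^{\alpha+2}}
= \lambda^2\left( H_\omega(g) - \lambda^{\alpha}\|g\|^{\alpha+2}_{L^{\alpha+2}}\right).
\]
Since $\alpha>0$, the bracket is strictly positive for $\lambda>0$ sufficiently small and strictly negative for $\lambda$ sufficiently large. By continuity of $\lambda\mapsto K_\omega(f_\lambda)$, the intermediate value theorem furnishes a $\lambda_0>0$ with $K_\omega(f_{\lambda_0})=0$. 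Taking $f=\lambda_0 g$ proves the claim.

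There is essentially no obstacle here: the only ingredient one needs beyond elementary calculus is the positivity of $H_\omega$ on $H^1_A(\R^3)\setminus\{0\}$ under the hypothesis $\omega>-|b|$, which is exactly the content of \eqref{equi-norm} (itself a consequence of the diamagnetic bound \eqref{L2-bound}). Note that $\omega>-|b|$ is used in a crucial way; if $\omega\le -|b|$ one could not guarantee $H_\omega(g)>0$ and the construction would fail.
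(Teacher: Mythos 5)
Your proof is correct and is essentially the paper's own argument: the paper also rescales a fixed test function by an amplitude $\lambda$ and notes that $K_\omega(\lambda f)=\lambda^2 H_\omega(f)-\lambda^{\alpha+2}\|f\|^{\alpha+2}_{L^{\alpha+2}}$ vanishes at $\lambda_0=\bigl(H_\omega(f)/\|f\|^{\alpha+2}_{L^{\alpha+2}}\bigr)^{1/\alpha}$, which is just the explicit root you obtain via the intermediate value theorem. Your explicit appeal to \eqref{equi-norm} for the positivity of $H_\omega$ is the same (implicit) ingredient the paper uses.
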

	Indeed, for $f \in C^\infty_0(\R^3)$, we have
	\[
	K_\omega(\lambda f) = \lambda^2 H_\omega(f)- \lambda^{\alpha+2} \|f\|^{\alpha+2}_{L^{\alpha+2}}, \quad \lambda>0.
	\]
	It follows that $K_\omega(\lambda_0 f)=0$ with $\lambda_0 = \left(\frac{H_\omega(f)}{\|f\|^{\alpha+2}_{L^{\alpha+2}}}\right)^{\frac{1}{\alpha}}$.
	
	\begin{lemma} \label{lem-exis-d-omega}
		Let $A$ be as in \eqref{defi-A}, $0<\alpha<4$, and $\omega>-|b|$. Then there exists a minimizer for $d(\omega)$.
	\end{lemma}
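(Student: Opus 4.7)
The plan is to exploit the identity $S_\omega(f)=\tfrac{\alpha}{2(\alpha+2)}H_\omega(f)=\tfrac{\alpha}{2(\alpha+2)}\|f\|^{\alpha+2}_{L^{\alpha+2}}$ that holds on the constraint $\{K_\omega(f)=0\}$, where $H_\omega(f):=\|(\nabla+iA)f\|^2_{L^2}+\omega\|f\|^2_{L^2}$. By \eqref{equi-norm}, $H_\omega$ is equivalent to the squared $H^1_A$-norm, so any minimizing sequence $(f_n)$ for $d(\omega)$ is automatically bounded in $H^1_A(\R^3)$. First I would combine the Gagliardo--Nirenberg inequality \eqref{GN-ineq-super} with the diamagnetic inequality \eqref{diag-ineq} to see that every $g\ne 0$ with $K_\omega(g)=0$ satisfies $H_\omega(g)=\|g\|^{\alpha+2}_{L^{\alpha+2}}\le C\,H_\omega(g)^{(\alpha+2)/2}$, hence $H_\omega(g)\ge\delta>0$; this gives both $d(\omega)>0$ and a uniform lower bound $\inf_n\|f_n\|_{L^{\alpha+2}}>0$.

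Next I would apply Lemma \ref{lem-weak-conv}, which produces $(y_n)\subset\R^3$ and $f\in H^1_A(\R^3)\setminus\{0\}$ such that $\tilde f_n(x):=e^{iA(y_n)\cdot x}f_n(x+y_n)\rightharpoonup f$ weakly in $H^1_A(\R^3)$. By Lemma \ref{lem-prop-H1-A}(3) the functionals $H_\omega$, $K_\omega$ and $S_\omega$ are invariant under this transformation, so $(\tilde f_n)$ remains a minimizing sequence with $K_\omega(\tilde f_n)=0$. Setting $M:=\tfrac{2(\alpha+2)}{\alpha}d(\omega)=\lim H_\omega(\tilde f_n)=\lim\|\tilde f_n\|^{\alpha+2}_{L^{\alpha+2}}$, the Brezis--Lieb splitting of Lemma \ref{lem-refi-fatou} applied to both quantities yields
\[
K_\omega(f)+K_\omega(\tilde f_n-f)=o(1).
\]

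The hard part will be to show that $K_\omega(f)=0$; once this is known, $f$ is admissible and weak lower semicontinuity gives $S_\omega(f)=\tfrac{\alpha}{2(\alpha+2)}H_\omega(f)\le\tfrac{\alpha}{2(\alpha+2)}M=d(\omega)$, identifying $f$ as a minimizer. The tool is the scaling: for $g\ne 0$, the unique $\lambda(g):=(H_\omega(g)/\|g\|^{\alpha+2}_{L^{\alpha+2}})^{1/\alpha}$ makes $K_\omega(\lambda(g)g)=0$ and yields $S_\omega(\lambda(g)g)=\tfrac{\alpha}{2(\alpha+2)}\lambda(g)^2 H_\omega(g)$, with $\lambda(g)<1$ iff $K_\omega(g)<0$. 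If $K_\omega(f)<0$, then $\lambda(f)<1$ and $S_\omega(\lambda(f)f)<\tfrac{\alpha}{2(\alpha+2)}H_\omega(f)\le d(\omega)$, contradicting the infimum. If instead $K_\omega(f)>0$, then $K_\omega(\tilde f_n-f)\to-K_\omega(f)<0$ forces $\lambda(\tilde f_n-f)\to\lambda_\infty<1$, and the competitors $\lambda(\tilde f_n-f)(\tilde f_n-f)$ satisfy
\[
\limsup_{n\to\infty} S_\omega\bigl(\lambda(\tilde f_n-f)(\tilde f_n-f)\bigr)\le\frac{\alpha}{2(\alpha+2)}\lambda_\infty^2\bigl(M-H_\omega(f)\bigr)<d(\omega),
\]
the strict inequality being secured by $H_\omega(f)>0$ (from $f\ne 0$, $\omega>-|b|$ and \eqref{L2-bound}). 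Either case contradicts the definition of $d(\omega)$, forcing $K_\omega(f)=0$ and completing the proof.
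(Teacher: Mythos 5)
Your proposal is correct and follows essentially the same route as the paper's own proof: positivity of $d(\omega)$ via Gagliardo--Nirenberg, the diamagnetic inequality and \eqref{equi-norm}; boundedness and non-vanishing of the minimizing sequence; Lemma \ref{lem-weak-conv} to extract a nontrivial weak limit after gauge--translation; the Brezis--Lieb splitting of Lemma \ref{lem-refi-fatou}; and the same two-case rescaling argument (applied to $f$ when $K_\omega(f)<0$ and to $\tilde f_n-f$ when $K_\omega(f)>0$) to force $K_\omega(f)=0$, with $H_\omega(f)>0$ supplying the strict inequality. No gaps to report.
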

	
	\begin{proof}
		The proof is done by several steps.
		
		{\bf Step 1.} We first show that $d(\omega)>0$. Let $f \in H^1_A(\R^3)$ be such that $K_\omega(f)=0$. By the Gagliardo-Nirenberg inequality, the diamagnetic inequality, and \eqref{equi-norm}, we have
		\begin{align*}
		H_\omega(f) = \|f\|^{\alpha+2}_{L^{\alpha+2}} &\lesssim \|(\nabla+iA) f\|^{\frac{3\alpha}{2}}_{L^2} \|f\|^{\frac{4-\alpha}{2}}_{L^2} \\
		&\lesssim \left(\|(\nabla+iA)f\|^2_{L^2} +\|f\|^2_{L^2} \right)^{\frac{\alpha}{2}+1} \lesssim (H_\omega(f))^{\frac{\alpha}{2}+1}.
		\end{align*}
		Thus we get $H_\omega(f) \geq C>0$. It follows that
		\[
		S_\omega(f) = \frac{\alpha}{2(\alpha+2)} H_\omega(f) \geq \frac{\alpha}{2(\alpha+2)}C>0.
		\]
		Taking the infimum over all $f\in H^1_A(\R^3)\backslash \{0\}$ satisfying $K_\omega(f)=0$, we obtain $d(\omega)>0$.
		
		{\bf Step 2.} We next show that there exists a minimizer for $d(\omega)$. Let $(f_n)_{n\geq 1}$ be a minimizing sequence for $d(\omega)$. We have 
		\[
		\frac{\alpha}{2(\alpha+2)} H_\omega(f_n) = S_\omega(f_n) \rightarrow d(\omega)>0 \text{ as } n \rightarrow \infty
		\]
		which, by \eqref{equi-norm}, implies that $(f_n)_{n\geq 1}$ is a bounded sequence in $H^1_A(\R^3)$. As $K_\omega(f_n)=0$, we have
		\[
		\|f_n\|^{\alpha+2}_{L^{\alpha+2}} = H_\omega(f_n) \rightarrow \frac{2(\alpha+2)}{\alpha} d(\omega)>0 \text{ as } n\rightarrow \infty.
		\]
		Thus up to a subsequence, we have $\inf_{n\geq 1} \|f_n\|_{L^{\alpha+2}} \geq C>0$. Here the constant $C$ may vary from line to line. Applying Lemma \ref{lem-weak-conv}, there exist $f \in H^1_A(\R^3)\backslash \{0\}$ and $(y_n)_{n\geq 1} \subset \R^3$ such that
		\[
		\tilde{f}_n(x):=e^{iA(y_n) \cdot x} f_n(x+y_n) \rightharpoonup f \text{ weakly in } H^1_A(\R^3).
		\]
		Thanks to Lemma \ref{lem-refi-fatou}, we have
		\begin{align}
		H_\omega(\tilde{f}_n) &= H_\omega(f) + H_\omega(\tilde{f}_n-f) +o_n(1), \label{pytha-H-omega} \\
		K_\omega(\tilde{f}_n)&= K_\omega(f) + K_\omega(\tilde{f}_n-f) + o_n(1). \label{pytha-K-omega}
		\end{align}
		We will show that $K_\omega(f)=0$. Indeed, if $K_\omega(f)<0$, then there exists $\lambda_0 \in (0,1)$ such that $K_\omega(\lambda_0 f) =0$. From the definition of $d(\omega)$, we have
		\begin{align*}
		d(\omega) \leq S_\omega(\lambda_0 f) = \frac{\alpha}{2(\alpha+2)} H_\omega(\lambda_0f) &= \frac{\alpha \lambda_0^2}{2(\alpha+2)} H_\omega(f) \\
		&< \frac{\alpha}{2(\alpha+2)} H_\omega(f) \\
		&\leq \frac{\alpha}{2(\alpha+2)} \liminf_{n\rightarrow \infty} H_\omega(\tilde{f}_n) = \liminf_{n\rightarrow \infty} H_\omega(f_n) = d(\omega)
		\end{align*}
		which is a contradiction. If $K_\omega(f)>0$, then, by \eqref{pytha-K-omega} and the fact that $K_\omega(\tilde{f}_n)= K_\omega(f_n) =0$, we have $K_\omega(\tilde{f}_n-f)<0$ for $n$ sufficiently large. Thus there exists $(\lambda_n)_{n\geq 1} \subset (0,1)$ such that $K_\omega(\lambda_n(\tilde{f}_n-f)) =0$. It follows that
		\begin{align*}
		d(\omega) \leq S_\omega(\lambda_n(\tilde{f}_n-f)) &= \frac{\alpha}{2(\alpha+2)} \lim_{n\rightarrow \infty} H_\omega(\lambda_n(\tilde{f}_n-f)) \\
		&= \frac{\alpha}{2(\alpha+2)} \lim_{n\rightarrow \infty} \lambda_n^2 H_\omega(\tilde{f}_n-f) \\
		&\leq \frac{\alpha}{2(\alpha+2)} \lim_{n\rightarrow \infty} H_\omega(\tilde{f}_n-f) \\
		&= \frac{\alpha}{2(\alpha+2)} \left(\lim_{n\rightarrow \infty} H_\omega(\tilde{f}_n) - H_\omega(f) \right) \\
		&= \frac{\alpha}{2(\alpha+2)}\lim_{n\rightarrow \infty} H_\omega(f_n) - \frac{\alpha}{2(\alpha+2)} H_\omega(f) \\
		&=d(\omega)-\frac{\alpha}{2(\alpha+2)} H_\omega(f) <d(\omega)
		\end{align*}
		which is also a contradiction. Here the fourth line follows from \eqref{pytha-H-omega}. Thus we have $K_\omega(f)=0$. 
		
		By the definition of $d(\omega)$, we have
		\begin{align*}
		d(\omega) \leq S_\omega(f) = \frac{\alpha}{2(\alpha+2)} H_\omega(f) &\leq \frac{\alpha}{2(\alpha+2)} \liminf_{n\rightarrow \infty} H_\omega(\tilde{f}_n) \\
		&= \frac{\alpha}{2(\alpha+2)} \liminf_{n\rightarrow \infty} H_\omega(f_n) = d(\omega).
		\end{align*}
		This shows that $S_\omega(f)=d(\omega)$ or $f$ is a minimizer for $d(\omega)$. The proof is complete.
	\end{proof}

	\begin{proof}[Proof of Theorem \ref{theo-exis-grou}]
		We set 
		\[
		\Dc(\omega):= \left\{ \phi \in H^1_A(\R^3)\backslash \{0\} \ : \ S_\omega(\phi) = d(\omega), K_\omega(\phi)=0\right\}.
		\]
		By Lemma \ref{lem-exis-d-omega}, we have $\Dc(\omega) \ne \emptyset$. We will show that $\Dc(\omega) \equiv \Gc(\omega)$.
		
		To see this, let $\phi \in \Dc(\omega)$. There exists a Lagrange multiplier $\lambda \in \R$ such that $S'_\omega(\phi) = \lambda K'_\omega(\phi)$. It follows that
		\begin{align*}
		K_\omega(\phi) = \scal{S'_\omega(\phi), \phi}_{L^2} &= \lambda \scal{K'_\omega(\phi), \phi}_{L^2} = \lambda \left(2 K_\omega(\phi) - \alpha \|f\|^{\alpha+2}_{L^{\alpha+2}} \right).
		\end{align*}
		As $K_\omega(f)=0$ and $\phi \ne 0$, we have $\lambda=0$ or $S'_\omega(\phi)=0$ or $\phi \in \Ac(\omega)$. Let $f\in \Ac(\omega)$. As $K_\omega(f)=0$, we have $S_\omega(f) \geq d(\omega) =S_\omega(\phi)$. This shows that $S_\omega(\phi)\leq S_\omega(f)$ for all $f\in \Ac(\omega)$ or $\phi \in \Gc(\omega)$. Thus $\Dc(\omega) \subset \Gc(\omega)$.
		
		Finally we show that $\Gc(\omega)\subset \Dc(\omega)$. Indeed, let $\phi \in \Gc(\omega)$ and take $f\in \Dc(\omega)\subset \Gc(\omega)$. We have $S_\omega(f)=S_\omega(\phi)=d(\omega)$. Since $\phi \in \Ac(\omega)$, we have $K_\omega(\phi)=0$. Thus $\phi \in \Dc(\omega)$. The proof is complete.
	\end{proof}
	
	\begin{proposition} \label{prop-expo-deca}
		Let $A$ be as in \eqref{defi-A}, $0<\alpha<$, $\omega>-|b|$, and $\phi \in \Gc(\omega)$. Then $\phi \in L^r(\R^3)$ for all $2\leq r \leq \infty$ and $\lim_{|x| \rightarrow \infty} \phi(x) =0$. Moreover, there exists $\delta>0$ such that $e^{\delta |x|} \phi \in L^2(\R^3)$. In particular, $\phi \in \Sigma_A(\R^3)$.
	\end{proposition}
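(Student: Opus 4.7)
The plan is to establish the proposition in three stages: (i) $L^\infty$-regularity via the diamagnetic/Kato inequality and a Brezis--Kato iteration, (ii) vanishing of $\phi(x)$ as $|x|\to\infty$, and (iii) exponential decay via a magnetic Agmon-type identity exploiting $\omega+|b|>0$. Since $\phi\in\Gc(\omega)\subset\Ac(\omega)$, it is a weak solution of
\begin{equation*}
-(\nabla+iA)^2\phi + \omega\phi - |\phi|^\alpha\phi = 0,
\end{equation*}
and Kato's inequality in the magnetic setting (see \cite{LL}) yields, in the distribution sense,
\begin{equation*}
-\Delta|\phi| + \omega|\phi| \leq |\phi|^{\alpha+1}.
\end{equation*}

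For step (i), I would rewrite the above as $-\Delta|\phi| + V|\phi| \leq 0$ with $V:=\omega-|\phi|^\alpha$. Since $\phi\in H^1_A(\R^3) \hookrightarrow L^6(\R^3)$ and $\alpha<4$, one has $V\in L^p_{\loc}(\R^3)$ for some $p>3/2$; a standard Brezis--Kato / Moser iteration (see, e.g., \cite{Cazenave}) then upgrades $|\phi|$ to $L^r(\R^3)$ for every $2\leq r\leq \infty$. With $|\phi|^\alpha$ now in $L^\infty$, elliptic regularity applied to the magnetic equation produces a uniform H\"older modulus of continuity of $\phi$ on $\R^3$; combining this with $\phi\in L^2(\R^3)$ forces $\phi(x)\to 0$ as $|x|\to\infty$, which is step (ii).

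For step (iii), the core ingredient is the magnetic Agmon identity: for a real-valued, nonnegative $C^1$-weight $\psi$ and $f\in H^1_A(\R^3)$,
\begin{equation*}
\int |(\nabla+iA)(\psi f)|^2\,dx - \int |\nabla\psi|^2 |f|^2\,dx = \rea \int \psi^2\, \overline{f}\cdot \bigl(-(\nabla+iA)^2 f\bigr)\, dx.
\end{equation*}
I would apply this to $\phi$ with the smoothed weight $\psi_\delta(x):=e^{\delta\scal{x}}$, where $\scal{x}:=\sqrt{1+|x|^2}$, truncated by $\min(\psi_\delta,M)$ and a spatial cutoff to make it admissible; using the elliptic equation and $|\nabla\psi_\delta|\leq \delta\psi_\delta$, then removing the cutoffs by monotone convergence, gives
\begin{equation*}
\int |(\nabla+iA)(\psi_\delta\phi)|^2\,dx + (\omega-\delta^2) \int \psi_\delta^2|\phi|^2\,dx \leq \int \psi_\delta^2|\phi|^{\alpha+2}\,dx.
\end{equation*}
Invoking \eqref{L2-bound} on $\psi_\delta\phi$ yields
\begin{equation*}
(|b|+\omega-\delta^2)\int \psi_\delta^2|\phi|^2\,dx \leq \int \psi_\delta^2 |\phi|^\alpha |\phi|^2\,dx.
\end{equation*}
Fix $\delta>0$ with $\delta^2<|b|+\omega$. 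By step (ii), one can pick $R$ so large that $|\phi(x)|^\alpha < \tfrac{1}{2}(|b|+\omega-\delta^2)$ for $|x|>R$; the far-field part of the right-hand side is absorbed into the left-hand side, while the near-field part is bounded by $e^{2\delta\scal{R}}\|\phi\|^{\alpha+2}_{L^{\alpha+2}}<\infty$. This yields $\psi_\delta\phi\in L^2(\R^3)$, hence $e^{\delta|x|}\phi\in L^2(\R^3)$.

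The main technical obstacle is making the Agmon identity rigorous: the weight $\psi_\delta$ is unbounded, so one must argue with $\min(\psi_\delta,M)$ times a compactly supported cutoff, produce the identity in that setting (where integrations by parts are justified since $\phi\in H^1_A$), and only then pass $M\to\infty$ and remove the spatial cutoff using the uniform bound just obtained; the use of $\scal{x}$ in place of $|x|$ ensures the weight is smooth everywhere. The conclusion $\phi\in\Sigma_A(\R^3)$ is then immediate: the exponential decay gives $|x|\phi\in L^2(\R^3)$ (since $|x|\lesssim e^{\delta|x|/2}$), which together with $\phi\in H^1_A(\R^3)$ is precisely the definition \eqref{Sigma-A}.
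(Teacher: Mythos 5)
Your overall strategy is sound and, at the level of the key mechanism, coincides with the paper's: everything hinges on the lower bound \eqref{L2-bound}, which makes $\omega>-|b|$ an effective spectral gap at infinity once $|\phi|^\alpha$ is known to be small there, and the exponential decay is then an Agmon-type estimate. The difference is in what is proved versus cited. The paper black-boxes the $L^r$-regularity and vanishing at infinity (Lemma \ref{lem-Linfty}, quoted from Chabrowski--Szulkin) and the Agmon decay itself (Lemma \ref{lem-expo-deca-Raymond}, quoted from Raymond), and its only real work is Observation \ref{obse-V-vareps}: verifying the quadratic-form lower bound $\int|(\nabla+iA)f|^2+\int V|f|^2\geq(1-\vareps)|b|\|f\|_{L^2}^2$ for $f$ supported outside a large ball, using only $V=-|\phi|^\alpha\in L^r$ for some $r>3/2$ together with H\"older, the diamagnetic inequality, and \eqref{L2-bound} --- note this needs no pointwise decay of $\phi$, only integrability. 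You instead prove both ingredients by hand: Kato's magnetic inequality plus Brezis--Kato iteration for the $L^\infty$ bound, and a direct Agmon identity with the truncated weight $\min(e^{\delta\scal{x}},M)$; your identity is correct (expand $(\nabla+iA)(\psi f)=(\nabla\psi)f+\psi(\nabla+iA)f$ and integrate by parts against $\psi^2 f$), the bound $|\nabla\psi_\delta|\leq\delta\psi_\delta$ is right, and you correctly identify that the absorption of the far-field term must be performed at the truncated level, where $\int\psi_{\delta,M}^2|\phi|^2\leq M^2\|\phi\|_{L^2}^2<\infty$, before letting $M\to\infty$. What your route buys is self-containedness and a quantitative rate $\delta^2<|b|+\omega$ matching Raymond's $\delta<\sqrt{\mu^*-\mu}$; what the paper's route buys is brevity and the fact that it never needs pointwise decay of $|\phi|^\alpha$, only $L^r$-smallness of its tail.

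One step deserves more care. In your step (ii) you invoke ``elliptic regularity applied to the magnetic equation'' to get a \emph{uniform} H\"older modulus on all of $\R^3$; but the equation written as $-\Delta\phi=-bL_z\phi-\tfrac{b^2}{4}\rho^2\phi-\omega\phi+|\phi|^\alpha\phi$ has coefficients growing linearly and quadratically in $|x|$, so local elliptic estimates on unit balls $B(x_0,1)$ carry constants that degrade as $|x_0|\to\infty$, and uniformity is not automatic. The clean fix stays within your own framework: once $\phi\in L^\infty$, your Kato inequality gives $-\Delta|\phi|\leq C|\phi|$ with a constant independent of $x$, and the local boundedness estimate for subsolutions yields $\sup_{B(x_0,1/2)}|\phi|\leq C\|\phi\|_{L^2(B(x_0,1))}$ uniformly in $x_0$, which tends to $0$ since $\phi\in L^2(\R^3)$. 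With that repair, your step (iii) (which genuinely uses the pointwise smallness of $|\phi|^\alpha$ outside a large ball) goes through, and the conclusion $\phi\in\Sigma_A(\R^3)$ follows as you say.
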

	
	The proof of Proposition \ref{prop-expo-deca} is based on the following results of Chabrowski and A. Szulkin \cite{CS} and N. Raymond \cite{Raymond}.
	
	\begin{lemma}[\cite{CS}] \label{lem-Linfty}
		Let $A \in L^2_{\loc}(\R^3, \R^3)$. Let $\phi \in H^1_A(\R^3)$ be a solution to \eqref{ell-equ}. Then $\phi \in L^r(\R^3)$ for all $2\leq r \leq \infty$. Moreover, 
		\[
		\lim_{|x| \rightarrow \infty} \phi(x) =0.
		\]
	\end{lemma}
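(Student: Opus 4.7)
The strategy has two parts: first upgrade the regularity of $\phi$ from $H^1_A$ to $L^\infty$ via Kato's inequality and a Brezis--Kato/Moser iteration, and second deduce pointwise decay at infinity from $L^2$-integrability together with an interior mean-value estimate. The guiding principle is that Kato's inequality converts the magnetic equation into a scalar subsolution inequality for $|\phi|$, after which one can use purely scalar elliptic techniques that are insensitive to the low regularity $A\in L^2_{\loc}$.

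First I would apply Kato's inequality for magnetic Schr\"odinger operators, which states that for $\phi\in H^1_A(\R^3)$ with $(\nabla+iA)^2\phi\in L^1_{\loc}(\R^3)$ one has, in the sense of distributions,
\[
\Delta|\phi|\ \geq\ \rea\!\bigl[\overline{(\operatorname{sign}\phi)}\,(\nabla+iA)^2\phi\bigr].
\]
Substituting the equation $(\nabla+iA)^2\phi=\omega\phi-|\phi|^\alpha\phi$ gives
\[
-\Delta|\phi|+\omega|\phi|\ \leq\ |\phi|^{\alpha+1}\quad\text{in }\mathcal{D}'(\R^3).
\]
By the diamagnetic inequality together with $H^1_A(\R^3)\hookrightarrow L^r(\R^3)$ for $2\leq r\leq 6$ (Lemma \ref{lem-prop-H1-A}), we have $|\phi|\in H^1(\R^3)\cap L^6(\R^3)$. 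Writing $|\phi|^{\alpha+1}=g\,|\phi|$ with $g:=|\phi|^\alpha$, and noting that $g\in L^{6/\alpha}(\R^3)$ with $6/\alpha>3/2$ since $\alpha<4$, the standard Brezis--Kato iteration applies and yields $|\phi|\in L^p(\R^3)$ for every $2\leq p<\infty$. An $L^p\to L^\infty$ Moser step applied to the same scalar inequality (now with $g$ bounded in any $L^p$ with $p$ large) promotes this to $|\phi|\in L^\infty(\R^3)$. Concretely, testing the inequality against $|\phi|^{2\beta+1}$ and using the identity $(\beta+1)^{-2}\int|\nabla|\phi|^{\beta+1}|^2=\int|\phi|^{2\beta}|\nabla|\phi||^2$ followed by the Sobolev bound $\||\phi|^{\beta+1}\|_{L^6}^2\lesssim\|\nabla|\phi|^{\beta+1}\|_{L^2}^2$ produces the iterative estimate on $\|\phi\|_{L^{6(\beta+1)}}$ that closes the argument.

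Once $M:=\|\phi\|_{L^\infty}<\infty$, the scalar inequality reads $-\Delta|\phi|\leq (M^\alpha+|\omega|)|\phi|$, a linear subsolution bound with bounded zeroth-order coefficient. The Moser mean-value estimate on unit balls then gives
\[
|\phi(x_0)|\ \lesssim\ \||\phi|\|_{L^2(B_1(x_0))}\qquad\text{for every }x_0\in\R^3,
\]
and since $\phi\in L^2(\R^3)$ the right-hand side tends to $0$ as $|x_0|\to\infty$, whence $\phi(x)\to 0$ at infinity.

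The main technical obstacle is justifying Kato's inequality and the iteration test functions rigorously under only $A\in L^2_{\loc}$, since one cannot freely test against $|\phi|^{2\beta+1}$ in the magnetic weak formulation. This is handled by the standard regularization: replace $|\phi|$ by $\phi_\varepsilon:=\sqrt{|\phi|^2+\varepsilon}$, use the pointwise bound $|\nabla\phi_\varepsilon|\leq|(\nabla+iA)\phi|$ (the pointwise diamagnetic inequality of \eqref{diag-ineq}) to derive Kato's inequality for $\phi_\varepsilon$, pass to the limit $\varepsilon\to 0$ against nonnegative test functions, and truncate $|\phi|^{2\beta+1}$ by $\min(|\phi|,L)^{2\beta}|\phi|$ before letting $L\to\infty$ at each iteration step. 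After this reduction every remaining estimate is scalar and independent of $A$.
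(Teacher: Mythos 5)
The paper does not prove this lemma at all: it is imported verbatim from the cited reference \cite{CS}, so there is no in-paper argument to compare against. Your proof is the standard route (and essentially the one used in that reference): magnetic Kato's inequality reduces the problem to the scalar subsolution inequality $-\Delta|\phi|\leq(|\phi|^{\alpha}+|\omega|)|\phi|$, after which Brezis--Kato/Moser iteration gives $L^{\infty}$ (using that $|\phi|^{\alpha}\in L^{6/\alpha}$ with $6/\alpha>3/2$ precisely because $\alpha<4$) and the local subsolution estimate $\sup_{B_{1/2}(x_0)}|\phi|\lesssim\|\phi\|_{L^{2}(B_{1}(x_0))}$ combined with $\phi\in L^{2}(\R^{3})$ yields the decay; the regularization via $\sqrt{|\phi|^{2}+\varepsilon}$ and the truncation of the test functions that you flag are exactly the points needing care, and your treatment of them is correct.
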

	
	\begin{lemma}[{\cite[Proposition 4.9]{Raymond}}] \label{lem-expo-deca-Raymond}
		Let $V \in C^0(\R^3,\R)$ be bounded from below and $A \in C^1(\R^3, \R^3)$. Assume that there exist $R_0>0$ and $\mu^* \in \R$ such that for all $f \in H^1_A(\R^3)$ with $\supp(f) \subset \R^3 \backslash B(0,R_0)$, we have
		\begin{align} \label{est-V}
			\int |(\nabla+iA)f|^2dx+ \int V|f|^2 dx \geq \mu^* \|f\|^2_{L^2}.
		\end{align}
		Then we have $\inf\spec (-(\nabla+iA)^2 +V) \geq \mu^*$. Moreover, if $\phi$ is an eigenfunction for $-(\nabla+iA)^2 +V$ with eigenvalue $\mu<\mu^*$, then for all $\delta \in (0, \sqrt{\mu^*-\mu})$, we have $e^{\delta |x|} \phi \in L^2(\R^3)$. 
	\end{lemma}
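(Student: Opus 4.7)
The plan is to recognise this as a Persson–Agmon type statement for magnetic Schrödinger operators and split the proof into two essentially independent pieces: (i) a Persson-style lower bound on the (essential) spectrum, and (ii) an Agmon-type weighted energy argument producing exponential decay of eigenfunctions lying strictly below the bottom of the essential spectrum. The form bound \eqref{est-V} restricted to functions supported outside $B(0,R_0)$ is exactly the common hypothesis that feeds both arguments.

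For (i), I would argue by contradiction: suppose that some spectral value $\mu<\mu^*$ of $H:=-(\nabla+iA)^2+V$ lies in the (essential) spectrum. Weyl's criterion then furnishes a normalised singular sequence $(f_n)\subset H^2_A(\R^3)$ with $f_n\rightharpoonup 0$ weakly and $\|(H-\mu)f_n\|_{L^2}\to 0$. Pick a smooth cutoff $\chi$ that vanishes on $B(0,R_0)$ and equals $1$ on $\R^3\setminus B(0,R_0+1)$. The compact embedding of $H^1_A(\R^3)$ into $L^2_{\loc}(\R^3)$ (Lemma \ref{lem-prop-H1-A}(4)) ensures $f_n\to 0$ in $L^2(B(0,R_0+1))$, so $\|\chi f_n\|_{L^2}\to 1$ and $\chi f_n$ is supported in $\R^3\setminus B(0,R_0)$. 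The IMS localisation identity
\[
\scal{H(\chi f_n),\chi f_n}=\rea\scal{Hf_n,\chi^2 f_n}+\int|\nabla\chi|^2|f_n|^2\,dx
\]
has right-hand side converging to $\mu$, since the second term is supported where $f_n\to 0$ in $L^2$ and the first one is $\mu\|\chi f_n\|^2+o(1)$. Combined with the hypothesis $\scal{H(\chi f_n),\chi f_n}\geq\mu^*\|\chi f_n\|^2\to\mu^*$, this forces $\mu\geq\mu^*$, a contradiction.

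For (ii), fix an eigenfunction $H\phi=\mu\phi$ with $\mu<\mu^*$ and $\delta\in(0,\sqrt{\mu^*-\mu})$. Introduce a bounded Lipschitz approximation $F_R$ of $\delta|x|$, smoothed near the origin and cut off at level $R$, so that $|\nabla F_R|\le\delta$, $\Delta F_R$ is bounded, and $F_R\nearrow\delta|x|$. Pairing the eigenvalue equation against $e^{2F_R}\bar\phi$ and integrating by parts yields the magnetic Agmon identity
\[
\int e^{2F_R}\bigl[|(\nabla+iA)\phi|^2+(V-\mu-|\nabla F_R|^2)|\phi|^2\bigr]dx = -\int e^{2F_R}\Delta F_R\cdot (\text{bounded})\,dx,
\]
where the key simplification is that the magnetic cross term $2\rea\int e^{2F_R}\nabla F_R\cdot\bar\phi(\nabla+iA)\phi$ collapses to $\int e^{2F_R}\nabla F_R\cdot\nabla|\phi|^2$ because $\rea(\bar\phi\, iA\phi)=0$. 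Setting $\eta:=e^{F_R}\phi$, I then localise $\eta$ away from $B(0,R_0)$ by the cutoff $\chi$ from Part (i) and apply \eqref{est-V} to $\chi\eta$. An IMS expansion of $|(\nabla+iA)(\chi\eta)|^2$ produces error terms supported on the annulus $R_0<|x|<R_0+1$, where both $F_R$ and $\phi$ are bounded (the latter by Lemma \ref{lem-Linfty}), giving a bound
\[
(\mu^*-\mu-\delta^2)\|\chi\eta\|_{L^2}^2\leq C(\phi,R_0,\delta)
\]
uniform in $R$. Sending $R\to\infty$ and invoking monotone convergence then gives $e^{\delta|x|}\phi\in L^2(\R^3)$.

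The main obstacle is the algebraic tightness in (ii): one must verify the magnetic Agmon identity rigorously, control $\Delta F_R$ after smoothing so that it contributes no worse than a lower-order perturbation, and carry out the IMS-style cutoff in a way that preserves the decisive positive factor $\mu^*-\mu-\delta^2$ on the left-hand side. Once this has been arranged, the passage $R\to\infty$ and the regularity considerations (using Lemma \ref{lem-Linfty} to make $\phi$ bounded on compact sets) are routine, and Part (i) is essentially a textbook Persson argument combined with the magnetic compact embedding already recorded in Lemma \ref{lem-prop-H1-A}(4).
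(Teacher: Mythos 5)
The paper does not prove this lemma at all: it is imported verbatim from Raymond's monograph (Proposition 4.9 there) and used as a black box in the proof of Proposition \ref{prop-expo-deca}, so there is no in-paper argument to compare against. Your reconstruction is the standard Persson--Agmon proof, which is what the cited source carries out, and the overall architecture is sound. Two remarks. First, as printed the conclusion $\inf\spec(-(\nabla+iA)^2+V)\geq\mu^*$ would make the second assertion vacuous (there could then be no eigenvalue $\mu<\mu^*$); the statement, and your part (i), really concern $\inf\spec_{\mathrm{ess}}$. Your Weyl-sequence/IMS argument, with the compact embedding of Lemma \ref{lem-prop-H1-A}(4) killing the localization errors on the fixed annulus, proves exactly that and is correct.

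Second, the displayed ``magnetic Agmon identity'' in part (ii) is not internally consistent: pairing the eigenvalue equation with $e^{2F_R}\overline{\phi}$ and taking real parts gives $\int e^{2F_R}\bigl[|(\nabla+iA)\phi|^2+(V-\mu-2|\nabla F_R|^2-\Delta F_R)|\phi|^2\bigr]dx=0$, with a coefficient $2$ on $|\nabla F_R|^2$; running the rest of the argument on the version you wrote (a single $|\nabla F_R|^2$ plus a $\Delta F_R$ remainder) would, once repaired, only reach $\delta<\sqrt{(\mu^*-\mu)/2}$ rather than the full range claimed in the lemma. The sharp constant comes from completing the square in the substitution $\eta=e^{F_R}\phi$: one gets the exact identity $\|(\nabla+iA)\eta\|_{L^2}^2+\int(V-\mu-|\nabla F_R|^2)|\eta|^2dx=0$ with no $\Delta F_R$ term at all, because the cross term is of the form $z-\overline{z}$ with $z=\int\overline{\eta}\,\nabla F_R\cdot(\nabla+iA)\eta\,dx$ and hence purely imaginary. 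This is the form to which the cutoff $\chi$ and the hypothesis \eqref{est-V} should be applied, and since your subsequent steps already work with $\chi\eta$ and the quadratic form of $\eta$, this is a repair of the displayed formula rather than of the strategy; with that correction the uniform bound $(\mu^*-\mu-\delta^2)\|\chi\eta\|_{L^2}^2\leq C(\phi,R_0,\delta)$ and the monotone-convergence passage $R\to\infty$ go through as you describe.
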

	
	\begin{proof} [Proof of Proposition \ref{prop-expo-deca}]
		By Lemma \ref{lem-Linfty}, it remains to show the exponential decay of the ground state. Let us start with the following observation.
		\begin{observation} \label{obse-V-vareps}
			Let $A$ be as in \eqref{defi-A} and $V \in L^r(\R^3)$ for some $r>\frac{3}{2}$. Then for every $\vareps>0$, there exists $R=R(\vareps)>0$ such that 
			\begin{align} \label{est-V-vareps}
				\int |(\nabla+iA)f|^2dx+ \int V|f|^2 dx \geq (1-\vareps)|b| \|f\|^2_{L^2}
			\end{align}
			for all $f \in H^1_A(\R^3)$ with $\supp(f) \subset \R^3 \backslash B(0,R)$.
		\end{observation}
		
		\begin{proof}
			By the H\"older inequality and Sobolev embedding, we have
			\[
			\left|\int V|f|^2 dx\right| \leq \|V\|_{L^r} \|f\|^2_{L^{\frac{2r}{r-1}}} \leq \|V\|_{L^r} \||f|\|^2_{H^1}, 
			\]
			where $2<\frac{2r}{r-1}<6$ as $r>\frac{3}{2}$. By the diamagnetic inequality \eqref{diag-ineq} and \eqref{L2-bound}, we have
			\[
			\||f|\|^2_{H^1} = \|\nabla |f|\|^2_{L^2} + \|f\|^2_{L^2} \leq \left(1+\frac{1}{|b|}\right) \|(\nabla+iA)f\|^2_{L^2}.
			\]
			It follows that
			\begin{align*}
				\int |(\nabla+iA)f|^2dx+ \int V|f|^2 dx \geq \left(1-\|V\|_{L^r} \left(1+\frac{1}{|b|}\right)\right) \|(\nabla+iA)f\|^2_{L^2}.
			\end{align*}
			In particular, for any $f\in H^1_A(\R^3)$ with $\supp(f)\subset \R^3 \backslash B(0,R)$, we have
			\[
			\int |(\nabla+iA)f|^2dx+ \int V|f|^2 dx \geq \left(1-\|V\|_{L^r(|x|\geq R)}\left(1+\frac{1}{|b|}\right)\right) \|(\nabla+iA)f\|^2_{L^2}.
			\]
			As $V \in L^r(\R^3)$, we have $\|V\|_{L^r(|x|\geq R)} \rightarrow 0$ as $R\rightarrow \infty$. Thus for every $\vareps>0$, there exists $R=R(\vareps)>0$ such that 
			\[
			\|V\|_{L^r(|x|\geq R)} \left(1+\frac{1}{|b|}\right) \leq \vareps
			\]
			which proves \eqref{est-V-vareps}. 
		\end{proof}
		
		Now we prove the exponential decay of the ground state by applying Lemma \ref{lem-expo-deca-Raymond} to $V= -|\phi|^\alpha$. By Lemma \ref{lem-Linfty}, we see that $V$ is bounded from below. To see $V \in C^0(\R^3,\R)$, it suffices to show $V \in C^0(B(0,R), \R)$ for any $R>0$. On $B(0,R)$, the equation \eqref{ell-equ} can be written as
		\[
		-\Delta \phi = -b L_z \phi -\frac{b^2}{4} \rho^2 \phi -\omega \phi +|\phi|^\alpha \phi, \quad x \in B(0,R).
		\] 
		Since the right hand side belongs to $L^2(B(0,R))$, the regularity argument (see e.g., \cite[Theorem 10.2]{LL}) shows that $\phi \in C^{0,\alpha}(B(0,R))$ for some $\alpha>0$.   
		
		For $\omega>-|b|$ being given, there exists $\vareps>0$ such that $\omega>-(1-\vareps)|b|$. As $\phi \in L^r(\R^3)$ for all $2\leq r\leq \infty$, by Observation \ref{obse-V-vareps}, we see that \eqref{est-V} holds with $\mu^*=(1-\vareps)|b|$. Applying Lemma \ref{lem-expo-deca-Raymond} with $\mu=-\omega$ the eigenvalue of $-(\nabla+iA)^2 +V$ associated to the eigenfunction $\phi$, namely
		\[
		-(\nabla+iA)^2 \phi - |\phi|^\alpha \phi = -\omega \phi,
		\]
		there exists $\delta>0$ such that $e^{\delta |x|}\phi \in L^2(\R^3)$. The proof is complete.
	\end{proof}
	
	\begin{lemma}
		Let $A$ be as in \eqref{defi-A}, $0<\alpha<4$, $\omega>-|b|$, and $\phi \in \Gc(\omega)$. Then we have
		\[
		K_\omega(\phi) = H(\phi) =0,
		\]
		where $H$ and $K_\omega$ are as in \eqref{defi-H} and \eqref{defi-K-omega} respectively.
	\end{lemma}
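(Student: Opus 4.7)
The statement splits into two assertions. The first, $K_\omega(\phi)=0$, is essentially tautological given the groundwork already laid: by the characterization
\[
\Gc(\omega) = \bigl\{\psi \in H^1_A(\R^3)\setminus\{0\} \ : \ S_\omega(\psi) = d(\omega),\ K_\omega(\psi) = 0\bigr\}
\]
established in Theorem \ref{theo-exis-grou}, every ground state satisfies $K_\omega(\phi)=0$ by definition. So all the work goes into the Pohozaev-type identity $H(\phi) = 0$.

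The plan for $H(\phi)=0$ is to exploit the $L^2$-invariant scaling $\phi^\lambda(x) = \lambda^{3/2}\phi(\lambda x)$ from \eqref{defi-phi-lambda}. Since $\phi\in \Gc(\omega)\subset \Ac(\omega)$, one has $S'_\omega(\phi)=0$, so every admissible variation yields a vanishing directional derivative of $S_\omega$ at $\phi$. By Proposition \ref{prop-expo-deca} we have $\phi \in \Sigma_A(\R^3)$, which guarantees that
\[
\left.\partial_\lambda \phi^\lambda\right|_{\lambda=1} = \tfrac{3}{2}\phi + x\cdot\nabla \phi \in H^1_A(\R^3),
\]
and that $\lambda \mapsto \phi^\lambda$ is $C^1$ into $H^1_A(\R^3)$ near $\lambda = 1$. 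The chain rule then gives
\[
\left.\frac{d}{d\lambda}S_\omega(\phi^\lambda)\right|_{\lambda=1} = \bigl\langle S'_\omega(\phi),\ \left.\partial_\lambda \phi^\lambda\right|_{\lambda=1}\bigr\rangle = 0.
\]

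It remains to evaluate this derivative explicitly. A substitution $y=\lambda x$ yields the standard scaling identities $\|\nabla \phi^\lambda\|_{L^2}^2 = \lambda^2 \|\nabla \phi\|_{L^2}^2$, $\|\phi^\lambda\|_{L^2}^2 = \|\phi\|_{L^2}^2$, $\|\rho\phi^\lambda\|_{L^2}^2 = \lambda^{-2}\|\rho\phi\|_{L^2}^2$, $R(\phi^\lambda)=R(\phi)$, and $\|\phi^\lambda\|_{L^{\alpha+2}}^{\alpha+2} = \lambda^{3\alpha/2}\|\phi\|_{L^{\alpha+2}}^{\alpha+2}$. Inserting these into \eqref{mag-norm} and into the definition of $S_\omega$ gives
\[
S_\omega(\phi^\lambda) = \frac{\lambda^2}{2}\|\nabla\phi\|_{L^2}^2 + \frac{b}{2}R(\phi) + \frac{b^2\lambda^{-2}}{8}\|\rho\phi\|_{L^2}^2 + \frac{\omega}{2}\|\phi\|_{L^2}^2 - \frac{\lambda^{3\alpha/2}}{\alpha+2}\|\phi\|_{L^{\alpha+2}}^{\alpha+2},
\]
and differentiating at $\lambda=1$ recovers precisely the functional $H(\phi)$ defined in \eqref{defi-H}. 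Combined with the vanishing of the derivative, this yields $H(\phi)=0$.

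The only delicate point is justifying the chain rule for $S_\omega \circ \phi^{(\cdot)}$ at $\lambda=1$, since this requires $\left.\partial_\lambda \phi^\lambda\right|_{\lambda=1}$ to be an admissible test element against $S'_\omega(\phi)\in (H^1_A)^\ast$. This is exactly what Proposition \ref{prop-expo-deca} provides, via $\phi\in \Sigma_A(\R^3)$ together with the exponential decay of $\phi$; once that is in hand, the argument reduces to a direct computation of polynomials in $\lambda$ and gives the claimed identities.
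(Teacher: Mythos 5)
Your overall strategy is sound, and in substance it coincides with the paper's: the paper obtains $K_\omega(\phi)=0$ by pairing \eqref{ell-equ} with $\overline{\phi}$ (your shortcut via the characterization of $\Gc(\omega)$ in Theorem \ref{theo-exis-grou} is equally valid), and it obtains $H(\phi)=0$ from the Pohozaev-type identity produced by pairing the equation with $x\cdot\nabla\overline{\phi}$ and taking real parts. Your computation of $\left.\partial_\lambda S_\omega(\phi^\lambda)\right|_{\lambda=1}=H(\phi)$ is correct (and is used verbatim later in the paper, in the proof of Theorem \ref{theo-insta-grou}), and testing $S'_\omega(\phi)=0$ against $\tfrac32\phi+x\cdot\nabla\phi$ is exactly the linear combination of the paper's two identities; so the route is the same Pohozaev argument in scaling-derivative form.

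The gap is in your justification of the chain rule. You claim that Proposition \ref{prop-expo-deca} ``provides exactly'' what is needed, but that proposition only gives $\phi\in L^r(\R^3)$ for $2\le r\le\infty$, decay at infinity, and $e^{\delta|x|}\phi\in L^2$, hence $\phi\in\Sigma_A(\R^3)$. Membership in $\Sigma_A(\R^3)$ controls $|x|\phi$ and $(\nabla+iA)\phi$ in $L^2$; it does \emph{not} give $x\cdot\nabla\phi\in H^1_A(\R^3)$ (that would require second derivatives of $\phi$ in $L^2$ and weighted bounds such as $\rho\, x\cdot\nabla\phi\in L^2$), nor the $C^1$-differentiability of $\lambda\mapsto\phi^\lambda$ as an $H^1_A$-valued map, which is what your pairing $\langle S'_\omega(\phi),\left.\partial_\lambda\phi^\lambda\right|_{\lambda=1}\rangle$ requires. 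To close this you need either additional elliptic regularity and decay of $\nabla\phi$ (bootstrapping from \eqref{ell-equ}, not contained in Proposition \ref{prop-expo-deca}), or the standard truncation/cutoff justification of the Pohozaev identity: pair the equation with $\chi_R(x)\,x\cdot\nabla\overline{\phi}$ for smooth cutoffs $\chi_R$, integrate by parts, and pass to the limit using the decay of $\phi$; this is what the paper's ``multiplying by $x\cdot\nabla\overline{\phi}$ and integrating by parts'' implicitly amounts to. The rest of your argument, including the scaling identities and the identification of the derivative with $H(\phi)$ in \eqref{defi-H} via \eqref{mag-norm} and \eqref{defi-phi-lambda}, is correct.
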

	
	\begin{proof}
		Since $\phi \in \Gc(\omega)$, we see that $\phi$ is a solution to \eqref{ell-equ}. By multiplying both sides of \eqref{ell-equ} with $\overline{\phi}$ and integrating over $\R^3$, we have $K_\omega(\phi)=0$. As $\phi \in \Sigma_A(\R^3)$ (see Proposition \ref{prop-expo-deca}), we have from \eqref{mag-norm} that
		\begin{align} \label{poho-prof-1}
		\|\nabla \phi\|^2_{L^2} + b R(\phi) + \frac{b^2}{4} \|\rho\phi\|^2_{L^2} + \omega \|\phi\|^2_{L^2} - \|\phi\|^{\alpha+2}_{L^{\alpha+2}}=0.
		\end{align}
		On the other hand, by \eqref{nabla-A}, we rewrite \eqref{ell-equ} as 
		\begin{align} \label{ell-equ-equi}
		-\Delta \phi + bL_z\phi +\frac{b^2}{4} \rho^2 \phi + \omega \phi - |\phi|^\alpha \phi =0.
		\end{align}
		Multiplying both sides of \eqref{ell-equ-equi} with $x \cdot \nabla \overline{\phi}$, integrating over $\R^3$, and taking the real part, we get
		\begin{align} \label{poho-prof-2}
		-\frac{1}{2}\|\nabla \phi\|^2_{L^2} - \frac{3}{2} b R(\phi) - \frac{5b^2}{8} \|\rho\phi\|^2_{L^2} - \frac{3\omega}{2} \|\phi\|^2_{L^2} +\frac{3}{\alpha+2} \|\phi\|^{\alpha+2}_{L^{\alpha+2}} =0.
		\end{align}
		Here we have used the following identities which can be showed by integration by parts:
		\begin{align*}
		\rea \left( \int x \cdot \nabla \overline{\phi} \Delta \phi dx\right) &= \frac{1}{2}\|\nabla \phi\|^2_{L^2}, \\
		\rea \left( \int x \cdot \nabla \overline{\phi} \rho^2 \phi dx \right) &= -\frac{5}{2} \|\rho \phi\|^2_{L^2}, \\
		\rea \left( \int x \cdot \nabla \overline{\phi} L_z \phi dx\right) &= -\frac{3}{2} R(\phi), \\
		\rea \left( \int x \cdot \nabla \overline{\phi} |\phi|^\alpha \phi dx \right) &= -\frac{3}{\alpha+2} \|\phi\|^{\alpha+2}_{L^{\alpha+2}}.
		\end{align*}
		From \eqref{poho-prof-1} and \eqref{poho-prof-2}, we infer that
		\[
		\|\nabla \phi\|^2_{L^2} -\frac{b^2}{4} \|\rho \phi\|^2_{L^2} -\frac{3\alpha}{2(\alpha+2)} \|\phi\|^{\alpha+2}_{L^{\alpha+2}}=0
		\]
		or $H(\phi)=0$. The proof is complete.
	\end{proof}

	\begin{lemma} \label{lem-key-est}
		Let $A$ be as in \eqref{defi-A}, $\frac{4}{3}<\alpha<4$, $\omega>-|b|$, and $\phi \in \Gc(\omega)$. Assume that $\left. \partial^2_\lambda S_\omega(\phi^\lambda)\right|_{\lambda=1} \leq 0$, where $\phi^\lambda$ is as in \eqref{defi-phi-lambda}. Let $f \in \Sigma_A(\R^3)$ be such that
		\begin{align} \label{cond-f}
		M(f) = M(\phi), \quad R(f) = R(\phi), \quad K_\omega(f) \leq 0, \quad H(f) \leq 0.
		\end{align}
		Then 
		\begin{align} \label{key-est}
		H(f) \leq 2 \left(S_\omega(f) - d(\omega)\right),
		\end{align}
		where $d(\omega)$ is as in \eqref{defi-d-omega}.
	\end{lemma}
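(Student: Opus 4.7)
The plan is to exploit the dilation $f^\lambda(x) := \lambda^{3/2} f(\lambda x)$, which preserves mass and angular momentum: $M(f^\lambda) = M(f) = M(\phi)$ and $R(f^\lambda) = R(f) = R(\phi)$. Setting $h(\lambda) := S_\omega(f^\lambda)$, a direct computation using \eqref{mag-norm} yields
\[
h(\lambda) = \frac{\lambda^2}{2}\|\nabla f\|^2_{L^2} + \frac{b}{2}R(f) + \frac{b^2 \lambda^{-2}}{8}\|\rho f\|^2_{L^2} + \frac{\omega}{2}\|f\|^2_{L^2} - \frac{\lambda^{3\alpha/2}}{\alpha+2}\|f\|^{\alpha+2}_{L^{\alpha+2}},
\]
so that $h'(1) = H(f)$.

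Next I locate a scale $\lambda_0 \in (0,1]$ with $K_\omega(f^{\lambda_0}) = 0$. Since $f \in \Sigma_A(\R^3)$, the quantity $\|\rho f\|_{L^2}$ is finite and the positive term $\tfrac{b^2\lambda^{-2}}{4}\|\rho f\|^2_{L^2}$ forces $K_\omega(f^\lambda) \to +\infty$ as $\lambda \to 0^+$, while $K_\omega(f^1) = K_\omega(f) \leq 0$ by assumption. Continuity provides $\lambda_0$, and the variational definition of $d(\omega)$ (Theorem \ref{theo-exis-grou}) then gives $h(\lambda_0) = S_\omega(f^{\lambda_0}) \geq d(\omega)$.

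The desired inequality $H(f) \leq 2(S_\omega(f) - d(\omega))$ is equivalent to $d(\omega) \leq h(1) - h'(1)/2$, so in view of $h(\lambda_0) \geq d(\omega)$ it suffices to prove the algebraic bound $h(1) - h(\lambda_0) \geq h'(1)/2$. To establish it I will use the identity
\[
S_\omega(f^\lambda) = \frac{1}{2}K_\omega(f^\lambda) + \frac{\alpha \lambda^{3\alpha/2}}{2(\alpha+2)}\|f\|^{\alpha+2}_{L^{\alpha+2}},
\]
which, evaluated at $\lambda = 1$ and $\lambda = \lambda_0$, rewrites the difference $h(1) - h(\lambda_0)$ in closed form. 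The constraint $K_\omega(f^{\lambda_0}) = 0$ is then used to eliminate the non-scaling combination $bR(f) + \omega\|f\|^2_{L^2}$, reducing the task to a purely algebraic inequality in $\lambda_0$ involving $\|\nabla f\|^2_{L^2}$, $\|\rho f\|^2_{L^2}$, and $\|f\|^{\alpha+2}_{L^{\alpha+2}}$. This inequality will be closed with the help of the sign conditions $K_\omega(f), H(f) \leq 0$ and of the hypothesis $\partial^2_\lambda S_\omega(\phi^\lambda)|_{\lambda=1} \leq 0$, which, combined with $H(\phi) = 0$, is equivalent to the structural ground-state bound $b^2\|\rho\phi\|^2_{L^2} \leq \tfrac{3\alpha(3\alpha-4)}{4(\alpha+2)}\|\phi\|^{\alpha+2}_{L^{\alpha+2}}$.

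The main obstacle is precisely this last step. Because $h$ mixes three competing scales ($\lambda^2$, $\lambda^{-2}$, and $\lambda^{3\alpha/2}$), no global convexity of $h$ is available, so the quantitative bound $h(1) - h(\lambda_0) \geq h'(1)/2$ does not follow from any clean one-line estimate; rather it must be extracted from the delicate interplay of the constraint at $\lambda_0$, the admissibility assumptions $M(f)=M(\phi)$, $R(f)=R(\phi)$, $K_\omega(f)\leq 0$, $H(f)\leq 0$, and the concavity-type input provided by the hypothesis on $\phi$.
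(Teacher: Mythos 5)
Your setup is correct and coincides with the paper's: dilate $f^\lambda(x)=\lambda^{3/2}f(\lambda x)$, note $\partial_\lambda S_\omega(f^\lambda)|_{\lambda=1}=H(f)$, locate $\lambda_0\in(0,1]$ with $K_\omega(f^{\lambda_0})=0$ (using that the $\lambda^{-2}\|\rho f\|_{L^2}^2$ term forces $K_\omega(f^\lambda)\to+\infty$ as $\lambda\to 0^+$), and combine $d(\omega)\le S_\omega(f^{\lambda_0})$ with a scalar inequality comparing $S_\omega(f^{\lambda_0})$ to $S_\omega(f)-\tfrac12 H(f)$. But the proposal stops exactly where the lemma becomes nontrivial: you announce that the remaining ``purely algebraic inequality in $\lambda_0$'' will be closed using the sign conditions and the hypothesis on $\phi$, and you yourself concede that no clean estimate does this. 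That is the entire content of the lemma, and as written it is a genuine gap, not a deferral of routine computation.

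Concretely, two ingredients are missing. First, the hypothesis $\partial^2_\lambda S_\omega(\phi^\lambda)|_{\lambda=1}\le 0$ gives a bound on $b^2\|\rho\phi\|^2_{L^2}$ in terms of $\|\phi\|^{\alpha+2}_{L^{\alpha+2}}$, but your scalar inequality involves $\|\rho f\|_{L^2}$ and $\|f\|_{L^{\alpha+2}}$; the bridge is the estimate $\|\phi\|^{\alpha+2}_{L^{\alpha+2}}\le \lambda_0^{3\alpha/2}\|f\|^{\alpha+2}_{L^{\alpha+2}}$, obtained from $d(\omega)=S_\omega(\phi)=\tfrac{\alpha}{2(\alpha+2)}\|\phi\|^{\alpha+2}_{L^{\alpha+2}}\le S_\omega(f^{\lambda_0})=\tfrac{\alpha}{2(\alpha+2)}\lambda_0^{3\alpha/2}\|f\|^{\alpha+2}_{L^{\alpha+2}}$ (both via $K_\omega=0$), together with a case distinction on whether $\|\rho f\|_{L^2}\ge\|\rho\phi\|_{L^2}$ or not. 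In the case $\|\rho f\|_{L^2}\ge\|\rho\phi\|_{L^2}$ the scalar inequality you aim for is not available at all, and the paper instead concludes by the direct comparison $d(\omega)=S_\omega(\phi)-\tfrac12 H(\phi)\le S_\omega(f)-\tfrac12 H(f)$ term by term; your plan does not account for this bifurcation. Second, even in the favorable case $\|\rho f\|_{L^2}<\|\rho\phi\|_{L^2}$, the resulting one-variable inequality reduces to showing $P(\lambda_0^2)\ge 0$ for $P(s)=s^\beta-1-\beta(s-1)-\tfrac12\beta(\beta-1)s^{\beta-1}(s-1)^2$ with $\beta=\tfrac{3\alpha}{4}>1$, which requires a Taylor expansion with explicit remainder and the monotonicity $\lambda_0^{2\beta-2}\le s_0^{\beta-2}$; none of this is present. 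Until these steps are supplied, the proof is incomplete.
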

	
	\begin{proof}
		The proof is inspired by an idea of M. Ohta \cite{Ohta}. We first consider the case $K_\omega(f)=0$. By the definition of $d(\omega)$ and $H(f) \leq 0$, we have
		\[
		d(\omega) \leq S_\omega(f) \leq S_\omega(f) - \frac{1}{2} H(f)
		\]
		which shows \eqref{key-est}.
		
		We now consider the case $K_\omega(f)<0$. As $f \in \Sigma_A(\R^3)$, we have from \eqref{mag-norm} that
		\[
		K_\omega(f^\lambda) = \lambda^2 \|\nabla f\|^2_{L^2} + b R(f) + \frac{b^2}{4} \lambda^{-2} \|\rho f\|^2_{L^2} + \omega \|f\|^2_{L^2} - \lambda^{\frac{3\alpha}{2}} \|f\|^{\alpha+2}_{L^{\alpha+2}},
		\]
		where $f^\lambda(x):= \lambda^{\frac{3}{2}} f(\lambda x)$. As $K_\omega(f)<0$ and $K_\omega(f^\lambda)>0$ for $\lambda>0$ sufficiently small, there exists $\lambda_0 \in (0,1)$ such that $K_\omega(f_{\lambda_0})=0$. It follows that
		\begin{align*}
		\frac{\alpha}{2(\alpha+2)} \|\phi\|^{\alpha+2}_{L^{\alpha+2}} = S_\omega(\phi) =d(\omega) \leq S_\omega(f_{\lambda_0}) = \frac{\alpha}{2(\alpha+2)} \|f_{\lambda_0}\|^{\alpha+2}_{L^{\alpha+2}} = \frac{\alpha}{2(\alpha+2)} \lambda_0^{\frac{3\alpha}{2}} \|f\|^{\alpha+2}_{L^{\alpha+2}}
		\end{align*}
		which yields
		\begin{align} \label{est-lambda-0}
		\|\phi\|^{\alpha+2}_{L^{\alpha+2}} \leq \lambda_0^{\frac{3\alpha}{2}} \|f\|^{\alpha+2}_{L^{\alpha+2}}.
		\end{align}
		If $\|\rho f\|_{L^2} \geq \|\rho \phi\|_{L^2}$, then we infer from \eqref{cond-f} and \eqref{est-lambda-0} that
		\begin{align*}
		d(\omega) =S_\omega(\phi) &= S_\omega(\phi) -\frac{1}{2} H(\phi) \\
		&= \frac{\omega}{2} \|\phi\|^2_{L^2} + \frac{b}{2} R(\phi) +\frac{b^2}{4}\|\rho \phi\|^2_{L^2} + \frac{3\alpha-4}{4(\alpha+2)} \|\phi\|^{\alpha+2}_{L^{\alpha+2}} \\
		&\leq \frac{\omega}{2} \|f\|^2_{L^2} + \frac{b}{2} R(f) + \frac{b^2}{4} \|\rho f\|^2_{L^2} + \frac{3\alpha-4}{4(\alpha+2)} \lambda_0^{\frac{3\alpha}{2}} \|f\|^{\alpha+2}_{L^{\alpha+2}} \\
		&\leq \frac{\omega}{2} \|f\|^2_{L^2} + \frac{b}{2} R(f) + \frac{b^2}{4} \|\rho f\|^2_{L^2} + \frac{3\alpha-4}{4(\alpha+2)} \|f\|^{\alpha+2}_{L^{\alpha+2}} \\
		&=S_\omega(f)- \frac{1}{2} H(f)
		\end{align*}
		which shows \eqref{key-est}.
		
		Finally we assume that $\|\rho f\|_{L^2} < \|\rho \phi\|_{L^2}$. We consider
		\begin{align*}
		J(\lambda):&= S_\omega(f^\lambda)-\frac{\lambda^2}{2} H(f) \\
		&=\frac{\omega}{2} \|f\|^2_{L^2} + \frac{b}{2} R(f) + \frac{b^2}{8} \left(\lambda^{-2} + \lambda^2\right) \|\rho f\|^2_{L^2} - \frac{1}{\alpha+2} \left(\lambda^{\frac{3\alpha}{2}} -\frac{3\alpha}{4} \lambda^2\right) \|f\|^{\alpha+2}_{L^{\alpha+2}}.
		\end{align*}
		We claim that 
		\begin{align} \label{claim-J}
		J(\lambda_0) \leq J(1).
		\end{align}
		Let us assume \eqref{claim-J} for the moment and complete the proof of Lemma \ref{lem-key-est}. Indeed, we have
		\[
		d(\omega)=S_\omega(f_{\lambda_0}) \leq S_\omega(f_{\lambda_0}) -\frac{\lambda_0^2}{2} H(f) =J(\lambda_0) \leq J(1) = S_\omega(f)-\frac{1}{2} H(f)
		\] 
		which implies \eqref{key-est}.
		
		It remains to show \eqref{claim-J} which is in turn equivalent to show
		\begin{align} \label{claim-J-proof-1}
		\frac{b^2}{8} \left(\lambda_0^{-2} + \lambda_0^2 -2\right) \|\rho f\|^2_{L^2} \leq \frac{1}{\alpha+2} \left( \lambda_0^{\frac{3\alpha}{2}} - \frac{3\alpha}{4} \lambda_0^2 +\frac{3\alpha}{4}-1 \right) \|f\|^{\alpha+2}_{L^{\alpha+2}}.
		\end{align}
		Since $\left. \partial^2_\lambda S_\omega(\phi^\lambda)\right|_{\lambda=1} \leq 0$ and $H(\phi)=0$, we see that
		\[
		b^2 \|\rho \phi\|^2_{L^2} \leq \frac{3\alpha(3\alpha-4)}{4(\alpha+2)} \|\phi\|^{\alpha+2}_{L^{\alpha+2}}.
		\]
		Using \eqref{est-lambda-0}, we infer that
		\begin{align} \label{claim-J-proof-2}
		b^2\|\rho f\|^2_{L^2} < b^2\|\rho \phi\|^2_{L^2} \leq \frac{3\alpha(3\alpha-4)}{4(\alpha+2)} \|\phi\|^{\alpha+2}_{L^{\alpha+2}} \leq \frac{3\alpha(3\alpha-4)}{4(\alpha+2)} \lambda_0^{\frac{3\alpha}{2}}\|f\|^{\alpha+2}_{L^{\alpha+2}}.
		\end{align}
		From \eqref{claim-J-proof-2}, \eqref{claim-J-proof-1} holds provided that
		\[
		\frac{3\alpha(3\alpha-4)}{32(\alpha+2)} \left( \lambda_0^{-2}+\lambda_0^2-2\right) \lambda_0^{\frac{3\alpha}{2}} \leq \frac{1}{\alpha+2} \left(\lambda_0^{\frac{3\alpha}{2}}-\frac{3\alpha}{4} \lambda_0^2 +\frac{3\alpha}{4}-1 \right).
		\]
		The above inequality is equivalent to
		\[
		\beta(\beta-1) (\lambda_0^{-1}-\lambda_0)^2 \lambda_0^{2\beta} \leq 2\left(\lambda_0^{2\beta} - \beta \lambda_0^2 +\beta -1\right)
		\]
		which is $P(\lambda_0^2) \geq 0$, where
		\[
		P(s):= s^\beta-1-\beta(s-1) - \frac{1}{2} \beta(\beta-1) s^{\beta-1} (s-1)^2.
		\]
		We take the Taylor expansion of $s^\beta$ at $s=1$ to get
		\[
		s^\beta =1 + \beta(s-1) +\frac{1}{2} \beta (\beta-1) (s-1)^2 s_0^{\beta-2}
		\]
		for some $s_0 \in [s,1]$. This shows that
		\[
		P(\lambda_0^2) = \frac{1}{2}\beta(\beta-1) (\lambda_0^2-1)^2\left(s_0^{\beta-2} - \lambda_0^{2\beta-2}\right)
		\]
		with $\lambda_0^2 \leq s_0 \leq 1$. Since $\lambda_0^{2\beta-2} \leq s_0^{\beta-1} \leq s_0^{\beta-2}$, we have $P(\lambda_0^2)\geq 0$. This proves \eqref{claim-J-proof-1}, hence \eqref{claim-J}. The proof is complete.
	\end{proof}
	
	We are now able to prove Theorem \ref{theo-insta-grou}.
	
	\begin{proof}[Proof of Theorem \ref{theo-insta-grou}]
		We define the set
		\[
		\Bc(\omega):= \left\{ f \in \Sigma_A(\R^3) \ : \ M(f) = M(\phi), R(f)=R(\phi), S_\omega(f) < d(\omega), K_\omega(f)<0, H(f) <0 \right\}.
		\]
		\begin{observation} \label{obse-inva}
			The set $\Bc(\omega)$ is invariant under the flow of \eqref{mag-NLS}, i.e., if $u_0 \in \Bc(\omega)$, then $u(t) \in \Bc(\omega)$ for all $t\in [0,T^*)$.
		\end{observation}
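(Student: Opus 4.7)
The plan is to verify that each of the five defining properties of $\Bc(\omega)$ is preserved along the flow of \eqref{mag-NLS}. Standard persistence of regularity ensures that $u(t) \in \Sigma_A(\R^3)$ for all $t \in [0, T^*)$, so the functionals $M, R, S_\omega, K_\omega, H$ are continuous in $t$. Three of the defining conditions are immediate from conservation laws: $M(u(t)) = M(u_0) = M(\phi)$ by mass conservation, $R(u(t)) = R(u_0) = R(\phi)$ by Lemma \ref{lem-cons-angu}, and $S_\omega(u(t)) = S_\omega(u_0) < d(\omega)$ since $S_\omega = E + (\omega/2) M$ is a linear combination of conserved quantities.

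For the strict inequality $K_\omega(u(t)) < 0$, I would argue by contradiction. If it fails, then by continuity there is a first time $t_0 \in (0, T^*)$ with $K_\omega(u(t_0)) = 0$. Since $M(u(t_0)) = M(\phi) > 0$, the function $u(t_0)$ is non-trivial, so by the variational characterization \eqref{defi-d-omega} we obtain $S_\omega(u(t_0)) \geq d(\omega)$, contradicting $S_\omega(u(t_0)) = S_\omega(u_0) < d(\omega)$.

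The main obstacle is preservation of $H(u(t)) < 0$, since $H$ is not conserved and there is no direct variational lower bound on the set $\{H = 0\}$. To handle it, I would again argue by contradiction and suppose there is a first time $t_1 \in (0, T^*)$ with $H(u(t_1)) = 0$, while $H(u(t)) < 0$ on $[0, t_1)$. At $t_1$, the previous step gives $K_\omega(u(t_1)) \leq 0$, and we still have $M(u(t_1)) = M(\phi)$ and $R(u(t_1)) = R(\phi)$, so all hypotheses of Lemma \ref{lem-key-est} are met with $f = u(t_1)$. That lemma then yields
\[
0 = H(u(t_1)) \leq 2\bigl(S_\omega(u(t_1)) - d(\omega)\bigr) = 2\bigl(S_\omega(u_0) - d(\omega)\bigr) < 0,
\]
a contradiction. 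This shows $H(u(t)) < 0$ throughout the lifespan, completing the invariance of $\Bc(\omega)$. The delicate point is that Lemma \ref{lem-key-est} was stated precisely with the non-strict conditions $K_\omega(f) \leq 0$ and $H(f) \leq 0$, which is exactly what is needed to close the continuity argument at the boundary instant.
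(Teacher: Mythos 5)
Your argument is correct and follows essentially the same route as the paper: conservation of mass, energy, and angular momentum handle $M$, $S_\omega$, and $R$, the strict sign of $K_\omega$ is preserved by continuity and the variational definition of $d(\omega)$, and the strict sign of $H$ is preserved by continuity together with Lemma \ref{lem-key-est} applied at the first touching time. The only cosmetic difference is that you make explicit the nontriviality of $u(t)$ and the persistence of $\Sigma_A$-regularity, which the paper leaves implicit.
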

		\begin{proof}
			In fact, let $u_0 \in \Bc(\omega)$ and $u: [0,T^*) \times \R^3 \rightarrow \C$ be the corresponding solution to \eqref{mag-NLS}. By the conservation of mass and energy, we have $M(u(t))=M(u_0)=M(\phi)$ and $S_\omega(u(t))=S_\omega(u_0)<d(\omega)$ for all $t\in [0,T^*)$. Thanks to Lemma \ref{lem-cons-angu}, we see that $R(u(t))=R(u_0)=R(\phi)$ for all $t\in [0,T^*)$. We will show that $K_\omega(u(t))<0$ for all $t\in [0,T^*)$. Suppose that it does not hold, then there exists $t_0 \in [0,T^*)$ such that $K_\omega(u(t_0))\geq 0$. By the continuity of $t\mapsto K_\omega(u(t))$, there exists $t_1 \in (0,t_0]$ such that $K_\omega(u(t_1))=0$. From the definition of $d(\omega)$, we get $d(\omega) \leq S_\omega(u(t_1))=S_\omega(u_0) <d(\omega)$ which is a contradiction. Finally we prove that $H(u(t))<0$ for all $t\in[0,T^*)$. If it is not true, then arguing as above, there exists $t_2 \in [0,T^*)$ such that $H(u(t_2))=0$. Applying Lemma \ref{lem-key-est} to $f=u(t_2)$, we get
			\[
			0=H(u(t_2)) \leq 2\left(S_\omega(u(t_2))-d(\omega)\right) 
			\]
			which implies 
			\[
			d(\omega) \leq S_\omega(u(t_2))=S_\omega(u_0) <d(\omega).
			\]
			This is again a contradiction. Thus we have
			\[
			M(u(t))=M(u_0)=M(\phi), \quad R(u(t))=R(u_0)=R(\phi), \quad S_\omega(u(t))=S_\omega(u_0)<d(\omega)
			\]
			and $K_\omega(u(t))<0$, $H(u(t))<0$ for all $t\in [0,T^*)$. This shows Observation \ref{obse-inva}. 
		\end{proof}
	
		\begin{observation} \label{obse-lambda}
			We have $\phi^\lambda \in \Bc(\omega)$ for all $\lambda>1$, where $\phi^\lambda$ is as in \eqref{defi-phi-lambda}.
		\end{observation}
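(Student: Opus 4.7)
The plan is to check in turn each of the five conditions entering the definition of $\Bc(\omega)$. Two of them are free: the change of variables $y = \lambda x$ gives $M(\phi^\lambda) = M(\phi)$ and $R(\phi^\lambda) = R(\phi)$, while Proposition \ref{prop-expo-deca} ensures $\phi^\lambda \in \Sigma_A(\R^3)$. The remaining three conditions $S_\omega(\phi^\lambda) < d(\omega)$, $K_\omega(\phi^\lambda) < 0$, and $H(\phi^\lambda) < 0$ will be obtained from a one-variable analysis of the explicit curve $g(\lambda) := S_\omega(\phi^\lambda)$.

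Using \eqref{mag-norm} together with the scalings $\|\nabla \phi^\lambda\|_{L^2}^2 = \lambda^2 \|\nabla \phi\|_{L^2}^2$, $\|\rho \phi^\lambda\|_{L^2}^2 = \lambda^{-2}\|\rho\phi\|_{L^2}^2$, $\|\phi^\lambda\|_{L^2}^2 = \|\phi\|_{L^2}^2$, $R(\phi^\lambda) = R(\phi)$, and $\|\phi^\lambda\|^{\alpha+2}_{L^{\alpha+2}} = \lambda^{3\alpha/2}\|\phi\|^{\alpha+2}_{L^{\alpha+2}}$, one writes $g(\lambda)$ as a linear combination of $\lambda^2$, $\lambda^{-2}$, $\lambda^{3\alpha/2}$ and constants. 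A direct differentiation then gives the fundamental identity $\lambda g'(\lambda) = H(\phi^\lambda)$. In particular, the preceding lemma (which asserts $H(\phi) = 0$ for every $\phi \in \Gc(\omega)$) yields $g'(1) = 0$. Writing $g'(\lambda) = \lambda h(\lambda)$ with
\[
h(\lambda) = \|\nabla\phi\|_{L^2}^2 - \tfrac{b^2}{4}\lambda^{-4}\|\rho\phi\|_{L^2}^2 - \tfrac{3\alpha}{2(\alpha+2)}\lambda^{3\alpha/2 - 2}\|\phi\|_{L^{\alpha+2}}^{\alpha+2},
\]
one has $h(1) = 0$ and $g''(1) = h'(1)$, so the hypothesis $\partial_\lambda^2 S_\omega(\phi^\lambda)|_{\lambda=1} \leq 0$ becomes $h'(1) \leq 0$.

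The crucial observation is the factored form
\[
\lambda^5 h'(\lambda) = b^2 \|\rho\phi\|_{L^2}^2 - \tfrac{3\alpha(3\alpha-4)}{4(\alpha+2)}\lambda^{3\alpha/2 + 2}\|\phi\|_{L^{\alpha+2}}^{\alpha+2},
\]
which, since $\alpha > 4/3$ makes the coefficient $\tfrac{3\alpha(3\alpha-4)}{4(\alpha+2)}$ and the exponent $\tfrac{3\alpha}{2}+2$ both positive, is a strictly decreasing function of $\lambda$ on $(0,\infty)$. Its value at $\lambda = 1$ equals $h'(1) \leq 0$, so $h'(\lambda) < 0$ strictly for every $\lambda > 1$. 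Combined with $h(1) = 0$ this propagates to $h(\lambda) < 0$ on $(1,\infty)$, whence $H(\phi^\lambda) = \lambda^2 h(\lambda) < 0$ and $g'(\lambda) < 0$ for all $\lambda > 1$. Integrating the latter yields $S_\omega(\phi^\lambda) < g(1) = S_\omega(\phi) = d(\omega)$. Finally, the elementary rewriting $K_\omega(f) = 2 S_\omega(f) - \tfrac{\alpha}{\alpha+2}\|f\|_{L^{\alpha+2}}^{\alpha+2}$, applied to $\phi^\lambda$, combines the strict inequality just obtained with $\|\phi^\lambda\|_{L^{\alpha+2}}^{\alpha+2} = \lambda^{3\alpha/2}\|\phi\|_{L^{\alpha+2}}^{\alpha+2} > \|\phi\|_{L^{\alpha+2}}^{\alpha+2}$ to produce $K_\omega(\phi^\lambda) < K_\omega(\phi) = 0$.

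The only non-routine step is the third paragraph: one must recognize that the merely pointwise hypothesis $g''(1) \leq 0$ propagates to the half-line $\lambda > 1$, which is exactly what the explicit monotonicity of $\lambda \mapsto \lambda^5 h'(\lambda)$ provides. Everything else is bookkeeping around the scalings of $\phi^\lambda$ and the two already-available ingredients, namely $H(\phi) = 0$ and $K_\omega(\phi) = 0$.
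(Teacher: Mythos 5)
Your proposal is correct. For the two conditions $S_\omega(\phi^\lambda)<d(\omega)$ and $H(\phi^\lambda)<0$ you follow essentially the paper's route: both arguments propagate the hypothesis $\left.\partial_\lambda^2 S_\omega(\phi^\lambda)\right|_{\lambda=1}\leq 0$ to the whole half-line $\lambda>1$ via the explicit scaling formulas (the paper observes directly that $\partial_\lambda^2 S_\omega(\phi^\lambda)$ is strictly decreasing in $\lambda$, while you factor $\partial_\lambda S_\omega(\phi^\lambda)=\lambda h(\lambda)$ and use the strict monotonicity of $\lambda^5 h'(\lambda)$; these are equivalent computations, since $h'(1)=\left.\partial_\lambda^2 S_\omega(\phi^\lambda)\right|_{\lambda=1}$ once $h(1)=H(\phi)=0$). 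Where you genuinely diverge is the last condition $K_\omega(\phi^\lambda)<0$: the paper runs a separate cascade of derivatives, computing $\partial_\lambda^3 K_\omega(\phi^\lambda)<0$ for all $\lambda>0$, feeding the spectral hypothesis into $\left.\partial_\lambda^2 K_\omega(\phi^\lambda)\right|_{\lambda=1}$, and then using $H(\phi)=0$ once more at the level of $\left.\partial_\lambda K_\omega(\phi^\lambda)\right|_{\lambda=1}$, whereas you simply invoke the identity $K_\omega(f)=2S_\omega(f)-\tfrac{\alpha}{\alpha+2}\|f\|^{\alpha+2}_{L^{\alpha+2}}$, the strict decrease of $S_\omega(\phi^\lambda)$ already established, and the obvious increase $\|\phi^\lambda\|^{\alpha+2}_{L^{\alpha+2}}=\lambda^{3\alpha/2}\|\phi\|^{\alpha+2}_{L^{\alpha+2}}>\|\phi\|^{\alpha+2}_{L^{\alpha+2}}$ for $\lambda>1$ to get $K_\omega(\phi^\lambda)<K_\omega(\phi)=0$. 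Your treatment of $K_\omega$ is shorter and avoids the third-derivative computation entirely; the paper's version does not lean on the previously proved inequality $S_\omega(\phi^\lambda)<S_\omega(\phi)$, but since that inequality is available in this proof anyway, your simplification loses nothing.
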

		\begin{proof}
		A straightforward computation shows
		\[
		M(\phi^\lambda)= M(\phi), \quad R(\phi^\lambda) = M(\phi), \quad \forall \lambda>0.
		\]
		Next we have
		\begin{align*}
		\partial^2_\lambda S_\omega(\phi^\lambda) &= \|\nabla \phi\|^2_{L^2} + \frac{3b^2}{4} \lambda^{-4} \|\rho \phi\|^2_{L^2} - \frac{3\alpha}{2(\alpha+2)}\left(\frac{3\alpha}{2}-1\right) \lambda^{\frac{3\alpha-4}{2}} \|\phi\|^{\alpha+2}_{L^{\alpha+2}} \\
		&<\|\nabla \phi\|^2_{L^2} + \frac{3b^2}{4} \|\rho\phi\|^2_{L^2} - \frac{3\alpha}{2(\alpha+2)}\left(\frac{3\alpha}{2}-1\right) \|\phi\|^{\alpha+2}_{L^{\alpha+2}} \\
		&=\left. \partial^2_\lambda S_\omega(\phi^\lambda)\right|_{\lambda=1} \leq 0, \quad \forall \lambda>1.
		\end{align*}
		It yields that
		\[
		\partial_\lambda S_\omega(\phi^\lambda) < \left. \partial_\lambda S_\omega(\phi^\lambda)\right|_{\lambda=1} =H(\phi)=0, \quad \forall \lambda>1
		\]
		which shows
		\[
		S_\omega(\phi^\lambda) < S_\omega(\phi), \quad \forall \lambda>1. 
		\]
		We also have
		\[
		H(\phi^\lambda)=\lambda \partial_\lambda S_\omega(\phi^\lambda) <0, \quad \forall \lambda>1.
		\]
		
		It remains to show that $K_\omega(\phi^\lambda)<0$ for all $\lambda>1$. We have
		\begin{align*}
		\partial^3_\lambda K_\omega(\phi^\lambda) &= -6b^2\lambda^{-5} \|\rho \phi\|^2_{L^2} - \frac{3\alpha}{2} \left( \frac{3\alpha}{2}-1\right) \left(\frac{3\alpha}{2}-2\right) \lambda^{\frac{3\alpha}{2}-3} \|\phi\|^{\alpha+2}_{L^{\alpha+2}} <0, \quad \forall \lambda>0.
		\end{align*}
		It follows that
		\[
		\partial^2_\lambda K_\omega(\phi^\lambda) < \left. \partial^2_\lambda K_\omega(\phi^\lambda)\right|_{\lambda=1} = 2 \|\nabla \phi\|^2_{L^2} +\frac{3b^2}{2} \|\rho \phi\|^2_{L^2} -\frac{3\alpha}{2} \left(\frac{3\alpha}{2}-1\right) \|\phi\|^{\alpha+2}_{L^{\alpha+2}}, \quad \forall \lambda>1.
		\]
		By the assumption $\left. \partial^2_\lambda S_\omega(\phi^\lambda)\right|_{\lambda=1} \leq 0$ which is equivalent to
		\[
		\|\nabla \phi\|^2_{L^2} +\frac{3b^2}{4} \|\rho \phi\|^2_{L^2} - \frac{3\alpha}{2(\alpha+2)} \left(\frac{3\alpha}{2}-1\right) \|\phi\|^{\alpha+2}_{L^{\alpha+2}} \leq 0,
		\]
		we infer that
		\[
		\partial^2_\lambda K_\omega(\phi^\lambda) \leq -\frac{3\alpha^2}{2(\alpha+2)} \left(\frac{3\alpha}{2}-1\right) \|\phi\|^{\alpha+2}_{L^{\alpha+2}} <0, \quad \forall \lambda>1.
		\]
		This shows that
		\[
		\partial_\lambda K_\omega(\phi^\lambda) < \left. \partial_\lambda K_\omega(\phi^\lambda)\right|_{\lambda=1} = 2\|\nabla \phi\|^2_{L^2} - \frac{b^2}{2} \|\rho \phi\|^2_{L^2} - \frac{3\alpha}{2} \|\phi\|^{\alpha+2}_{L^{\alpha+2}}, \quad \forall \lambda>1.
		\]
		Using the fact that $H(\phi)=0$, we obtain
		\[
		\partial_\lambda K_\omega(\phi^\lambda) < -\frac{3\alpha^2}{2(\alpha+2)} \|\phi\|^{\alpha+2}_{L^{\alpha+2}}, \quad \forall \lambda>1.
		\]
		This shows that $K_\omega(\phi^\lambda)< K_\omega(\phi)=0$ for all $\lambda>1$. Therefore we prove that $\phi^\lambda \in \Bc(\omega)$ for all $\lambda>1$.
		\end{proof}
	
		Now let $\vareps>0$. As $\phi^\lambda \rightarrow \phi$ strongly in $\Sigma_A(\R^3)$ as $\lambda \rightarrow 1$, there exists $\lambda_0>1$ such that $\|\phi_{\lambda_0}-\phi\|_{\Sigma_A} <\vareps$. Set $u_0 = \phi_{\lambda_0} \in \Bc(\omega)$ and let $u:[0,T^*) \times \R^3 \rightarrow \C$ be the corresponding solution to \eqref{mag-NLS}. By Observation \ref{obse-lambda} and Observation \ref{obse-inva}, $u(t) \in \Bc(\omega)$ for all $t\in [0,T^*)$. Applying Lemma \ref{lem-key-est} to $f=u(t)$ and using the conservation laws of mass and energy, we get
		\[
		H(u(t)) \leq 2\left(S_\omega(u(t))-d(\omega)\right)=2\left(S_\omega(u_0) - d(\omega)\right) <0, \quad \forall t\in [0,T^*).
		\]
		Thanks to Lemma \ref{lem-viri-iden}, we have
		\[
		F''(u(t)) = 8 H(u(t)) \leq -16\left(S_\omega(u_0)-d(\omega)\right)<0, \quad \forall t\in [0,T^*),
		\]
		where $F$ is as in \eqref{defi-F}. The convexity argument shows that $T^*<\infty$. The proof is complete.
		\end{proof}

	\section*{Acknowledgments}
	This work was supported in part by the European Union’s Horizon 2020 Research and Innovation Programme (Grant agreement CORFRONMAT No. 758620, PI: Nicolas Rougerie). V.D.D. would like to express his deep gratitude to his wife - Uyen Cong for her encouragement and support. The authors would like to thank the reviewers for their helpful comments and suggestions.


\end{document}